\apptocmd{\sloppy}{\hbadness 10000\relax}{}{}
\definecolor{bwgreen}{rgb}{0.183,1,0.5}
\definecolor{bwmagenta}{rgb}{0.7,0.0,0.1}
\definecolor{bwblue}{rgb}{0.317,0.161,1}
\DeclareFontFamily{OT1}{rsfs}{}
\DeclareFontShape{OT1}{rsfs}{n}{it}{<-> rsfs10}{}
\DeclareMathAlphabet{\mathscr}{OT1}{rsfs}{n}{it}
\DeclareFontFamily{OT1}{pzc}{}
\DeclareFontShape{OT1}{pzc}{n}{it}{<->s*[2.2]pzc}{}
\DeclareMathAlphabet{\mathpzc}{OT1}{pzc}{b}{sl}
\newcommand{\Rmnum}[1]{\expandafter\@slowromancap\romannumeral #1@}
\DeclareMathOperator{\id}{id}
\DeclareMathOperator{\Frac}{Frac}
\DeclareMathOperator{\ord}{ord} 
\DeclareMathOperator{\nil}{nil}
\newcommand*{\pr}{\rho}
\newcommand*{\ps}{\sigma}
\DeclareMathOperator{\Hom}{Hom}
\DeclareMathOperator{\End}{End}
\DeclareMathOperator{\Ext}{Ext}
\DeclareMathOperator{\Gal}{Gal}
\DeclareMathOperator{\GL}{GL}
\DeclareMathOperator{\Aut}{Aut}
\DeclareMathOperator{\Spec}{Spec}
\DeclareMathOperator{\et}{\acute{e}t}
\DeclareMathOperator{\dR}{dR}
\DeclareMathOperator{\tr}{tr}
\DeclareMathOperator{\Tr}{Tr}
\DeclareMathOperator{\Pic}{Pic}
\DeclareMathOperator{\Alb}{Alb}
\DeclareMathOperator{\sm}{sm}
\DeclareMathOperator{\Lie}{Lie}
\DeclareMathOperator{\red}{red}
\DeclareMathOperator{\Rep}{Rep}
\DeclareMathOperator{\res}{res}
\DeclareMathOperator{\Ig}{Ig}
\DeclareMathOperator{\im}{im}
\DeclareMathOperator{\Irr}{Irr}
\DeclareMathOperator{\bal}{bal.}
\DeclareMathOperator{\Ell}{Ell}
\DeclareMathOperator{\univ}{univ}
\DeclareMathOperator{\proj}{proj}
\DeclareMathOperator{\Sen}{Sen}
\newcommand*{\R}{\ensuremath{\mathbf{R}}}   
\newcommand*{\Z}{\ensuremath{\mathbf{Z}}}               
\newcommand*{\Q}{\ensuremath{\mathbf{Q}}}                           
\newcommand*{\Qbar}{\overline{\Q}}
\renewcommand*{\P}{\ensuremath{\mathbf{P}}}           		
\newcommand*{\Aff}{\ensuremath{\mathbf{A}}}
\newcommand*{\s}{\mathfrak{S}}
\newcommand*{\scrA}{\mathscr{A}}
\newcommand*{\C}{\mathbf{C}}
\newcommand*{\F}{\mathbf{F}}
\newcommand*{\scrF}{\mathscr{F}}
\newcommand*{\scrG}{\mathscr{G}}  
\newcommand*{\scrH}{\mathscr{H}}                           
\newcommand*{\h}{\mathscr{H}}                               
\newcommand*{\I}{\mathscr{I}}                               
\renewcommand*{\L}{\mathscr{L}}
\newcommand*{\scrM}{\mathscr{M}}
\renewcommand*{\O}{\mathscr{O}}                    
\newcommand*{\X}{\mathcal{X}}     
\newcommand*{\Y}{\mathcal{Y}}
\newcommand*{\scrP}{\mathscr{P}}  
\newcommand*{\scrQ}{\mathscr{Q}}                             
\newcommand*{\scrHom}{\mathscr{H}\mathit{om}}
\newcommand*{\D}{\ensuremath{\mathbf{D}}}
\newcommand*{\M}{\ensuremath{\mathbf{M}}}
\renewcommand*{\H}{\ensuremath{\mathfrak{H}}}
\newcommand*{\Dual}[1]{{{#1}^t}}
\newcommand*{\VDual}[1]{{{#1}^{\vee}}}
\renewcommand*{\int}{\ensuremath{\mathrm{int}}}
\renewcommand*{\SS}{\ensuremath{\underline{\mathrm{ss}}}}
\renewcommand*{\u}[1]{\underline{#1}}
\renewcommand*{\o}[1]{\overline{#1}}
\renewcommand*{\c}[1]{{#1}^{\mathrm{c}}}
\newcommand*{\wh}[1]{\widehat{#1}}
\newcommand*{\wt}[1]{\widetilde{#1}}
\newcommand*{\nor}[1]{{#1}^{\mathrm{n}}}
\newcommand*{\tens}{\mathop{\otimes}\limits}
\newcommand*{\can}{\text{-}\mathrm{can}}
\theoremstyle{plain}
  \newtheorem{theorem}{Theorem}
  \newtheorem{proposition}[theorem]{Proposition}
  \newtheorem{lemma}[theorem]{Lemma}
  \newtheorem{corollary}[theorem]{Corollary}
\theoremstyle{definition}
  \newtheorem{definition}[theorem]{Definition}
\theoremstyle{remark}
  \newtheorem{remark}[theorem]{Remark}
\numberwithin{theorem}{subsection}  
\numberwithin{equation}{subsection}
\begin{document}
\title{The Geometry of Hida Families \Rmnum{1}: $\Lambda$-adic de Rham cohomology}

\author{Bryden Cais}
\address{University of Arizona, Tucson}
\curraddr{Department of Mathematics, 617 N. Santa Rita Ave., Tucson AZ. 85721}
\email{cais@math.arizona.edu}

\thanks{
	During the writing of this paper, the author was partially supported by an NSA Young Investigator grant
	(H98230-12-1-0238) and an NSF RTG (DMS-0838218).
	}

\dedicatory{To Masami Ohta}

\subjclass[2010]{Primary: 11F33  Secondary: 11F67, 11G18, 11R23}
\keywords{Hida families, integral $p$-adic Hodge theory, de~Rham cohomology, crystalline cohomology.}
\date{June 6, 2016}

\begin{abstract}
	We construct the $\Lambda$-adic de Rham analogue of Hida's
	ordinary $\Lambda$-adic \'etale cohomology and of Ohta's $\Lambda$-adic Hodge cohomology,
	and by exploiting the geometry 
	of integral models of modular curves over the cyclotomic extension
	of $\Q_p$, we give a purely geometric proof of the expected finiteness,
	control, and $\Lambda$-adic duality theorems.  Following Ohta, we then prove that
	our $\Lambda$-adic module of differentials is canonically isomorphic to  
	the space of ordinary $\Lambda$-adic cuspforms.
	In the sequel \cite{CaisHida2} to this paper, 
	we construct the crystalline counterpart to Hida's ordinary $\Lambda$-adic 
	\'etale cohomology, and 
	employ integral $p$-adic Hodge theory
	to prove $\Lambda$-adic
	comparison isomorphisms between all of these cohomologies.
	As applications of our work in this paper and \cite{CaisHida2}, we 
	will be able to provide a ``cohomological" construction of the 
	family of $(\varphi,\Gamma)$-modules attached to Hida's ordinary $\Lambda$-adic \'etale
	cohomology by \cite{Dee}, as well as a new and purely geometric 
	proof of Hida's finitenes and control theorems.  We are also able to prove
	refinements of the main theorems in \cite{MW-Hida} and \cite{OhtaEichler}.
\end{abstract}

\maketitle

\section{Introduction}\label{intro}

\subsection{Motivation}

In his landmark papers \cite{HidaGalois} and \cite{HidaIwasawa}, Hida proved that the $p$-adic Galois representations 
attached to ordinary cuspidal Hecke eigenforms by Deligne (\cite{DeligneFormes}, \cite{CarayolReps})
interpolate $p$-adic analytically in the weight variable to a family of $p$-adic representations
whose specializations to integer weights $k\ge 2$ recover
the ``classical" Galois representations attached to weight $k$ cuspidal eigenforms.
Hida's work paved the way for a revolution---
from the pioneering work of Mazur
on Galois deformations to Coleman's construction of $p$-adic families of finite slope overconvergent 
modular forms---and began a trajectory of thought whose fruits include some of the most spectacular
achievements in modern number theory.

Hida's proof is constructive and has at its heart the \'etale cohomology of the tower
of modular curves $\{X_1(Np^r)\}_{r}$ over $\Q$.  More precisely,  
Hida considers the projective limit $H^1_{\et}:=\varprojlim_r H^1_{\et}(X_1(Np^r)_{\Qbar},\Z_p)$
(taken with respect to the trace mappings), which is naturally a module for the 
``big" $p$-adic Hecke algebra $\H^*:=\varprojlim_r \H_r^*$, which is itself an algebra
over the completed group ring $\Lambda:=\Z_p[\![1+p\Z_p]\!]\simeq \Z_p[\![T]\!]$
via the diamond operators.  
Using the idempotent $e^*\in \H^*$ attached to the (adjoint) Atkin operator $U_p^*$ 
to project to the part of $H^1_{\et}$ where $U_p^*$ acts invertibly,
Hida proves in \cite[Theorem 3.1]{HidaGalois} 
(via the comparison isomorphism between \'etale and topological cohomology and explicit 
calculations in group cohomology) that 
$e^* H^1_{\et}$ is finite and free as a module over $\Lambda$, and that the resulting Galois representation
\begin{equation*}
	\xymatrix{
		{\rho: \scrG_{\Q}} \ar[r] & {\Aut_{\Lambda}(e^*H^1_{\et}) \simeq \GL_m(\Z_p[\![T]\!])} 
		}
\end{equation*}
$p$-adically interpolates the representations attached to ordinary cuspidal eigenforms.

By analyzing the geometry of the tower of modular curves, Mazur and Wiles \cite{MW-Hida}
were able to relate the inertial invariants of the local (at $p$) representation $\rho_p$ to the 
\'etale cohomology of the Igusa tower studied in \cite{MW-Analogies}, and in so doing 
proved\footnote{Mazur and Wiles treat only the case of tame level $N=1$.} 
that the ordinary filtration of the Galois representations attached
to ordinary cuspidal eigenforms can be $p$-adically interpolated:
both the inertial invariants and covariants 
are free of the same finite rank over $\Lambda$ and specialize to the corresponding subquotients
in integral weights $k\ge 2$.  
As an application, they provided examples of cuspforms $f$ and primes $p$
for which the specialization of the associated Hida family of Galois representations 
to weight $k=1$ is not Hodge--Tate,
and so does not arise from a weight one cuspform via the construction of Deligne-Serre
\cite{DeligneSerre}.
Shortly thereafter, Tilouine \cite{Tilouine} clarified the geometric underpinnings
of \cite{HidaGalois} and \cite{MW-Hida}, and removed most of the restrictions on the $p$-component of the
nebentypus of $f$.  Central to both \cite{MW-Hida} and \cite{Tilouine} is a careful study of
the tower of $p$-divisible groups attached to the ``good quotient" modular abelian varieties
introduced in \cite{MW-Iwasawa}.

With the advent of integral $p$-adic Hodge theory, 
and in view of the prominent role
it has played in furthering the trajectory initiated by Hida's work, 
it is natural to ask 
if one can construct Hodge--Tate, de~Rham and crystalline analogues of $e^*H^1_{\et}$,
and if so, to what extent the integral comparison isomorphsms of $p$-adic Hodge theory
can be made to work in $\Lambda$-adic families.
In \cite{OhtaEichler},
Ohta has addressed this question in the case of Hodge cohomology. 
Using the invariant differentials on the tower of $p$-divisible groups studied in \cite{MW-Hida} and \cite{Tilouine},
Ohta constructs a $\Lambda \wh{\otimes}_{\Z_p} \Z_p[\mu_{p^{\infty}}]$-module
from which, via an integral version of the Hodge--Tate comparison isomorphism \cite{Tate}
for ordinary $p$-divisible groups, he is able to recover the semisimplification
of the ``semilinear representation"
$\rho_{p}\wh{\otimes} \O_{\C_p}$, where 
$\C_p$ is, as usual, the $p$-adic completion of an algebraic closure of $\Q_p$.
Using Hida's results, Ohta proves that his Hodge cohomology analogue of
$e^*H^1_{\et}$ is free of finite rank over $\Lambda\wh{\otimes}_{\Z_p} \Z_p[\mu_{p^{\infty}}]$
and specializes to finite level exactly as one expects.  As applications
of his theory, Ohta provides a construction of two-variable $p$-adic $L$-functions
attached to families of ordinary cuspforms differing from that of Kitagawa \cite{Kitagawa},
and, in a subsequent paper \cite{Ohta2}, 
provides a new and streamlined proof of the theorem of Mazur--Wiles \cite{MW-Iwasawa}
(Iwasawa's Main Conjecture for $\Q$; see also \cite{WilesTotallyReal}).
We remark that Ohta's $\Lambda$-adic Hodge-Tate isomorphism is a crucial ingredient
in the forthcoming partial proof of Sharifi's conjectures \cite{SharifiConj}, \cite{SharifiEisenstein} 
due to Fukaya and Kato \cite{FukayaKato}.

\subsection{Results}\label{resultsintro}

In this paper, we construct the de Rham
counterpart to Hida's ordinary $\Lambda$-adic \'etale cohomology and Ohta's 
$\Lambda$-adic Hodge cohomology, and we prove the expected control 
and finiteness theorems via a purely geometric argument
involving a careful study of the geometry of
certain Katz--Mazur integral models of modular curves and a classical
result of Nakajima \cite{Nakajima}.

In the sequel \cite{CaisHida2} to this paper, we will 
use crystalline Dieudonn\'e theory to provide a cohomoligcal construction of the $\Lambda$-adic family 
of $(\varphi,\Gamma)$-modules attached to $e^*H^1_{\et}$
by Dee \cite{Dee}, and will establish
a suitable $\Lambda$-adic 
version of every integral comparison isomorphism one could hope for.
In particular, we will be able to recover the entire family
of $p$-adic Galois representations $\rho_{p}$ (and not just its
semisimplification) from our $\Lambda$-adic crystalline cohomology.
As an application of our theory and the results of this paper, we will 
in \cite{CaisHida2} give a new 
and purely geometric proof of Hida's freeness and control theorems
for $e^*H^1_{\et}$.  
As our methods are geometric,
we expect that they can be adapted to the setting of other Shimura curves
once one has a sufficiently sophisticated theory of integral models.

In order to survey our main results more precisely, we introduce
some notation.  Throughout, we fix a prime $p>2$ and a positive integer
$N>3$.
Fix an algebraic closure $\Qbar_p$
of $\Q_p$ 
as well as a $p$-power compatible sequence 
$\{\varepsilon^{(r)}\}_{r\ge 0}$ of primitive $p^r$-th roots of unity in $\Qbar_p$.
We set $K_r:=\Q_p(\mu_{p^r})$ and $K_r':=K_r(\mu_N)$, and we
write $R_r$ and $R_r'$ for the rings of integers
in $K_r$ and $K_r'$, respectively.  
Denote by $\scrG_{\Q_p}:=\Gal(\Qbar_p/\Q_p)$ the absolute Galois group
and by $\scrH$
the kernel of the $p$-adic cyclotomic character
$\chi: \scrG_{\Q_p}\rightarrow \Z_p^{\times}$.
We write $\Gamma:=\scrG_{\Q_p}/\scrH \simeq \Gal(K_{\infty}/K_0)$ for the quotient and,
using that $K_0'/\Q_p$ is unramified, we canonically identify $\Gamma$
with $\Gal(K_{\infty}'/K_0')$.
We put $\Delta:=\Z_p^{\times}$, and for $r\ge 1$ set $\Delta_r:=1+p^r\Z_p$.
For any ring $A$ define
$\Lambda_{A}:=\varprojlim_r A[\Delta_1/\Delta_r]$ to be the completed group ring
on $\Delta_1$ over $A$; if $\varphi$ is an endomorphism of $A$, we again write $\varphi$
for the induced endomorphism of $\Lambda_A$ that acts as the identity on $\Delta_1$.
We will denote by $\langle u\rangle^*$ 
the adjoint diamond operator 
attached to $u\in \Delta$, 
and give $\H_r^*$ the structure of a $\Lambda$-module via the map 
$\langle \cdot\rangle^*:\Delta_1\hookrightarrow \H^*$.
Finally, we denote by $X_r:=X_1(Np^r)$ the 
canonical model over $\Q$ with rational cusp at $i\infty$
of the modular curve arising as the quotient of the extended upper-halfplane
by the congruence subgroup $\Upgamma_1(Np^r)$, and we write
$J_r:=J_1(Np^r)$ for its Jacobian.

The goal of this paper is to construct a de Rham analogue of Hida's $e^*H^1_{\et}$.
A na\"ive idea would be to mimic Hida's construction, using the
(relative) de Rham cohomology of $\Z_p$-integral models of the modular curves $X_r$
in place of $p$-adic \'etale cohomology.  However, this approach fails due
to the fact that $X_r$ has bad reduction
at $p$, so the relative de Rham
cohomology of integral models does not provide good
$\Z_p$-lattices in the de Rham cohomology of $X_r$ over $\Q_p$.
To address this problem, we use the canonical integral structures in de Rham cohomology
studied in \cite{CaisDualizing} and the canonical integral model $\X_r$ of $X_r$
over $R_r$ associated to the moduli problem 
$([\bal\ \Upgamma_1(p^r)]^{\varepsilon^{(r)}\can};\ [\mu_N])$
\cite{KM} to construct well-behaved integral ``de Rham cohomology" for the tower of modular
curves.  For each $r$, we obtain a short exact sequence of free $R_r$-modules
with semilinear $\Gamma$-action and commuting $\H_r^*$-action
\begin{equation}
	\xymatrix{
		0\ar[r] & {H^0(\X_r, \omega_{\X_r/R_r})} \ar[r] & {H^1(\X_r/R_r)} \ar[r] & {H^1(\X_r,\O_{\X_r})}
		\ar[r] & 0 
	}\label{finiteleveldRseq}
\end{equation}
which is functorial in finite $K_r$-morphisms of the generic fiber $X_r$,
and whose scalar extension to $K_r$ recovers the Hodge filtration of $H^1_{\dR}(X_r/K_r)$.
Extending scalars to $R_{\infty}$ and taking projective limits, 
we obtain a short exact sequence of $\Lambda_{R_{\infty}}$-modules with semilinear $\Gamma$-action
and commuting linear $\H^*$-action 
\begin{equation}
	\xymatrix{
		0\ar[r] & {H^0(\omega)} \ar[r] & {H^1_{\dR}} \ar[r] & {H^1(\O)} 
	}.\label{dRseq}
\end{equation}
Our main result (see Theorem \ref{main}) is that the ordinary part of (\ref{dRseq}) 
is the correct de Rham analogue of Hida's ordinary $\Lambda$-adic \'etale cohomology:

\begin{theorem}\label{MTA}
	There is a canonical short exact sequence of finite free $\Lambda_{R_{\infty}}$-modules
	with semilinear $\Gamma$-action and commuting linear $\H^*$-action
	\begin{equation}
		\xymatrix{
		0\ar[r] & {e^*H^0(\omega)} \ar[r] & {e^*H^1_{\dR}} \ar[r] & {e^*H^1(\O)} \ar[r] & 0
	}.\label{orddRseq}
	\end{equation}
	As a $\Lambda_{R_{\infty}}$-module, $e^*H^1_{\dR}$ is free of rank $2d$, while each of the flanking terms 
	in $(\ref{orddRseq})$ is free of rank $d$, for $d=\sum_{k=3}^{p+1}\dim_{\F_p} S_k(\Upgamma_1(N);\F_p)^{\ord}$.
	Applying $\otimes_{\Lambda_{R_{\infty}}} R_{\infty}[\Delta_1/\Delta_r]$ to $(\ref{orddRseq})$
	recovers the ordinary part of the scalar extension of $(\ref{finiteleveldRseq})$ to $R_{\infty}$.
\end{theorem}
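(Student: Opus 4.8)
The plan is to treat the three terms of (\ref{dRseq}) in parallel and to deduce the whole statement from a single control theorem at finite level, combined with the exactness of Hida's ordinary projector $e^* = \varprojlim_n (U_p^*)^{n!}$. Since $e^*$ is an idempotent in $\H^*$, it splits each finite-level module off as a direct summand and is in particular exact; applying it to the short exact sequences (\ref{finiteleveldRseq}) therefore preserves their exactness, so the only genuine difficulty in producing (\ref{orddRseq}) lies in passage to the projective limit. Accordingly I would first record that the inverse system defining each term of (\ref{dRseq}) has $\H_r^*$-equivariant transition (trace) maps, so that $e^*$ commutes with $\varprojlim$, and then reduce the theorem to three claims, proved termwise for $T \in \{H^0(\omega), H^1_{\dR}, H^1(\O)\}$: (i) $e^* T$ is finite free over $\Lambda_{R_\infty}$; (ii) the canonical specialization map $e^* T \otimes_{\Lambda_{R_\infty}} R_\infty[\Delta_1/\Delta_r] \to e^*\bigl(T(\X_r) \otimes_{R_r} R_\infty\bigr)$ is an isomorphism; and (iii) the flanking ranks equal $d$ while the middle rank equals $2d$.

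The heart of the matter is (i)--(ii) for the two outer terms, the middle term then following formally. The $p$-power diamond operators realize $X_r \to X_1$ as a cyclic $\Delta_1/\Delta_r$-cover, and I would analyze the induced action on $H^0(\X_r, \omega_{\X_r/R_r})$ and $H^1(\X_r, \O_{\X_r})$ directly on the Katz--Mazur model $\X_r$ and its special fiber. The crucial input is Nakajima's theorem \cite{Nakajima} on the Galois-module structure of differentials: applied to this cover, whose ramification along the supersingular and cuspidal loci is read off from the moduli description $([\bal\ \Upgamma_1(p^r)]^{\varepsilon^{(r)}\can}; [\mu_N])$, it shows that $e^* H^0(\X_r, \omega)$ is free over the group ring $R_r[\Delta_1/\Delta_r]$; the corresponding statement for $e^* H^1(\O)$ follows by Grothendieck--Serre duality for $\omega_{\X_r/R_r}$. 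Taking the limit over $r$ yields freeness over $\Lambda_{R_\infty}$, giving (i); comparing the group-ring freeness at successive levels with the augmentation quotients shows the trace maps are surjective on ordinary parts, so that the relevant $\varprojlim^1$ vanishes and the specialization map (ii) is an isomorphism. This same surjectivity restores right-exactness in the limit, supplying the arrow $e^* H^1_{\dR} \twoheadrightarrow e^* H^1(\O)$ that upgrades (\ref{dRseq}) to the short exact sequence (\ref{orddRseq}).

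With freeness and control established, the rank is computed by a single specialization. I would base change along the augmentation $\Lambda_{R_\infty} \to R_\infty$, which by (ii) amounts to the case $r=1$, and then reduce modulo $p$, identifying $e^* H^0(\X_1, \omega)$ over the residue field with the ordinary part of the space of mod-$p$ cusp forms on $\Upgamma_1(N)$. Running through the weight filtration given by multiplication by the Hasse invariant (of weight $p-1$), the weights $3 \le k \le p+1$ form a fundamental domain that captures the full rank, producing $d = \sum_{k=3}^{p+1} \dim_{\F_p} S_k(\Upgamma_1(N); \F_p)^{\ord}$. The flanking terms of (\ref{orddRseq}) then have rank $d$, and since the sequence consists of finite free modules its middle term $e^* H^1_{\dR}$ has rank $2d$, completing (iii).

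I expect the principal obstacle to be exactly the finite-level group-ring freeness underlying (i)--(ii) in the presence of bad reduction at $p$: one must pass from the arithmetic surface $\X_r$ to a setting in which Nakajima's theorem genuinely applies, controlling the wild ramification of the diamond-operator cover along the supersingular and cuspidal points of the special fiber, and one must check that the canonical integral de Rham structures of \cite{CaisDualizing} are compatible with the trace maps and with $e^*$. Commuting $\varprojlim$ with the idempotent and with base change---that is, the vanishing of $\varprojlim^1$ and of the higher Tor-terms that could otherwise obstruct (ii)---is precisely what the group-ring freeness buys, so the entire argument hinges on this geometric input.
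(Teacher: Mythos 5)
Your global architecture is the paper's: reduce to two finite-level statements in characteristic $p$ (group-ring freeness of the ordinary parts, surjectivity of the traces), feed them into a commutative-algebra limit formalism (the paper's Lemma \ref{Technical}, which also supplies your $\varprojlim^1$-vanishing and the control isomorphism), and compute $d$ via mod-$p$ cusp forms on $\Upgamma_1(N)$ as in Proposition \ref{MFmodp} and Remark \ref{dMFmeaning}. But two steps fail as written. First, Nakajima's equivariant Deuring--Shafarevich formula applies to a ramified Galois $p$-cover of \emph{smooth} proper geometrically connected curves over a perfect field, and the diamond-operator cover $\o{\X}_r\rightarrow \o{\X}_1$ is not such a cover: $\o{\X}_r$ is a non-smooth union of Igusa curves $I_{(a,b,u)}$ crossing at the supersingular points, and neither it nor its normalization presents $H^0(\o{\X}_r,\omega_{\o{\X}_r/\F_p})$ as the differentials of a smooth $\Delta_1/\Delta_r$-cover. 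You flag this as ``the principal obstacle'' but supply no mechanism past it, and this mechanism is the actual heart of the paper: Proposition \ref{charpord} shows that pullback along $i_r^0$ (resp.\ $i_r^{\infty}$) identifies $e_r^*H^0(\o{\X}_r,\omega_{\o{\X}_r})$ (resp.\ $e_rH^0$) with $H^0(I_r^0,\Omega^1(\SS))^{V_{\ord}}$ (resp.\ $H^0(I_r^{\infty},\Omega^1(\SS))^{V_{\ord}}$). Proving this requires Ulmer's explicit description of $U_p$ and $U_p^*$ on the components of $\nor{\o{\X}}_r$, the fact that $F^*$ kills differentials in characteristic $p$ (so high powers of $U_p^*$ contract everything onto the good component), and a residue computation via Rosenlicht's criterion to check that the inverse maps $\gamma_r^{\star}$ actually land in the dualizing sheaf. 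Only \emph{after} this contraction does Nakajima apply --- to the smooth Igusa tower $\pr:\Ig_r\rightarrow\Ig_1$, totally wildly ramified over the supersingular points --- and the resulting $\F_p[\Delta_1/\Delta_r]$-freeness is then lifted to $R_r[\Delta_1/\Delta_r]$ by the Nakayama argument of Lemma \ref{Technical}, rather than by invoking Nakajima over the base $R_r$ as you suggest.

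Second, your claim that ``comparing the group-ring freeness at successive levels with the augmentation quotients shows the trace maps are surjective'' is not a valid inference: a tower of free $\F_p[\Delta_1/\Delta_r]$-modules of constant rank can perfectly well have zero transition maps, so freeness buys you nothing here. The paper proves surjectivity geometrically (Proposition \ref{IgusaStructure} (\ref{IgusaControl})): one dualizes to the \emph{injectivity} of the pullbacks $\pr^*$ on $H^0(\Ig_s,\Omega^1(\SS))$ (clear, since $\pr$ is separable) and on $H^1(\Ig_s,\O(-\SS))$, and the latter rests on Lemma \ref{MW} --- $\Pic(\pr)$ has trivial scheme-theoretic kernel because $\pr$ totally ramifies over a point --- together with the diagram chase (\ref{pbH1inj}), which again uses total ramification over \emph{every} supersingular point, and the pole-control Lemma \ref{sspoles}. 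Without this input your step (ii), the Mittag--Leffler argument, and hence the exactness of (\ref{orddRseq}) in the limit are all unproven. Your rank computation and the formal deduction of the middle term's freeness and rank $2d$ from the flanking terms are fine in outline, but both sit downstream of these two gaps.
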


We then show that the $\Lambda_{R_{\infty}}$-adic Hodge filtration (\ref{orddRseq}) is very nearly ``auto dual".
To state our duality result more succintly,
for any ring homomorphism $A\rightarrow B$,
we will write $(\cdot)_B:=(\cdot)\otimes_A B$ and $(\cdot)_B^{\vee}:=\Hom_B((\cdot)\otimes_A B , B)$ for these
functors from $A$-modules to $B$-modules.  If $G$ is any group of automorphisms of $A$ and $M$
is an $A$-module with a semilinear action of $G$, for any ``crossed" 
homomorphism\footnote{That is, $\psi(\sigma\tau) = \psi(\sigma)\cdot\sigma\psi(\tau)$ for all $\sigma,\tau\in\Gamma$,} 
$\psi:G\rightarrow A^{\times}$ we will write $M(\psi)$ for the 
$A$-module $M$ with ``twisted" semilinear $G$-action given by $g\cdot m:=\psi(g)g m$.
Our duality theorem is (see Proposition \ref{dRDuality}):

\begin{theorem}\label{dRdualityThm}
	The natural cup-product auto-duality of 
	$(\ref{finiteleveldRseq})$ over $R_r':=R_r[\mu_N]$ induces a canonical
	$\Lambda_{R_{\infty}'}$-linear and $\H^*$-equivariant isomorphism 
	of exact sequences
	\begin{equation*}
		\xymatrix{
				0\ar[r] & {e^*H^0(\omega)(\langle\chi\rangle\langle a\rangle_N)_{\Lambda_{R_{\infty}'}}}
				\ar[r]\ar[d]^-{\simeq} & 
				{e^*H^1_{\dR}(\langle\chi\rangle\langle a\rangle_N)_{\Lambda_{R_{\infty}'}}} 
				\ar[r]\ar[d]^-{\simeq} & 
				{e^*H^1(\O)(\langle\chi\rangle\langle a\rangle_N)_{\Lambda_{R_{\infty}'}}} 
				\ar[r]\ar[d]^-{\simeq} & 0\\
	0\ar[r] & {(e^*H^1(\O))^{\vee}_{\Lambda_{R_{\infty}'}}} \ar[r] & 
	{({e^*H^1_{\dR}})^{\vee}_{\Lambda_{R_{\infty}'}}} \ar[r] & 
	{(e^*H^0(\omega))^{\vee}_{\Lambda_{R_{\infty}'}}} \ar[r] & 0
		}
	\end{equation*}
	that is compatible with the natural action of $\Gamma \times \Gal(K_0'/K_0)\simeq \Gal(K_{\infty}'/K_0)$ 
	on the bottom row and the twist of the natural action on the top row by the $\H^*$-valued character 
	$\langle \chi\rangle \langle a\rangle_N$,
	where 
	$a(\gamma) \in (\Z/N\Z)^{\times}$ is
	determined for $\gamma\in \Gal(K_0'/K_0)$ by 
	$\zeta^{a(\gamma)}=\gamma\zeta$ for every $N$-th root of unity $\zeta$.
\end{theorem}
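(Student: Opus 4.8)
The plan is to descend everything to finite level, where the asserted duality is essentially Poincar\'e duality for the curve $X_r$ refined to the integral models $\X_r$, and then to reassemble the finite-level pairings into the $\Lambda_{R_{\infty}'}$-adic statement using the freeness supplied by Theorem \ref{MTA}. First I would show that, over $R_r'$, the integral de Rham sequence $(\ref{finiteleveldRseq})$ is auto-dual under cup product, with $H^0(\omega_{\X_r/R_r})$ equal to its own annihilator. Over $K_r'$ this is immediate: the cup product $H^1_{\dR}(X_r)\times H^1_{\dR}(X_r)\to K_r'$ is perfect and alternating, and since the product of two global differentials lands in $H^0(\Omega^2_{X_r})=0$, the Hodge piece $H^0(\Omega^1)$ is isotropic, hence half-dimensional and Lagrangian. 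The real content is that this pairing is perfect on the \emph{integral} structures and interchanges $H^0(\omega)$ with $H^1(\O)$; this is exactly the self-duality of the canonical integral de Rham cohomology established in \cite{CaisDualizing} via Grothendieck--Serre duality for $\X_r/R_r$, the relative dualizing sheaf being $\omega_{\X_r/R_r}$ even when $\X_r$ fails to be smooth.

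Next I would record the Hecke- and Galois-equivariance of the finite-level pairing. Viewing the Hecke operators as correspondences, the adjoint of $T_n$ (resp.\ $U_p$) under cup product is its transpose $T_n^*$ (resp.\ $U_p^*$), so that $\langle Tx,y\rangle=\langle x,T^*y\rangle$. Because $e$ is a limit of polynomials in $U_p$ and $e^*$ the corresponding limit in $U_p^*$, the pairing identifies $(e^*H^\bullet)^{\vee}$ with $eH^\bullet$. To re-express the dual in terms of $e^*$ as in the statement, I would bring in the main (Atkin--Lehner) involution $w_r$, which intertwines $T_n$ with $T_n^*$ and hence conjugates $e$ to $e^*$; composing cup product with $w_r$ then yields a perfect pairing of $e^*H^\bullet$ with itself. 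The involution $w_r$ is built from the chosen root of unity $\varepsilon^{(r)}$ together with the level-$N$ structure, and it is precisely its failure to be $\Gamma\times\Gal(K_0'/K_0)$-equivariant---governed by the Weil pairing valued in $R_r'(1)$---that produces the twist by the $\H^*$-valued character $\langle\chi\rangle\langle a\rangle_N$. Tracking this twist, using $\gamma\varepsilon^{(r)}=(\varepsilon^{(r)})^{\chi(\gamma)}$ and $\zeta^{a(\gamma)}=\gamma\zeta$, is the heart of the computation.

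Finally I would pass to the limit. Here one must check that the cup-product pairings at consecutive levels are compatible with the trace maps defining the projective limit, up to the normalization that feeds into the diamond-operator twist; since the transition data on roots of unity vary with $r$, this is where the cyclotomic factor $\langle\chi\rangle$ is forced. Granting this compatibility, the finite-level pairings glue to a $\Lambda_{R_{\infty}'}$-bilinear, $\H^*$-equivariant pairing on $(\ref{dRseq})$ over $R_{\infty}'$. Applying $e^*$ and invoking Theorem \ref{MTA}, which guarantees that the three terms of $(\ref{orddRseq})$ are finite free over $\Lambda_{R_{\infty}'}$ and specialize correctly, I can propagate perfectness from finite level to the limit by a standard control/Nakayama argument: a map of finite free $\Lambda_{R_{\infty}'}$-modules of equal rank whose reductions to all finite levels via $\otimes_{\Lambda_{R_{\infty}'}}R_{\infty}'[\Delta_1/\Delta_r]$ are isomorphisms is itself an isomorphism. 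This gives the vertical isomorphisms, and the annihilator (Lagrangian) property gives their compatibility with the horizontal maps, yielding the claimed isomorphism of exact sequences.

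The main obstacle throughout is the precise determination of the twisting character, rather than the duality itself. Once the integral self-duality of \cite{CaisDualizing} is in hand the linear algebra is routine, but pinning down both the cyclotomic contribution $\langle\chi\rangle$ (from the Weil pairing and the changing roots of unity along the tower) and the $\langle a\rangle_N$ contribution (from the level-$N$ Atkin--Lehner datum), and verifying that they assemble into the single $\H^*$-valued crossed homomorphism on $\Gal(K_{\infty}'/K_0)$ asserted in the statement, is where the careful bookkeeping lies.
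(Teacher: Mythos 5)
Your overall strategy coincides with the paper's: integral cup-product autoduality at finite level from \cite{CaisDualizing} (Proposition \ref{HodgeIntEx}), a twist by the Atkin--Lehner involution $w_r$ to convert the $e$--$e^*$ adjunction into a self-pairing on $e^*H^1_{\dR,r}$, the Galois twist $\langle\chi\rangle\langle a\rangle_N$ traced to the non-equivariance of $w_r$ (Proposition \ref{ALinv}), and propagation to the limit via freeness from Theorem \ref{main} plus Nakayama. However, there is one genuine gap at the change-of-level step, and it is not a bookkeeping matter that can be absorbed into the Galois twist as you suggest. The pairing $(x,y)\mapsto (x, w_r y)$ does \emph{not} satisfy the trace compatibility needed to compile the finite-level pairings into a $\Lambda_{R_{\infty}'}$-valued one. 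The correct finite-level pairing is $\langle x,y\rangle_r := (x, w_r\,{U_p^*}^r\, y)$, with an $r$-dependent power of $U_p^*$, and the required compatibility is the precise identity $\langle \rho_* x, \rho_* y\rangle_s = \sum_{\delta\in\Delta_s/\Delta_r}\langle x,\langle\delta^{-1}\rangle^* y\rangle_r$ (condition (\ref{pairingchangeinr}) of Lemma \ref{LambdaDuality}). Verifying it rests on two identities your sketch never isolates: the Atkin--Lehner intertwining $\rho^* w_r = w_{r+1}\sigma^*$ on cohomology, and the degeneracy relation $\sigma^*\rho_* = U_p^*\sum_{\delta\in\Delta_r/\Delta_{r+1}}\langle\delta^{-1}\rangle^*$ (the latter requiring Lemmas 3.1.1, 3.1.5 and Proposition 2.2.5 of \cite{CaisHida2}). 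It is exactly this relation that forces the exponent of $U_p^*$ to increase by one with each level, which is why the ${U_p^*}^r$ normalization must be built into the pairing from the start; perfectness survives only because $U_p^*$ is invertible on the ordinary part.

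Relatedly, your third paragraph misattributes the cyclotomic factor: you write that the level-compatibility discrepancy ``is where the cyclotomic factor $\langle\chi\rangle$ is forced,'' but the discrepancy along the tower is a \emph{Hecke} factor (the extra $U_p^*$ and the diamond-averaging over $\Delta_r/\Delta_{r+1}$), not a Galois one. The twist $\langle\chi\rangle\langle a\rangle_N$ arises solely from the mechanism you correctly identified in your second paragraph, namely $(\gamma^* w_{\varepsilon^{(r)}})\gamma = \langle\chi(\gamma)^{-1}\rangle\gamma\, w_{\varepsilon^{(r)}}$ and its level-$N$ analogue. Finally, your phrase that the pairings ``glue'' to a $\Lambda_{R_{\infty}'}$-bilinear pairing elides a real construction: the finite-level pairings are $R_r'$-valued, and one must form the group-ring-valued pairing $(m,m')_r := \sum_{\delta\in\Delta_1/\Delta_r}\langle m,\langle\delta^{-1}\rangle^* m'\rangle_r\cdot\delta$ and check, via (\ref{pairingchangeinr}), that these are compatible with the surjections $\Lambda_{A_r,r}\twoheadrightarrow\Lambda_{A_r,s}$ before passing to the limit; this is the content of Lemma \ref{LambdaDuality}, and once it is in place your concluding Nakayama argument is exactly how perfectness is verified.
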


We moreover prove that, as one would expect, the $\Lambda_{R_{\infty}}$-module $e^*H^0(\omega)$ 
is canonically isomorphic to the  module $eS(N,\Lambda_{R_{\infty}})$ of ordinary $\Lambda_{R_{\infty}}$-adic
cusp forms of tame level $N$; see Corollary \ref{LambdaFormsRelation}.

\subsection{Overview of the article}\label{Overview}

We now describe the contents and structure of the article in more detail.
Appendix \ref{GD} is devoted to reviewing and extending the theory of \cite{CaisDualizing},
which provides the ``good integral structures" in the de~Rham cohomology of
curves that plays a central role in our constructions; in particular, the existence and 
properties of the short exact sequence (\ref{finiteleveldRseq}) are discussed
in the generality that we shall need them.
As this discussion
will be applied to certain integral (Katz--Mazur) models of modular curves (like $\X_r$), 
we review the construction and relevant features
of these models, as well as correspondences on them,
in Appendix \ref{tower}.  The reader who is content to accept this foundational material
on faith can safely skip the discussion in Appendices \ref{GD}--\ref{tower}, and refer to it on a ``need to know" basis.  As we explain below, a key point (which is
the reason that we must work over $R_r$) is that the regular and proper relative curve $\X_r$
over $R_r$ has {\em reduced} special fiber.

Writing $H(\X_r/R_r)$ for this short exact sequence (\ref{finiteleveldRseq}),
the natural degeneracy mappings on modular curves $X_{r+1}\rightarrow X_r$
induce trace mappings 
$$\rho_{*}:e_{r+1}^*H(\X_{r+1}/R_{r+1})\rightarrow e_r^*H(\X_r/R_r)\otimes_{R_r}{R_{r+1}}$$
on ordinary parts, and by definition the sequence (\ref{orddRseq}) is obtained from these mappings by
passing to projective limits.  
In order to prove theorems \ref{MTA} and \ref{dRdualityThm},
we will reduce to working at finite level and in
characteristic $p$ via 
a general commutative algebra formalism for dealing with 
such projective limits of ``towers" of cohomology
that we use throughout this paper and its sequel \cite{CaisHida2}.  
This formalism, which is the the subject of \S\ref{TowerFormalism},  
reduces the proof of Theorem \ref{MTA} to the following
two assertions:
\begin{enumerate}
	\item The terms in the short exact sequence $e_r^*H(\X_r/R_r)\otimes_{R_r} \F_p$
	are free $\F_p[\Delta_1/\Delta_r]$-modules of ranks $d$, $2d$, and $d$, respectively.
	\label{keypfA}
	
	\item For all $r$, the induced maps 
	$\xymatrix@1{{\rho_*\otimes 1 : e_{r+1}^*H(\X_{r+1}/R_{r+1})\otimes_{R_{r+1}}\F_p} \ar[r] & {e_r^*H(\X_r/R_r)\otimes_{R_r} \F_p}}$
	are surjective.
	\label{keypfB}
\end{enumerate}
One of the miraculous properties of the cohomology sequence (\ref{finiteleveldRseq}) is that
it is compatible with base change, in the sense that $H(\X_r/R_r)\otimes_{R_r}\F_p$
may be functorially identified with the hypercohomology $H^{\bullet}(\o{\X}_r/\F_p)$
of the two term complex $d:\O_{\o{\X}_r}\rightarrow \omega_{\o{\X}_r/\F_p}$
on 
the special fiber $\o{\X}_r:=\X_r\times_{R_r} \F_p$, where $\omega_{\o{\X}_r/\F_p}$
is the relative dualizing sheaf; see Lemma \ref{ReductionCompatibilities}.
Thus, we obtain a functorial identification of
$e_r^*H(\X_r/R_r)\otimes_{R_r} \F_p$ 
with the short exact sequence
\begin{equation}
	\xymatrix{
		0\ar[r] & {e_r^*H^0(\o{\X}_r,\omega_{\o{\X}_r/\F_p})} \ar[r] &
		{e_r^*H^1(\o{\X}_r/\F_p)}\ar[r] & {e_r^*H^1(\o{\X}_r,\O_{\o{\X}_r})} \ar[r] & 0
	}\label{CharpSeq}
\end{equation}
arising from the degeneration of the ``Hodge to de~Rham" spectral seqence 
that computes the hypercohomology $H^1(\o{\X}_r/\F_p)$.
Thus, in order to prove (\ref{keypfA}) and (\ref{keypfB}) above, we are reduced to a problem
about the cohomology (\ref{CharpSeq}) of the characteristc $p$ schemes $\o{\X}_r$.

Now the fact that $\o{\X}_r$ is {\em reduced} plays a critical role, as it allows us
to use Rosenlicht's explicit description of the relative dualizing sheaf for a reduced curve
to determine the structure of the exact sequence (\ref{CharpSeq}).
Rosenlicht's theory allows us to identify $H^0(\o{\X}_r,\omega_{\o{\X}_r/\F_p})$
with a certain subspace of meromorphic differentials on the normalization $\nor{\o{\X}}_r$
of $\o{\X}_r$.  This normalization is a disjoint union of smooth and proper ``Igusa" curves,
and has two ``privileged" irreducible components:
namely the component $I_r^{\infty}$ which meets the $\infty$-cuspidal section of
$\X_r$, and $I_r^0$, which meets the $0$-cuspidal section.  Using
the description of the correspondences $U_p$ and $U_p^*$ on $\nor{\o{\X}}_r$
due to Ulmer \cite{Ulmer}, and recorded in Proposition \ref{UlmerProp},
together with the fact that pullback by Frobenius
kills differential forms in characteristic $p$, we calculate that these correspondences
``contract" the space of meromorphic differentials on $\nor{\o{\X}}_r$
onto the components $I_r^{\star}$.  More precisely, we prove in \S\ref{OrdStruct}
that pullback of meromorphic
differentials along the canonical closed immersions $I_r^{\star}\hookrightarrow \nor{\o{\X}}_r$
induce natural isomorphisms
\begin{equation}
	e_r^*H^0(\o{\X}_r,\omega_{\o{\X}_r/\F_p}) \simeq H^0(I_r^{\infty},\Omega^1_{I_r^{\infty}}(\SS))^{V_{\ord}}
	\quad\text{and}\quad
	e_rH^0(\o{\X}_r,\omega_{\o{\X}_r/\F_p}) \simeq H^0(I_r^{0},\Omega^1_{I_r^0}(\SS))^{V_{\ord}},
	\label{UpRescues}
\end{equation} 
where the superscript of ``$V_{\ord}$" means the {\em $V$-ordinary} part of cohomology, by definition
the maximal subspace on which the Cartier operator $V$ acts invertibly. 
Applying Grothendieck duality---which swaps the idempotent $e_r$ with $e_r^*$ and the
Cartier operator with pullback by absolute Frobenius---to the second of these isomorphisms
yields functorial identifications 
\begin{equation}
	e_r^*H^1(\o{\X}_r,\O_{\o{\X}_r})\simeq 
	{e_rH^0(\o{\X}_r,\omega_{\o{\X}_r/\F_p}})^{\vee}\simeq
	\left(H^0(I_r^0,\Omega^1_{I_r^0}(\SS))^{V_{\ord}}\right)^{\vee}\simeq
	H^1(I_r^0,\O_{I_r^0}(-\SS))^{F_{\ord}}\label{GDualIden}
\end{equation}
where the superscript of ``$F_{\ord}$" denotes the maximal subspace on which pullback by absolute
Frobenius is invertible.  From the functoriality of (\ref{GDualIden}), we deduce that
pullback by absolute Frobenius is an {\em isomorphism} on $e_r^*H^1(\o{\X}_r,\O_{\o{\X}_r})$,
and it follows (recalling again that pullback by Frobenius kills differentials) that 
(\ref{CharpSeq}) is {\em functorially split} by Frobenius.  In this way, and via the isomorphisms
(\ref{UpRescues})--(\ref{GDualIden}), the structure of the cohomology exact sequence (\ref{CharpSeq})
is entirely captured by the space of $V$-ordinary meromorphic differential forms
on the Igusa towers $\{I_r^{\star}\}_{r}$ for $\star=0,\infty$.

Nakajima's beautiful {equivariant Deuring--Shafarevich formula} (Proposition \ref{Nakajima} below),
applied to the $\Delta_1/\Delta_r$-cover $I_r^{\star}\rightarrow I_1^{\star}$,
allows us to conclude that the right side of each isomorphism
in (\ref{UpRescues}) is {\em free} of rank $d$ as a module over the group ring $\F_p[\Delta_1/\Delta_r]$.
We recall the context and statement of this key result in \S\ref{CarterOp},
and apply it to 
the cohomology of the Igusa tower $I_r^{\star}$ in \S\ref{IgusaTower}.
It then follows from the identifications (\ref{UpRescues})--(\ref{GDualIden})
that the flanking terms of the exact sequence (\ref{CharpSeq}) are likewise free of rank 
$d$ over $\F_p[\Delta_1/\Delta_r]$,
and since---as observed in the discussion above---the cohomology sequence (\ref{CharpSeq})
is functorially split by Frobenius, the middle term $e_r^*H^1(\o{\X}_r/\F_p)$ of this sequence
is then free of rank $2d$ over $\F_p[\Delta_1/\Delta_r]$, which establishes the key claim (\ref{keypfA})
above.

To prove (\ref{keypfB}), and thus complete the proof of Theorem \ref{MTA},
we again use the established Frobenius splitting of (\ref{CharpSeq}) and the identification
of its flanking terms with the cohomology of the Igusa tower
provided by (\ref{UpRescues})--(\ref{GDualIden}), together with Grothendieck duality,
to reduce the asserted surjectivity of $\rho_*\otimes 1$
to the following claim: the canonical pullback maps
\begin{subequations}
\begin{equation}
	\xymatrix@1{
	{\rho^* : H^0(I_{r+1}^{0},\Omega^1_{I_{r+1}^0}(\SS))} \ar[r] & {H^0(I_r^{0},\Omega^1_{I_r^0}(\SS))}
	}\label{IgDiffPB}
\end{equation}
\begin{equation}
\xymatrix@1{
	{\rho^* : H^1(I_{r+1}^{\infty},\O_{I_{r+1}^{\infty}}(-\SS))} \ar[r] & {H^0(I_r^{\infty},\O_{I_r^{\infty}}(-\SS))}
	}\label{IgLinePB}
\end{equation}	
\end{subequations}
attached to the degeneracy mapping $\rho:I_{r+1}^{\star}\rightarrow I_r^{\star}$
are both {\em injective}.
The injectivity of (\ref{IgDiffPB}) is clear, as $\rho$ is generically \'etale, 
while we prove that (\ref{IgLinePB}) is injective in Proposition \ref{IgusaStructure} and Lemma \ref{MW}
by interpreting classes in $H^1$ as line bundles and using the fact that the degeneracy mapping $\rho$ {\em totally} ramifies over every supersingular point.  

With Theorem \ref{MTA} established,
we show as part of our commutative algebra formalism in Lemma \ref{LambdaDuality}
that the proof of Theorem \ref{dRdualityThm} may then be reduced to the existence of 
certain autoduality pairings on $e_r^*H^1(\X_r/R_r')$ that are compatible
in a precise sense with change in $r$ (see (\ref{pairingchangeinr}) for the exact condition).
Using the fact that the canonical $R_r'$-lattice $H^1(\X_r/R_r')$  in the de Rham cohomology 
of the generic fiber $X_r$ over $K_r'$ is preserved by the standard cup-product autoduality, 
we are able to ``twist" the restriction of this pairing to $e_r^*H^1(\X_r/R_r')$
by an approprite power of $U_p^*$ and the Atkin--Lehner involution
(which explains why we must work over $R_r'=R_r[\mu_N]$ rather than $R_r$ itself)
to obtain a perfect self-pairing that satisfies the required compatibility condition.

\subsection{Notation}\label{Notation}

If $\varphi:A\rightarrow B$ is any map of rings, we will often write $M_B:=M\otimes_{A} B$ 
for the $B$-module induced from an $A$-module $M$ by extension of scalars.
When we wish to specify $\varphi$, we will write $M\otimes_{A,\varphi} B$.
Likewise, if $\varphi:T'\rightarrow T$ is any morphism of schemes, for any $T$-scheme $X$
we denote by $X_{T'}$ the base change of $X$ along $\varphi$.
If $f:X\rightarrow Y$ is any morphism of $T$-schemes,
we will write $f_{T'}: X_{T'}\rightarrow Y_{T'}$
for the morphism of $T'$-schemes obtained from $f$ by base change along $\varphi$.
When $T=\Spec(R)$ and $T'=\Spec(R')$ are affine, we abuse notation and write
$X_{R'}$ or $X\times_{R} R'$ for $X_{T'}$.
We will frequently work with schemes over a discrete valuation ring $R$.
We will often write $\X,\Y,\ldots$ for schemes over $\Spec(R)$,
and will generally use $X,Y,\ldots$ (respectively $\o{\X},\o{\Y},\ldots$) 
for their generic (respectively special) fibers.

The following notation will be in effect throughout this article and its sequel \cite{CaisHida2}.
We always assume that $p > 2$ is a fixed prime and $N$ is a fixed positive integer
with $p\nmid N$ and $Np>4$.  We set $K_r:=\Q_p(\mu_{p^r})$ and $R_r:=\Z_p[\mu_{p^r}]$,
and put $K_r':=K_r(\mu_N)$ and $R_r':=R_r[\mu_N]$.  We choose, once and for all,
a compatible sequence $\{\varepsilon^{(r)}\}_{r\ge 0}$ of primitive $p^r$-th
roots of unity.
As in \S\ref{resultsintro}, for $r\ge 1$, we denote by $X_r:=X_1(Np^r)$ 
the canonical model over $\Q$ with rational cusp at $i\infty$
of the modular curve arising as the quotient of the extended upper half-plane
by the congruence subgroup $\Upgamma_1(Np^r)$.
There are two natural degeneracy mappings $\rho,\sigma:X_{r+1}\rightrightarrows X_r$
of curves over $\Q$ induced by the self-maps of the upper half-plane $\rho:\tau\mapsto \tau$ and 
$\sigma:\tau\mapsto p\tau$.
We write $J_r:=\Pic^0_{X_r/\Q}$ for the Jacobian of $X_r$ over $\Q$
and $\H_r(\Z)$
for the $\Z$-subalgebra of $\End_{\Q}(J_r)$ generated by the
Hecke operators $\{T_{\ell}\}_{\ell\nmid Np}$, $\{U_{\ell}\}_{\ell|Np}$
and the Diamond operators $\{\langle u\rangle\}_{u\in (\Z/p^r\Z)^{\times}}$
and 
$\{\langle v \rangle_N\}_{v\in (\Z/N\Z)^{\times}}$.
We define $\H_r(\Z)^{*}$ similarly, using instead the ``transpose"
Hecke and diamond operators, and set $\H_r:=\H_r(\Z)\otimes_{\Z}\Z_p$
and $\H_r^*:=\H_r(\Z)^*\otimes_{\Z}\Z_p$; see Definitions \ref{CorrDef} and \ref{Def:HeckeModuliProblem},
and the surrounding discussion.
As usual, $e_r\in \H_r$ and $e_r^*\in \H_r^*$ are the idempotents of these
semi-local $\Z_p$-algebras corresponding to the Atkin operators $U_p$ and $U_p^*$,
respectively. 
We put $e:=(e_r)_r$ and $e^*:=(e_r^*)_r$
for the induced idempotents of the ``big" $p$-adic Hecke 
algebras $\H:=\varprojlim_r \H_r$ and $\H^*:=\varprojlim_r \H_r^*$, with the projective limits 
formed using the transition mappings induced by the maps on Jacobians $J_r\rightrightarrows J_{r'}$
for $r'\ge r$ arising (via Picard functoriality) from $\sigma$ and $\rho$, respectively.
We will consistently view $\H^*$ (respectively $\H$) as a $\Lambda$-module
via the $\Z_p$-linear map $\Lambda\hookrightarrow \H^*$ (respectively $\Lambda\hookrightarrow \H$)
sending $u\in \Delta_1$ to $\langle u\rangle^*$ (respectively $\langle u\rangle$).
Let $w_r$ be the Atkin--Lehner 
``involution" of $X_r$ over $\Q(\mu_{Np^r})$ corresponding to a choice
of primitive $Np^r$-th root of unity as in Appendix \ref{tower} and simply write $w_r$
for the automorphism $\Alb(w_r)$ of $J_r$ over $\Q(\mu_{Np^r})$ induced by Albanese
functoriality.
We note that for any Hecke operator $T\in \H_r(\Z)$
one has the relation 
$w_rT=T^*w_r$ as endomorphisms of $J_r$ over $\Q(\mu_{Np^r})$; see
\cite[pg. 336]{Tilouine}, \cite[2.1.8]{OhtaEichler}, or \cite[Chapter 2, \S5.6 (c)]{MW-Iwasawa}.

\subsection{Acknowledgements}

It is a pleasure to thank 
Brian Conrad, Adrian Iovita, Joseph Lipman, 
and Romyar Sharifi 
for enlightening conversations and correspondence,
and Doug Ulmer for supplying the proof
of Lemma \ref{UlmerLemma}.
This paper owes a great deal to the work of Masami Ohta, to whom this paper is dedicated, 
and I heartily thank him
for graciously hosting me during a visit to Tokai University in August, 2009.

\tableofcontents

\section{\texorpdfstring{{d}e~Rham Cohomology of Modular Curves in characteristic $p$}
{{d}e~Rham Cohomology of Modular Curves in characteristic p}}\label{Prelim}

We keep the notation of \S\ref{Notation}, and
write $\X_r$ for the Katz--Mazur model of $X_1(Np^r)$
over $R_r:=\Z_p[\mu_{p^r}]$ provided by Definition \ref{XrDef}.
Due to Proposition \ref{XrCptRepresentability},
the scheme $\X_r$ is regular and proper flat of pure relative dimension one
over $R_r$ with fibers that are geometrically connected
and reduced thanks to Proposition \ref{redXr}; it is therefore
a {\em curve} in the sense of Definition \ref{curvedef}
due to Corollary \ref{curvecorollary}.

Denote by $\o{\X}_r:=\X_r\times_{R_r}\F_p$ the 
special fiber of $\X_r$.
As $\o{\X}_r$ is again a curve in the sense of 
Definition \ref{curvedef}, the relative dualizing sheaf $\omega_{\o{\X}_r/\F_p}$
exists and is a line bundle on $\o{\X}_r$, and there is a canonical
two-term complex $\mathrm{d}:\O_{\o{\X}_r}\rightarrow \omega_{\o{\X}_r/\F_p}$
whose hypercohomology provides 
a well-behaved version of de~Rham cohomology
for non-smooth curves such as $\o{\X}_r$; see Appendix
\ref{GD}.  In particular,  thanks to Proposition \ref{HodgeFilCrvk},
the first hypercohomology of this complex sits in a functorial short-exact sequence
of $\F_p$-vector spaces
\begin{equation}
	\xymatrix{
		0 \ar[r] & {H^0(\o{\X}_r,\omega_{\o{X}_r/\F_p})} \ar[r] & {H^1(\o{\X}_r/\F_p)} \ar[r] & 
		{H^1(\o{\X}_r,\O_{\o{\X}_r})}\ar[r] & 0
	}\label{ExSeq:SpecialFiber}
\end{equation}
which we denote by $H(\o{\X}_r/\F_p)$.

In this section, we will use Rosenlicht's explicit description of the dualizing sheaf 
to compute the {\em ordinary part} of the cohomology $H(\o{\X}_r/\F_p)$
in terms of the de Rham cohomology of the {\em Igusa tower}, which we analyze in \S\ref{IgusaTower}.
The crucial ingredient in our analysis is Nakajima's beautiful {\em equivariant Deuring--Shafarevich formula},
which we recall first.

\subsection{The Cartier operator}\label{CarterOp}

Fix a perfect field $k$ of characteristic $p > 0$ and write $\varphi:k\rightarrow k$ for the $p$-power Frobenius map. 
In this section, we recall the basic theory of the Cartier operator 
for a smooth and proper curve over $k$.  As we will only need the theory
in this limited setting, we will content ourselves with a somewhat {\em ad hoc} formulation of it.
Our exposition follows \cite[\S10]{SerreTopology}, but 
the reader may consult \cite[\S5.5]{Oda} or \cite{CartierNouvelle} for a more general treatment.

Let $X$ be a smooth and proper curve over $k$
and write $F:X\rightarrow X$ 
for the absolute Frobenius map; 
it is finite and flat and is a morphism over the endomorphism of $\Spec(k)$ induced by $\varphi$.
Moreover, if $\L$ is a line bundle on $X$, then one has a canonical isomorphism 
$F^*\L\simeq \L^{\otimes p}$ of line bundles.\footnote{This useful description of $F^*\L$ justifies our choice to use the absolute Frobenius map
rather than the {\em relative} Frobenius map $F_{X/k}:X\rightarrow X^{(p)}$, which
does not enjoy any analogous property.}
Let $D$ be an effective Cartier (=Weil) divisor on $X$ over $k$, and write $\O_X(-D)$ for 
the coherent (invertible) ideal sheaf determined by $D$. 
The pullback map $F^*:\O_{X}\rightarrow {F}_*\O_{X}$ carries the ideal sheaf
 $\O_X(-nD) \subseteq \O_{X}$ into ${F}_*\O_X(-npD)$, so we obtain a canonical $\varphi$-semilinear pullback map on cohomology
 \begin{equation}
 	\xymatrix{
		{F^*:H^1(X,\O_X(-nD))} \ar[r] & H^1(X,\O_X(-npD)).
		}\label{Fpullback}
 \end{equation}
By Grothendieck--Serre duality, (\ref{Fpullback}) gives a $\varphi^{-1}$-semilinear ``trace" 
map of $k$-vector spaces
\begin{equation}
	\xymatrix{
		{V:={F}_*:H^0(X,\Omega^1_{X/k}(npD))} \ar[r] & {H^0(X,\Omega^1_{X/k}(nD))}.
		}\label{cartier}
\end{equation}
which, thanks to the very construction of the duality isomorphism \cite[3.4.10]{GDBC},
coincides with the map induced by Grothendieck's trace morphism on dualizing sheaves
attached to the finite map $F$ \cite[2.7.26]{GDBC}.

\begin{proposition}\label{CartierOp}
	Let $X/k$ be a smooth and proper curve, $D$ an effective Cartier divisor on $X$,
	and $n$ a nonnegative integer; for $x\in \R$ we write $\lceil x\rceil$ for the least
	integer $m$ satisfying $m\ge x$.
	\begin{enumerate}
		\item There is a unique $\varphi^{-1}$-linear endomorphism $V:={F}_*$
		of $H^0(X,\Omega^1_{X/k}(nD))$ which is dual, via Grothendieck-Serre
		duality, to pullback by absolute Frobenius on $H^1(X,\O_X(-nD))$.
		\label{CartierExists}	
		\item The map $V$ ``improves poles" in the sense that it factors through the canonical inclusion
	  	\begin{equation*}
	  		\xymatrix{
				{H^0(X,\Omega^1_{X/k}(\lceil\frac{n}{p}\rceil D))} \ar@{^{(}->}[r] & 
				{H^0(X,\Omega^1_{X/k}(nD))}
			}.
	  	\end{equation*}
		\label{CartierImproves}
		
		\item If $\rho:Y\rightarrow X$ is any finite morphism of smooth proper curves over $k$, 
		and $\rho^*D$ is the pullback of $D$ to $Y$, then
		the induced pullback and trace maps
		\begin{equation*}
			\xymatrix{
				H^0(Y,\Omega^1_{Y/k}(n\rho^*D)) \ar@<0.5ex>[r]^-{\rho_*} & 
				\ar@<0.5ex>[l]^-{\rho^*} H^0(X,\Omega^1_{X/k}(nD))
			}
		\end{equation*}
		commute with $V$.
		\label{CartierCommutes}
		
		\item Assume that $k$ is algebraically closed, and that $x$ is a closed point of $X$.
		Choose a uniformizer $t\in \O_{X,x}$, and for any meromorphic differential 
		$\eta$ on $X$, denote by $\eta_x$ the image of $\eta$ under the map
		$ \Omega^1_{k(X)/k}\hookrightarrow \Omega^1_{k(X)/k}\otimes_{\O_{X,x}}\wh{\O}_{X,x} = k(\!(t)\!)\,\mathrm{d}t$.
		\begin{equation*}
			\text{If}\ \eta_x = \sum_{n} a_n t^n\,\mathrm{d}t\quad \text{then}\quad V(\eta)_x = \sum_{n\equiv -1\bmod p} \varphi^{-1}(a_n) t^{(n+1)/p-1}
			\,\mathrm{d}t.
		\end{equation*}
		\label{Vformula}
	
		\item With hypotheses and notation as in $(\ref{Vformula})$, we have
	$
			\res_x(V\eta)^p = \res_x(\eta)
	$
		where $\res_x$ is the canonical ``residue at $x$ map" on meromorphic 
		differentials, determined by the condition $\res_x(\eta) := a_{-1}$.
		\label{CartierResidue}
	\end{enumerate}
\end{proposition}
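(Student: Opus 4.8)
The plan is to read off the claim directly from the explicit local formula established in part $(\ref{Vformula})$, so that $(\ref{CartierResidue})$ becomes a one-line coefficient extraction. Recall that $\res_x$ is by definition the coefficient of $t^{-1}\,\mathrm{d}t$ in the local expansion, so to compute $\res_x(V\eta)$ I only need to locate which term of the formula for $V(\eta)_x$ contributes the exponent $-1$.

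Concretely, starting from
\begin{equation*}
	V(\eta)_x = \sum_{n\equiv -1\bmod p} \varphi^{-1}(a_n)\, t^{(n+1)/p-1}\,\mathrm{d}t,
\end{equation*}
I would observe that the $t$-exponent $(n+1)/p-1$ equals $-1$ exactly when $n=-1$ (which indeed satisfies $n\equiv -1\bmod p$, so it genuinely appears in the sum). Hence the only surviving contribution to the residue is the $n=-1$ term, giving $\res_x(V\eta)=\varphi^{-1}(a_{-1})=\varphi^{-1}(\res_x\eta)$. This identifies $V$ as acting on residues by the inverse Frobenius, which is the entire content of the formula once unwound.

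To finish, I would raise this to the $p$-th power and use that $\varphi$ is the $p$-power map on the perfect field $k$: for any $c\in k$ one has $(\varphi^{-1}(c))^p=\varphi(\varphi^{-1}(c))=c$. Applying this with $c=\res_x\eta$ yields $\res_x(V\eta)^p=\big(\varphi^{-1}(\res_x\eta)\big)^p=\res_x(\eta)$, as claimed.

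There is essentially no obstacle here: the substance of the result lives entirely in part $(\ref{Vformula})$, and the present statement is a clean consequence of it. The only points requiring a word of care are that the residue $a_{-1}$ is intrinsic (independent of the choice of uniformizer $t$), which is the standard invariance of the residue on a smooth curve over the algebraically closed field $k$, and that the index $n=-1$ is the unique contributor to the pole of order one in $V(\eta)_x$. Both are routine, so I expect the proof to be short and computational rather than conceptual.
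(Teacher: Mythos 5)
Your derivation of part (\ref{CartierResidue}) from part (\ref{Vformula}) is correct, and it is exactly the paper's route for that step: the exponent $(n+1)/p-1$ equals $-1$ only for $n=-1$, so $\res_x(V\eta)=\varphi^{-1}(\res_x\eta)$, and raising to the $p$-th power on the perfect field $k$ gives the claim. The problem is that this is only the last (and easiest) of the five assertions, and you have assumed precisely the one that carries all the content. Part (\ref{Vformula}) is itself part of the statement to be proved, and in the paper it is the only part requiring real work: one identifies $V$ with Grothendieck's trace morphism $\Tr_F\colon F_*\Omega^1_{X/k}\rightarrow \Omega^1_{X/k}$ attached to the absolute Frobenius (this uses the construction of the duality isomorphism), then reduces by \'etale localization to $X=\Aff^1_k$ at the origin, by base-change compatibility to $k=\F_p$, and finally lifts to the standard Frobenius lift $\Phi\colon\Z_p[T]\rightarrow\Z_p[T]$, $T\mapsto T^p$, where the explicit description of the trace gives $\Tr_F(b\,\mathrm{d}\Phi(T))=\Tr_{\Phi}(b)\,\mathrm{d}T$; the linear-algebra computation $\Tr_{\Phi}(T^j)=pT^{j/p}$ for $p\mid j$ and $0$ otherwise, followed by cancelling the factor of $p$ (legitimate because one is working in a rank-one free $\Z_p(\!(T)\!)$-module before reducing mod $p$), yields the displayed formula. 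None of this appears in your proposal, so as a proof of the proposition it has a genuine gap rather than being ``short and computational.''

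Parts (\ref{CartierExists})--(\ref{CartierCommutes}) also need (brief) arguments that you omit: (\ref{CartierExists}) and (\ref{CartierImproves}) follow from the observation that $F^*$ carries $\O_X(-nD)$ into $F_*\O_X(-npD)$, so that the Grothendieck--Serre dual map $V$ lands in $H^0(X,\Omega^1_{X/k}(\lceil n/p\rceil D))$; and (\ref{CartierCommutes}) follows via duality from the fact that the $p$-power map commutes with every ring homomorphism in characteristic $p$. Your two ``points requiring care'' (intrinsicness of the residue, uniqueness of the $n=-1$ contribution) are fine but peripheral; what is missing is the identification of $V$ with $\Tr_F$ and the local trace computation that justifies the formula you start from.
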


\begin{proof}
	Both (\ref{CartierExists}) and (\ref{CartierImproves}) follow from our discussion, while
	(\ref{CartierCommutes}) follows (via duality) from the fact that the $p$-power
	map commutes with any ring homomorphism in characteristic $p$. 
	
	To prove (\ref{Vformula}), we work locally at $x$ and use the fact noted above that
	$V$ is induced by Grothendieck's trace map 
	$\Tr_F : F_*\Omega^1_{X/k}\rightarrow \Omega^1_{X/k}$ 
	attached to $F$.  We may find a Zariski open neighborhood $U$
	of $x$ in $X$ that admits a finite \'etale map $U\rightarrow \Aff^1_k$
	carrying $x$ to the origin, and since the formation of $\Tr_F$
	is compatible with \'etale localization, we may reduce to checking the proposed
	formula for $X=\Aff^1_k$ at the origin.  As $\Tr_F$ is compatible with base change,
	we may further reduce to the case $k=\F_p$.  Again invoking compatibility
	with base change, this map is the reduction modulo $p$ of the Grothendieck
	trace mapping attached to the standard lift of Frobenius on $\Aff^1_{\Z_p}$
	given at the level of coordinate rings by the $\Z_p$-algebra map
	$\Phi:\Z_p[T]\rightarrow \Z_p[T]$ sending $T$ to $T^p.$
	Appealing now to the explicit description of Grothendieck's trace mapping afforded by 
	\cite[2.7.41]{GDBC}, we find that for any $b \in \Z_p(\!(T)\!) $, one has
	\begin{equation}
		\Tr_F(b\,\mathrm{d}{\Phi(T)}) = \Tr_{\Phi}(b)\, \mathrm{d}T, \label{TraceKey}
	\end{equation}
	where $\Tr_{\Phi}: \Z_p(\!(T)\!)\rightarrow \Z_p(\!(T)\!)$
	is the standard ring-theoretic trace mapping attached to $\Phi$ (which realizes
	$\Z_p(\!(T)\!)$ as a free module of rank $p$ over itself).  The computation
	\begin{equation*}
		\Tr_{\Phi}(T^j) = \begin{cases} 0 & \text{if}\ j\not\equiv 0\bmod p\\ pT^{j/p} & \text{otherwise}\end{cases}
	\end{equation*}
	is a standard exercise in linear algebra, whence we obtain from (\ref{TraceKey})
	the identity
	$$\Tr_F( \sum_j a_jT^j pT^{p-1}\mathrm{d}T) = \sum_{j\equiv 0\bmod p} p a_j T^{j/p},$$
 	which, after canceling the factor of $p$ on either side (remember that we are now working
	in a rank-1 free $\Z_p(\!(T)\!)$-module!), is equivalent to the desired formula.

	 Finally, (\ref{CartierResidue}) is an immediate consequence of (\ref{Vformula})
	 and the given explicit description of $\res_x$.
\end{proof}

\begin{remark}\label{poletrace}
	As a sort of complement to Proposition \ref{CartierOp} (\ref{CartierResidue}), 
	we have the following: assume that $\rho: Y\rightarrow X$ is
	a finite and {\em generically \'etale} morphism of smooth curves
	over an algebraically closed field $k$.  Then 
	\begin{equation}
		\sum_{y\in \rho^{-1}(x)} \res_y(\eta) = \res_x(\rho_*\eta).\label{TateFormula}
	\end{equation}
	Indeed, as $\rho$ is generically \'etale, we have $\Omega^1_{k(Y)/k} = k(Y)\otimes_{k(X)}\Omega^1_{k(X)/k}$,
	so the claimed formula follows from \cite[Theorem 4]{TateResidues}
	and the explicit description of Grothendieck's trace morphism 
	provided by \cite[2.7.41]{GDBC}, together with the facts that the isomorphism $H^1(X,\Omega^1_{X/k})\rightarrow k$
	induced by the residue map coincides with the negative of Grothendieck's trace isomorphism,
	and the latter is compatible with compositions; see Appendix B and Corollary 3.6.6 of \cite{GDBC}.
\end{remark}

We recall the following (generalization of a) well-known lemma of Fitting:

\begin{lemma}\label{HW}
		Let $A$ be a commutative ring, $\varphi$ an automorphism of $A$,
		and $M$ an $A$-module equipped with a $\varphi$-semilinear
		endomorphism $F:M\rightarrow M.$
		Assume that one of the following holds:
		\begin{enumerate}
			\item $M$ is a finite length $A$-module.\label{finlen}
			\item $A$ is a complete noetherian adic ring, with ideal of definition $I\subseteq A$, and $M$ is a finitely generated $A$-module.\label{top}
		\end{enumerate}
		Then there is a unique direct sum decomposition 
		\begin{equation}
			M = M^{F_{\ord}} \oplus M^{F_{\nil}},\label{FittingDecomp}
		\end{equation}
		where $M^{F_{\ord}}$ 
		is the maximal $F$-stable
		submodule of $M$ on which $F$ is bijective, and $M^{F_{\nil}}$
		is the maximal $F$-stable submodule of $M$ on which $F$
		is $($topologically$)$ nilpotent.  The assignment $M\rightsquigarrow M^{F_{\star}}$
		for $\star=\ord, \nil$ is an exact functor on the category of $($left$)$
		$A[F]$-modules verifying $(\ref{finlen})$ or $(\ref{top})$.
\end{lemma}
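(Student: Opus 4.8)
The plan is to realize the decomposition $(\ref{FittingDecomp})$ as the Fitting decomposition attached to the iterates of $F$, taking care throughout that $F$ is only $\varphi$-semilinear. The first observation is that, although $F$ is not $A$-linear, for every $n$ both $\ker(F^n)$ and $\im(F^n)$ are genuine $A$-submodules of $M$: the kernel because $F^n(am)=\varphi^n(a)F^n(m)$, and the image because the identity $a\cdot F^n(m)=F^n(\varphi^{-n}(a)m)$ --- which is precisely where we use that $\varphi$ is an \emph{automorphism} --- exhibits $\im(F^n)$ as stable under multiplication by $A$. I would then define $M^{F_{\nil}}$ from the ascending chain $\ker(F^n)$ and $M^{F_{\ord}}$ from the descending chain $\im(F^n)$, and verify the asserted properties case by case.

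In case $(\ref{finlen})$, where $M$ has finite length, both chains stabilize, say at $n=n_0$, and I would run the classical Fitting argument: if $x\in\ker(F^{n_0})\cap\im(F^{n_0})$, then writing $x=F^{n_0}(y)$ and applying $F^{n_0}$ forces $y\in\ker(F^{2n_0})=\ker(F^{n_0})$, whence $x=0$; while for any $m$ one has $F^{n_0}(m)\in\im(F^{n_0})=\im(F^{2n_0})$, so subtracting a suitable element of $\im(F^{n_0})$ places $m$ in $\ker(F^{n_0})$. This yields $M=\ker(F^{n_0})\oplus\im(F^{n_0})$ with $F$ nilpotent on the first summand and invertible on the second. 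The one point requiring the semilinear hypothesis is that $F$ is \emph{bijective} on $\im(F^{n_0})$: since $\varphi$ is an automorphism, an injective semilinear endomorphism induces a semilinear isomorphism onto its image, which therefore has the same length as its source, so injectivity forces surjectivity on any finite-length module. Maximality and uniqueness of the two summands are then formal consequences of this same fact.

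The real content is case $(\ref{top})$, and here the main obstacle is that the descending chain $\im(F^n)$ need not stabilize, so $M$ cannot be split directly; this is exactly where $I$-adic completeness must enter. My plan is to reduce to case $(\ref{finlen})$ by passing to the finite-length quotients $M_n:=M/I^nM$. As $\varphi(I)\subseteq I$, the operator $F$ descends to a $\varphi$-semilinear $F_n$ on each $M_n$, to which the finite-length case applies and produces $M_n=M_n^{F_{\ord}}\oplus M_n^{F_{\nil}}$. By the functoriality of the finite-length construction these decompositions are compatible with the surjective transition maps $M_{n+1}\twoheadrightarrow M_n$, so forming the inverse limit --- and using $M=\varprojlim_n M_n$, valid because $M$ is finitely generated over the complete ring $A$ --- yields the splitting with $M^{F_{\ord}}:=\varprojlim_n M_n^{F_{\ord}}$ and $M^{F_{\nil}}:=\varprojlim_n M_n^{F_{\nil}}$. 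Bijectivity of $F$ on $M^{F_{\ord}}$ follows as an inverse limit of bijections, while the nilpotence of each $F_n$ on $M_n^{F_{\nil}}$ translates, via the separatedness $\bigcap_n I^nM=0$, into \emph{topological} nilpotence of $F$ on $M^{F_{\nil}}$.

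Finally, for functoriality and exactness I would argue that any morphism of $A[F]$-modules carries $\ker(F^n)$ and $\im(F^n)$ into their counterparts and hence respects the decomposition, so that $M\rightsquigarrow M^{F_{\star}}$ is functorial for $\star=\ord,\nil$. Exactness I would first establish in the finite-length case, where the crux is the identity $(M')^{F_{\ord}}=M'\cap M^{F_{\ord}}$ for a submodule $M'\subseteq M$ (again invoking that an injective endomorphism of a finite-length module is bijective), from which exactness of $M\rightsquigarrow M^{F_{\star}}$ on a short exact sequence is immediate by comparing direct-sum decompositions. The adic case then follows by applying this at each level $M_n$ and invoking the Mittag--Leffler criterion --- available since all transition maps are surjective --- to conclude that the inverse limits remain exact.
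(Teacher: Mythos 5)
Your proposal is correct and follows essentially the same route as the paper: the paper cites Lazard for the finite-length case (with exactly your formulas $M^{F_{\ord}}=\bigcap_n\im(F^n)$, $M^{F_{\nil}}=\bigcup_n\ker(F^n)$, using that $\varphi$ is an automorphism to see these are $A$-submodules) and likewise handles case (2) by reducing to the Artinian quotients $M/I^nM$ and invoking $I$-adic completeness of the finite module $M$. The only difference is that you supply the classical Fitting argument yourself instead of citing it, and your exactness argument via $(M')^{F_{\ord}}=M'\cap M^{F_{\ord}}$ is a harmless variant of the paper's observation that $M^{F_{\star}}\hookrightarrow M$ is an $A[F]$-direct summand.
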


\begin{proof}
	For the proof in case (\ref{finlen}), we refer to \cite[\Rmnum{6}, 5.7]{LazardGroups},
	and just note that one has:
	\begin{equation*}
		M^{F_{\ord}}:=\bigcap_{n\ge 0} \im(F^n)\quad\text{and}\quad
		M^{F_{\nil}}:=\bigcup_{n\ge 0} \ker(F^n),
	\end{equation*}
	where one uses that $\varphi$ is an automorphism to know that the image and kernel 
	of $F^n$ are $A$-submodules of $M$.
	It follows immediately from this that the association $M\rightsquigarrow M^{F_{\star}}$
	is a functor from the category of left $A[F]$-modules of finite $A$-length
	to itself.  It is an exact functor because the canonical inclusion $M^{F_{\star}}\rightarrow M$
	is an $A[F]$-direct summand.  
	In case (\ref{top}), our hypotheses ensure that $M/I^nM$ is a noetherian and Artinian
	$A$-module, and hence of finite length, for all $n$.  Our assertions in this situation
	then follow immediately from (\ref{finlen}), via the uniqueness of (\ref{FittingDecomp}),
	together with fact that $M$ is finite as an $A$-module, and hence $I$-adically complete (as $A$ is).
\end{proof}

We apply \ref{HW} to the $k$-vector space $M:=H^0(X,\Omega^1_{X/k})$
equipped with the $\varphi^{-1}$ semilinear map $V$:

\begin{definition}\label{ordnil}
	The $k[V]$-module $H^0(X,\Omega^1_{X/k})^{V_{\ord}}$ is called the
	{\em $V$-ordinary subspace} of holomorphic differentials on $X$.  
	The integer $\gamma_X:=\dim_k H^0(X,\Omega^1_{X/k})^{V_{\ord}}$
	is called the {\em Hasse-Witt invariant} of $X$.
\end{definition}

\begin{remark}\label{DualityOfFVOrd}
	Let $D$ be any effective Cartier divisor.
	Since $V:={F}_*$ and $F:=F^*$ are adjoint under the canonical perfect 
	$k$-pairing between $H^0(X,\Omega^1_{X/k}(D))$ and $H^1(X,\O_X(-D))$, this pairing
	restricts to a perfect duality pairing 
	\begin{equation}
		\xymatrix{
			{H^0(X,\Omega^1_{X/k}(D))^{V_{\ord}} \times H^1(X,\O_X(-D))^{F_{\ord}}} \ar[r] & {k}
		}.\label{DualityOfFVOrdMap}
	\end{equation}
	In particular (taking $D=0$) we also have $\gamma_X=\dim_k H^1(X,\O_X)^{F_{\ord}}$.
\end{remark}

The following ``control lemma" is a manifestation of the fact that the Cartier
operator improves poles (Proposition \ref{CartierOp}, (\ref{CartierImproves})): 

\begin{lemma}\label{sspoles}
	Let $X$ be a smooth and proper curve over $k$ and $D$ an effective Cartier divisor 
	on $X$. Considering $D$ as a closed subscheme of $X$, we write $D_{\red}$ for the associated 
	reduced closed subscheme.  
	\begin{enumerate}
		\item For all positive integers $n$, the canonical morphism
			\begin{equation*}
				H^0(X,\Omega^1_{X/k}(D_{\red})) \rightarrow H^0(X,\Omega^1_{X/k}(nD))
			\end{equation*}
			induces a natural isomorphism on $V$-ordinary subspaces.\label{VControl}
			
		\item For each positive integer $n$, the canonical map
			\begin{equation*}
				H^1(X, \O_X(-nD)) \rightarrow H^1(X,\O_X(-D_{\red}))
			\end{equation*}
			induces a natural isomorphism on $F$-ordinary subspaces.\label{FControl} 
		
		\item The identifications in $(\ref{VControl})$ and $(\ref{FControl})$ are canonically 
		$k$-linearly dual, via Remark $\ref{DualityOfFVOrd}$. 	
		
	\end{enumerate}
\end{lemma}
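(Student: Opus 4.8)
The plan is to establish \ref{VControl} directly, by iterating the pole-improving property of the Cartier operator, and then to obtain \ref{FControl} together with the duality statement of part (3) formally from Grothendieck--Serre duality. For \ref{VControl}, I would first invoke Lemma \ref{HW}(\ref{finlen}), applied to the finite-dimensional (hence finite-length) $k$-vector space $H^0(X,\Omega^1_{X/k}(nD))$ with its $\varphi^{-1}$-semilinear operator $V$, to identify the $V$-ordinary subspace with $\bigcap_{m\ge 0}\im(V^m)$. Writing $W:=H^0(X,\Omega^1_{X/k}(nD))$ and $W_{\red}:=H^0(X,\Omega^1_{X/k}(D_{\red}))$, the hypothesis $D_{\red}\le nD$ realizes $W_{\red}$ as a $V$-stable subspace of $W$ (both carrying the restriction of the Cartier operator $V$ on meromorphic differentials), and the map in \ref{VControl} is exactly this inclusion; it is therefore $V$-equivariant and induces $W_{\red}^{V_{\ord}}\hookrightarrow W^{V_{\ord}}$. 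Naturality throughout follows from the functoriality of $V$ recorded in Proposition \ref{CartierOp}(\ref{CartierCommutes}).

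The crux is that $V$ maps $W$ into $W_{\red}$ after finitely many applications. Writing $D=\sum_i a_i x_i$ with $a_i\ge 1$, we have $nD\le M\cdot D_{\red}$ for $M:=n\max_i a_i$, so $W\subseteq H^0(X,\Omega^1_{X/k}(M D_{\red}))$. Applying the pole-improvement statement of Proposition \ref{CartierOp}(\ref{CartierImproves}) \emph{with base divisor} $D_{\red}$ shows that $V$ carries $H^0(X,\Omega^1_{X/k}(\ell D_{\red}))$ into $H^0(X,\Omega^1_{X/k}(\lceil \ell/p\rceil D_{\red}))$; iterating, $V^m(W)\subseteq H^0(X,\Omega^1_{X/k}(\lceil M/p^m\rceil D_{\red}))=W_{\red}$ as soon as $p^m\ge M$. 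Fixing such an $m_0$, a purely formal argument finishes the proof: given $\eta\in W^{V_{\ord}}=\bigcap_m\im(V^m|_W)$, for every $m$ one has $\eta\in V^{m+m_0}(W)=V^m(V^{m_0}(W))\subseteq V^m(W_{\red})$, whence $\eta\in\bigcap_m\im(V^m|_{W_{\red}})=W_{\red}^{V_{\ord}}$. Thus $W^{V_{\ord}}=W_{\red}^{V_{\ord}}$ inside $W$, and the inclusion restricts to an isomorphism on $V$-ordinary subspaces.

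For \ref{FControl} and part (3), I would argue by duality rather than repeat the calculation. Since $D_{\red}\le nD$, the inclusion of ideal sheaves $\O_X(-nD)\hookrightarrow \O_X(-D_{\red})$ induces the map in \ref{FControl} on $H^1$, and under the perfect Grothendieck--Serre pairings of Remark \ref{DualityOfFVOrd} this map is the $k$-linear transpose of the inclusion $H^0(X,\Omega^1_{X/k}(D_{\red}))\hookrightarrow H^0(X,\Omega^1_{X/k}(nD))$ of \ref{VControl}; this is precisely the functoriality of Serre duality for the two adjoint sheaf inclusions. Because $F$ and $V$ are adjoint under these pairings, the pairings restrict to perfect dualities between the $V$-ordinary and $F$-ordinary subspaces (Remark \ref{DualityOfFVOrd}), so the map in \ref{FControl} restricted to $F$-ordinary parts is the transpose of the isomorphism of \ref{VControl} restricted to $V$-ordinary parts, and is therefore itself an isomorphism. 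Part (3) is then immediate, since the two identifications are transpose by construction.

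The main obstacle to watch is the pole-improvement step. Iterating Proposition \ref{CartierOp}(\ref{CartierImproves}) with base divisor $D$ only lands one in $H^0(X,\Omega^1_{X/k}(D))$, not in $H^0(X,\Omega^1_{X/k}(D_{\red}))$, because $D$ may carry multiplicities greater than $1$. The remedy is exactly the observation above that $nD$ is dominated by the multiple $M\cdot D_{\red}$, which lets one run the pole-improvement relative to the \emph{reduced} divisor and thereby collapse every pole order down to $1$.
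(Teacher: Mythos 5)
Your proof is correct and follows essentially the same route as the paper, whose entire proof is the one-line assertion that the lemma follows immediately from Proposition \ref{CartierOp} (\ref{CartierImproves}) and Remark \ref{DualityOfFVOrd} --- precisely the two ingredients you use, with Lemma \ref{HW} supplying the description of the ordinary part as $\bigcap_{m}\im(V^m)$. Your observation that one must dominate $nD$ by $M\cdot D_{\red}$ and iterate the pole-improvement relative to the \emph{reduced} divisor (rather than $D$ itself) is exactly the detail the paper leaves implicit, and you have filled it in correctly.
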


\begin{proof}
	This follows immediately from Proposition \ref{CartierOp} (\ref{CartierImproves})
	and Remark \ref{DualityOfFVOrd}.
\end{proof}

Now let $\pi:Y\rightarrow X$ be a finite map of smooth, proper and geometrically connected curves over $k$
that is generically \'etale and Galois with group $G$ that is a $p$-group.  
Let $D_X$ be any effective Cartier divisor on $X$ over $k$ with support containing the ramification locus of $\pi$,
and put $D_Y=\pi^*D_X$.   As in Lemma \ref{sspoles},  denote by $D_{X,\red}$ and $D_{Y,\red}$ the underlying reduced closed subschemes;
as $D_{Y,\red}$ is $G$-stable, the $k$-vector spaces 
$H^0(Y,\Omega^1_{Y/k}(nD_{Y,\red}))$ and $H^1(Y,\O_Y(-nD_{Y,\red})$
are canonically $k[G]$-modules for any $n\ge 1$.  The following {\em equivariant Deuring--Shafarevich
formula} of Nakajima 
is the key to the proofs of our structure theorems for $\Lambda$-modules:

\begin{proposition}[Nakajima]\label{Nakajima}
	Assume that $\pi$ is ramified, 
	let $\gamma_X$ be the Hasse-Witt invariant of $X$ and set $d:=\gamma_X-1+\deg (D_{X,\red})$.
	Then for each positive integer $n$:
	\begin{enumerate}
		\item The $k[G]$-module 
		 $H^0(Y,\Omega^1_{Y/k}(nD_{Y,\red}))^{V_{\ord}}$ is free of rank $d$ and independent
		 of $n$.\label{NakajimaOne}
		
		\item The $k[G]$-module $H^1(Y,\O_Y(-nD_{Y,\red}))^{F_{\ord}}$ is naturally
		isomorphic to the contragredient of $H^0(Y,\Omega^1_{Y/k}(nD_{Y,\red}))^{V_{\ord}}$;
		as such, it is $k[G]$-free of rank $d$ and independent of $n$.
		\label{NakajimaTwo}
	\end{enumerate}
\end{proposition}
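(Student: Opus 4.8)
The plan is to deduce both assertions from a single freeness statement over $\F_p[G]$, pushing all the genuinely arithmetic input---Nakajima's analysis of wild ramification---to the very end. First I would reduce part (\ref{NakajimaTwo}) to part (\ref{NakajimaOne}). Remark \ref{DualityOfFVOrd}, applied to $Y$ with the divisor $nD_{Y,\red}$, provides a perfect $k$-bilinear pairing between $H^0(Y,\Omega^1_{Y/k}(nD_{Y,\red}))^{V_{\ord}}$ and $H^1(Y,\O_Y(-nD_{Y,\red}))^{F_{\ord}}$ under which $V=F_*$ and $F=F^*$ are adjoint. Since Grothendieck--Serre duality is functorial in the automorphisms of $Y$ over $k$, this pairing is $G$-equivariant, and hence identifies the $H^1$-term with the contragredient $k[G]$-module of the $H^0$-term. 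As $k[G]$ is a symmetric (Frobenius) algebra, its regular representation is self-dual, so the contragredient of a free $k[G]$-module is free of the same rank; thus (\ref{NakajimaTwo}) follows formally from (\ref{NakajimaOne}). Next, Lemma \ref{sspoles}(\ref{VControl}), applied with the effective divisor taken to be the reduced divisor $D_{Y,\red}$ itself, shows that the canonical map $H^0(Y,\Omega^1_{Y/k}(D_{Y,\red}))^{V_{\ord}}\to H^0(Y,\Omega^1_{Y/k}(nD_{Y,\red}))^{V_{\ord}}$ is an isomorphism for every $n\ge 1$; being canonical, it is $G$-equivariant, which reduces (\ref{NakajimaOne}) to the case $n=1$. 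Write $W:=H^0(Y,\Omega^1_{Y/k}(D_{Y,\red}))^{V_{\ord}}$.

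The key structural step is to linearize the semilinear operator $V$. By Proposition \ref{CartierOp}(\ref{CartierCommutes}) the operator $V$ commutes with the ($k$-linear) $G$-action, and by construction it is a $\varphi^{-1}$-semilinear \emph{automorphism} of $W$. After base change to $\kbar$ (freeness over $k[G]$ descends by faithfully flat base change, so this is harmless), Lang's theorem shows that the $\F_p$-vector space $W_0$ of $V$-fixed vectors satisfies $W\otimes_k\kbar = W_0\otimes_{\F_p}\kbar$, and the $G$-action descends to make $W_0$ a finite $\F_p[G]$-module. Hence $W$ is $k[G]$-free of rank $d$ if and only if $W_0$ is $\F_p[G]$-free of rank $d$. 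Because $G$ is a $p$-group and $\F_p$ has characteristic $p$, freeness of $W_0$ is equivalent to projectivity, equivalently to cohomological triviality (vanishing of the Tate cohomology $\widehat{H}^i(G',W_0)$ for all subgroups $G'\le G$). I would then identify $W_0$ concretely: by Proposition \ref{CartierOp}(\ref{CartierResidue}) a $V$-fixed differential with poles bounded by $D_{Y,\red}$ has residues in $\F_p$ at the marked points, and Cartier theory exhibits $W_0$ as the space of logarithmic ($\mathrm{d}\log$) differentials ramified only along $D_{Y,\red}$, that is, as an Artin--Schreier object attached to the open curve $Y\setminus D_{Y,\red}$ on which $G$ acts.

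The cohomological triviality of $W_0$ is the heart of the matter, and I expect it to be the main obstacle. Away from $D_{Y,\red}$ the cover $\pi$ is \'etale, so $G$ acts freely and the ``interior'' contribution is manifestly induced, hence cohomologically trivial; the difficulty is entirely local at the ramified points, where $\pi$ is wildly ramified (as $G$ is a $p$-group). There one must show that the packet of local terms above each branch point is an induced module $\mathrm{Ind}_{I_P}^{G}$ for the inertia subgroup $I_P$, once the global residue relation (the residues sum to zero) is accounted for. This is precisely the local computation carried out by Nakajima \cite{Nakajima}, and I would invoke it directly rather than reprove it, as it is exactly the wild-ramification input that makes the Deuring--Shafarevich phenomenon work.

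Granting freeness, the rank is pinned down by a dimension count. The generalized Hasse--Witt invariant satisfies $\dim_k W = \gamma_Y + \deg D_{Y,\red}-1$: this follows from the short exact sequence $0\to\O_Y(-D_{Y,\red})\to\O_Y\to\O_{D_{Y,\red}}\to 0$ together with the duality of Remark \ref{DualityOfFVOrd}, the exactness of the $F$-ordinary functor (Lemma \ref{HW}), and the fact that absolute Frobenius acts bijectively on $H^0(D_{Y,\red},\O_{D_{Y,\red}})$ since $D_{Y,\red}$ is reduced. The analogous formula on $X$ reads $\gamma_X+\deg D_{X,\red}-1 = d$. Consequently the identity $\dim_k W = |G|\cdot d$ is equivalent to the classical Deuring--Shafarevich formula $\gamma_Y - 1 = |G|(\gamma_X-1)+\sum_P(|G|-r_P)$, where $r_P$ is the number of points of $Y$ over a branch point $P$; comparing the two counts yields $\rk_{\F_p[G]}W_0 = d$, which completes both parts.
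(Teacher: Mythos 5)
Your proposal is correct and follows essentially the same route as the paper: the paper likewise disposes of the independence of $n$ via Lemma \ref{sspoles}, obtains part (\ref{NakajimaTwo}) from the duality of Remark \ref{DualityOfFVOrd} (making explicit the adjunction $g_*=(g^{-1})^*$ that underlies your ``$G$-equivariance of Grothendieck--Serre duality''), and then simply quotes Theorem 1 of \cite{Nakajima} for the reduced-divisor case of part (\ref{NakajimaOne}). Your additional scaffolding---the Lang-type linearization of $V$, the reduction of $\F_p[G]$-freeness to cohomological triviality for the $p$-group $G$, and the rank count via the classical Deuring--Shafarevich formula---is a sound unfolding of the internal structure of Nakajima's theorem, but since you still defer the wild-ramification heart to \cite{Nakajima}, it carries the same mathematical content as the paper's one-line citation.
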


\begin{proof}
	The independence of $n$ is simply Lemma \ref{sspoles};
	using this, the first assertion is then equivalent to Theorem 1 of \cite{Nakajima}.
	The second assertion is immediate from Remark \ref{DualityOfFVOrd},
	once one notes that for $g\in G$ one has the identity $g_*=(g^{-1})^*$
	on cohomology (since $g_*g^*=\deg g = \id$), so $g^*$ and $(g^{-1})^*$
	are adjoint under the duality pairing (\ref{DualityOfFVOrdMap}).
\end{proof}

\subsection{The Igusa tower}\label{IgusaTower}

We apply Proposition \ref{Nakajima} to the {\em Igusa tower}.
Recall (Definition \ref{IgusaDef}) that for $r\ge 0$ we write $\Ig_r$
for the compactified moduli scheme classifying
Elliptic curves over $\F_p$-schemes equipped with 
a level-$p^r$ Igusa structure 
and a $\mu_N$-structure as in Definition \ref{IgusaStrDef}.
Then $\Ig_r$ is a smooth, projective and geometrically irreducible
curve over $\F_p$ by Proposition \ref{Pr:IgusaSmoothCrv},
and comes equipped with a 
canonical degeneracy map 
$\pr:\Ig_{r}\rightarrow \Ig_1$ defined moduli-theoretically by (\ref{Vmapsch}) that is
finite \'etale outside\footnote{We will frequently 
write simply $\SS$ for the divisor $\SS_r$ on $\Ig_r$
when $r$ is clear from context.
}
$\SS:=\SS_r$ and totally (wildly) ramified
over $\SS_1$. In this way, $\Ig_r$ is a branched cover
of $\Ig_1$ with group $\Delta_1/\Delta_r$, so that
the cohomology groups $H^0(\Ig_r,\Omega^1_{\Ig_r/\F_p}(\SS))$
and $H^1(\Ig_r,\O_{\Ig_r}(-\SS))$ are naturally
$\F_p[\Delta_1/\Delta_r]$-modules.
Note that we have $\Ig_0=X_1(N)_{\F_p}$, the compactified modular curve over $\F_p$
classifying elliptic curves with a $\mu_N$-structure.

\begin{proposition}\label{IgusaStructure}
	Let $r\ge 1$ be an integer, write $\gamma$ for the $p$-rank of
	$\Pic^0_{X_1(N)/\F_p}$, and set $\delta:=\deg\SS$.
\begin{enumerate}	
	\item The $\F_p[\Delta_1/\Delta_r]$-modules $H^0(\Ig_r,\Omega^1_{\Ig_r/\F_p}(\SS))^{V_{\ord}}$
	and $H^1(\Ig_r,\O_{\Ig_r}(-\SS))^{F_{\ord}}$ are both free of rank $d:=\gamma+\delta-1$.
	Each is canonically isomorphic to the contragredient of the other.
	\label{IgusaFreeness}
	 
	\item For any positive integer $s\le r$, the canonical trace mapping
	associated to $\pr:\Ig_r\rightarrow \Ig_s$ induces natural isomorphisms of $\F_p[\Delta_1/\Delta_s]$-modules 
	\begin{subequations}
		\begin{equation*}
			\xymatrix{
				{\pr_*:H^0(\Ig_r, \Omega^1_{\Ig_r/\F_p}(\SS))^{V_{\ord}}
				\tens_{\F_p[\Delta_1/\Delta_r]} \F_p[\Delta_1/\Delta_s]} \ar[r]^-{\simeq} &
				{H^0(\Ig_s, \Omega^1_{\Ig_s/\F_p}(\SS))^{V_{\ord}}}
			}
		\end{equation*}
		\begin{equation*}
			\xymatrix{
				{\pr_*:H^1(\Ig_r, \O_{\Ig_r}(-\SS))^{F_{\ord}}
				\tens_{\F_p[\Delta_1/\Delta_r]} \F_p[\Delta_1/\Delta_s]} \ar[r]^-{\simeq} &
				{H^1(\Ig_s, \O_{\Ig_s}(-\SS))^{F_{\ord}}}
			}
		\end{equation*}
	\end{subequations}	
	\label{IgusaControl}
\end{enumerate}
\end{proposition}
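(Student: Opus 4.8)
\emph{Proof proposal.}
The plan is to reduce both assertions to Nakajima's equivariant Deuring--Shafarevich formula (Proposition \ref{Nakajima}), applied to the degeneracy cover $\pr:\Ig_r\rightarrow \Ig_1$. For $r\ge 2$ this cover meets the hypotheses of Proposition \ref{Nakajima}: it is finite and generically \'etale, Galois with group $G:=\Delta_1/\Delta_r$ (a cyclic $p$-group), and ramified---indeed totally and wildly ramified over each point of $\SS_1$. Taking $D_X:=\SS_1$ on $X:=\Ig_1$ (whose support is precisely the ramification locus) and $D_Y:=\pr^*\SS_1$ on $Y:=\Ig_r$, total ramification over the supersingular locus gives $(\pr^*\SS_1)_{\red}=\SS_r=\SS$ and shows the number of supersingular points is independent of the level, so $\deg\SS=\delta$. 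Proposition \ref{Nakajima} then delivers the freeness and duality of part (\ref{IgusaFreeness}) at once: $H^0(\Ig_r,\Omega^1_{\Ig_r/\F_p}(\SS))^{V_{\ord}}$ and $H^1(\Ig_r,\O_{\Ig_r}(-\SS))^{F_{\ord}}$ are free over $\F_p[\Delta_1/\Delta_r]$, each canonically the contragredient of the other, of common rank $\gamma_{\Ig_1}+\delta-1$, where $\gamma_{\Ig_1}$ is the Hasse--Witt invariant of $\Ig_1$; identifying $\gamma_{\Ig_1}$ with the $p$-rank $\gamma$ of $\Pic^0_{X_1(N)/\F_p}$ via the structure of the tame cover $\Ig_1\rightarrow\Ig_0$ recovers the rank $d=\gamma+\delta-1$. (The case $r=1$ is the base case: freeness is automatic and the duality is the self-pairing of Remark \ref{DualityOfFVOrd} for $D=\SS$.)

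For the control statement (\ref{IgusaControl}), fix $s\le r$ and note that $\F_p[\Delta_1/\Delta_s]$ is a local Artinian ring whose maximal ideal, the augmentation ideal of the group algebra of a finite $p$-group over $\F_p$, is nilpotent. By part (\ref{IgusaFreeness}) at levels $r$ and $s$, both the base-changed source $(\cdot)\otimes_{\F_p[\Delta_1/\Delta_r]}\F_p[\Delta_1/\Delta_s]$ and the target of each trace map $\pr_*$ are free of the same rank $d$ over $\F_p[\Delta_1/\Delta_s]$. Since a surjection between free modules of equal finite rank over such a ring is an isomorphism, Nakayama's lemma reduces each assertion to surjectivity after reduction modulo the maximal ideal, i.e. to a surjection of $d$-dimensional $\F_p$-vector spaces induced by $\pr_*$ on the relevant $\Delta_1/\Delta_r$- and $\Delta_1/\Delta_s$-coinvariants.

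Finally I would convert this surjectivity into injectivity by duality. Under the contragredient pairing of part (\ref{IgusaFreeness}) the trace maps $\pr_*$ and pullback maps $\pr^*$ are adjoint, so the surjectivity needed for each exact sequence is equivalent to the injectivity of $\pr^*$ on the \emph{complementary} cohomology group. This isolates the injectivity of (\ref{IgDiffPB}) and (\ref{IgLinePB}). The differential map (\ref{IgDiffPB}) is injective because $\pr$ is generically \'etale, so $\pr^*$ is already injective on meromorphic differentials and respects $V$-ordinary parts by Proposition \ref{CartierOp} (\ref{CartierCommutes}). The injectivity of (\ref{IgLinePB}) on $H^1(\O(-\SS))$ is the main obstacle, since the generic-\'etaleness argument is unavailable: here I would interpret a class in $H^1$ as a line bundle and use that $\pr$ is \emph{totally} ramified over every supersingular point, so that a nonzero class trivialized upstairs would already be trivial downstairs---this is the content of Lemma \ref{MW}. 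Feeding these two injectivities back through the duality and Nakayama reductions yields both isomorphisms of part (\ref{IgusaControl}).
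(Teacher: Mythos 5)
Your proposal is correct and is essentially the paper's own argument: part (\ref{IgusaFreeness}) comes from applying Nakajima's equivariant Deuring--Shafarevich formula (Proposition \ref{Nakajima}) to the totally wildly ramified $p$-group cover $\pr:\Ig_r\rightarrow \Ig_s$, and part (\ref{IgusaControl}) is reduced, via equal-rank freeness over the local ring $\F_p[\Delta_1/\Delta_s]$ together with duality, to exactly the two injectivity statements you isolate---$\pr^*$ on $H^0(\Omega^1(\SS))$ by generic \'etaleness, and $\pr^*$ on $H^1(\O(-\SS))$ via Lemma \ref{MW} and total ramification over the supersingular points, which the paper implements through the skyscraper sequence and $\Lie\Pic^0$, with Lemma \ref{sspoles} absorbing the divisor multiplicities $\pr^*\SS_s=p^{r-s}\SS$. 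The only cosmetic differences are that you route the equal-rank conclusion through a Nakayama reduction where the paper dualizes the injections directly, and that you make explicit the normalization of the rank constant (the Hasse--Witt invariant of $\Ig_1$ versus $\gamma$), a point the paper's one-line appeal to Proposition \ref{Nakajima} leaves implicit.
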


In order to prove Proposition \ref{IgusaStructure}, we require the following Lemma
({\em cf.} \cite[p. 511]{MW-Analogies}):

\begin{lemma}\label{MW}
	Let $\pi: Y\rightarrow X$ be a finite and generically \'etale morphism of
	smooth proper and geometrically irreducible curves over a field $k$.
	If there is a geometric point of $X$ over which $\pi$ is totally ramified then the induced map
	of $k$-group schemes $\Pic(\pi):\Pic_{X/k}\rightarrow \Pic_{Y/k}$ has trivial scheme-theoretic kernel.
\end{lemma}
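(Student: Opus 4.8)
The plan is to pass to the algebraic closure and there reduce triviality of the kernel to the impossibility of factoring $\pi$ through an auxiliary finite cover, an impossibility forced separately by total ramification and by generic \'etaleness. First I would observe that the scheme-theoretic kernel $K:=\ker(\Pic(\pi))$ commutes with the flat base change $k\to\kbar$, and that a group scheme over $k$ is trivial precisely when its base change to $\kbar$ is; since $X$ and $Y$ are geometrically irreducible and the totally ramified point is a geometric point, I may assume $k=\kbar$ and that there is a closed point $x_0\in X$ with unique preimage $y_0\in Y$ at which $\pi$ is totally ramified. Because $\deg\circ\,\pi^* = (\deg\pi)\cdot\deg$ on the component group $\Z$ of $\Pic_{X/k}$ and $\deg\pi\neq 0$, the kernel $K$ lands in $\Pic^0_{X/k}=:J_X$; and since $\pi_*\circ\pi^* = [\deg\pi]$, we get $K\subseteq J_X[\deg\pi]$, so $K$ is a finite group scheme.

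It then suffices to show that $K$ contains no nontrivial subgroup scheme of prime order, since over the perfect field $\kbar$ any nontrivial finite commutative group scheme contains one of $\Z/\ell$, $\mu_p$, or $\alpha_p$. The key device is the dictionary between finite subgroup schemes $G\hookrightarrow J_X = \Pic^0_{X/k}$ and connected torsors $w\colon Z\to X$ under the Cartier dual $G^\vee$ (trivialized along a rational point): an inclusion $G\hookrightarrow J_X$ corresponds to such a torsor, and the condition $G\subseteq K$, i.e.\ $\pi^*|_G=0$, says exactly that the pulled-back torsor $Z\times_X Y\to Y$ is trivial. A trivialization is a section, whose composite with the projection $Z\times_X Y\to Z$ yields a morphism $\phi\colon Y\to Z$ satisfying $w\circ\phi=\pi$. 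Thus a nontrivial $G\subseteq K$ would force $\pi$ to factor through a nontrivial finite cover $w\colon Z\to X$ of degree $|G|>1$.

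Now I would split according to whether $G^\vee$ is \'etale or infinitesimal. If $G^\vee$ is \'etale, then $w$ is a connected finite \'etale cover of degree $>1$, so over $\kbar$ its fiber $w^{-1}(x_0)$ consists of $\deg w\ge 2$ distinct points; since $\phi$ is surjective while $\phi^{-1}(w^{-1}(x_0))=\pi^{-1}(x_0)=\{y_0\}$, all these points must coincide with $\phi(y_0)$, a contradiction. If $G^\vee$ is infinitesimal, then $w$ is purely inseparable of degree $>1$, whence $\deg\pi=\deg_{\mathrm{sep}}\pi=\deg_{\mathrm{sep}}\phi\le\deg\phi=\deg\pi/\deg w<\deg\pi$, contradicting the generic \'etaleness of $\pi$. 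As a prime-order group scheme over $\kbar$ is either \'etale or infinitesimal, both cases are exhausted; hence $K$ has no nontrivial prime-order subgroup and is trivial. Note that total ramification is exactly what defeats the cases with \'etale $w$ (in particular the multiplicative-type part $\mu_p$ of $K$), while generic \'etaleness disposes of the inseparable cases.

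The main obstacle I anticipate is making the torsor dictionary of the second paragraph precise and functorial enough to apply to every finite group scheme $G$, not merely the classical cyclic case $G=\mu_n$ with $p\nmid n$ (where $Z$ is the familiar Kummer cover attached to an $n$-torsion line bundle and the argument is completely elementary). In general one must invoke the identification of the group of $G^\vee$-torsors on $X$ rigidified along a rational point with $\Hom(G,\Pic^0_{X/k})$, together with the connectedness of the torsor attached to an injection $G\hookrightarrow\Pic^0_{X/k}$ for $G$ of prime order. Once this input is in hand, the two incompatibilities above---\'etale covers against a totally ramified point, and inseparable covers against a separable $\pi$---close the argument cleanly.
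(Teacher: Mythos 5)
Your proposal is correct in outline, but it takes a genuinely different route from the paper's. The paper proves triviality of the kernel functorially: it shows $\pi_R^*:\Pic(X_R)\to\Pic(Y_R)$ is injective for every $k$-algebra $R$ by descent theory for line bundles along $\pi$. Given $\L$ on $X_R$ with $\pi^*\L$ trivial, one rigidifies a trivialization at the unique point $y_R$ above the totally ramified point $x_R$; the obstruction to the two pullback trivializations on $Y_R\times_{X_R}Y_R$ agreeing is a global unit, and the geometric heart of the argument is that $Y\times_X Y$ is connected (every connected component is finite flat over $X$, hence passes through $(y,y)$ --- this is where total ramification enters) and generically reduced (from generic \'etaleness), so that $H^0((Y\times_X Y)_R,\O)=R$ and the obstruction can be evaluated at $(y_R,y_R)$, where it is $1$. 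Your argument instead reduces, after the same passage to $\kbar$ and the observation that $K\subseteq J_X[\deg\pi]$ is finite, to excluding prime-order subgroups $G\subseteq K$, and converts $G\hookrightarrow J_X$ with $\pi^*|_G=0$ into a factorization of $\pi$ through a nontrivial connected $G^\vee$-torsor $Z\to X$, killed by total ramification when $G^\vee$ is \'etale and by separability of $\pi$ when $G^\vee$ is infinitesimal. It is striking that the two hypotheses divide the labor in both proofs, in parallel ways. What the paper's proof buys is self-containedness: nothing beyond fpqc descent of quasi-coherent sheaves and elementary geometry of the fiber product. What yours buys is structural insight --- it shows exactly which simple group schemes each hypothesis excludes (drop generic \'etaleness and $\Z/p$ slips through, as for $\ker(V)=\ker\Pic^0(F)$ of Frobenius; drop total ramification and $\mu_p$ or $\Z/\ell$ slip through, via unramified covers) --- at the cost of the fppf dictionary $\Hom(G,J_X)\simeq H^1_{\mathrm{fppf}}(X,G^\vee)$, which, while standard (self-duality of $J_X$), is genuinely heavier input than anything the paper uses.

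Two loose ends you should tie up, both patchable. First, your list of simple objects should include $\Z/p$ (whose Cartier dual $\mu_p$ is infinitesimal); since your case division is by whether $G^\vee$ is \'etale or infinitesimal, this case is in fact covered by your inseparability argument, but the enumeration as written omits it. Second, in the infinitesimal case your degree count $\deg_{\mathrm{sep}}\pi\le\deg\pi/\deg w$ requires $Z$ to be \emph{integral}, not merely connected: you must rule out that the nontrivial torsor is generically trivial (hence generically non-reduced). This follows from properness: a rational section of the finite map $w$ over the smooth curve $X$ extends to a global section by the valuative criterion, so a generically trivial torsor is trivial; hence $Z_\eta$ is a nontrivial torsor over $K(X)$ under a simple infinitesimal group, which one checks directly (Kummer and Artin--Schreier-type presentations $t^p-u$, $t^p-g$) is a field, so $Z$ is integral and purely inseparable of degree $|G|$ over $X$, as your computation needs.
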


\begin{proof}
	The hypotheses and the conclusion are preserved under extension of $k$, so we may
	assume that $k$ is algebraically closed.  We fix a $k$-point $x\in X(k)$ over which
	$\pi$ is totally ramified, and let $y\in Y(k)$ be the unique $k$-point of $Y$ over $x$.
	To prove that $\Pic_{X/k}\rightarrow \Pic_{Y/k}$
	has trivial kernel, it suffices to prove that the map of groups  
	$\pi_R^*:\Pic(X_R)\rightarrow \Pic(Y_R)$
	is injective for every 
	$k$-algebra $R$.  
	We fix such a $k$-algebra, and denote by $x_R\in X_R(R)$ and $y_R\in Y_R(R)$
	the points obtained from $x$ and $y$ by base change.  Let $\L$ be a line bundle
	on $X_R$ whose pullback to $Y_R$ is trivial; our claim is that we may choose
	a trivialization $\pi^*\L\xrightarrow{\simeq} \O_{Y_R}$ of $\pi^*\L$
	over $Y_R$ which descends to $X_R$.  In other words, by descent theory,
	we assert that we may choose a trivialization of $\pi^*\L$ with the property that the
	two pullback trivializations under the canonical projection maps
	\begin{equation}
		\xymatrix{
			{Y_R \times_{X_R} Y_R} \ar@<-0.5ex>[r]_-{\pr_2}\ar@<0.5ex>[r]^-{\pr_1} & {Y_R}
			}\label{TwoPullback}
	\end{equation}
	coincide.
	
	We first claim that the $k$-scheme $Z:=Y\times_X Y$ is connected and generically reduced.
	Since $\pi$ is totally ramified over $x$, 
	there is a unique geometric point $(y,y)$ of $Z$ mapping to $x$ under the canonical map 
	$Z\rightarrow X$.  Since this map is moreover finite flat (because $\pi:Y\rightarrow X$
	is finite flat due to smoothness of $X$ and $Y$), every connected component of $Z$ is finite flat  	
	onto $X$ and so passes through $(y,y)$.  Thus, $Z$ is connected.
	On the other hand, $\pi:Y\rightarrow X$ is generically
	\'etale by hypothesis, so there exists a dense open subscheme $U\subseteq X$
	over which $\pi$ is \'etale.  Then $Z\times_X U$ is \'etale---and hence smooth---over $U$
	and the open immersion $Z\times_X U\rightarrow Z$ is schematically dense 
	as $U\rightarrow X$ is schematically dense and $\pi$ is finite
	and flat.  As $Z$ thus contains a $k$-smooth and dense subscheme,
	it is generically reduced.
	
	Fix a choice $e$ of $R$-basis of the fiber $\L(x_R)$ of $\L$ at $x_R$.
	As any two trivializations of $\pi^*\L$ over $Y_R$ differ by an element of 
	$R^{\times}$, we may uniquely choose a trivialization $\pi^*\L\simeq \O_{Y_R}$
	with the property that the induced isomorphism
	\begin{equation}
		\xymatrix{ 
			{\L(x_R)\simeq (\pi^*\L)(y_R)}\ar[r]^-{\simeq} & {\O_{Y_R}(y_R)\simeq R}
			}\label{TrivOnYunit}
	\end{equation}
	carries $e$ to $1$.  The obstruction to the two pullback trivializations
	under (\ref{TwoPullback}) being equal is a global unit on $Y_R\times_{X_R} Y_R$.
	But since $Y_R\times_{X_R} Y_R = (Y\times_X Y)_R$, we have by flat base change
	\begin{equation*}
		H^0(Y_R\times_{X_R} Y_R, \O_{Y_R\times_{X_R} Y_R}) = H^0(Y\times_X Y,\O_{Y\times_X Y})\otimes_k R=R
	\end{equation*}
	where the last equality rests on the fact that $Y\times_X Y$ is connected, generically reduced, 
	and proper over $k$.
	Thus, the obstruction to the two pullback trivializations being equal is an element of $R^{\times}$,
	whose value may be calculated at any point of $Y_R\times_{X_R} Y_R$.  By our choice (\ref{TrivOnYunit})
	of trivialization of $\pi^*\L$, the value of this obstruction at the point $(y_R,y_R)$ is 1,
	and hence the two pullback trivializations coincide as desired.
\end{proof}

\begin{proof}[Proof of Proposition $\ref{IgusaStructure}$]
	Since $\pr:\Ig_r\rightarrow \Ig_s$ is a finite branched cover with group $\Delta_s/\Delta_r$
	and totally wildly ramified over $\SS_s$,
	we may apply Proposition \ref{Nakajima}, which gives (\ref{IgusaFreeness}).

	To prove (\ref{IgusaControl}), we work over $k:=\o{\F}_p$ and
	argue as follows.  Since $\pr:\Ig_r\rightarrow \Ig_{s}$
	is of degree $p^{r-s}$ and totally ramified over $\SS_{s}$, we have $\pr^*\SS_{s}=p^{r-s}\cdot\SS$;
	it follows that pullback induces a map
	\begin{equation}
		\xymatrix{
				{H^1(\Ig_{s},\O_{\Ig_{s}}(-\SS_{s}))}	\ar[r]^-{\pr^*} & 	{H^1(\Ig_r,\O_{\Ig_r}(-\SS))}
			}\label{pbH1poles}
	\end{equation}
	which we claim is {\em injective}.
	To see this,
	we observe that the long exact cohomology sequence attched to
	the short exact sequence of sheaves on $\Ig_r$ 
	\begin{equation*}
		\xymatrix{
			0\ar[r] & {\O_{\Ig_r}(-\SS)} \ar[r] & {\O_{\Ig_r}} \ar[r] & {\O_{\SS}} \ar[r] & 0
		}
	\end{equation*}
	(with $\O_{\SS}$ a skyscraper sheaf supported on $\SS$) 
	yields a commutative diagram with exact rows
	\begin{equation}
	\begin{gathered}
		\xymatrix@C=15pt{
			0\ar[r] & {H^0(\Ig_{s},\O_{\Ig_{s}})} \ar[r]\ar[d] & 
			{H^0(\Ig_{s},\O_{\SS_{s}})} \ar[r]\ar[d] & 
			{H^1(\Ig_{s},\O_{\Ig_{s}}(-\SS_{s}))} \ar[r]\ar[d] & 
			{H^1(\Ig_{s},\O_{\Ig_{s}})} \ar[r]\ar[d] & 0\\
			0\ar[r] & {H^0(\Ig_r,\O_{\Ig_r})} \ar[r] & 
			{H^0(\Ig_r,\O_{\SS})} \ar[r] & {H^1(\Ig_r,\O_{\Ig_r}(-\SS))} \ar[r] & 
			{H^1(\Ig_r,\O_{\Ig_r})} \ar[r] & 0
		}
	\end{gathered}	
		\label{pbH1inj}
	\end{equation}
	The leftmost vertical arrow is an isomorphism because $\Ig_r$ is geometrically connected for all $r$.
	Since $\SS$ is reduced, we have
		$H^0(\Ig_r,\O_{\SS})=k^{\deg\SS}$
	for all $r$, so since $\pr:\Ig_r\rightarrow \Ig_{s}$ totally ramifies over every 
	supersingular point, the second vertical arrow in 
	(\ref{pbH1inj}) is also an isomorphism.  
	Now the rightmost vertical map in (\ref{pbH1inj}) is	
	identified with the map on Lie algebras
	${\Lie \Pic^0_{\Ig_{s}/k}} \rightarrow {\Lie\Pic^0_{\Ig_r/k}}$
	induced by $\Pic^0(\pr)$, 
	which is injective thanks to Lemma \ref{MW} and the left-exactness of the functor $\Lie$.
	An easy diagram chase 
	using (\ref{pbH1inj}) then shows that (\ref{pbH1poles}) is injective, as claimed.

	Using again the equality $\pr^*(\SS_s)=p^{r-s}\cdot\SS_r$, 
	pullback of meromorphic differentials yields a mapping	
	\begin{equation}
		\xymatrix{
			{H^0(\Ig_s,\Omega^1_{\Ig_s/k}(\SS))} \ar[r] & {H^0(\Ig_r, \Omega^1_{\Ig_r/k}(p^{r-s}\cdot\SS))}
		}\label{diffPBinj}
	\end{equation}
	which is injective since $\pr:\Ig_r\rightarrow \Ig_s$ is separable.
	
	Dualizing the injective maps (\ref{pbH1poles}) and (\ref{diffPBinj}), we see that the canonical
	trace mappings
	\begin{subequations}
		\begin{equation}
			\xymatrix{
				{H^0(\Ig_r,\Omega^1_{\Ig_r/k}(\SS))} \ar[r]^-{\pr_*} & 	
				{H^0(\Ig_{s},\Omega^1_{\Ig_{s}/k}(\SS))}
			}
		\end{equation}
		\begin{equation}
			\xymatrix{
				{H^1(\Ig_r,\O_{\Ig_r}(-p^{r-s}\cdot \SS))} \ar[r]^-{\pr_*} & {H^1(\Ig_{s},\O_{\Ig_{s}}(-\SS))}
			}
		\end{equation}
	\end{subequations}	
	are surjective for all $r\ge s\ge 1$.
	Passing to $V$- (respectively $F$-) ordinary parts and using
	Lemma \ref{sspoles} (\ref{VControl}), we conclude that the canonical trace 
	mappings attached to $\Ig_r\rightarrow \Ig_s$ induce {\em surjective} maps
	as in Proposition \ref{IgusaStructure} (\ref{IgusaControl}).  By (\ref{IgusaFreeness}), 
	these mappings are then surjective
	mappings of free $\F_p[\Delta_1/\Delta_s]$-modules of the same rank, and are hence
	isomorphisms.  
\end{proof}

The group $\F_p^{\times}$ acts naturally on the Igusa curve $\Ig_1$ through the
{\em diamond operators} (see below Definition \ref{IgusaDef}), and the eigenspaces
for the induced action on $H^0(\Ig_1,\Omega^1_{\Ig_1/\F_p}(\SS))$
are intimately connected with mod $p$ cusp forms:

\begin{proposition}\label{MFmodp}
	Let $S_k(\Upgamma_1(N);\F_p)$ be the space of weight $k$ cuspforms for $\Upgamma_1(N)$ over $\F_p$,
	and for $0\le j<p$ denote
	 by $H^0(\Ig_r,\Omega^1_{\Ig_1/\F_p}(\SS))^{(j)}$ the subspace
	of $H^0(\Ig_r,\Omega^1_{\Ig_1/\F_p}(\SS))$ on which $\F_p^{\times}$ acts through the
	$j$-th power of the Teichm\"uller character.
	For each $k$ with $ 2 < k < p+1$, there are canonical isomorphisms
	of $\F_p$-vector spaces
	\begin{equation}
	\addtocounter{equation}{1}
		S_k(\Upgamma_1(N);\F_p) \simeq H^0(\Ig_1,\Omega^1_{\Ig_1/\F_p})^{(k-2)} \simeq  H^0(\Ig_1,\Omega^1_{\Ig_1/\F_p}(\SS))^{(k-2)} 
		\tag{$\arabic{section}.\arabic{subsection}.\arabic{equation}_k$}
		\label{WtkIsom}
	\end{equation}	
	which are equivariant for the Hecke operators and diamond operators $\langle \cdot\rangle_N$, with $U_p$ acting as usual on modular forms
	and as the Cartier operator $V$ 
	on differential forms.  For $k=2,$ $p+1$, we have the following commutative diagram:
	\begin{equation*}
		\xymatrix{
			{S_2(\Upgamma_1(N);\F_p)} \ar[r]^-{\simeq} \ar@{^{(}->}[d]_-{\cdot A} & 
			{H^0(\Ig_1,\Omega^1_{\Ig_1/\F_p})^{(0)}} \ar@{^{(}->}[d] \\
			{S_{p+1}(\Upgamma_1(N);\F_p)} \ar[r]^-{\simeq} & {H^0(\Ig_1,\Omega^1_{\Ig_1/\F_p}(\SS))^{(0)}}  \\
			}
	\end{equation*}
	where $A$ is the Hasse invariant.
\end{proposition}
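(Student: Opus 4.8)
The plan is to reduce everything to the geometry of the Igusa curve via the Kodaira--Spencer isomorphism together with the canonical trivialization of the Hodge bundle $\omega$ afforded by the level-$p$ Igusa structure. First I would record the two geometric inputs. The Kodaira--Spencer isomorphism for the universal elliptic curve over $\Ig_1$ identifies $\Omega^1_{\Ig_1/\F_p}$ with $\omega^{\otimes 2}(-\mathrm{Cusps})$, and hence $\Omega^1_{\Ig_1/\F_p}(\SS)$ with $\omega^{\otimes 2}(\SS-\mathrm{Cusps})$; under this identification holomorphic differentials are exactly weight-two cusp forms. Second, the Igusa structure provides a canonical section $a\in H^0(\Ig_1,\omega)$ which trivializes $\omega$ on the ordinary locus, vanishes to order one at each supersingular point, and is an eigenvector for a \emph{primitive} character of $\F_p^{\times}$ under the diamond operators, with $a^{p-1}$ equal to the pullback of the Hasse invariant $A$. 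These are standard for the Igusa tower and I would cite them from the foundational material.

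Granting these, the heart of the argument is a weight-shifting identification. For the appropriate sign convention, multiplication by $a^{k-2}$ carries the $(k-2)$-eigenspace of $H^0(\Ig_1,\omega^{\otimes 2}(\SS-\mathrm{Cusps}))$ isomorphically onto the space of $\F_p^{\times}$-invariant sections of $\omega^{\otimes k}(-\mathrm{Cusps})$, i.e. onto weight-$k$ cusp forms descended to $X_1(N)_{\F_p}$, which by Katz's theory is precisely $S_k(\Upgamma_1(N);\F_p)$. The pole bookkeeping at the supersingular locus is where the hypothesis $2<k<p+1$ enters: since $a^{k-2}$ vanishes to order $k-2$ there, a differential with at worst a simple pole yields a section of $\omega^{\otimes k}$ regular at the supersingular locus exactly when $k\ge 3$. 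To compare the two differential spaces I would use a residue argument: the diamond operators fix each supersingular point and act linearly on a local uniformizer, so the logarithmic differential---and hence the residue---is invariant, whence the residue map $H^0(\Omega^1(\SS))\to\bigoplus_{\mathrm{ss}}\F_p$ is $\F_p^{\times}$-equivariant for the \emph{trivial} character on the target. Thus for $k-2\not\equiv 0\pmod{p-1}$, which for $2<k<p+1$ holds for every such $k$, all residues vanish on the $(k-2)$-eigenspace, forcing $H^0(\Omega^1)^{(k-2)}=H^0(\Omega^1(\SS))^{(k-2)}$ and yielding both isomorphisms in (\ref{WtkIsom}).

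The subtlest point, and the one I expect to be the main obstacle, is the assertion that $U_p$ acts as the Cartier operator $V$. Here I would argue via $q$-expansions at the $\infty$-cusp: the Cartier operator has the explicit local description of Proposition \ref{CartierOp} (\ref{Vformula}), which on a weight-two form written as $\eta = g\,\tfrac{dq}{q}$ sends $g=\sum a_n q^n$ to $\sum \varphi^{-1}(a_{np})\,q^n$; over the prime field $\F_p$ the semilinear twist $\varphi^{-1}$ is the identity, so this is exactly the classical formula $\sum a_n q^n\mapsto \sum a_{np}q^n$ defining $U_p$ on mod-$p$ forms, and the $q$-expansion principle then upgrades this agreement to an equality of operators on global sections. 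Equivalently, one may read this off from Ulmer's description of the correspondence $U_p$ on the normalized special fiber recorded in Proposition \ref{UlmerProp}. Equivariance for the remaining Hecke operators $T_\ell$ $(\ell\nmid Np)$, $U_\ell$ $(\ell\mid N)$ and the tame diamond operators $\langle\cdot\rangle_N$ follows from the functoriality of Kodaira--Spencer and of the trivialization under the defining correspondences, since these operators commute with the $\F_p^{\times}$-action and preserve each eigenspace.

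Finally, the boundary cases $k=2$ and $k=p+1$ fall into the single eigenspace indexed by the \emph{trivial} character, since $2-2\equiv(p+1)-2\equiv 0\pmod{p-1}$. In this eigenspace the residue obstruction above is exactly the cokernel of the inclusion $H^0(\Omega^1)^{(0)}\hookrightarrow H^0(\Omega^1(\SS))^{(0)}$, and under the weight-shifting identification for $k=p+1$ (multiplication by $a^{p-1}=A$) multiplication by the Hasse invariant $A$ on forms corresponds precisely to this inclusion: a weight-two form $g$ and the form $gA\in S_{p+1}$ are carried to the same underlying differential, viewed successively in $H^0(\Omega^1)^{(0)}$ and in the larger space $H^0(\Omega^1(\SS))^{(0)}$. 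This produces the asserted commutative square relating $S_2$ and $S_{p+1}$, completing the proof.
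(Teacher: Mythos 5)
Your overall route---weight-shifting by the canonical section $a\in H^0(\Ig_1,\omega)$ with $\operatorname{div}(a)=\SS$ and $a^{p-1}=A$, a residue argument for the eigenspace comparison, and a $q$-expansion check that $U_p$ matches $V$---is essentially the argument of Gross that the paper invokes: its own proof of Proposition \ref{MFmodp} is a one-line citation of Propositions 5.7--5.10 of \cite{tameness}, together with Lemma \ref{CharacterSpaces} and Remark \ref{GrossCompat}. In particular, your residue argument is literally the paper's proof of Lemma \ref{CharacterSpaces}, and your $U_p=V$ verification via Proposition \ref{CartierOp} (\ref{Vformula}) is sound: $\varphi^{-1}$ is the identity on $\F_p$, the section $a$ has $q$-expansion $1$ at the cusp $\infty$, and the $q$-expansion principle applies since $\Ig_1$ is irreducible.

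There is, however, a genuine error at the step you yourself call the heart of the argument: Kodaira--Spencer on the Igusa curve is \emph{not} $\Omega^1_{\Ig_1/\F_p}\simeq\omega^{\otimes 2}(-\mathrm{Cusps})$. The universal elliptic curve over $\Ig_1$ is pulled back along $\pi\colon \Ig_1\to X_1(N)_{\F_p}$, which is totally and \emph{tamely} ramified of degree $p-1$ at each supersingular point (and \'etale elsewhere, including the cusps), so the Kodaira--Spencer map on $\Ig_1$ vanishes to order $p-2$ along $\SS$: the correct statement is $\Omega^1_{\Ig_1/\F_p}\simeq\omega^{\otimes 2}\bigl((p-2)\SS-\mathrm{Cusps}\bigr)$, as a Riemann--Hurwitz degree count confirms. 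With your normalization the weight-shifting step fails: multiplication by $a^{k-2}$ carries the $(k-2)$-eigenspace of $H^0(\omega^{\otimes 2}(\SS-\mathrm{Cusps}))$ only onto invariant weight-$k$ cusp forms vanishing to order at least $k-3$ along $\SS$, not onto all of them; for $k=p+1$ your map would have image exactly $A\cdot S_2(\Upgamma_1(N);\F_p)$ inside $S_{p+1}(\Upgamma_1(N);\F_p)$, so it cannot be the isomorphism asserted in (\ref{WtkIsom}), and your own commutative square would force the false identity $S_{p+1}=A\cdot S_2$.

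With the corrected twist the bookkeeping closes, and it is worth seeing how: for $f\in S_k(\Upgamma_1(N);\F_p)$ the form $(\pi^*f)\,a^{2-k}$ has poles of order at most $k-2$ along $\SS$, and Kodaira--Spencer sends it into $H^0\bigl(\Ig_1,\Omega^1((k-p)\SS)\bigr)^{(k-2)}$---holomorphic (indeed vanishing to order $p-k$ at $\SS$) for $2<k\le p$, and with at worst simple poles for $k=p+1$. Surjectivity onto the eigenspace then needs one further point missing from your sketch: a section with the appropriate $\F_p^{\times}$-eigenvalue whose pole order along $\SS$ is strictly less than the ramification index $p-1$ descends, after untwisting by $a^{k-2}$, to a \emph{holomorphic} weight-$k$ form on $X_1(N)_{\F_p}$. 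It is precisely the interplay between the $(p-2)\SS$ twist and this descent bound that produces the range $2<k<p+1$ and the boundary behavior at $k=2,\,p+1$; your residue argument and your $U_p=V$ computation can then be kept verbatim.
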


\begin{proof}
	This follows from Propositions 5.7--5.10 of
	\cite{tameness}, using Lemma \ref{CharacterSpaces} (keeping in mind Remark \ref{GrossCompat});
	we note that our forward reference to Lemma \ref{CharacterSpaces} does not result in circular reasoning.
\end{proof}

\begin{remark}\label{dMFmeaning}
	For each $k$ with $2\le k\le p+1$, let us write $d_k:=\dim_{\F_p} S_k(\Upgamma_1(N);\F_p)^{\ord}$
	for the $\F_p$-dimension of the subspace of weight $k$ level $N$ cuspforms over $\F_p$
	on which\footnote{In this characteristic $p$ setting, it is standard to write $U_p$
	for the Hecke operator $T_p$, as the effect of these operators on $q$-expansions
	agrees in characteristic $p$; {\em cf.} \cite[\S4]{tameness}.} $U_p:=T_p$ acts invertibly.
	As in Proposition \ref{IgusaStructure} (\ref{IgusaFreeness}), 
	let $\gamma$ be the $p$-rank of the Jacobian of
	$X_1(N)_{\F_p}$ and $\delta:=\deg\SS$.  It follows immediately from Proposition
	\ref{MFmodp} that we have the equality
	\begin{equation}
		d :=\gamma+\delta-1 = \sum_{k=3}^{p+1} d_k.
	\end{equation}
\end{remark}

\subsection{Structure of the ordinary part of \texorpdfstring{$H^0(\o{\X}_r,\omega_{\o{\X}_r/\F_p})$}{
H0(Xr,omega)}
}\label{OrdStruct}

As in the introduction to \S\ref{Prelim}, we write $\X_r$
for the regular proper model of $X_1(Np^r)$ over $R_r$
given by Definition \ref{XrDef}, and
we denote by $\o{\X}_r:=\X_r\times_{R_r}\F_p$ the 
special fiber of $\X_r$.
There is a natural ``semilinear" action of  
$\Gamma=\Gal(\Q_p(\mu_{p^{\infty}})/\Q_p)$ on the scheme
$\X_r$ given by (\ref{gammamaps}), which records the fact that its generic fiber 
is defined over $\Q_p$; this action induces a ``geometric inertia
action" of $\Gamma$ on the special fiber $\o{\X}_r$ over $\F_p$.

By Proposition \ref{redXr}, the curve $\o{\X}_r$  
is a ``disjoint union with crossings at the supersingular points"
of Igusa curves $I_{(a,b,u)}:=\Ig_{\max(a,b)}$ indexed by
triples $(a,b,u)$ with $a,b$ nonnegative integers satisfying $a+b=r$
and $u$ ranging over all elements of $(\Z/p^{\min(a,b)}\Z)^{\times}$.
Using Rosenlicht's explicit description of the relative dualizing sheaf
$\omega_{\o{\X}_r/\F_p}$ as a certain sheaf of meromorphic differentials
on the normalization $\nor{\o{\X}}_r$ of $\o{\X}_r$,
we now relate the ordinary 
part of the cohomology $H(\o{\X}_r/\F_p)$ given by (\ref{ExSeq:SpecialFiber}) 
to the de Rham cohomology of the Igusa tower.

For notational ease, as in Remark \ref{MWGood} we write $I_r^{\infty}:=I_{(r,0,1)}$ and 
$I_r^0:=I_{(0,r,1)}$ for the two ``good" components of $\o{\X}_r$,
each of which is a copy of $\Ig_r$.
For $\star=0,\infty$, we denote by $i_r^{\star}: I_r^{\infty}\hookrightarrow \o{\X}_r$ the canonical closed immersion,
and for $s\le r$, we 
write simply $\pr:I_r^{\star}\rightarrow I_s^{\star}$ for the 
the degeneracy map (\ref{Vmapsch}) on Igusa curves.
The diamond operator and Hecke correspondences on $\X_r$ and $I_r^{\star}\simeq \Ig_r$
are defined in Appendix \ref{tower} (see especially Definition 
\ref{Def:HeckeModuliProblem}). Each of these correspondences $T$
induces endomorphisms $T$ and $T^*$
of $H^0(\o{\X}_r,\omega_{\o{\X}_r})$ via 
Lemma \ref{ReductionCompatibilities} (\ref{BaseChngDiagram})
and (\ref{HeckeDef}) (see below),
and of
$H^0(I_r^{\star},\Omega^1_{I_r^{\star}}(\SS))$ via (\ref{HeckeDef}).
There are thus two possible ways of viewing each of these cohomology groups
as modules over the group ring $\F_p[\Delta/\Delta_r]$ of diamond operators at $p$:
either via the ``usual" action, or the adjoint action.
We give $e_r^*H^0(\o{\X}_r,\omega_{\o{\X}_r})$ (respectively 
$e_rH^0(\o{\X}_r,\omega_{\o{\X}_r})$) the structure induced by
the adjoint (respectively usual) diamond operator action,
which induces the {\em same} $\F_p[\Delta_1/\Delta_r]$-module
structure arising from the natural $\H_r^*$ (respectively $\H_r$) module structure
and the $\Lambda$-algebra structure of $\H^*$ (respectively $\H$)
fixed in \S\ref{Notation}.
On the other hand, 
in what follows we will {\em always} give $H^0(I_r^{\star},\Omega^1_{I_r^{\star}}(\SS))$
the $\F_p[\Delta/\Delta_r]$-module structure 
determined by 
requiring that $ u\in \Delta$ act as $\langle u\rangle$ (and {\em not} the adjoint $\langle u\rangle^* = \langle u^{-1}\rangle$).

\begin{proposition}\label{charpord}
	For each positive integer $r$, pullback of differentials along		
	$i_r^{0}$ $($respectively $i_r^{\infty}$$)$ induces a natural 
	isomorphism of $\F_p[\Delta/\Delta_r]$-modules
		\begin{equation}
				\xymatrix{
					{e_r^*H^0(\o{\X}_r,\omega_{\o{\X}_r})} \ar[r]^-{\simeq}_-{(i_r^{0})^*} &
					{H^0(I_r^{0},\Omega^1_{I_r^{0}}(\SS))^{V_{\ord}}}
					},
		\ \text{resp.}\ 
				\xymatrix{
					{e_rH^0(\o{\X}_r,\omega_{\o{\X}_r})} \ar[r]^-{\simeq}_-{(i_r^{\infty})^*} &
					{H^0(I_r^{\infty},\Omega^1_{I_r^{\infty}}(\SS))^{V_{\ord}}}
					}.
					\label{I'compIsom}			
		\end{equation}
	that is equivariant for the Hecke operators $T_{\ell}^*$, $U_{\ell}^*$
	and $\langle v\rangle_N^*$
	$($respectively $T_{\ell}$, $U_{\ell}$
	and $\langle v\rangle_N$$)$ and 		
	$\Gamma$-equivariant for the geometric inertia action 
	on $\o{\X}_r$ and the action $\gamma\mapsto \langle \chi(\gamma)^{-1}\rangle^{*}$ 
	$($respectively the trivial action$)$
	on 
	$H^0(I_r^{\star},\Omega^1_{I_r^{\star}}(\SS))$
	for $\star=0$ $($resp. $\star=\infty$$)$.	
	The isomorphisms $(\ref{I'compIsom})$ 
	induce identifications that are
	compatible with change in $r$:
	the diagrams formed
	by taking 
	the interior or the exterior arrows
	\begin{equation}
		\begin{gathered}
			\xymatrix@R=30pt{
				{e_r^*H^0(\o{\X}_r,\omega_{\o{\X}_r})} 
				\ar@<-0.5ex>[r]_-{\langle p\rangle_N^{r} (i_r^0)^*}
				\ar@<0.5ex>[r]^-{F_*^r (i_r^0)^*} \ar@<-0.5ex>[d]_-{\pr_*} &
				{H^0(I_r^{0},\Omega^1_{I_r^{0}}(\SS))^{V_{\ord}}} \ar@<0.5ex>[d]^-{\pr_*}  \\
				{e_{s}^*H^0(\o{\X}_{s},\omega_{\o{\X}_{s}})} 
				\ar@<-0.5ex>[r]_-{F_*^s (i_s^0)^*}
				\ar@<0.5ex>[r]^-{\langle p\rangle_N^{s} (i_s^0)^*}
				\ar@<-0.5ex>[u]_-{\ps^*} &
				{H^0(I_s^{0},\Omega^1_{I_s^{0}}(\SS))^{V_{\ord}}} \ar@<0.5ex>[u]^-{\pr^*} 
			}
		\quad\raisebox{-24pt}{and}\quad
			\xymatrix@R=30pt{
				{e_rH^0(\o{\X}_r,\omega_{\o{\X}_r})} \ar@<-0.5ex>[r]_-{(i_r^{\infty})^*}
				\ar@<0.5ex>[r]^-{F_*^r (i_r^{\infty})^*}
				\ar@<-0.5ex>[d]_-{\ps_*} &
				{H^0(I_r^{\infty},\Omega^1_{I_r^{\infty}}(\SS))^{V_{\ord}}} \ar@<0.5ex>[d]^-{\pr_*}  \\
				{e_{s}H^0(\o{\X}_{s},\omega_{\o{\X}_{s}})} \ar@<-0.5ex>[r]_-{F_*^s (i_s^{\infty})^*}
				\ar@<0.5ex>[r]^-{(i_s^{\infty})^*}
				\ar@<-0.5ex>[u]_-{\pr^*} &
				{H^0(I_s^{\infty},\Omega^1_{I_s^{\infty}}(\SS))^{V_{\ord}}} \ar@<0.5ex>[u]^-{\pr^*} 
			}			
		\end{gathered}\label{rCompatDiagrams}
		\end{equation}
	are all commutative for $s\le r$.
	The same assertions hold true if we replace $\o{\X}_r$ with $\nor{\o{\X}}_r$ and $\Omega^1_{I_r^{\star}}(\SS)$
	with $\Omega^1_{I_r^{\star}}$ throughout.
		\end{proposition}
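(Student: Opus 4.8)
The plan is to use Rosenlicht's explicit description of the relative dualizing sheaf to realize $H^0(\o{\X}_r,\omega_{\o{\X}_r})$ as a space of meromorphic differentials on the normalization $\nor{\o{\X}}_r$, and then to show that the idempotent $e_r^*$ (respectively $e_r$) ``contracts'' this space onto the single good component $I_r^0$ (respectively $I_r^\infty$). By Proposition \ref{redXr} we have $\nor{\o{\X}}_r=\bigsqcup_{(a,b,u)} I_{(a,b,u)}$, a disjoint union of smooth Igusa curves, and Rosenlicht's theory (Appendix \ref{GD}) identifies a global section of $\omega_{\o{\X}_r}$ with a tuple of differentials $\eta_{(a,b,u)}\in H^0(I_{(a,b,u)},\Omega^1_{I_{(a,b,u)}}(\SS))$, each with at worst simple poles along $\SS$, subject to the residue-matching conditions at the supersingular crossing points. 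Under this identification the map $(i_r^0)^*$ (respectively $(i_r^\infty)^*$) is the projection onto the component indexed by $(0,r,1)$ (respectively $(r,0,1)$), so the content of $(\ref{I'compIsom})$ is that this projection is an isomorphism onto the $V$-ordinary subspace after applying $e_r^*$ (respectively $e_r$).

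The heart of the argument is the computation of the action of the correspondences $U_p$ and $U_p^*$ on this space of differentials. Here I would combine Ulmer's description of $U_p$ and $U_p^*$ on $\nor{\o{\X}}_r$ recorded in Proposition \ref{UlmerProp} with the fundamental fact that pullback by absolute Frobenius annihilates differential forms in characteristic $p$: this forces each correspondence to act through only its \emph{separable} constituent, so that $U_p^*$ (respectively $U_p$) shifts the tuple $(\eta_{(a,b,u)})$ strictly toward the component $I_r^0$ (respectively $I_r^\infty$) except on that good component itself, where it acts as the Cartier operator $V$ (Proposition \ref{MFmodp}, together with the Atkin--Lehner interchange of $U_p$ with $U_p^*$ and of the two good components). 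Invoking the Fitting decomposition of Lemma \ref{HW} for the semilinear operator $U_p^*$, the ordinary projector $e_r^*$ then kills the strictly shifting (topologically nilpotent) part and identifies $e_r^*H^0(\o{\X}_r,\omega_{\o{\X}_r})$, via $(i_r^0)^*$, with the subspace of $H^0(I_r^0,\Omega^1_{I_r^0}(\SS))$ on which $V$ is bijective, namely $H^0(I_r^0,\Omega^1_{I_r^0}(\SS))^{V_{\ord}}$; injectivity of the projection comes from the fact that the shift structure lets one recover every $\eta_{(a,b,u)}$ from $\eta_{(0,r,1)}$, and surjectivity from the explicit separable transition maps. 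I expect this combinatorial-geometric analysis of how the correspondences permute and contract the Igusa components---keeping track of residues at the crossings and isolating the separable contributions that survive on differentials---to be the main obstacle.

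With the isomorphisms $(\ref{I'compIsom})$ in hand, the Hecke-equivariance for the operators $T_\ell^*,U_\ell^*,\langle v\rangle_N^*$ (respectively the unstarred operators) is immediate from the moduli-theoretic definition of these correspondences (Appendix \ref{tower}) and the compatibility of the identification with the correspondence action used to produce it. For the $\Gamma$-equivariance I would trace the geometric inertia action on $\o{\X}_r$ through the identification: the semilinear $\Gamma$-action on $\X_r$ encodes that the generic fiber descends to $\Q_p$, and on the special fiber it permutes the components $I_{(a,b,u)}$ while stabilizing the good components $I_r^0$ and $I_r^\infty$ (for which $u=1$ is forced). The appearance of the twisting character $\gamma\mapsto \langle \chi(\gamma)^{-1}\rangle^{*}$ on the $I_r^0$-side (and the trivial action on the $I_r^\infty$-side) then comes from the dependence of the chosen identification $I_r^0\simeq \Ig_r$ on the fixed root of unity $\varepsilon^{(r)}$, on which $\gamma$ acts through $\chi(\gamma)$, translated into a diamond operator via the moduli description.

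Finally, for the change-in-$r$ compatibilities $(\ref{rCompatDiagrams})$ I would use the base-change compatibility of Lemma \ref{ReductionCompatibilities} together with the explicit description of the reductions of the degeneracy maps $\pr,\ps:X_{r+1}\to X_r$ on special fibers from Appendix \ref{tower} and Proposition \ref{UlmerProp}, observing that on the good components they induce the Igusa degeneracy maps $\pr:I_{r+1}^\star\to I_r^\star$. The two horizontal arrows in each diagram, namely $(i^\star)^*$ and $F_*^r(i^\star)^*$ (with the $\langle p\rangle_N$ twist on the $I_r^0$ side), record precisely the two separable/inseparable constituents of the correspondence on the special fiber identified above, and their commutativity with the trace maps $\pr_*$ and the pullbacks $\pr^*$ follows from the functoriality of pullback and trace of differentials. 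The concluding assertion, with $\o{\X}_r$ replaced by $\nor{\o{\X}}_r$ and $\Omega^1_{I_r^\star}(\SS)$ by $\Omega^1_{I_r^\star}$, follows by running the identical argument with the honest dualizing sheaf $\omega_{\nor{\o{\X}}_r}=\Omega^1_{\nor{\o{\X}}_r}$ of the smooth curve $\nor{\o{\X}}_r$ in place of the Rosenlicht sheaf, the matching of $V$-ordinary subspaces being guaranteed by Lemma \ref{sspoles}.
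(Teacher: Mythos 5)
Your overall strategy is the same as the paper's: identify $H^0(\o{\X}_r,\omega_{\o{\X}_r})$ with tuples of meromorphic differentials on $\nor{\o{\X}}_r$ via Rosenlicht's theorem, feed Ulmer's description of $U_p,U_p^*$ (Proposition \ref{UlmerProp}) through the fact that $F^*$ kills differentials in characteristic $p$, and let the ordinary projector contract everything onto the good component. But there is a genuine gap at the crux, namely the \emph{surjectivity} of $(i_r^0)^*$ onto $H^0(I_r^0,\Omega^1_{I_r^0}(\SS))^{V_{\ord}}$. The Fitting decomposition of Lemma \ref{HW} together with the shift structure does give injectivity exactly as you sketch: if $\eta_{(0,r,1)}=0$ then $(U_p^*)^n\eta=0$ for $n\ge r$ by the analogue of Corollary \ref{Cor:UpHighPower}, and $U_p^*$ is invertible on the image of $e_r^*$. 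It does \emph{not} show that every $V$-ordinary $\nu$ on $I_r^0$ is hit: a priori the image of $e_r^*H^0(\o{\X}_r,\omega_{\o{\X}_r})$ under the projection could be a proper $V$-stable subspace. To rule this out one must exhibit, for each such $\nu$, a global section of the dualizing sheaf restricting to $\nu$. The ``explicit separable transition maps'' you invoke do dictate what the candidate tuple must be on every component --- this is the paper's map $\gamma_r^0$ of (\ref{gammadef}), built from $\pr_*$, $F_*^{-1}$, diamond operators and powers of $\langle p\rangle_N$ --- but the nontrivial point, which is the actual content of Lemma \ref{Lem:gammaMaps}, is that this tuple satisfies Rosenlicht's residue-matching conditions at every supersingular crossing, i.e.\ genuinely lies in $H^0(\o{\X}_r,\omega_{\o{\X}_r})$. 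That verification is a real computation: it uses that the diamond operators fix the supersingular points (via (\ref{TateFormula})), that $V$-ordinary differentials have at worst simple poles so that $\res_y(s\omega)=s(x)\res_y(\omega)$, the residue-trace formula together with total ramification of $\pr$ over $\SS$, the vanishing mod $p$ of the number of units $u$ indexing components with $\min(a,b)\ge 2$ (which collapses the residue sum to two terms), and finally the identity $\res_x(V\eta)^p=\res_x(\eta)$ of Proposition \ref{CartierOp} (\ref{CartierResidue}) applied after writing $\nu=F_*\xi$. Your proposal never engages with this verification, and without it the isomorphism claim is unproven.

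A secondary inaccuracy feeds the same issue: you assert at the outset that Rosenlicht's description puts each component $\eta_{(a,b,u)}$ in $H^0(I_{(a,b,u)},\Omega^1(\SS))$ with at worst simple poles. That is false for the full space $H^0(\o{\X}_r,\omega_{\o{\X}_r})$: the components of $\o{\X}_r$ do not cross transversally at the supersingular points, and sections of the dualizing sheaf can have poles along $\SS$ of order up to a constant $N_r>1$ (this is why the paper's Lemma \ref{Cor:PBisWD} first lands in $H^0(I_r^\infty,\Omega^1(N_r\cdot\SS))$, citing \cite[Lemma 5.2.2]{GDBC}). Simple poles hold only on the ordinary part, after using the intertwining $(i_r^\star)^*\circ U_p = F_*\circ (i_r^\star)^*$ and the pole-improving property of the Cartier operator via Lemma \ref{sspoles}. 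So even your identification of the target of $(i_r^\star)^*$ as $H^0(I_r^\star,\Omega^1_{I_r^\star}(\SS))^{V_{\ord}}$ requires an argument you have skipped. The equivariance and change-in-$r$ assertions, by contrast, are handled in your proposal essentially as in the paper (via Propositions \ref{AtkinInertiaCharp} and \ref{pr1desc}) and are fine once the isomorphism itself is established.
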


	We will prove Proposition \ref{charpord} via a series of Lemmas.  To proceed with the proof, 
	we first fix some notation and ideas.  To begin with, 
	we may and do work over $k:=\o{\F}_p$.  If $X$ is any $\F_p$-scheme, 
	we will abuse notation slightly by 
	again writing $X$ for the base change of $X$ to $k$,
	and we write $F:X\rightarrow X$ for the base change of the absolute Frobenius of
	$X$ over $\F_p$ to $k$.
	
	Let $\nor{\o{\X}}_r\rightarrow \o{\X}_r$ be the
	normalization map; by Proposition \ref{redXr}, we know that 
	$\nor{\o{\X}}_r$ is the disjoint union of proper smooth and irreducible
	Igusa curves $I_{(a,b,u)}=\Ig_{\max(a,b)}$ indexed by triples $(a,b,u)$
	with $a,b$ nonnegative integers satisfying $a+b=r$ and 
	$u\in(\Z/p^{\min(a,b)}\Z)^{\times}$.  Rosenlicht's explicit description
	of the relative dualizing sheaf (see Proposition \ref{Rosenlicht})
	provides a functorial
	injection of $k$-vector spaces
	\begin{equation}
		 \xymatrix{
		 	{H^0(\o{\X}_r,\omega_{\o{\X}_r})}\ar@{^{(}->}[r] & 
			{H^0(\nor{\o{\X}}_r,\underline{\Omega}^1_{k(\nor{\o{\X}}_r)})\simeq 
			\prod\limits_{(a,b,u)}\Omega^1_{k(I_{(a,b,u)})}}
			}\label{dualizing2prod}
	\end{equation}
	with image precisely those elements $(\eta_{(a,b,u)})$ of the product that 
	are holomorphic outside the supersingular points and
	satisfy
	$\sum \res_{x_{(a,b,u)}}(s\eta_{(a,b,u)}) =0$
	for each supersingular point $x\in \o{\X}_r(k)$ and all $s\in \O_{\o{\X}_r,x}$, where
	$x_{(a,b,u)}$ is the unique point of $I_{(a,b,u)}$ lying
	over $x$ and the sum is over all triples $(a,b,u)$ as above.
	We henceforth identify $H^0(\o{\X}_r,\omega_{\o{\X}_r})$ with a subspace of
	the space of meromorphic differential forms on $\nor{\o{\X}}_r$ via
	(\ref{dualizing2prod}), and for any meromorphic differential $\eta$, we denote by
	$\eta_{(a,b,u)}$ the $(a,b,u)$-component of $\eta$.

	The correspondence $U_p:=(\pi_1,\pi_2)$ 
	on $\X_r$ is given by the 
	degeneracy maps $\pi_1,\pi_2:\Y_r\rightrightarrows \X_r$ of (\ref{Upcorr}) and 
	yields, as in (\ref{HeckeDef}), endomorphisms $U_p:=(\pi_1)_*\circ\pi_2^*$ and $U_p^*:=(\pi_2)_*\circ\pi_1^*$
	of $H^0(\X_r,\omega_{\X_r/R_r})$;
	we will again denote by $U_p$ and $U_p^*$ the induced endomorphisms $U_p\otimes 1$ and $U_p^*\otimes 1$ of
	\begin{equation*}
		H^0(\o{\X}_r,\omega_{\o{\X}_r}) \simeq H^0(\X_r,\omega_{\X_r/R_r})\otimes_{R_r} k,
	\end{equation*}
	where the isomorphism is
	the canonical one of Lemma \ref{ReductionCompatibilities} (\ref{BaseChngDiagram}).	
	By the functoriality of normalization,
	we have an induced correspondence $U_p:=(\nor{\o{\pi}}_1,\nor{\o{\pi}}_2)$
	on $\nor{\o{\X}}_r$,
	and we write $U_p$ and $U_p^*$ for the resulting endomorphisms (\ref{HeckeDef})
	of $H^0(\nor{\o{\X}}_r,\underline{\Omega}^1_{k(\nor{\o{\X}}_r)})$.
	By Lemma \ref{ReductionCompatibilities} (\ref{PTBCCompat}), 
	the map (\ref{dualizing2prod}) is then $U_p$ and $U_p^*$-equivariant.  
	The  Hecke correspondences away from $p$
	and the diamond operators act on the source of (\ref{dualizing2prod})
	via ``reduction modulo $p$" and on the target via the induced
	correspondences in the usual way (\ref{HeckeDef}), and 
	the map (\ref{dualizing2prod}) is compatible with these
	actions 
	thanks to Lemma 
	\ref{ReductionCompatibilities} (\ref{PTBCCompat}).
	  Similarly, 
	the semilinear ``geometric inertia" action of $\Gamma$
	on $\X_r$ induces a linear action on $\nor{\o{\X}}_r$ as in Proposition
	\ref{AtkinInertiaCharp}, and the map (\ref{dualizing2prod}) is equivariant
	with respect to these actions. 		
	
	With these preliminaries out of the way, our first task is to describe the action of $U_p$ and $U_p^*$ on the space of meromorphic
	differentials on $\nor{\o{\X}}_r$:
	
	\begin{lemma}\label{Lem:UpOnMero}
	 For {\em any} meromorphic differential $\eta = (\eta_{(a,b,u)})$ on $\nor{\o{\X}}_r$, we have
	\begin{subequations}
	\begin{equation}
			\left({U_p}\eta \right)_{(a,b,u)} = \begin{cases}
				 F_*\eta_{(r,0,1)} &:\quad (a,b,u)=(r,0,1)\\
				\pr_*\eta_{(a+1,b-1,u)} &:\quad 0 < b \le a \\
				\sum\limits_{\substack{u'\in (\Z/p^{a+1}\Z)^{\times} \\ u'\equiv u\bmod p^{a}}} 
				\langle u'\rangle \eta_{(a+1,b-1,u)}
				 &:\quad r\ \text{odd},\ a=b-1\\
				 \sum\limits_{\substack{u'\in (\Z/p^{a+1}\Z)^{\times} \\ u'\equiv u\bmod p^{a}}} 
				\rho^*\langle u'\rangle \eta_{(a+1,b-1,u')}
				 &:\quad r\ \text{even},\ a=b-2\\				 
				\sum\limits_{\substack{u'\in (\Z/p^{a+1}\Z)^{\times}\\ u'\equiv u\bmod p^a}} 
				\pr^*\eta_{(a+1,b-1,u')} &:\quad 0 \le a < b-2\\
			\end{cases}\label{Up1}
	\end{equation}
	and
	\begin{equation}
			\left(U_p^*\eta \right)_{(a,b,u)} = \begin{cases}
				\langle p\rangle_N^{-1}F_*\eta_{(0,r,1)} &:\quad (a,b,u)=(0,r,1)\\
				\pr_*\eta_{(a-1,b+1,u)} &:\quad 0 < a < b \\
				\langle u\rangle^{-1}\pr_*\eta_{(a-1,b+1,u)} &:\quad r\ \text{even},\ b=a\\				 
				 \sum\limits_{\substack{u'\in (\Z/p^{b+1}\Z)^{\times} \\ u'\equiv u\bmod p^{b}}} 
				\langle u'\rangle^{-1} \eta_{(a-1,b+1,u')}
				 &:\quad r\ \text{odd},\ b=a-1\\
				\sum\limits_{\substack{u'\in (\Z/p^{b+1}\Z)^{\times}\\ u'\equiv u\bmod p^b}} 
				\pr^*\eta_{(a-1,b+1,u')} &:\quad 0 \le b < a-1\\
			\end{cases}\label{Up2}
	\end{equation}
	\end{subequations}
\end{lemma}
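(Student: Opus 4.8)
The plan is to read the formulas off Ulmer's explicit description of the $U_p$- and $U_p^*$-correspondences on $\nor{\o{\X}}_r$ recorded in Proposition \ref{UlmerProp}, translating each component map into its effect on meromorphic differentials via the defining identities $U_p = (\nor{\o{\pi}}_1)_*\circ(\nor{\o{\pi}}_2)^*$ and $U_p^* = (\nor{\o{\pi}}_2)_*\circ(\nor{\o{\pi}}_1)^*$. Because the statement concerns \emph{all} meromorphic differentials, no Rosenlicht residue conditions need be imposed: it suffices to work component by component on each $\Omega^1_{k(I_{(a,b,u)})}$, so the computation reduces entirely to understanding how the correspondence carries the irreducible components of $\nor{\o{\X}}_r$ to one another. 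By functoriality of normalization the correspondence $(\pi_1,\pi_2)$ on $\o{\X}_r$ lifts to a correspondence $(\nor{\o{\pi}}_1,\nor{\o{\pi}}_2)$ out of the normalization $\nor{\o{\Y}}_r$, itself a disjoint union of Igusa curves, and Proposition \ref{UlmerProp} identifies, for each such component, the two target components in $\nor{\o{\X}}_r$ together with the precise shape of the restricted maps: a degeneracy map $\pr$, an absolute Frobenius $F$, or an isomorphism composed with a diamond operator.

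First I would fix a target component $I_{(a,b,u)}$ and enumerate the components of $\nor{\o{\Y}}_r$ lying over it under $\nor{\o{\pi}}_1$. In the generic range $0<b\le a$ there is a single such component $W$, on which $\nor{\o{\pi}}_2$ is an isomorphism onto $I_{(a+1,b-1,u)}\simeq\Ig_{a+1}$ while $\nor{\o{\pi}}_1$ is the degeneracy map $\pr\colon\Ig_{a+1}\to\Ig_a\simeq I_{(a,b,u)}$; computing $(\nor{\o{\pi}}_1)_*\circ(\nor{\o{\pi}}_2)^*$ then returns exactly $\pr_*\eta_{(a+1,b-1,u)}$, as the comparison of the relevant $\max$-values confirms. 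On the good component $(r,0,1)$ one cannot decrease $b$, and Ulmer's description shows the relevant self-correspondence is carried by the absolute Frobenius, so that $U_p$ acts as $F_*$; the symmetric analysis of $U_p^*$ on $(0,r,1)$ yields $\langle p\rangle_N^{-1}F_*$, the diamond factor recording the twist of the $\mu_N$-structure by the $p$-isogeny.

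The genuinely delicate part is the behavior near the turning point $a\approx b$, where the outcome depends on the parity of $r$ and where components indexed by distinct lifts $u'\in(\Z/p^{a+1}\Z)^\times$ of a fixed $u$ come into play. Here I would extract from Proposition \ref{UlmerProp}, for each component of $\nor{\o{\Y}}_r$ in the fiber of $\nor{\o{\pi}}_1$ over $I_{(a,b,u)}$, whether the restriction of $\nor{\o{\pi}}_1$ is an isomorphism or a degeneracy map (equivalently, whether $\max(a,b)$ equals or exceeds $\max(a+1,b-1)$, which at the boundary is governed by the parity of $r$), whether a pullback $\pr^*$ is thereby forced, and which diamond twist $\langle u'\rangle$ is contributed by the action on the level structure; summing the pushforward--pullback contributions over the fiber then reproduces the three middle lines of (\ref{Up1}). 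Concretely, for $r$ odd with $a=b-1$ one has $\max(a,b)=\max(a+1,b-1)=a+1$ and only diamond twists appear; for $r$ even with $a=b-2$ the maximum jumps, forcing both $\pr^*$ and a diamond twist; and for $0\le a<b-2$ the map $\pr^*$ appears without any twist. The main obstacle is exactly this parity bookkeeping: matching Ulmer's component-level data against the distinction between $\max(a+1,b-1)$ and $\max(a,b)$ at the boundary, and correctly pinning down both the diamond twists $\langle u'\rangle$ and the direction ($\pr_*$ versus $\pr^*$) of the degeneracy maps. Once this is settled, the formula (\ref{Up2}) for $U_p^*$ follows from the identical argument with the roles of $\nor{\o{\pi}}_1$ and $\nor{\o{\pi}}_2$ interchanged and with the $\mu_N$-twists tracked on the $(0,r,1)$ side.
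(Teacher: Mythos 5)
Your overall route is the same as the paper's: the intended proof is exactly the component-by-component bookkeeping you describe, reading the two legs of the correspondence off Proposition \ref{UlmerProp} and computing $U_p = (\nor{\o{\pi}}_1)_*\circ(\nor{\o{\pi}}_2)^*$ and $U_p^* = (\nor{\o{\pi}}_2)_*\circ(\nor{\o{\pi}}_1)^*$ on each $\Omega^1_{k(I_{(a,b,u)})}$, and your handling of the generic range $0<b\le a$ and of the parity cases near $a\approx b$ (which maps are $\pr$ versus isomorphisms, which diamond twists $\langle u'\rangle$ appear, and the direction $\pr_*$ versus $\pr^*$) is the correct bookkeeping.

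There is, however, one genuine omission, and it is precisely the third ingredient the paper's proof cites: the fact that $F^*$ annihilates every meromorphic differential in characteristic $p$ (since $F^*(f\,\mathrm{d}g)=f^p\,\mathrm{d}(g^p)=pf^pg^{p-1}\,\mathrm{d}g=0$). Your enumeration of the fibers is incomplete at the extremal components. Over $I_{(0,r,1)}$ the fiber of $\nor{\o{\pi}}_1$ contains, in addition to the components $J_{(1,r,u')}$ that you account for, the extremal component $J_{(0,r+1,1)}$, on which Proposition \ref{UlmerProp} gives $\nor{\o{\pi}}_1=\langle p\rangle_N$ and $\nor{\o{\pi}}_2=F$; this component therefore contributes the a priori nonzero term $(\langle p\rangle_N)_*F^*\eta_{(0,r,1)}$ to $(U_p\eta)_{(0,r,1)}$, which is absent from (\ref{Up1}). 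Symmetrically, $J_{(r+1,0,1)}$ (on which $\nor{\o{\pi}}_2=\id$ and $\nor{\o{\pi}}_1=F$) contributes $F^*\eta_{(r,0,1)}$ to $(U_p^*\eta)_{(r,0,1)}$, which is absent from (\ref{Up2}). Since the lemma is asserted for \emph{arbitrary} meromorphic $\eta$ --- no ordinarity or holomorphicity hypothesis is available --- these terms vanish for one reason only: pullback by Frobenius kills differential forms. You correctly record the surviving occurrences of Frobenius, where $F$ is the \emph{pushforward} leg (yielding $F_*\eta_{(r,0,1)}$ and $\langle p\rangle_N^{-1}F_*\eta_{(0,r,1)}$), but you never address its dual occurrences as the \emph{pullback} leg; as written, your computation would terminate with two spurious extra terms and would not yield the stated formulas. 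The fix is a single observation, but it is an essential one.
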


\begin{proof}
	This is an easy exercise using the definition of $U_p$, the explicit description of 
	the maps $\nor{\o{\pi}}_1$ and $\nor{\o{\pi}}_2$
	given in Proposition \ref{UlmerProp}, and the fact that $F^*$ kills any global 
	meromorphic differential form on a scheme of characteristic $p$.	
\end{proof}

	Observe that for $0 < b \le r$, the $(a,b,u)$-component of 
	${U_p}\eta$ depends only on the $(a+1,b-1,u')$-components of $\eta$ for varying $u'$, and similarly
	for $0<a\le r$ the $(a,b,u)$-component of $U_p^*\eta$ depends only on the $(a-1,b+1,u')$-components 
	of $\eta$.  This fact is crucial for our purposes, and manifests itself clearly in the following:

\begin{corollary}\label{Cor:UpHighPower}
	For any $n\ge r\ge 1$, and any meromorphic differential $\eta = (\eta_{(a,b,u)})$ on $\nor{\o{\X}}_r$, we have
	\begin{subequations}
	\begin{equation}
		\left(U_p^n\eta \right)_{(a,b,u)} = \begin{cases}
				\pr_*^b F_*^{n-b}\eta_{(r,0,1)} &:\quad  b \le a \\
				\sum\limits_{\substack{u'\in (\Z/p^{b}\Z)^{\times}\\ u'\equiv u\bmod p^a}} 
				\langle u'\rangle \pr_*^a F_*^{n-b}\eta_{(r,0,1)} &:\quad a < b 			
				\end{cases}\label{Upn1}
	\end{equation}
	and
	\begin{equation}
		\left({U_p^*}^n\eta \right)_{(a,b,u)} = \begin{cases}
				\pr_*^a\langle p\rangle_N^{a-n}F_*^{n-a}\eta_{(0,r,1)} &:\quad  a < b \\
				\sum\limits_{\substack{u'\in (\Z/p^{a}\Z)^{\times}\\ u'\equiv u\bmod p^b}} 
				\langle u'\rangle^{-1} \pr_*^b\langle p\rangle_N^{a-n} F_*^{n-a}\eta_{(0,r,1)} &:\quad b \le a				
				\end{cases}\label{Upn2}
	\end{equation}
	\end{subequations}
\end{corollary}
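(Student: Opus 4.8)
The plan is to establish both displays by induction on $n\ge r$, treating $U_p$ in detail since the argument for $U_p^*$ is its mirror image (with the privileged component $(0,r,1)$ and the extra factor $\langle p\rangle_N^{-1}$ from the first line of (\ref{Up2}) in place of $(r,0,1)$ and the first line of (\ref{Up1})). The inductive step is essentially free: the first line of (\ref{Up1}) says $(U_p\xi)_{(r,0,1)}=F_*\xi_{(r,0,1)}$ for any meromorphic $\xi$, whence $(U_p^m\eta)_{(r,0,1)}=F_*^m\eta_{(r,0,1)}$ for all $m\ge0$ by a one-line induction. Writing $U_p^{n+1}\eta=U_p^n(U_p\eta)$ and feeding $\zeta:=U_p\eta$ into the case-$n$ formula, the only datum that enters is $\zeta_{(r,0,1)}=F_*\eta_{(r,0,1)}$; since $F_*^{n-b}\zeta_{(r,0,1)}=F_*^{(n+1)-b}\eta_{(r,0,1)}$, the case-$n$ formula turns into the case-$(n+1)$ formula verbatim. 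Everything therefore reduces to the base case $n=r$.

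For $n=r$ I iterate Lemma \ref{Lem:UpOnMero}. The structural fact isolated just before the statement is that, for $b\ge1$, the $(a,b,u)$-component of $U_p\xi$ is built only from the bidegree-$(a+1,b-1)$ components of $\xi$; peeling off one factor of $U_p$ at a time therefore walks the bidegree along the chain $(a,b)\rightsquigarrow(a+1,b-1)\rightsquigarrow\cdots\rightsquigarrow(r,0)$, arriving at the privileged component after exactly $b$ steps, after which the remaining $r-b$ factors contribute $F_*^{r-b}$. It then remains to compose the $b$ transition maps read off from (\ref{Up1}). When $b\le a$, every intermediate bidegree $(a+j,b-j)$ satisfies $b-j\le a+j$, so each transition is the trace $\pr_*$ and the composite is $\pr_*^b$; together with $F_*^{r-b}\eta_{(r,0,1)}$ this is exactly the first line of (\ref{Upn1}).

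The real work is the regime $a<b$, which I expect to be the main obstacle, since here the chain crosses the diagonal $a'=b'$: the first transitions are pullbacks $\pr^*$ (each carrying a sum over the $p$ lifts of the third coordinate), one boundary transition carries a diamond twist (occurring at $a'=b'-1$ for $r$ odd and at $a'=b'-2$ for $r$ even), and the remaining transitions are traces $\pr_*$. To collapse the resulting composite of the shape $\pr^*\cdots\pr^*\cdot(\text{twist})\cdot\pr_*\cdots\pr_*$, I factor the traces that descend from $\Ig_r$ to the valley level through $\Ig_b$ and observe that, over the range of levels where pullbacks and traces overlap, the operator $\pr^*\pr_*$ attached to the relevant generically Galois cover of Igusa curves acts on meromorphic differentials as $\sum_g\langle g\rangle$, the sum over its deck transformations acting through the diamond operators (the standard identity $\pr^*\pr_*=\sum_g g^*$ for a Galois cover, visible on the function fields through which we work via (\ref{dualizing2prod})). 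Combining this sum of diamond operators with the lift-sums already present in the $\pr^*$ transitions and with the boundary diamond twist reconstitutes precisely the sum $\sum_{u'\equiv u\bmod p^a}\langle u'\rangle$, leaving behind $\pr_*^a$; this yields the second line of (\ref{Upn1}). The two parities of $r$ are handled by the same mechanism but must be bookkept separately.

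Tensoring the collapsed composite with $F_*^{r-b}\eta_{(r,0,1)}$ finishes the base case, and the inductive step of the first paragraph then delivers the claim for all $n\ge r$. The formulas (\ref{Upn2}) for $U_p^*$ follow verbatim from the mirror-image chain $(a,b)\rightsquigarrow(a-1,b+1)\rightsquigarrow\cdots\rightsquigarrow(0,r)$ terminating at $(0,r,1)$, using the first line of (\ref{Up2}) to supply the accumulating factor $\langle p\rangle_N^{-1}F_*$ and hence the twist $\langle p\rangle_N^{a-n}F_*^{n-a}$.
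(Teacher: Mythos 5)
Your proposal is correct and follows the same route as the paper, whose entire proof is ``this follows immediately from Lemma \ref{Lem:UpOnMero} by induction'': you simply make explicit the bookkeeping the paper leaves to the reader, namely inducting on $n$ via $(U_p\eta)_{(r,0,1)}=F_*\eta_{(r,0,1)}$ and, in the base case $n=r$, collapsing the diagonal-crossing composite using $\pr^*\pr_*=\sum_{g}\langle g\rangle$ for the Galois cover $\pr$ of Igusa curves so that the lift-sums and the boundary twist reassemble into $\sum_{u'\equiv u\bmod p^a}\langle u'\rangle\pr_*^a$. The mirror argument for $U_p^*$, with the accumulating factor $\langle p\rangle_N^{-1}F_*$ from the first line of (\ref{Up2}), is likewise exactly what the paper intends.
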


\begin{proof}
	This follows immediately from Lemma \ref{Lem:UpOnMero} by induction.
\end{proof}	

\begin{lemma}\label{Cor:PBisWD}
	Pullback of meromorphic differential forms
	along  $i_r^{\star}:I_r^{\star}\hookrightarrow \nor{\o{\X}}_r$ for $\star=\infty$ $($respectively $\star=0$$)$
	carries $e_rH^0(\o{\X}_r,\omega_{\o{\X}_r})$ $($respectively $e_r^*H^0(\o{\X}_r,\omega_{\o{\X}_r})$$)$
	into $H^0(I_r^{\star},\Omega^1_{I_r^{\star}}(\SS))^{V_{\ord}}$.  Moreover, the resulting maps
	$(\ref{I'compIsom})$ intertwine $U_p$ $($respectively $U_p^*$$)$ on source with $F_*$ $($respctively
	$\langle p\rangle_N^{-1} F_*$$)$ on target.
\end{lemma}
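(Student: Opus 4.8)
The plan is to read everything off the explicit formulas of Lemma \ref{Lem:UpOnMero} and Corollary \ref{Cor:UpHighPower}, working throughout over $k:=\o{\F}_p$ as fixed in the preamble to the proof of Proposition \ref{charpord} and identifying the pullbacks $(i_r^{\infty})^*\eta$ and $(i_r^0)^*\eta$ with the components $\eta_{(r,0,1)}$ and $\eta_{(0,r,1)}$ of $\eta$ under the Rosenlicht embedding (\ref{dualizing2prod}), since $I_r^{\infty}=I_{(r,0,1)}$ and $I_r^0=I_{(0,r,1)}$. The intertwining claim is then immediate: the first case of (\ref{Up1}) reads $(U_p\eta)_{(r,0,1)}=F_*\eta_{(r,0,1)}$ and the first case of (\ref{Up2}) reads $(U_p^*\eta)_{(0,r,1)}=\langle p\rangle_N^{-1}F_*\eta_{(0,r,1)}$. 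As $F_*=V$ is the Cartier operator, this says exactly that $(i_r^{\infty})^*$ carries $U_p$ to $V$ and $(i_r^0)^*$ carries $U_p^*$ to $\langle p\rangle_N^{-1}V$; iterating (or invoking Corollary \ref{Cor:UpHighPower} with $b=0$, resp. $a=0$) gives $(i_r^{\infty})^*(U_p^n\eta)=V^n(i_r^{\infty})^*\eta$ and $(i_r^0)^*((U_p^*)^n\eta)=(\langle p\rangle_N^{-1}V)^n(i_r^0)^*\eta$ for all $n\ge 1$.

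Next I would show that the pullback lands in the $V$-ordinary subspace, treating $\star=\infty$ first (the other case being identical up to the twist handled below). Because $e_r$ is the idempotent cutting out the part of $H^0(\o{\X}_r,\omega_{\o{\X}_r})$ on which $U_p$ is invertible, any $\eta\in e_rH^0(\o{\X}_r,\omega_{\o{\X}_r})$ can be written $\eta=U_p^n\eta_n$ with $\eta_n:=U_p^{-n}\eta\in e_rH^0(\o{\X}_r,\omega_{\o{\X}_r})$ for every $n$. By the Rosenlicht description (\ref{dualizing2prod}), $(i_r^{\infty})^*$ sends the fixed finite-dimensional space $H^0(\o{\X}_r,\omega_{\o{\X}_r})$ into meromorphic differentials on $I_r^{\infty}$ that are holomorphic away from $\SS$, hence into $H^0(I_r^{\infty},\Omega^1_{I_r^{\infty}}(m\SS))$ for a single fixed $m\ge 1$; since $V$ preserves this space by Proposition \ref{CartierOp} (\ref{CartierImproves}), the iterated intertwining yields $(i_r^{\infty})^*\eta=V^n(i_r^{\infty})^*\eta_n\in\im(V^n)$ for all $n$. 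Thus $(i_r^{\infty})^*\eta\in\bigcap_n\im(V^n)=H^0(I_r^{\infty},\Omega^1_{I_r^{\infty}}(m\SS))^{V_{\ord}}$ by Lemma \ref{HW}, which Lemma \ref{sspoles} (\ref{VControl}) identifies with $H^0(I_r^{\infty},\Omega^1_{I_r^{\infty}}(\SS))^{V_{\ord}}$, giving the first half of (\ref{I'compIsom}).

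For $\star=0$ the only extra point is the diamond twist, since the operator on the target is $\langle p\rangle_N^{-1}V$ rather than $V$. As $\langle p\rangle_N$ is induced by an automorphism of the Igusa curve it commutes with $V$ by Proposition \ref{CartierOp} (\ref{CartierCommutes}), and it is a $k$-linear bijection of order prime to $p$; hence $(\langle p\rangle_N^{-1}V)^n=\langle p\rangle_N^{-n}V^n$ has the same image as $V^n$ for each $n$, so $\bigcap_n\im((\langle p\rangle_N^{-1}V)^n)=\bigcap_n\im(V^n)$ and the $(\langle p\rangle_N^{-1}V)$-ordinary subspace coincides with the $V$-ordinary one. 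Running the previous paragraph with $U_p^*$, $e_r^*$, and $\langle p\rangle_N^{-1}V$ in place of $U_p$, $e_r$, and $V$ then shows that $(i_r^0)^*$ carries $e_r^*H^0(\o{\X}_r,\omega_{\o{\X}_r})$ into $H^0(I_r^0,\Omega^1_{I_r^0}(\SS))^{V_{\ord}}$.

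Every computation here is routine once Lemma \ref{Lem:UpOnMero} is granted; the one step demanding care—and the main (modest) obstacle—is the passage to the $V$-ordinary subspace. There one must combine the invertibility of $U_p$ (resp. $U_p^*$) on the ordinary part with the pole-improving property of $V$ (Proposition \ref{CartierOp} (\ref{CartierImproves})) and the control isomorphism of Lemma \ref{sspoles} (\ref{VControl}) in order to see that the a priori higher-order poles of the pullback are forced down onto the reduced supersingular divisor $\SS$, so that the image genuinely lands in $H^0(I_r^{\star},\Omega^1_{I_r^{\star}}(\SS))^{V_{\ord}}$.
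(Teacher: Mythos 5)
Your proposal is correct and takes essentially the same route as the paper's proof: both identify $(i_r^{\star})^*$ with projection onto the $(r,0,1)$- (resp.\ $(0,r,1)$-) component, read the intertwining off the first cases of (\ref{Up1}) and (\ref{Up2}), bound the poles uniformly via Rosenlicht's description, and then use invertibility of $U_p$ (resp.\ $U_p^*$) on the ordinary part together with Lemma \ref{sspoles} (\ref{VControl}) to force the image into the $V$-ordinary part of $H^0(I_r^{\star},\Omega^1_{I_r^{\star}}(\SS))$ --- your $\bigcap_n \im(V^n)$ formulation via Lemma \ref{HW} is just a more explicit rendering of the paper's ``$F_*$ is invertible on the image.'' One harmless slip: the order of $\langle p\rangle_N$ need not be prime to $p$ (take $p=5$, $N=11$), but your argument uses only that $\langle p\rangle_N$ is a bijection commuting with $V$, so the conclusion --- and your welcome explicit treatment of the twisted case $\star=0$, which the paper dismisses as ``similar'' --- stands.
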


\begin{proof}
	We will treat the case of $\star=\infty$; the case $\star=0$ is similar.
 	We first observe that pullback of meromorphic differentials along 
		$i_r^{\infty}:I_r^{\infty}\hookrightarrow \nor{\o{\X}}_r$ is given by projection
		\begin{equation}
			\xymatrix@C=35pt{
			{H^0(\nor{\o{\X}}_r,\underline{\Omega}^1_{k(\nor{\o{\X}}_r)})\simeq 
			\prod\limits_{(a,b,u)} H^0(I_{(a,b,u)},\u{\Omega}^1_{k(I_{(a,b,u)})})} \ar[r]^-{\proj_{(r,0,1)}} & 
			{H^0(I_r^{\infty},\u{\Omega}^1_{k(I_r^{\infty})})}
			}\label{PBisProj}
		\end{equation}
		onto the $(r,0,1)$-component. 
		Thanks to Rosenlicht's description of the image of (\ref{dualizing2prod}), 
		any section $\eta=(\eta_{(a,b,u)})$ of $H^0(\o{\X}_r,\omega_{\o{\X}_r})$
		has the property that $\eta_{(a,b,u)}$ is holomorphic outside $\SS$ and, 
		due to \cite[Lemma 5.2.2]{GDBC}, has poles along $\SS$ of order bounded
		by a constant $N_r$ depending only on $r$.
		Thus, the composition
		of pullback along $i_r^{\infty}$ with (\ref{dualizing2prod}) induces a map
		\begin{equation}
			\xymatrix{
				{(i_r^{\infty})^*:e_rH^0(\o{\X}_r,\omega_{\o{\X}_r})} \ar[r] & H^0(I_r^{\infty},\Omega^1_{I_r^{\infty}}(N_r\cdot\SS))
				}\label{Map:irinfpullback}
		\end{equation}	
		which moreover satisfies $(i_r^{\infty})^*\circ U_p = F_* \circ (i_r^{\infty})^*$ 
		thanks to the description (\ref{Up1}) of Lemma \ref{Lem:UpOnMero}.
		Since $U_p$ is invertible on the source of (\ref{Map:irinfpullback}),
		it follows at once from this intertwining relation that $F_*$ is invertible on its image, and we conclude from Lemma \ref{sspoles} (\ref{VControl})
		that (\ref{Map:irinfpullback}), has image contained in $H^0(I_r^{\infty},\Omega^1_{I_r^{\infty}}(\SS))^{V_{\ord}}$,
		as claimed.
\end{proof}

In order to prove that the pullback maps (\ref{I'compIsom}) resulting from Lemma \ref{Cor:PBisWD}
are isomorphisms, we will construct the inverse mappings.
For any $r>0$ and for $\star=\infty, 0$ we now define maps
$		\xymatrix@1{
			{\gamma_r^{\star}: H^0(I_r^{\star},\Omega^1_{I_r^{\star}}(\SS))^{V_{\ord}}} \ar[r] & 
			{H^0(\nor{\o{\X}}_r,\underline{\Omega}^1_{k(\nor{\o{\X}}_r)})}
			}
$
	by
	\begin{subequations}
		\begin{equation}
			(\gamma_{r}^{\infty}(\nu))_{(a,b,u)}: = \begin{cases}
				\pr_*^bF_*^{-b}\nu &:\quad  b\le a\\
		\sum\limits_{\substack{u'\in (\Z/p^{b}\Z)^{\times}\\ u'\equiv u\bmod p^a}} 
				\langle u'\rangle \pr_*^a F_*^{-b}\nu &:\quad a < b \\
				\end{cases}\label{betadef}
		\end{equation}
		and
		\begin{equation}
		(\gamma_{r}^{0}(\nu))_{(a,b,u)}: = \begin{cases}
				\pr_*^a\langle p\rangle_N^{a}F_*^{-a}\nu &:\quad  a < b\\
		\sum\limits_{\substack{u'\in (\Z/p^{a}\Z)^{\times}\\ u'\equiv u\bmod p^b}} 
				\langle u'\rangle^{-1} \pr_*^b \langle p\rangle_N^{a} F_*^{-a}\nu_{(0,r,1)} &:\quad b \le a				

				\end{cases}\label{gammadef}
		\end{equation}
	\end{subequations}
	These maps are well-defined because $F_*=V$ is invertible on the $V$-ordinary subspace.
	
	\begin{lemma}\label{Lem:gammaMaps}
	The maps $\gamma_r^{\star}$ 
	satisfy 
	$U_p\circ \gamma_r^{\infty}=\gamma_r^{\infty}\circ F_*$  and $U_p^*\circ \gamma_r^{0}=\gamma_r^0\circ \langle p\rangle_N^{-1}F_*$.
	Via $(\ref{dualizing2prod})$, the map $\gamma_r^{\infty}$ $($respectively $\gamma_r^0$$)$ has image contained in 
	$e_rH^0(\o{\X}_r,\omega_{\o{\X}_r})$ $($respectively $e_r^*H^0(\o{\X}_r,\omega_{\o{\X}_r})$$)$.
	\end{lemma}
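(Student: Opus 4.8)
The plan is to prove the two assertions separately, carrying out the argument for $\gamma_r^{\infty}$ in detail; the case of $\gamma_r^0$ is completely parallel, the only difference being the powers of $\langle p\rangle_N$ in $(\ref{gammadef})$, which are produced by using the formula $(\ref{Up2})$ for $U_p^*$ in place of $(\ref{Up1})$ for $U_p$. For the intertwining relation $U_p\circ\gamma_r^{\infty}=\gamma_r^{\infty}\circ F_*$, I would compute both sides componentwise. Setting $\eta:=\gamma_r^{\infty}(\nu)$ and applying $(\ref{Up1})$ of Lemma \ref{Lem:UpOnMero}, one checks in each of its five cases that $(U_p\eta)_{(a,b,u)}$ is assembled from the $(a+1,b-1,u')$-components of $\eta$; substituting the definition $(\ref{betadef})$, using $\pr_*\circ\pr_*^{b-1}=\pr_*^{b}$ and $F_*^{-(b-1)}=F_*\circ F_*^{-b}$, one recovers exactly $(\gamma_r^{\infty}(F_*\nu))_{(a,b,u)}$. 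The only cases requiring attention are the ``wrap-around'' ranges $a<b$ (together with the two parity-dependent boundary cases), where one must match the diamond-operator sums over $u'\equiv u\bmod p^{a}$ on the two sides; this is a bookkeeping check on the congruence conditions. The relation $U_p^*\circ\gamma_r^0=\gamma_r^0\circ\langle p\rangle_N^{-1}F_*$ follows identically from $(\ref{Up2})$.

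Granting the intertwining relations, I would reduce the membership $\gamma_r^{\infty}(\nu)\in e_rH^0(\o{\X}_r,\omega_{\o{\X}_r})$ to the weaker statement that $\gamma_r^{\infty}(\nu)$ lands in the finite-dimensional subspace $H^0(\o{\X}_r,\omega_{\o{\X}_r})$ cut out inside meromorphic differentials by $(\ref{dualizing2prod})$. Indeed, because $F_*=V$ is bijective on the $V$-ordinary subspace, $F_*^{-n}\nu$ again lies there, so the intertwining relation yields $\gamma_r^{\infty}(\nu)=U_p^{n}\,\gamma_r^{\infty}(F_*^{-n}\nu)$ for every $n\ge0$. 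If each $\gamma_r^{\infty}(F_*^{-n}\nu)$ already lies in $H^0(\o{\X}_r,\omega_{\o{\X}_r})$, then $\gamma_r^{\infty}(\nu)\in\bigcap_{n\ge0}\im(U_p^{n})$, which by Lemma \ref{HW} applied to the operator $U_p$ on the finite-dimensional $k$-vector space $H^0(\o{\X}_r,\omega_{\o{\X}_r})$ is precisely its $U_p$-ordinary part $e_rH^0(\o{\X}_r,\omega_{\o{\X}_r})$.

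It therefore remains to verify that the tuple $\bigl(\gamma_r^{\infty}(\nu)_{(a,b,u)}\bigr)$ satisfies Rosenlicht's conditions: holomorphy away from the supersingular locus, and the vanishing of $\sum_{(a,b,u)}\res_{x_{(a,b,u)}}\!\bigl(s\,\gamma_r^{\infty}(\nu)_{(a,b,u)}\bigr)$ for every supersingular $x\in\o{\X}_r(k)$ and every $s\in\O_{\o{\X}_r,x}$. Holomorphy away from $\SS$ is clear, since each component of $\gamma_r^{\infty}(\nu)$ is built from $\nu$ by the operators $\pr_*$, $F_*^{-1}$, and $\langle u'\rangle$, all finite and generically \'etale away from $\SS$, hence introducing no new poles. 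A useful guide is that $(\ref{betadef})$ is modeled on the expression $(\ref{Upn1})$ for $U_p^{n}$ in Corollary \ref{Cor:UpHighPower}: for a genuine section $\eta\in H^0(\o{\X}_r,\omega_{\o{\X}_r})$ and $n$ large enough that $F_*^{n}(i_r^{\infty})^*\eta$ is $V$-ordinary, comparing $(\ref{Upn1})$ with $(\ref{betadef})$ gives $U_p^{n}\eta=\gamma_r^{\infty}(F_*^{n}(i_r^{\infty})^*\eta)$, so the residue conditions do hold on such elements; the difficulty is that these need not exhaust the whole $V$-ordinary subspace without the surjectivity of pullback, which is only established later, so the general case demands a direct residue computation. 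I expect this to be \emph{the main obstacle}. My plan is to reduce it to a local computation at a single supersingular point $x$, where the branches through $x$---the points $x_{(a,b,u)}$ on the Igusa components $I_{(a,b,u)}=\Ig_{\max(a,b)}$---and the restriction of $s$ to each branch are governed by the crossing description of Proposition \ref{redXr} and the degeneracy maps of Proposition \ref{UlmerProp}. On each branch I would evaluate $\res_{x_{(a,b,u)}}(s\,\gamma_r^{\infty}(\nu)_{(a,b,u)})$ using the trace formula $(\ref{TateFormula})$ of Remark \ref{poletrace} to push residues through $\pr_*$, and the identity $\res(V\eta)^p=\res(\eta)$ of Proposition \ref{CartierOp}\,$(\ref{CartierResidue})$ to transport them through the powers of $F_*^{-1}$; the $p$-power distortion produced by the latter is exactly what must be reconciled against the $\varphi$-semilinearity of the gluing data at the crossing. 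The delicate point is organizing this sum so that the combinatorial matching of supersingular points across the levels of the Igusa tower forces the total to vanish; once this cancellation is established for $\gamma_r^{\infty}$, the same computation, with $F_*^{-1}$ replaced by $\langle p\rangle_N^{-1}F_*^{-1}$ and $e_r$ by $e_r^*$, settles the claim for $\gamma_r^0$.
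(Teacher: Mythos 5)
Your scaffolding matches the paper's proof exactly: the intertwining relations are checked componentwise from Lemma \ref{Lem:UpOnMero} just as you describe, and your reduction of $U_p$-ordinarity to mere membership in $H^0(\o{\X}_r,\omega_{\o{\X}_r})$---writing $\gamma_r^{\infty}(\nu)=U_p^{n}\gamma_r^{\infty}(F_*^{-n}\nu)$ and identifying $e_rH^0(\o{\X}_r,\omega_{\o{\X}_r})$ with $\bigcap_{n}\im(U_p^{n})$ via Lemma \ref{HW}---is precisely the paper's (tacit) argument. You have also correctly isolated the Rosenlicht residue condition at the supersingular points as the crux, and assembled the right toolkit: the trace formula $(\ref{TateFormula})$ for pushing residues through $\pr_*$, Proposition \ref{CartierOp} $(\ref{CartierResidue})$ for transporting them through powers of $F_*^{-1}$, invariance of residues under diamond operators, and the simple-pole bound of Lemma \ref{sspoles} for $V$-ordinary forms.

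But you stop exactly where the content of the lemma lies: you declare the cancellation ``the delicate point'' without producing its mechanism, and the mechanism you gesture at (reconciling the $p$-power distortion of $(\ref{CartierResidue})$ against ``the $\varphi$-semilinearity of the gluing data'') is not what makes the sum vanish. The actual cancellation is arithmetic. After two local reductions---$\res_y(\langle v\rangle\omega)=\res_y(\omega)$ because diamond operators fix every supersingular point (the paper's $(\ref{DiamondFix})$), and $\res_y(s\omega)=s(x)\res_y(\omega)$ because $V$-ordinary differentials have at worst simple poles (the paper's $(\ref{FunctionPullOut})$)---the contribution of all components of $\o{\X}_r$ with fixed $(a,b)$ becomes a \emph{single} residue counted with multiplicity $\varphi(p^{b})=\#(\Z/p^{b}\Z)^{\times}$: for $b\le a$ there are $\varphi(p^b)$ components, and for $a<b$ each of the $\varphi(p^a)$ components carries $\varphi(p^b)/\varphi(p^a)$ diamond translates. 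Since residues live in characteristic $p$ and $\varphi(p^{b})\equiv 0\bmod p$ for $b\ge 2$, the entire sum over the tower collapses to the two terms $b=0,1$; using total ramification of $\pr$ over $\SS$ together with $(\ref{TateFormula})$ to lift the $b=1$ residue back to $I_r^{\infty}$, one is left with $s(x)\bigl(\res_{x_{(r,0,1)}}(\nu)-\res_{x_{(r,0,1)}}(F_*^{-1}\nu)\bigr)$, which the paper then kills by a final use of $V$-ordinarity: write $\nu=F_*\xi$ with $\xi$ $V$-ordinary and apply $\res(V\xi)^p=\res(\xi)$. Without the observation that the multiplicities $\varphi(p^{b})$ vanish mod $p$---which is exactly what renders all the ``middle'' components of $\o{\X}_r$ invisible to the residue condition---your plan has no route to the cancellation, so the proposal has a genuine gap at its central step. (Your side remark that $U_p^{n}\eta=\gamma_r^{\infty}(F_*^{n}(i_r^{\infty})^*\eta)$ for $\eta\in e_rH^0(\o{\X}_r,\omega_{\o{\X}_r})$ is correct and explains how $(\ref{betadef})$ was designed, but, as you yourself note, it cannot substitute for the direct residue computation; and for $\gamma_r^0$ the paper indeed proceeds by the same calculation, as you predict.)
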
	

\begin{proof}
	The asserted intertwining relations follow readily from Lemma \ref{Lem:UpOnMero} 
	and the definitions (\ref{betadef})--(\ref{gammadef}).
	Since $F_*$ is invertible on the source of $\gamma_r^{\star}$,
	to complete the proof it then suffices to show that $\gamma_r^{\star}$
	has image in $H^0(\o{\X}_r,\omega_{\o{\X}_r})$. 
	
	To do this, we again appeal to the explicit description of the image of (\ref{dualizing2prod}) afforded by Rosenlicht's theory,
	and must check that for		
	any supersingular point $x$ on $\o{\X}_r$, and any local section $s\in \O_{\o{\X}_r,x}$,
	the sum of the residues of $s\gamma^{\star}(\nu)$
	at all $k$-points of $\nor{\o{\X}}_r$ lying over $x$ is zero.  
	Using (\ref{betadef}), we calculate that for $\star=\infty$ this sum is equal to
	\begin{align}
		\sum_{b \le a} 
		\sum_{u\in (\Z/p^b\Z)^{\times}}\res_{x_{(a,b,u)}}(s\pr_*^bF_*^{-b}\nu)
		+\sum_{a < b} \sum_{u\in (\Z/p^b\Z)^{\times}}
		\res_{x_{(a,b,u)}}(s\langle u\rangle \pr_*^aF_*^{-b}\nu)
	\label{residuecalc1}
	\end{align}
	where $x_{(a,b,u)}$ denotes the unique point of the $(a,b,u)$-component of $\nor{\o{\X}}_r$ over $x$,
	and the outer sums range over all nonnegative integers $a,b$ with $a+b=r$.
	We claim that for any meromorphic differential $\omega$ on $I_{(a,b,u)}$ and any 
	supersingular point $y$ of $I_{(a,b,u)}$ over $x$, we have
	\begin{subequations}
	\begin{equation}
		 \res_y(\omega) = \res_y(\langle v \rangle\omega)\label{DiamondFix}
	\end{equation}
	for all $v\in \Z_p^{\times}$, and, if in addition $\omega$ is $V$-ordinary,
	\begin{equation}
		\res_y(s\omega) = s(x)\res_y(\omega)\label{FunctionPullOut}
	\end{equation}
	\end{subequations}
	Indeed, (\ref{DiamondFix}) is a consequence of (\ref{TateFormula}),
	using the fact that the automorphism
	$\langle v\rangle$ of $I_{(a,b,u)}$ fixes every supersingular point, 
	while (\ref{FunctionPullOut}) is deduced by thinking about formal expansions of differentials
	at $y$ ({\em cf.} Proposition \ref{CartierOp} (\ref{Vformula}) and (\ref{CartierResidue}))
	and using the fact that a $V$-ordinary meromorphic differential has at worst simple poles 
	thanks to Lemma \ref{sspoles}.  Via (\ref{DiamondFix})--(\ref{FunctionPullOut}), we reduce the
	sum (\ref{residuecalc1}) to
	\begin{align}
		\sum_{a + b = r} 
		\sum_{u\in (\Z/p^b\Z)^{\times}} s(x)\res_{x_{(a,b,u)}}(\pr_*^{\min(a,b)}F_*^{-b}\nu)
		&= \sum_{a + b = r} 
		\varphi(p^b)s(x)\res_{x_{(a,b,1)}}(\pr_*^{\min(a,b)}F_*^{-b}\nu)\nonumber\\
		&=s(x)\res_{x_{(r,0,1)}}(\nu) - s(x)\res_{x_{(r,0,1)}}(F_*^{-1}\nu)
		\label{twoterm}
	\end{align}
	where the first equality above follows from the fact
	that for {\em fixed} $a,b$, the points $x_{(a,b,u)}$ for varying $u\in (\Z/p^{\min(a,b)}\Z)^{\times}$
	are all identified with the {\em same} point on $\Ig(p^{\max(a,b)})$, and the second 
	equality is a consequence of (\ref{TateFormula}), since $\rho:I_{(r,0,1)}\rightarrow I_{(r-1,0,1)}$
	is generically \'etale and carries the point $x_{(r,0,1)}$ to $x_{(r-1,0,1)}$.
	As $\nu$ is $V$-ordinary, there exists a $V$-ordinary meromorphic differential $\xi$
	on $I_r^{\infty}$ with $\nu=F_*\xi$; substituting this expression for $\nu$ into (\ref{twoterm})
	and applying Proposition \ref{CartierOp} (\ref{CartierResidue}), we conclude that (\ref{twoterm}) is zero,
	as desired.
	That $\gamma_r^{0}$ has image in $H^0(\o{\X}_r,\omega_{\o{\X}_r/k})$ 
	follows from a nearly identical calculation, and we omit the details.		
\end{proof}

From Lemmas \ref{Cor:PBisWD} and \ref{Lem:gammaMaps} we thus obtain natural maps
\begin{subequations}
\begin{equation}
	\xymatrix{
			{H^0(I_r^{\infty},\Omega^1_{I_r^{\infty}}(\SS))^{V_{\ord}}} \ar@<0.5ex>[r]^-{\gamma_r^{\infty}} & 
			\ar@<0.5ex>[l]^-{(i_r^{\infty})^*}{e_rH^0(\o{\X}_r,\omega_{\o{\X}_r})}
			}\label{Map:gammaInf}
\end{equation}	
and
\begin{equation}	
	\xymatrix{
			{H^0(I_r^{0},\Omega^1_{I_r^{0}}(\SS))^{V_{\ord}}} \ar@<0.5ex>[r]^-{\gamma_r^{0}} & 
			\ar@<0.5ex>[l]^-{(i_r^0)^*}{e_r^*H^0(\o{\X}_r,\omega_{\o{\X}_r})}
			}\label{Map:gamma0}
\end{equation}
\end{subequations}
intertwining $F_*$ and $\langle p\rangle_N^{-1} F_*$ on the left with $U_p$ and $U_p^*$ on the right, respectively.

\begin{lemma}\label{Lemma:InverseIsoms}
	The maps $(\ref{Map:gammaInf})$ and $(\ref{Map:gamma0})$ are inverse isomorphisms.	
\end{lemma}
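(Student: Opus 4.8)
The plan is to show that the two composites $(i_r^{\infty})^*\circ\gamma_r^{\infty}$ and $\gamma_r^{\infty}\circ (i_r^{\infty})^*$ are each the identity (and treat $\star=0$ analogously at the end). One of these is essentially formal. Since pullback along $i_r^{\infty}$ is projection onto the $(r,0,1)$-component by $(\ref{PBisProj})$, and since the triple $(r,0,1)$ lies in the range $b=0\le a=r$, the defining formula $(\ref{betadef})$ gives $(\gamma_r^{\infty}(\nu))_{(r,0,1)}=\pr_*^0F_*^0\nu=\nu$. Hence $(i_r^{\infty})^*\circ\gamma_r^{\infty}=\id$ on $H^0(I_r^{\infty},\Omega^1_{I_r^{\infty}}(\SS))^{V_{\ord}}$, and the real content is to prove $\gamma_r^{\infty}\circ (i_r^{\infty})^*=\id$ on $e_rH^0(\o{\X}_r,\omega_{\o{\X}_r})$.

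For this substantive direction the key input is Corollary $\ref{Cor:UpHighPower}$, which shows that for any $n\ge r$ the \emph{entire} differential $U_p^n\eta$ is determined by the single component $\eta_{(r,0,1)}=(i_r^{\infty})^*\eta$ through the very recipe defining $\gamma_r^{\infty}$. Explicitly, I would set $\mu:=F_*^n\eta_{(r,0,1)}$; this is $V$-ordinary because $\eta_{(r,0,1)}$ is (Lemma $\ref{Cor:PBisWD}$) and $F_*=V$ preserves $V$-ordinarity, so $\gamma_r^{\infty}(\mu)$ is defined. Substituting $\mu$ into $(\ref{betadef})$ and using $F_*^{-b}F_*^n=F_*^{n-b}$ (valid since $n\ge r\ge b$ and $F_*$ is invertible on $V$-ordinary differentials), one finds that each component of $\gamma_r^{\infty}(\mu)$ agrees with the corresponding component of $U_p^n\eta$ recorded in Corollary $\ref{Cor:UpHighPower}$, with the diamond-operator sums over $u'$ in the range $a<b$ matching on the nose. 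This yields the identity $U_p^n\eta=\gamma_r^{\infty}(F_*^n\eta_{(r,0,1)})$.

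Finally, the intertwining relation $U_p\circ\gamma_r^{\infty}=\gamma_r^{\infty}\circ F_*$ of Lemma $\ref{Lem:gammaMaps}$, iterated $n$ times, rewrites the right-hand side as $U_p^n\big(\gamma_r^{\infty}((i_r^{\infty})^*\eta)\big)$, so that $U_p^n\eta=U_p^n\big(\gamma_r^{\infty}((i_r^{\infty})^*\eta)\big)$. Both $\eta$ and $\gamma_r^{\infty}((i_r^{\infty})^*\eta)$ lie in $e_rH^0(\o{\X}_r,\omega_{\o{\X}_r})$ — the latter by Lemma $\ref{Lem:gammaMaps}$ — on which $U_p$ acts invertibly by the very definition of the idempotent $e_r$; cancelling the automorphism $U_p^n$ gives $\gamma_r^{\infty}((i_r^{\infty})^*\eta)=\eta$. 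Combined with the formal identity $(i_r^{\infty})^*\circ\gamma_r^{\infty}=\id$, this shows the two maps are inverse isomorphisms. The case $\star=0$ is entirely analogous, replacing $(\ref{betadef})$, $U_p$, and $F_*$ by $(\ref{gammadef})$, $U_p^*$, and $\langle p\rangle_N^{-1}F_*$, and using the second formulas of Corollary $\ref{Cor:UpHighPower}$ and Lemma $\ref{Lem:gammaMaps}$ together with invertibility of $U_p^*$ on $e_r^*H^0(\o{\X}_r,\omega_{\o{\X}_r})$.

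I expect the main obstacle to be the term-by-term comparison in the middle step: matching the piecewise definition $(\ref{betadef})$ of $\gamma_r^{\infty}$ against the piecewise formula of Corollary $\ref{Cor:UpHighPower}$ for $U_p^n$, tracking the diamond-operator sums in the range $a<b$ and confirming that all exponents $n-b$ are nonnegative so no spurious inversions of $F_*$ intrude. Once that bookkeeping identity $U_p^n\eta=\gamma_r^{\infty}(F_*^n(i_r^{\infty})^*\eta)$ is in hand, the cancellation of the automorphism $U_p^n$ — which is exactly where ordinarity is used — is the conceptual crux and is immediate.
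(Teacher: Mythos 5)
Your proposal is correct and is essentially the paper's own proof: the formal identity $(i_r^{\infty})^*\circ\gamma_r^{\infty}=\id$ read off from $(\ref{PBisProj})$ and $(\ref{betadef})$, followed by the reduction---via the intertwining relations of Lemmas $\ref{Cor:PBisWD}$ and $\ref{Lem:gammaMaps}$ and the invertibility of $U_p$ (resp.\ $U_p^*$) on the ordinary part---to the componentwise identity $U_p^n\eta=\gamma_r^{\infty}\bigl(F_*^n(i_r^{\infty})^*\eta\bigr)$ for $n\ge r$, which you verify against Corollary $\ref{Cor:UpHighPower}$. The only difference is that you spell out the bookkeeping comparison between $(\ref{betadef})$ and $(\ref{Upn1})$ that the paper dismisses as ``clear,'' including the needed observations that $F_*^n(i_r^{\infty})^*\eta$ remains $V$-ordinary and that all exponents $n-b$ are nonnegative.
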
		

\begin{proof}
	From the very definitions (\ref{betadef})
	of $\gamma_r^{\infty}$ and the description (\ref{PBisProj}) of pullback as projection,
	it is clear that $(i_r^{\infty})^*\circ \gamma_r^{\infty}=\id$ on $H^0(I_r^{\infty},\Omega^1_{I_r^{\infty}})^{V_{\ord}}$.
	Since $U_p$ 
	is an isomorphism on $e_rH^0(\o{\X}_r,\omega_{\o{\X}_r})$
	to prove that the composition in the other
	order is the identity, it suffices to prove 
	that $U_p^n \circ \gamma_r^{\infty}\circ (i_r^{\infty})^* = U_p^n$ 
	on $e_rH^0(\o{\X}_r,\omega_{\o{\X}_r})$
	for any $n>r$.
	Using the intertwining relations established by Lemmas \ref{Cor:PBisWD} and \ref{Lem:gammaMaps},
	this amounts to proving that 
	$\gamma_r^{\infty} \circ (i_r^{\infty})^* \circ U_p^n = U_p^n$
	for any $n >r$, which is clear thanks to Corollary \ref{Cor:UpHighPower},
	the description (\ref{PBisProj}), and the definition  (\ref{betadef}).
	The proof that (\ref{Map:gamma0}) are inverse isimorphisms is similar,
	and is left to the reader.
\end{proof}		
	
		As discussed after Definition \ref{Def:HeckeModuliProblem},
		$i_r^{\infty}$ is compatible with the $\H_r$-correspondences,
		and it follows that 
		the isomorphism (\ref{I'compIsom}) resulting from Lemma \ref{Lemma:InverseIsoms}
		is $\H_r$-equivariant (with $U_p$ acting on the target via $F_*$).  Similarly, 
		since the ``geometric inertia" action (\ref{gammamaps}) of 
		$\Gamma$ on $\X_r$ is compatible via $i_r^{\infty}$ with the trivial action
		on $I_r^{\infty}$ by Proposition \ref{AtkinInertiaCharp},		
		the isomorphism (\ref{I'compIsom}) is equivariant for these actions of $\Gamma$.
		A nearly identical analysis shows that $(i_r^{0})^*$ is equivariant
		for the actions of $T_{\ell}^*$, $U_{\ell}^*$ (with $U_p^*$ acting on the target 
		as $\langle p\rangle_N^{-1} F_*$) and $\langle v\rangle_N^*$,
		and that it intertwines $\langle u\rangle^*$ on source with $\langle u\rangle$
		on target ({\em cf.} the preliminary discussion above Proposition \ref{charpord}).
		Likewise, $(i_r^0)^*$ is $\Gamma$-equivariant for the
		action of $\Gamma$ on $I_r^{0}$ via $\langle \chi(\cdot)\rangle^{-1}$.
		The commutativity of
		the four diagrams in (\ref{rCompatDiagrams}) is an immediate consequence
		of the descriptions of the degeneracy mappings $\o{\pr},\o{\ps}$ on $\nor{\o{\X}}_r$ furnished by Proposition
		\ref{pr1desc} and the explication (\ref{PBisProj}) of pullback by $i_r^{\star}$ in terms
		of projection.  
		
		Finally, that the assertions of Proposition \ref{charpord} all hold if $\o{\X}_r$ and
		 $\Omega^1_{I_r^{\star}}(\SS)$
		are replaced by $\nor{\o{\X}}_r$ and $\Omega^1_{I_r^{\star}}$, respectively, follows from a
	 	similar---but much simpler---argument.  The point is that the maps $\gamma_r^{\star}$ 
		of (\ref{betadef})--(\ref{gammadef})
		visibly carry $H^0(I_r^{\star},\Omega^1_{I_r^{\star}})^{V_{\ord}}$ into 
		$H^0(\nor{\o{\X}}_r,\Omega_{\nor{\o{\X}}_r}^1)$, from which it follows 
		via our argument that they induce the claimed isomorphisms.	
		This completes the proof of Proposition \ref{charpord}.

Since $\o{\X}_r$ is a proper and geometrically connected curve over $\F_p$, Proposition \ref{HodgeFilCrvk} 
(\ref{HodgeDegenerationField}) provides short exact sequences of 
$\F_p[\Delta/\Delta_r]$-modules with linear $\Gamma$ and $\H_r^*$ (respectively $\H_r$)-actions
\begin{subequations}
\begin{equation}
	\xymatrix{
		0\ar[r] & {e_r^*H^0(\o{\X}_r,\omega_{\o{\X}_r/\F_p})} \ar[r] & {e_r^*H^1(\o{\X}_r/\F_p)} \ar[r] &
		{e_r^*H^1(\o{\X}_r,\O_{\o{\X}_r})} \ar[r] & 0
	}\label{sesincharp1}
\end{equation}
respectively
\begin{equation}
	\xymatrix{
		0\ar[r] & {e_rH^0(\o{\X}_r,\omega_{\o{\X}_r/\F_p})} \ar[r] & {e_rH^1(\o{\X}_r/\F_p)} \ar[r] &
		{e_rH^1(\o{\X}_r,\O_{\o{\X}_r})} \ar[r] & 0
	}\label{sesincharp2}
\end{equation}
\end{subequations}
which are canonically $\F_p$-linearly dual to each other.  We likewise have such exact sequences
in the case of $\nor{\o{\X}}_r$; note that since $\nor{\o{\X}}_r$ is smooth,
the short exact sequence $H(\nor{\o{\X}}_r/\F_p)$ is simply the Hodge filtration
of $H^1_{\dR}(\nor{\o{\X}}_r/\F_p)$.

In what follows, we give $H^1(I_r^{\star},\O(-\SS))$
the structure of $\F_p[\Delta/\Delta_r]$-module 
by decreeing that $u\in \Delta$ act
through the {\em adjoint} diamond operator $\langle u\rangle^*=\langle u^{-1}\rangle$.
The canonical Grothendieck--Serre duality between 
$H^1(I_r^{\star},\O(-\SS))$ and $H^0(I_r^{\star},\Omega^1_{I_r^{\star}}(\SS))$,
then identifies the former with the contragredient of the latter; 
{\em cf.} the discussion preceeding Proposition \ref{charpord}.

\begin{corollary}\label{SplitIgusa}
		The absolute Frobenius morphism of $\o{\X}_r$ over $\F_p$ induces a natural
		$\F_p[\Delta/\Delta_r]$-linear, $\Gamma$-compatible,
		and $\H_r^*$ (respectively $\H_r$) equivariant splitting of $(\ref{sesincharp1})$
		$($respectively $(\ref{sesincharp2})$$)$.
		For each $r$ there are natural isomorphisms of split short exact sequences 
		of $\F_p[\Delta/\Delta_r]$-modules
		\begin{subequations}
		\begin{equation}
			\xymatrix@C=15pt{
				0\ar[r] & {e_r^*H^0(\o{\X}_r,\omega_{\o{\X}_r/\F_p})} \ar[r]\ar[d]_-{F_*^r (i_r^0)^*}^-{\simeq} & 
				{e_r^*H^1(\o{\X}_r/\F_p)} \ar[r]\ar[d]^-{\simeq} &
				{e_r^*H^1(\o{\X}_r,\O_{\o{\X}_r})} \ar[r] & 0\\
				0 \ar[r] & {H^0(I_r^{0},\Omega^1(\SS))^{V_{\ord}}} \ar[r] &
				{H^0(I_r^{0},\Omega^1(\SS))^{V_{\ord}}\oplus
				H^1(I_r^{\infty},\O(-\SS))^{F_{\ord}}} \ar[r] & 
				{H^1(I_r^{\infty},\O(-\SS))^{F_{\ord}}} \ar[r]\ar[u]_-{\VDual{(i_r^{\infty})^*}}^-{\simeq} & 0
			}\label{LowerIsom1}
		\end{equation}
		\begin{equation}
			\xymatrix@C=15pt{
				0\ar[r] & {e_rH^0(\o{\X}_r,\omega_{\o{\X}_r/\F_p})} 
				\ar[r]\ar[d]_-{F_*^r (i_r^{\infty})^*}^-{\simeq} & 
				{e_rH^1(\o{\X}_r/\F_p)} \ar[r]\ar[d]^-{\simeq} &
				{e_rH^1(\o{\X}_r,\O_{\o{\X}_r})} \ar[r] & 0\\
				0 \ar[r] & {H^0(I_r^{\infty},\Omega^1(\SS))^{V_{\ord}}} \ar[r] &
				{H^0(I_r^{\infty},\Omega^1(\SS))^{V_{\ord}}\oplus
				H^1(I_r^{0},\O(-\SS))^{F_{\ord}}} \ar[r] & 
				{H^1(I_r^{0},\O(-\SS))^{F_{\ord}}} \ar[r]
				\ar[u]_-{\VDual{(i_r^{0})^*}\langle p\rangle_N^{-r}}^-{\simeq} & 0
			}\label{UpperIsom1}
		\end{equation}
		\end{subequations}
		that are compatible with the actions of $\Gamma$
		and of the Hecke operators 
		$T_{\ell}^*$, $U_{\ell}^*$ and $\langle v\rangle_N^*$
		$($respectively $T_{\ell}^*$, $U_{\ell}^*$ and $\langle v\rangle_N^*$$)$.
	The identification
		$(\ref{LowerIsom1})$  $($respectively $(\ref{UpperIsom1})$$)$ is moreover
		compatible with change in $r$ using the trace mappings attached
		to $\pr: I_r^{\star}\rightarrow I_{r-1}^{\star}$ and to 
		$\o{\pr}:\o{\X}_r\rightarrow \o{\X}_{r-1}$ $($respectively 
		$\o{\ps}:\o{\X}_r\rightarrow \o{\X}_{r-1}$$)$.
		The same statements hold true if we replace $\o{\X}_r$, $\Omega^1_{I_r^{\star}}(\SS)$,
		and $\O_{I_r^{\star}}(-\SS)$ with $\nor{\o{\X}}_r$, $\Omega^1_{I_r^{\star}}$, 
		and $\O_{I_r^{\star}}$, respectively.
\end{corollary}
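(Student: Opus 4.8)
The plan is to obtain the splitting from the action of absolute Frobenius via Fitting's lemma (Lemma \ref{HW}) and then to read off the two displayed isomorphisms of exact sequences directly from Proposition \ref{charpord} together with Grothendieck--Serre duality. I would first isolate the two structural facts about the pullback $F^*$ on $e_r^*H^1(\o{\X}_r/\F_p)$ that drive everything. On the one hand, since pullback by absolute Frobenius annihilates every (meromorphic) differential in characteristic $p$, the operator $F^*$ kills the subspace $e_r^*H^0(\o{\X}_r,\omega_{\o{\X}_r/\F_p})$ and therefore descends to $e_r^*H^1(\o{\X}_r,\O_{\o{\X}_r})$. On the other hand, Proposition \ref{charpord} identifies $e_rH^0(\o{\X}_r,\omega_{\o{\X}_r/\F_p})$ with the \emph{entirely} $V$-ordinary space $H^0(I_r^{\infty},\Omega^1_{I_r^{\infty}}(\SS))^{V_{\ord}}$; dualizing via Remark \ref{DualityOfFVOrd} (which interchanges $V=F_*$ with $F=F^*$) and using that Grothendieck--Serre duality swaps $e_r$ with $e_r^*$ and $H^0(\omega)$ with $H^1(\O)$, I obtain that $e_r^*H^1(\o{\X}_r,\O_{\o{\X}_r})$ is identified with the entirely $F$-ordinary space $H^1(I_r^{\infty},\O_{I_r^{\infty}}(-\SS))^{F_{\ord}}$, on which $F^*$ is by definition invertible.

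With these facts in hand I would apply Lemma \ref{HW}\,(\ref{finlen}) to the finite-dimensional $\F_p$-vector space $M:=e_r^*H^1(\o{\X}_r/\F_p)$ equipped with $F^*$ (which is $\F_p$-linear, the base being the prime field). The first fact puts $e_r^*H^0(\omega_{\o{\X}_r/\F_p})$ inside $M^{F_{\nil}}$, while the second shows that the quotient $e_r^*H^1(\O_{\o{\X}_r})$ is all $F$-ordinary; a dimension count then forces $M^{F_{\nil}}=e_r^*H^0(\omega_{\o{\X}_r/\F_p})$, so the canonical complement $M^{F_{\ord}}=\bigcap_n \im((F^*)^n)$ maps isomorphically onto $e_r^*H^1(\O_{\o{\X}_r})$. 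This is exactly the Frobenius splitting of (\ref{sesincharp1}); the argument for (\ref{sesincharp2}) is identical. Because the Fitting decomposition is functorial and $F^*$ commutes with the Hecke and diamond correspondences (by functoriality of absolute Frobenius) as well as with the geometric inertia action of $\Gamma$, the resulting splitting is automatically $\F_p[\Delta/\Delta_r]$-linear, $\H_r^*$- (respectively $\H_r$-) equivariant, and $\Gamma$-compatible.

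To assemble the isomorphisms of split sequences (\ref{LowerIsom1}) and (\ref{UpperIsom1}) I would take the vertical on the sub to be $F_*^r(i_r^0)^*$ (an isomorphism because $F_*=V$ is invertible on the $V$-ordinary target and $(i_r^0)^*$ is the isomorphism (\ref{I'compIsom}) of Proposition \ref{charpord}), the vertical on the quotient to be the dual $\VDual{(i_r^{\infty})^*}$ of the companion isomorphism for $e_rH^0(\omega)$, and the middle vertical to be the direct sum of these two under the Frobenius splitting of the top row and the tautological splitting of the bottom row. Commutativity of both squares is then immediate, and equivariance for $T_{\ell}^*, U_{\ell}^*, \langle v\rangle_N^*$ and for $\Gamma$ follows from the corresponding assertions in Proposition \ref{charpord} together with the equivariance of the splitting established above; the $F_*^r$ normalization is inserted precisely so that these identifications intertwine correctly with the transition maps.

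Finally, compatibility with change in $r$ reduces to the commutativity of the diagrams (\ref{rCompatDiagrams}) of Proposition \ref{charpord}, supplemented by the observation that $F^*$---and hence the Frobenius splitting---commutes with the trace maps $\o{\pr}_*,\o{\ps}_*$ on $\o{\X}_r$ and $\pr_*$ on $I_r^{\star}$; this is what pins down the use of $\o{\pr}$ in (\ref{LowerIsom1}) and $\o{\ps}$ in (\ref{UpperIsom1}). The assertions for $\nor{\o{\X}}_r$ follow by rerunning the identical argument with the (simpler) smooth-curve statements of Proposition \ref{charpord}, using $\Omega^1_{I_r^{\star}}$ and $\O_{I_r^{\star}}$ in place of $\Omega^1_{I_r^{\star}}(\SS)$ and $\O_{I_r^{\star}}(-\SS)$. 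The main obstacle is the second structural fact of the first paragraph---that $U_p^*$-ordinarity coincides with $F$-ordinarity on $H^1(\O_{\o{\X}_r})$---since it is this coincidence, a consequence of Proposition \ref{charpord} and Grothendieck--Serre duality, that makes $F^*$ invertible on the quotient and thereby produces the splitting; once it is available, everything else is formal.
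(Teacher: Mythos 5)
Your proposal is correct and follows essentially the same route as the paper: both arguments rest on the two facts that $F^*$ kills $e_r^*H^0(\o{\X}_r,\omega_{\o{\X}_r/\F_p})$ (so descends to the quotient) and that $F^*$ is invertible on $e_r^*H^1(\o{\X}_r,\O_{\o{\X}_r})$ via the duality isomorphism $\VDual{(i_r^{\infty})^*}$ furnished by Proposition \ref{charpord}, and then deduce the isomorphisms (\ref{LowerIsom1})--(\ref{UpperIsom1}) and all equivariance and change-in-$r$ compatibilities from Proposition \ref{charpord} and Remark \ref{DualityOfFVOrd}. The only cosmetic difference is packaging: the paper constructs the section directly by pre-composing the descended Frobenius map with $(F^*)^{-1}$, whereas you invoke the Fitting decomposition of Lemma \ref{HW}, which produces the identical complement $\bigcap_{n}\im\bigl((F^*)^n\bigr)$ (indeed your dimension count can even be skipped, since invertibility of $F^*$ on the quotient forces $M^{F_{\nil}}$ to equal the subobject outright).
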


\begin{proof}
		Pullback by the absolute Frobenius endomorphism of $\o{\X}_r$
		induces an endomorphism of (\ref{sesincharp1}) which 
		kills $H^0(\o{\X}_r,\omega_{\o{\X}_r/\F_p})$
		and so yields a morphism of $\F_p[\Delta/\Delta_r]$-modules
		\begin{equation}
			\xymatrix{
				{e_r^*H^1(\o{\X}_r,\O_{\o{\X}_r})}\ar[r] & {{e_r^*}H^1(\o{\X}_r/\F_p)} 
			}\label{H1Splitting}
		\end{equation}
		that 
		is $\Gamma$ and $\H_r^*$-compatible 
		and projects to the endomorphism $F^*$ of $e_r^*H^1(\o{\X}_r,\O_{\o{\X}_r})$.
		On the other hand, Proposition \ref{charpord} and Grothendieck--Serre duality (see Remark \ref{DualityOfFVOrd})
		 give a natural $\Gamma$ and $\H_r^*$-equivariant 
		isomorphism of $\F_p[\Delta/\Delta_r]$-modules
		\begin{equation}
			\xymatrix{
				{H^1(I_r^{\infty},\O_{I_r^{\infty}}(-\SS))^{F_{\ord}}}\ar[r]^-{\VDual{(i_r^{\infty})^*}} &  
				{e_r^*H^1(\o{\X}_r,\O_{\o{\X}_r})}
			}.\label{IsomOnH1}
		\end{equation}
		As this isomorphism intertwines $F^*$ on source and target, 
		we deduce that $F^*$ acts invertibly on 
		${e_r^*H^1(\o{\X}_r,\O_{\o{\X}_r})}$. We may therefore pre-compose (\ref{H1Splitting}) with $(F^*)^{-1}$
		to obtain a canonical splitting of (\ref{sesincharp1}), which by construction is
		$\F_p[\Delta/\Delta_r]$-linear and compatible with $\Gamma$ and $\H_r^*$.  
		The existence of (\ref{LowerIsom1}) as well
		as its asserted compatibility with $\Gamma$ and adjoint Hecke operators
		 and with change in $r$ now follows
		immediately from Proposition \ref{charpord} and duality ({\em cf.} again Remark \ref{DualityOfFVOrd}).
		The corresponding assertions for the exact sequence (\ref{sesincharp2}) and the 
		diagram (\ref{UpperIsom1}) are proved similarly, and we leave the details to the reader.
		A nearly identical argument shows that the same assertions hold true when $\o{\X}_r$, 
		$\Omega^1_{I_r^{\star}}(\SS)$,
		and $\O_{I_r^{\star}}(-\SS)$ are replaced by $\nor{\o{\X}}_r$,
		 $\Omega^1_{I_r^{\star}}$, and $\O_{I_r^{\star}}$, respectively.
\end{proof}

\begin{corollary}\label{FreenessInCharp}
	The exact sequences
	$(\ref{sesincharp1})$ and $(\ref{sesincharp2})$ are split short exact sequences of 
	free $\F_p[\Delta_1/\Delta_r]$-modules 
	whose terms have $\F_p[\Delta_1/\Delta_r]$-ranks
	$d$, $2d$, and $d$, respectively, for $d$ as in 
	Remark $\ref{dMFmeaning}$.
	For $s\le r$, the degeneracy maps $\pr,\ps:\X_r\rightrightarrows \X_s$ 
	induce natural isomorphisms of
	exact sequences
	\begin{align*}
		&\xymatrix{
			{\pr_*:e_r^*H(\o{\X}_r/\F_p) \tens_{\F_p[\Delta_1/\Delta_s]} \F_p[\Delta_1/\Delta_r]} \ar[r]^-{\simeq} & 
			{e_s^*H(\o{\X}_s/\F_p)}
			}\\
		&\xymatrix{
			{\ps_*:e_rH(\o{\X}_r/\F_p) \tens_{\F_p[\Delta_1/\Delta_s]} \F_p[\Delta_1/\Delta_r]} \ar[r]^-{\simeq} & 
			{e_sH(\o{\X}_s/\F_p)}
			}	
	\end{align*}
	that are $\Gamma$ and $\H_r^*$ $($respectively $\H_r$$)$ equivariant.
\end{corollary}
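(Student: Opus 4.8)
The plan is to read off the entire statement from Corollary \ref{SplitIgusa} together with the freeness and control assertions of Proposition \ref{IgusaStructure}; essentially no new argument is required, and the content lies in matching up the module structures and tracking the change-in-$r$ maps correctly.

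First I would dispatch the freeness and rank claims. Corollary \ref{SplitIgusa} already produces a natural, $\F_p[\Delta/\Delta_r]$-linear Frobenius splitting of each of (\ref{sesincharp1}) and (\ref{sesincharp2}), along with the isomorphisms of split exact sequences (\ref{LowerIsom1}) and (\ref{UpperIsom1}). Viewing these as isomorphisms of $\F_p[\Delta_1/\Delta_r]$-modules (by restriction along the inclusion $\Delta_1/\Delta_r \hookrightarrow \Delta/\Delta_r$), they identify the flanking terms of (\ref{sesincharp1}) with $H^0(I_r^0,\Omega^1_{I_r^0}(\SS))^{V_{\ord}}$ and $H^1(I_r^\infty,\O_{I_r^\infty}(-\SS))^{F_{\ord}}$. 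By Proposition \ref{IgusaStructure} (\ref{IgusaFreeness}) each of these is free of rank $d=\gamma+\delta-1$ over $\F_p[\Delta_1/\Delta_r]$, and this $d$ agrees with that of Remark \ref{dMFmeaning}. Since the splitting is $\F_p[\Delta_1/\Delta_r]$-linear, the middle term is the internal direct sum of the two flanking terms, hence free of rank $2d$; here I only use that a finite direct sum of free modules is free over an arbitrary ring, so no appeal to the (nonexistent) semisimplicity of $\F_p[\Delta_1/\Delta_r]$ is needed. The same reasoning applied to (\ref{UpperIsom1}) handles (\ref{sesincharp2}).

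Next I would obtain the control isomorphisms. The final assertions of Corollary \ref{SplitIgusa} record that the identifications (\ref{LowerIsom1}) and (\ref{UpperIsom1}) are compatible with change in $r$: the trace map $\pr_*$ (respectively $\ps_*$) attached to $\o{\pr}:\o{\X}_r\rightarrow\o{\X}_s$ (respectively $\o{\ps}$) corresponds, term by term, to the trace map $\pr_*$ attached to $\pr:I_r^\star\rightarrow I_s^\star$. Proposition \ref{IgusaStructure} (\ref{IgusaControl}) states exactly that these Igusa trace maps become isomorphisms after base change along the natural surjection $\F_p[\Delta_1/\Delta_r]\twoheadrightarrow\F_p[\Delta_1/\Delta_s]$. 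Applying this to each flanking term and transporting the conclusion across the Frobenius splitting to the middle term yields the asserted isomorphisms of exact sequences. The $\Gamma$- and $\H_r^*$- (respectively $\H_r$-) equivariance is inherited directly from the corresponding equivariance already built into Corollary \ref{SplitIgusa} and Proposition \ref{IgusaStructure}, and so requires only bookkeeping.

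Because so much has been absorbed into the two cited results, I anticipate no serious obstacle. The only points deserving genuine care are (i) checking that the $\F_p[\Delta/\Delta_r]$-linear isomorphisms of Corollary \ref{SplitIgusa} do restrict to the $\F_p[\Delta_1/\Delta_r]$-module structures named in the statement (immediate, as $\Delta_1/\Delta_r$ is a subgroup of $\Delta/\Delta_r$), and (ii) keeping the direction of the base-change maps straight, so that the control statement is aligned with the form of Proposition \ref{IgusaStructure} (\ref{IgusaControl}).
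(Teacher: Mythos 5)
Your proposal is correct and follows exactly the paper's route: the paper's proof of Corollary \ref{FreenessInCharp} simply cites Proposition \ref{IgusaStructure} and Corollary \ref{SplitIgusa}, which is precisely the combination you use, and your fleshing-out of the module-structure matching and the direction of the control maps is sound.
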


\begin{proof}
	This follows immediately from Proposition \ref{IgusaStructure} and Corollary \ref{SplitIgusa}.
\end{proof}

To formulate an analogue of Corollary \ref{FreenessInCharp}
in the case of $\nor{\o{\X}}_r$, we proceed as follows.
Denote by $\tau:\F_p^{\times} \rightarrow \Z_p^{\times}$ the Teichm\"uller character,
and for any $\Z_p[\F_p^{\times}]$-module $M$ 
and any $j\in \Z/(p-1)\Z$, let
\begin{equation*}
	M^{(j)}:=\{m\in M\ :\ d\cdot m = \tau(d)^jm\ \text{for all}\ d\in \F_p^{\times}\}
\end{equation*}
be the subspace of $M$ on which $\F_p^{\times}$ acts via $\tau^j$. 
As $\#\F_p^{\times}=p-1$ is a unit in $\Z_p^{\times}$, the submodule $M^{(j)}$
is a direct summand of $M$.
Explicitly, the idenitity of $\Z_p[\F_p^{\times}]$ admits the  decomposition
\begin{equation}
	1 = \sum_{j\in \Z/(p-1)\Z} f_j\quad\text{with}\quad 
	f_j:=\frac{1}{p-1}\sum_{g\in \F_p^{\times}} \tau^{-j}(g)\cdot g
	\label{GpRngIdem}
\end{equation}
into mutually orthogonal idempotents $f_j$, and we have $M^{(j)}=f_jM$.
In applications, we will consistently need to remove the trivial eigenspace
$M^{(0)}$ from $M$, as this eigenspace in the $p$-adic Galois representations 
we consider is not potentially crystalline at $p$.  We will write
\begin{equation}
	f':=\sum_{\substack{j\in \Z/(p-1)\Z \\ j\neq 0}} f_j\label{TeichmullerIdempotent}
\end{equation}
for the idempotent of $\Z_p[\F_p^{\times}]$ corresponding to projection away
from the 0-eigenspace for $\F_p^{\times}$.  

Applying these considerations
to the identifications of split exact sequences in Corollary \ref{SplitIgusa}, which
are compatible with the given diamond operator action of $\Delta=\Z_p^{\times}\simeq\F_p^{\times}\times \Delta_1$ 
on both rows, we obtain a corresponding identification of split exact sequences
of $\tau^j$-eigenspaces, for each $j\bmod p-1$.
The following is a generalization of \cite[Proposition 8.10 (2)]{tameness}:

\begin{lemma}\label{CharacterSpaces}
	Let $j$ be an integer with $j\not\equiv 0\bmod p-1$.  For each $r$, there are canonical isomorphisms
	\begin{equation}
		\xymatrix{
			{H^0(I_r^{\star},\Omega^1_{I_r^{\star}})^{(j)}}\ar[r]^-{\simeq} & {H^0(I_r^{\star},\Omega^1_{I_r^{\star}}(\SS))^{(j)}}
		}\qquad\text{and}\qquad
		\xymatrix{
			{H^1(I_r^{\star},\O(-\SS))^{(j)}}\ar[r]^-{\simeq} & {H^1(I_r^{\star},\O)^{(j)}}
		}\label{IgusaEigen}
	\end{equation}
	The normalization map 
	$\nu:\nor{\o{\X}}_r\rightarrow \o{\X}_r$ induces a natural isomorphism of split
	exact sequences
	\begin{equation}
	\begin{gathered}
		\xymatrix{
				0\ar[r] & {{e_r^*}H^0(\o{\X}_r,\Omega^1_{\nor{\o{\X}}_r})^{(j)}} \ar[r]\ar[d]_-{\nu_*}^-{\simeq} & 
				{{e_r^*}H^1_{\dR}(\nor{\o{\X}}_r/\F_p)^{(j)}} \ar[r]\ar[d]^-{\simeq} &
				{{e_r^*}H^1(\nor{\o{\X}}_r,\O_{\nor{\o{\X}}_r})^{(j)}} \ar[r] & 0		\\
				0\ar[r] & {e_r^*H^0(\o{\X}_r,\omega_{\o{\X}_r/\F_p})^{(j)}} \ar[r] & 
				{e_r^*H^1(\o{\X}_r/\F_p)^{(j)}} \ar[r] &
				{e_r^*H^1(\o{\X}_r,\O_{\o{\X}_r})^{(j)}} \ar[r]\ar[u]_-{\nu^*}^-{\simeq} & 0			
		}\label{splitholo}
	\end{gathered}
	\end{equation}
	where the central vertical arrow is deduced from the outer two vertical arrows via the
	splitting of both rows by the Frobenius endomorphism.
	The same assertions hold if we replace $e_r^*$ with $e_r$.	
\end{lemma}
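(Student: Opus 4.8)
The plan is to derive the entire lemma from two structural facts about the Igusa curves: the supersingular divisor $\SS$ is \emph{reduced}, and the diamond operators in $\F_p^{\times}$ fix every supersingular point. Together these force the cohomology concentrated along $\SS$ to live entirely in the trivial eigenspace $(j=0)$, so that passing to any $\tau^j$-eigenspace with $j\not\equiv0\bmod p-1$ erases the distinction between the ``poles/twist'' and ``holomorphic/untwisted'' versions.

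For the first isomorphism in (\ref{IgusaEigen}) I would begin with the inclusion of sheaves $\Omega^1_{I_r^{\star}}\hookrightarrow\Omega^1_{I_r^{\star}}(\SS)$, whose cokernel is a skyscraper supported on the reduced divisor $\SS$. Since $\SS$ is reduced, sections of $\Omega^1_{I_r^{\star}}(\SS)$ have at worst simple poles along $\SS$, and the long exact cohomology sequence realizes $H^0(\Omega^1(\SS))/H^0(\Omega^1)$ as a subspace of $\bigoplus_{y\in\SS}k$ via residues. The crucial input is (\ref{DiamondFix}): the residue at each fixed supersingular point is invariant under every diamond operator $\langle v\rangle$, so for $\omega$ in the $(j)$-eigenspace one gets $\res_y(\omega)=\tau(v)^j\res_y(\omega)$ for all $v$; choosing $v$ with $\tau(v)^j\neq1$ (possible as $j\not\equiv0$) forces all residues to vanish, and a meromorphic differential with at worst simple poles and vanishing residues is holomorphic. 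Hence $H^0(\Omega^1)^{(j)}=H^0(\Omega^1(\SS))^{(j)}$. For the second isomorphism I would run the parallel argument on $0\to\O(-\SS)\to\O\to\O_{\SS}\to0$: the map $H^1(\O(-\SS))\to H^1(\O)$ is surjective (as $H^1(\O_{\SS})=0$) with kernel $\coker(H^0(\O)\to H^0(\O_{\SS}))$, and $\F_p^{\times}$ acts trivially on $H^0(\O_{\SS})=k^{\deg\SS}$ because it fixes the supersingular points; thus the kernel sits in the $j=0$ eigenspace and the map is an isomorphism on $(j)$-eigenspaces for $j\neq0$. Grothendieck--Serre duality (Remark \ref{DualityOfFVOrd}) interchanges these two statements and provides a consistency check.

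To assemble the diagram (\ref{splitholo}) I would feed these isomorphisms into Corollary \ref{SplitIgusa}, in both its $\o{\X}_r$-form and its $\nor{\o{\X}}_r$-form. Under those identifications (which rest on Proposition \ref{charpord}), the $e_r^*$-parts of the bottom row are identified with the $V_{\ord}$- and $F_{\ord}$-parts of $H^0(I_r^0,\Omega^1(\SS))$ and $H^1(I_r^{\infty},\O(-\SS))$, and the $e_r^*$-parts of the top row with the corresponding parts of $H^0(I_r^0,\Omega^1)$ and $H^1(I_r^{\infty},\O)$. Because the diamond operators commute with the Cartier operator, the $(j)$-eigenspace decomposition is compatible with the $V_{\ord}$- and $F_{\ord}$-decompositions, so on $(j)$-eigenspaces the left map $\nu_*$ and the right map $\nu^*$ become exactly the natural maps $\Omega^1\hookrightarrow\Omega^1(\SS)$ and $\O(-\SS)\to\O$, which are isomorphisms by the first paragraph's results. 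The central vertical map is then defined through the Frobenius splittings of both rows, as $\nu_*$ on the sub and $(\nu^*)^{-1}$ on the quotient; it is automatically an isomorphism compatible with the split structure, and its compatibility with $\Gamma$, the adjoint Hecke and diamond operators, and the change-in-$r$ transition maps follows from the corresponding compatibilities recorded in Proposition \ref{charpord} and Corollary \ref{SplitIgusa}. The $e_r$-case, and the case with $\star=0$ and $\star=\infty$ interchanged, is identical after swapping the roles of $I_r^0$ and $I_r^{\infty}$.

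The step I expect to demand the most care is checking that $\nu_*$ and $\nu^*$ genuinely correspond to the naive inclusion and restriction on the distinguished Igusa components. This means tracing through the Rosenlicht description (\ref{dualizing2prod}) of $\omega_{\o{\X}_r}$ and the description (\ref{PBisProj}) of $(i_r^{\star})^*$ as projection onto the $(r,0,1)$- or $(0,r,1)$-component, and verifying that on that component the dualizing-sheaf sections acquire precisely the simple poles along $\SS$ that the first isomorphism accounts for. Once this identification and the triviality of the $\F_p^{\times}$-action on the supersingular skyscrapers are in hand, the remainder is formal.
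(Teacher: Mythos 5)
Your proposal is correct and follows essentially the same route as the paper: the residue argument at the diamond-fixed supersingular points (via Remark \ref{poletrace} and the reducedness of $\SS$) for the first isomorphism in (\ref{IgusaEigen}), the identifications from Proposition \ref{charpord} and Corollary \ref{SplitIgusa} to reduce $\nu_*$ and $\nu^*$ to the natural maps on the distinguished Igusa components, and the Frobenius splittings of both rows to define the middle arrow of (\ref{splitholo}). The only divergence is minor: you prove the second isomorphism in (\ref{IgusaEigen}) directly from the long exact sequence of $0\to\O(-\SS)\to\O\to\O_{\SS}\to 0$, using that $\F_p^{\times}$ acts trivially on $H^0(\O_{\SS})$ over $\o{\F}_p$ since it fixes each supersingular point, whereas the paper deduces it from the first isomorphism by Grothendieck--Serre duality (which you relegate to a consistency check); both arguments are valid.
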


\begin{proof}
	The first map in (\ref{IgusaEigen}) is injective, as it is simply the canonical inclusion.
	To see that it is an isomorphism, 
	we may work over $k:=\o{\F}_p$.  If $\eta$ is {\em any} meromorphic differential on 
	$I_r^{\star}$ on which $\F_p^{\times}$
	acts via the character $\tau^j$, 
	then since the diamond operators fix every supersingular point on $I_r^{\star}$ we have
	\begin{equation*}
		\res_x(\eta) = \res_x(\langle u\rangle\eta) = \tau^j(u)\res_x(\eta)
	\end{equation*}
	for any $x\in \SS(k)$ and all $u\in \F_p^{\times}$ thanks to Remark \ref{poletrace}.  As
	$j\not\equiv 0\bmod p-1$, so $\tau^j$ is nontrivial, we must therefore have $\res_x(\eta)=0$ for all 
	supersingular points $x$.  If in addition $\eta$ is holomorphic
	outside $\SS$ with at worst simple poles along $\SS$, then $\eta$ must be holomorphic everywhere, 
	so the first map in (\ref{IgusaEigen}) is
	surjective, as desired. As duality interchanges $\langle u\rangle$ with $\langle u\rangle^*$,
	the second mapping in (\ref{IgusaEigen}) is dual to the first
	via our conventions on the $\F_p[\Delta/\Delta_r]$-structures
	of $H^0(I_r^{\star},\Omega^1_{I_r^{\star}}(\SS))$ and $H^1(I_r^{\star},\O(-\SS))$, and is therefore
	an isomorphism as well.

	Now for each $j\not\equiv 0\bmod p-1$,
	we have a commutative diagram
	\begin{equation}
	\begin{gathered}
		\xymatrix{
			{e_r^*H^0(\nor{\o{\X}}_r,\Omega^1_{\nor{\o{\X}}_r})^{(j)}} 
			\ar@{^{(}->}[r]^-{\nu_*}\ar[d]_-{(i_r^0)^*}^-{\simeq} &
			{e_r^*H^0(\o{\X}_r,\omega_{\o{\X}_r})^{(j)}} \ar[d]^-{(i_r^0)^*}_-{\simeq} \\
			{{H^0(I_r^0,\Omega^1_{I_r^0})^{(j)}}^{V_{\ord}}} \ar@{^{(}->}[r]^-{\simeq} & 
			{{H^0(I_r^0,\Omega^1_{I_r^0}(\SS))^{(j)}}^{V_{\ord}}}
		}
		\label{linkingDiagram}
	\end{gathered}
	\end{equation}
	of $\F_p[\Delta_1/\Delta_r]$-modules 
	in which
	the two vertical arrows are isomorphisms by Proposition \ref{charpord} and the bottom horizontal mapping is 
	an isomorphism as we have just seen.  We conclude that the top horizontal
	arrow of (\ref{linkingDiagram}) is an isomorphism as well.    
	Thus, the left vertical map in (\ref{splitholo}) is an isomorphism.
	A similar argument shows that the analogue of (\ref{linkingDiagram})
	with $e_r$ in place of $e_r^*$ and $I_r^{\infty}$, $i_r^{\infty}$
	in place of $I_r^0$ and $i_r^0$, respectively, is likewise a
	commutative diagram of isomorphisms.  Dualizing 
	this analogue then shows that
	the right vertical map of (\ref{splitholo}) is an isomorphism as well.
	 The diagram (\ref{splitholo}) then follows at once
	from the fact the both rows are canonically split by the Frobenius endomorphism, thanks to 
	Corollary \ref{SplitIgusa}. 	
	A nearly identical argument shows that the same assertions hold if we replace $e_r^*$
	with $e_r$ throughout.
	\end{proof}

If $A$ is any $\Z_p[\F_p^{\times}]$-algebra and $a\in A$, we will 
write $a':=f'a$ for the product of $a$ with the idempotent $f'$
of (\ref{TeichmullerIdempotent}), or equivalently the 
projection of $a$ to the complement 
of the trivial eigenspace for $\F_p^{\times}$.  We will apply this
to $A=\H_r,\,\H_r^*$, viewed as $\Z_p[\F_p^{\times}]$-algebras
via the diamond and adjoint diamond operators, respectively.

\begin{proposition}\label{NormalizationCoh}
	For each $r$ there are isomorphisms of split short exact sequences
	of $\F_p[\Delta_1/\Delta_r]$-modules
	\begin{subequations}
		\begin{equation}
		\begin{gathered}
			\xymatrix@C=15pt{
				0\ar[r] & {{e_r^*}'H^0(\o{\X}_r,\Omega^1_{\nor{\o{\X}}_r})} 
				\ar[r]\ar[d]_-{F_*^r (i_r^0)^*}^-{\simeq} & 
				{{e_r^*}'H^1_{\dR}(\nor{\o{\X}}_r/\F_p)} \ar[r]\ar[d]^-{\simeq} &
				{{e_r^*}'H^1(\nor{\o{\X}}_r,\O_{\nor{\o{\X}}_r})} \ar[r] & 0\\
				0 \ar[r] & {f'H^0(I_r^{0},\Omega^1)^{V_{\ord}}} \ar[r] &
				{f'H^0(I_r^{0},\Omega^1)^{V_{\ord}}\oplus
				f'H^1(I_r^{\infty},\O)^{F_{\ord}}} \ar[r] & 
				{f'H^1(I_r^{\infty},\O)^{F_{\ord}}} \ar[r]\ar[u]_-{\VDual{(i_r^{\infty})^*}}^-{\simeq} & 0
			}\label{LowerIsom2}
		\end{gathered}
		\end{equation}
		\begin{equation}
		\begin{gathered}
			\xymatrix@C=15pt{
				0\ar[r] & {e_r'H^0(\nor{\o{\X}}_r,\Omega^1_{\nor{\o{\X}}_r})} 
				\ar[r]\ar[d]_-{F_*^r (i_r^{\infty})^*}^-{\simeq} & 
				{e_r'H^1_{\dR}(\nor{\o{\X}}_r/\F_p)} \ar[r]\ar[d]^-{\simeq} &
				{e_r'H^1(\nor{\o{\X}}_r,\O_{\nor{\o{\X}}_r})} \ar[r] & 0\\
				0 \ar[r] & {f'H^0(I_r^{\infty},\Omega^1)^{V_{\ord}}} \ar[r] &
				{f'H^0(I_r^{\infty},\Omega^1)^{V_{\ord}}\oplus
				f'H^1(I_r^{0},\O)^{F_{\ord}}} \ar[r] & 
				{f'H^1(I_r^{0},\O)^{F_{\ord}}} \ar[r]
				\ar[u]_-{\VDual{(i_r^{0})^*}\langle p\rangle_N^{-r}}^-{\simeq} & 0
			}\label{UpperIsom2}
		\end{gathered}
		\end{equation}
		\end{subequations}
		Setting $d':=\sum_{k=3}^p d_k$ where $d_k:=\dim_{\F_p} S_k(\Upgamma_1(N);\F_p)^{\ord}$
		as in Remark $\ref{dMFmeaning}$,
		the terms in the top rows of $(\ref{LowerIsom2})$ and $(\ref{UpperIsom2})$
		are free over $\F_p[\Delta_1/\Delta_r]$ of ranks $d'$, $2d'$, and $d'$.		
		The identification
		$(\ref{LowerIsom2})$  $($respectively $(\ref{UpperIsom2})$$)$ is
		equivariant for the actions of $\Gamma$ and the Hecke operators
		$T_{\ell}^*$, $U_{\ell}^*$, and $\langle v\rangle_N^*$
		$($respectively $T_{\ell}$, $U_{\ell}$, and $\langle v\rangle_N$$)$, and
		compatible with change in $r$ using the trace mappings attached
		to $\pr: I_r^{\star}\rightarrow I_s^{\star}$ and to 
		$\o{\pr}:\o{\X}_r\rightarrow \o{\X}_{s}$ $($respectively 
		$\o{\ps}:\o{\X}_r\rightarrow \o{\X}_{s}$$)$.
\end{proposition}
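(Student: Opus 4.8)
The plan is to obtain Proposition \ref{NormalizationCoh} by applying the projector $f'$ of (\ref{TeichmullerIdempotent}) to the ``$\nor{\o{\X}}_r$-version'' of Corollary \ref{SplitIgusa}, and then to extract freeness and the rank count by reducing the \emph{untwisted} Igusa cohomology that appears on $\nor{\o{\X}}_r$ to the $\SS$-twisted Igusa cohomology, where Proposition \ref{IgusaStructure} supplies freeness. First I would invoke the final clause of Corollary \ref{SplitIgusa}, which already provides split short exact sequences and isomorphisms identical in shape to (\ref{LowerIsom2})--(\ref{UpperIsom2}), but with $H^1_{\dR}(\nor{\o{\X}}_r/\F_p)$ sitting between $H^0(I_r^{\star},\Omega^1_{I_r^{\star}})^{V_{\ord}}$ and $H^1(I_r^{\star},\O_{I_r^{\star}})^{F_{\ord}}$, together with their full $\Gamma$- and Hecke-equivariance and compatibility with change in $r$ via the trace maps of $\o{\pr}$, $\o{\ps}$, and $\pr$. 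This step carries over the entire structural content; what remains is to apply $f'$ and to establish freeness.

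Next I would observe that $f'$, being a central idempotent of $\Z_p[\F_p^{\times}]$ assembled from the diamond operators $\langle g\rangle$ with $g\in\F_p^{\times}$, commutes with the Frobenius endomorphism that splits (\ref{sesincharp1})--(\ref{sesincharp2}), with the idempotents $e_r,e_r^*$, with the Hecke operators $T_{\ell},U_{\ell},\langle v\rangle_N$ and their adjoints, with the semilinear $\Gamma$-action, and with all degeneracy trace maps. Applying $f'$ to the normalization analogue of Corollary \ref{SplitIgusa} therefore yields the split exact sequences (\ref{LowerIsom2}) and (\ref{UpperIsom2}) with every claimed equivariance and change-in-$r$ compatibility intact, so the only substantive task left is freeness and the determination of the ranks $d',2d',d'$.

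The crux is that Nakajima's theorem, as packaged in Proposition \ref{IgusaStructure} (\ref{IgusaFreeness}), yields free $\F_p[\Delta_1/\Delta_r]$-modules only for the $\SS$-twisted cohomology, because $\SS$ must contain the wild ramification locus of $\pr:\Ig_r\to\Ig_1$; freeness of the untwisted $H^0(I_r^{\star},\Omega^1)^{V_{\ord}}$ is therefore not automatic. To bridge this gap I would use the eigenspace isomorphisms (\ref{IgusaEigen}) of Lemma \ref{CharacterSpaces}: for each $j\not\equiv 0\bmod p-1$ the natural inclusion $H^0(I_r^{\star},\Omega^1_{I_r^{\star}})^{(j)}\hookrightarrow H^0(I_r^{\star},\Omega^1_{I_r^{\star}}(\SS))^{(j)}$ is an isomorphism, and dually for $H^1$. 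Since the Cartier operator commutes with the diamond operators by Proposition \ref{CartierOp} (\ref{CartierCommutes}), the $V$- and $F$-ordinary decompositions respect the $\F_p^{\times}$-eigenspaces, so summing (\ref{IgusaEigen}) over $j\neq 0$ produces isomorphisms $f'H^0(I_r^{\star},\Omega^1_{I_r^{\star}})^{V_{\ord}}\simeq f'H^0(I_r^{\star},\Omega^1_{I_r^{\star}}(\SS))^{V_{\ord}}$, and correspondingly for $H^1(I_r^{\star},\O)^{F_{\ord}}$. The right-hand side is the $f'$-part of a free $\F_p[\Delta_1/\Delta_r]$-module of rank $d$, hence itself free, being a direct summand over the local ring $\F_p[\Delta_1/\Delta_r]$.

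It remains to compute the rank of the $f'$-part, which I expect to be the delicate bookkeeping step and the main obstacle. Applying the control isomorphism Proposition \ref{IgusaStructure} (\ref{IgusaControl}) with $s=1$, which is $\F_p^{\times}$-equivariant, the rank over $\F_p[\Delta_1/\Delta_r]$ of the $\tau^j$-eigenspace of $H^0(I_r^{\star},\Omega^1(\SS))^{V_{\ord}}$ equals $\dim_{\F_p}\bigl(H^0(\Ig_1,\Omega^1_{\Ig_1/\F_p}(\SS))^{(j)}\bigr)^{V_{\ord}}$, which by Proposition \ref{MFmodp} is $d_{j+2}$ for weight $k=j+2$ (with $U_p=V$ acting invertibly). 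As $j$ runs over the nonzero classes mod $p-1$ the weight runs over $k=3,\dots,p$, while the excised trivial eigenspace $j\equiv 0$ corresponds to $k=p+1$ via the Hasse-invariant diagram of Proposition \ref{MFmodp}; hence the $f'$-part has rank $\sum_{k=3}^{p}d_k=d'$ and the removed piece accounts for the remaining $d_{p+1}=d-d'$ (compatibly with Remark \ref{dMFmeaning}). The flanking terms of (\ref{LowerIsom2}) and (\ref{UpperIsom2}) are thus free of rank $d'$, and since both rows split the middle terms are free of rank $2d'$. The genuinely nontrivial point throughout is precisely this identification of the trivial eigenspace with the weight $p+1$ contribution, which is exactly why the passage from $\Omega^1(\SS)$ to $\Omega^1$ becomes clean only after projecting by $f'$.
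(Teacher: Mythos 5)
Your proposal is correct and follows essentially the same route as the paper, whose proof simply combines the $\nor{\o{\X}}_r$-version of Corollary \ref{SplitIgusa}, the freeness results of Proposition \ref{IgusaStructure}/Corollary \ref{FreenessInCharp}, and the eigenspace isomorphisms of Lemma \ref{CharacterSpaces}, together with the locality of $\F_p[\Delta_1/\Delta_r]$ to convert direct summands into free modules. Your explicit rank bookkeeping---specializing eigenspace-wise to level $1$ via Proposition \ref{IgusaStructure} (\ref{IgusaControl}) and matching $\tau^{k-2}$-eigenspaces with $S_k(\Upgamma_1(N);\F_p)^{\ord}$ for $3\le k\le p$ via Proposition \ref{MFmodp}, with the trivial eigenspace absorbing $d_{p+1}$---is exactly the computation the paper leaves implicit through Remark \ref{dMFmeaning}.
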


\begin{proof}
	This follows immediately from Corollaries \ref{SplitIgusa}--\ref{FreenessInCharp}
	and Lemma \ref{CharacterSpaces}, using the fact that the group ring $\F_p[\Delta_1/\Delta_r]$
	is local, so any projective $\F_p[\Delta_1/\Delta_r]$-module 
	is free.
\end{proof}

\section{Ordinary \texorpdfstring{$\Lambda$}{Lambda}-adic de Rham cohomology}\label{results}

In this section, we will state and prove our main results as described in \S\ref{resultsintro}.
Throughout, we will keep the notation of \S\ref{resultsintro} and the introduction of \S\ref{Prelim}.

\subsection{The formalism of towers}\label{TowerFormalism}

In this preliminary section, we set up a general commutative algebra framework 
for dealing with the various projective limits of cohomology modules
that we will encounter.

\begin{definition}
	A {\em tower of rings} is an inductive system $\scrA:=\{A_r\}_{r\ge 1}$ of local rings with local
	transition maps.  A {\em morphism of towers} $\scrA\rightarrow \scrA'$
	is a collection of local ring homomorphisms $A_r\rightarrow A_r'$ which are compatible
	with change in $r$.
	A {\em tower of $\scrA$-modules} $\scrM$
	consists of the following data:
	\begin{enumerate}
		\item For each integer $r\ge 1$, an $A_r$-module $M_r$.
	  	\item A collection of $A_r$-module homomorphisms
			$\varphi_{r,s}:M_r\rightarrow M_{s}\otimes_{A_{s}} A_r $
		for each pair of integers $r\ge s\ge 1$, which are compatible 
		in the obvious way under composition.		
	\end{enumerate}
	A {\em morphism of towers of $\scrA$-modules} $\scrM\rightarrow \scrM'$
	is a collection of $A_r$-module homomorphisms $M_r\rightarrow M_r'$ which
	are compatible with change in $r$ in the evident manner.  For a tower of 
	rings $\scrA=\{A_r\}$, we will write $A_{\infty}$ for the inductive limit,
	and for
	a tower of $\scrA$-modules $\scrM=\{M_r\}$ and any $A_{\infty}$-algebra $B$ we set
	\begin{equation*}
		M_B := \varprojlim_r \left( M_r\otimes_{A_r} B\right)\quad\text{and write simply}\quad
		M_{\infty}:=M_{A_{\infty}},
	\end{equation*}
	with the projective limit taken with respect to the induced transition maps.
\end{definition}

\begin{lemma}\label{Technical}
	Let $\scrA=\{A_r\}_{r\ge 0}$ be a tower of rings
	and suppose that $I_r\subseteq A_r$ is a sequence of proper 
	ideals
	such that 
	the image of $I_{r}$ in $A_{r+1}$ is contained in $I_{r+1}$ for all $r$.	
	Write $I_{\infty}:=\varinjlim I_r$
	for the inductive limit, and set $\o{A}_r:=A_r/I_r$ for all $r$.
	Let $\scrM=\{M_r,\pr_{r,s}\}$ be a tower of $\scrA$-modules
	equipped with an action\footnote{That is, a homomorphism of 
	groups $\Delta_1\rightarrow \Aut_{\scrA}(\scrM)$,
	or equivalently, an $A_r$-linear action of $\Delta_1$ on $M_r$
	for each $r$ that is compatible with change in $r$.
	}
	of $\Delta_1$ by $\scrA$-automorphisms.  Suppose that 
	$M_r$ is free of finite rank over $A_r$ for all $r$, and that 
	$\Delta_r$ acts trivially on $M_r$.
	Let $B$ be an $A_{\infty}$-algebra, and observe that $M_B$
	is canonically a module over the completed group ring $\Lambda_B$.
	Assume that 
	that the following
	conditions hold for $r>0$:
	\begin{enumerate}
	\setcounter{equation}{1}
		\renewcommand{\theenumi}{\theequation{\rm\alph{enumi}}}
		{\setlength\itemindent{10pt} 
			\item $\o{M}_r:=M_r/I_rM_r$ is a free $\o{A}_r[\Delta_1/\Delta_r]$-module of rank 
			$d$ that is independent of $r$.\label{freehyp}}
		{\setlength\itemindent{10pt} 
		\item For all $s\le r$ the induced maps 
		$\xymatrix@1{
				{\overline{\pr}_{r,s}: \o{M}_r}\ar[r] & 
				{\o{M}_{s}\otimes_{\o{A}_{s}} \o{A}_{r}}
				}$
		are surjective.\label{surjhyp}} 
	\end{enumerate}
	Then:
	\begin{enumerate}
	 	\item $M_r$ is a free $A_r[\Delta_1/\Delta_r]$-module of rank $d$ for all $r$.\label{red2pfree}
	 
		\item The induced maps of $A_r[\Delta_1/\Delta_{s}]$-modules
		\begin{equation*}
			\xymatrix{
				{M_r \otimes_{A_r[\Delta_1/\Delta_r]} A_r[\Delta_1/\Delta_{s}]} \ar[r] & {M_{s}\otimes_{A_{s}} A_r}
			}
		\end{equation*}
		are isomorphisms for all $r\ge s$.\label{red2psurj}
		
		\item $M_B$ is a finite free $\Lambda_{B}$-module of rank $d$.	\label{MBfree}
		
		\item For each $r$, the canonical map 
		\begin{equation*}
			\xymatrix{
				{M_B\otimes_{\Lambda_B} B[\Delta_1/\Delta_r]} \ar[r] & {M_r\otimes_{A_r} B}
				}
		\end{equation*}
		is an isomorphism of $B[\Delta_1/\Delta_r]$-modules.
		\label{ControlLimit}

		\item If $B'$ is any 
		$B$-algebra, 
		then the canonical map
		\begin{equation*}
			\xymatrix{
				{M_B\otimes_{\Lambda_B} \Lambda_{B'}} \ar[r] & {M_{B'}}
				}
		\end{equation*}
		is an isomorphism of finite free $\Lambda_{B'}$-modules.\label{CompletedBaseChange}
		
	\end{enumerate}
\end{lemma}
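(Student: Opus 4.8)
The plan is to establish the finite-level assertions (1) and (2) by Nakayama-type arguments over the local ring $A_r$, and then to deduce the $\Lambda_B$-statements (3)--(5) from a standard completion/limit argument. Throughout I write $\Lambda_r:=A_r[\Delta_1/\Delta_r]$ and, for the given $A_\infty$-algebra $B$, set $P_r:=M_r\otimes_{A_r}B$ and $R_r:=B[\Delta_1/\Delta_r]$, so that $\Lambda_B=\varprojlim_r R_r$ and $M_B=\varprojlim_r P_r$. Since each $I_r$ is a proper ideal of the local ring $A_r$, we have $I_r\subseteq \mathfrak{m}_r:=\rad(A_r)$, which is what makes Nakayama available. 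For (1), which uses only hypothesis \eqref{freehyp}, I would choose $m_1,\dots,m_d\in M_r$ lifting an $\o{A}_r[\Delta_1/\Delta_r]$-basis of $\o{M}_r$ and form the $\Lambda_r$-linear map $\phi:\Lambda_r^d\to M_r$ sending the standard basis to the $m_i$. Both sides are free over $A_r$ (the source of rank $d\cdot\#(\Delta_1/\Delta_r)$, the target by hypothesis), and reduction modulo $I_r$ turns $\phi$ into the isomorphism $\o{A}_r[\Delta_1/\Delta_r]^d\xrightarrow{\sim}\o{M}_r$; comparing $A_r$-ranks shows $\Lambda_r^d$ and $M_r$ are $A_r$-free of the same finite rank. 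As $I_r\subseteq\mathfrak{m}_r$ and $\o\phi$ is surjective, Nakayama forces $\phi$ surjective, and a surjection between free $A_r$-modules of equal finite rank is an isomorphism; hence $M_r$ is $\Lambda_r$-free of rank $d$.

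For (2), fix $r\ge s$ and consider the canonical map $\theta:M_r\otimes_{\Lambda_r}A_r[\Delta_1/\Delta_s]\to M_s\otimes_{A_s}A_r$ induced by $\pr_{r,s}$; it factors through the displayed coinvariants because $\pr_{r,s}$ is $\Delta_1$-equivariant and $\Delta_s$ acts trivially on the target. By (1) together with base change, both sides are free of rank $d$ over $A_r[\Delta_1/\Delta_s]$, hence finite over $A_r$. Reducing $\theta$ modulo $I_r$ and recalling that $\o{\pr}_{r,s}$ factors through $\o{M}_r\otimes_{\o{A}_r[\Delta_1/\Delta_r]}\o{A}_r[\Delta_1/\Delta_s]$, hypothesis \eqref{surjhyp} says this reduction is surjective; Nakayama over $A_r$ (again using $I_r\subseteq\mathfrak{m}_r$) promotes this to surjectivity of $\theta$, and a surjection of free $A_r[\Delta_1/\Delta_s]$-modules of equal finite rank is an isomorphism, giving (2).

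Tensoring the isomorphism of (2) with $B$ over $A_r$ identifies $P_r$ (free of rank $d$ over $R_r$ by (1)) with $P_r\otimes_{R_r}R_s\cong P_s$, so $\{P_r\}$ is a base-change compatible system of free rank-$d$ modules over the tower $\{R_r\}$. Writing $J_r:=\ker(\Lambda_B\to R_r)$, the ring $\Lambda_B$ is complete and separated for the filtration $\{J_r\}$, and each transition map $R_{r+1}\to R_r$ is surjective with kernel contained in $\rad(R_{r+1})$: that kernel is generated by the elements $\delta-1$ for $\delta$ in the $p$-group $\Delta_s/\Delta_r$, and $p\in\rad(B)$ because $B$ is an algebra over $A_\infty$. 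Successive lifting of a basis up the tower then yields a compatible system of $R_r$-bases of the $P_r$, i.e. a $\Lambda_B$-linear map $\Lambda_B^d\to M_B$ whose reduction modulo each $J_r$ is the isomorphism $R_r^d\xrightarrow{\sim}P_r$; completeness of both sides shows $\Lambda_B^d\to M_B$ is an isomorphism, which is (3), and the same reductions are exactly the control maps of (4). For (5), both $M_B\otimes_{\Lambda_B}\Lambda_{B'}$ and $M_{B'}$ are free of rank $d$ over $\Lambda_{B'}$ (the latter by (3) applied to the $A_\infty$-algebra $B'$), and the canonical comparison reduces modulo every $J_r$ to the isomorphism of (4) over $B'$, so completeness again forces it to be an isomorphism.

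I expect the crux to be part (2): one must correctly recognize the source of $\theta$ as the coinvariants module and match it against the base-changed target, then feed \eqref{surjhyp} through Nakayama together with the principle that a surjective endomorphism of a finitely generated module is injective. The passage to the limit in (3) is standard, but the one point that genuinely needs checking is that the transition kernels lie in the Jacobson radical, which is where $p\in\rad(B)$ enters; granting (1)--(3), the statements (4) and (5) are then purely formal consequences of completeness.
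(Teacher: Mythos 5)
Your treatment of the finite-level statements (1) and (2) is correct and essentially the same as the paper's: lift an $\o{A}_r[\Delta_1/\Delta_r]$-basis of $\o{M}_r$, promote to a generating set by Nakayama (using $I_r\subseteq\rad(A_r)$, resp.\ $I_r\Lambda_{A_r,r}\subseteq\rad(\Lambda_{A_r,r})$ in the paper, where $\Lambda_{A_r,r}:=A_r[\Delta_1/\Delta_r]$), and conclude via the principle that a surjection of finite free modules of equal rank over a commutative ring is an isomorphism; your identification of the source of $\theta$ as the $\Delta_s$-coinvariants and the Nakayama step feeding in the surjectivity hypothesis are exactly the paper's argument for (2), just organized slightly differently.

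The genuine gap is in your proof of (3), and it propagates into your (5). You lift bases up the tower $\{R_r\}=\{B[\Delta_1/\Delta_r]\}$, which requires $\ker(R_{r+1}\twoheadrightarrow R_r)\subseteq\rad(R_{r+1})$, and you justify this by asserting that $p\in\rad(B)$ ``because $B$ is an algebra over $A_\infty$.'' That implication does not hold: nothing in the hypotheses makes $B$ local or $p$ small in $B$ --- $B$ is an \emph{arbitrary} $A_\infty$-algebra, and in (5) you must even run this argument for an arbitrary $B$-algebra $B'$, since your proof of (5) invokes (3) for $B'$. If, say, $B'$ contains $\Q$, or $B'=B[x]$, then $\delta-1$ for $\delta\in\Delta_r/\Delta_{r+1}$ does not lie in $\rad(B'[\Delta_1/\Delta_{r+1}])$, and a lift of a basis need not be a basis, so your ``successive lifting'' breaks down. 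The paper sidesteps this entirely by arranging the compatible bases \emph{integrally, before tensoring with $B$}: the kernel of $\Lambda_{A_r,r}\twoheadrightarrow\Lambda_{A_r,s}$ lies in the radical of the \emph{local} ring $\Lambda_{A_r,r}$ (localness holding because $\Delta_1/\Delta_r$ is a $p$-group and $A_r$ is local of residue characteristic $p$, as in every application), so by (2) and Nakayama any lift to $M_r$ of a $\Lambda_{A_r,s}$-basis of $M_s\otimes_{A_s}A_r$ is a $\Lambda_{A_r,r}$-basis of $M_r$; choosing such lifts successively yields a compatible system of bases of the $M_r$ themselves, whose base change gives compatible $R_r$-bases of the $P_r$ for \emph{every} $B$ simultaneously. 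With that repair your limit argument for (3) and the control map in (4) go through, and for (5) one checks the map of finite free $\Lambda_{B'}$-modules after reduction modulo each $J_r$ via (4): an element of $\Lambda_{B'}=\varprojlim\Lambda_{B'}/J_r$ is a unit as soon as all of its reductions are, so no radical condition on $B'$ is ever needed.
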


\begin{proof}
	For notational ease, let us put $\Lambda_{A_r,s}:=A_r[\Delta_1/\Delta_{s}]$ for all pairs of nonnegative integers
	$r,s$.  Note that $\Lambda_{A_r,s}$ is a local $A_r$-algebra, so the 
	ideal $\widetilde{I}_r:=I_r\Lambda_{A_r,r}$ is 
	contained in the radical of $\Lambda_{A_r,r}$. 
	
	Let us fix $r$ and choose a generator $\gamma$ of $\Delta_1/\Delta_r$.
	The module $M_r$ is obviously finite
	over $\Lambda_{A_r,r}$ (as it is even finite over $A_r$), so by hypothesis (\ref{freehyp})
	we may choose $m_1,\ldots,m_{d}\in M_r$
	with the property that the images $\o{m}_i$ of the $m_i$ in $\o{M}_r=M_r/\widetilde{I}_rM_r$ 
	freely generate $\o{M}_r$ as an 
	$\o{A}_r[\Delta_1/\Delta_r]=\Lambda_{A_r,r}/\widetilde{I}_r$-module.   
	By Nakayama's Lemma \cite[Corollary to Theorem 2.2]{matsumura}, we conclude
	that $m_1,\ldots,m_{d}$ generate $M_r$ as a $\Lambda_{A_r,r}$-module.  
	It follows that $\{\gamma^i m_j\}_{i,j}$ for $i=1,\ldots, r$ and $j=1,\ldots, d$
	generate $M_r$ as an $A_r$-module.  We claim that these elements {\em freely}
	generate $M_r$ as an $A_r$-module.  To see this, we first observe that
	as $M_r$ is free of finite rank over $A_r$
	by hypothesis, we may select an $A_r$-basis $e_1,\ldots, e_N$ of $M_r$.
	The images $\{\o{e}_i\}_i$ of the $e_i$ in $\o{M}_r$ are then an $\o{A}_r$-basis;
	as the same is true of $\{\gamma^i \o{m}_j\}_{i,j}$, we necessarily have
	$N=rd$.  Choosing a reindexing the $e_i$'s, we obtain an $A_r$-basis $\{e_{ij}\}_{i,j}$
	for $1\le i \le r$ and $1\le j\le d$ of $M_r$.  Now consider the $A_r$-linear map  
	$\psi:M\rightarrow M$ sending $e_{ij}$ to $\gamma^im_j$.  
	As $m_1,\ldots, m_d$ generate $M_r$ as an $A_r[\Delta_1/\Delta_r]$-module, 
	necessarily $\{\gamma^im_j\}_{i,j}$ generates $M_r$ as an $A_r$-module and $\psi$
	is surjective.  By \cite[Theorem 2.4]{matsumura}, we conclude that 
	$\psi$ is an isomorphism and $\{\gamma^i m_j\}_{i,j}$ freely generates 
	$M_r$ as an $A_r$-module, as claimed.  
	It then follows that $m_1,\ldots,m_{d}$ freely generate $M_r$ over $\Lambda_{A_r,r}$, giving 
	(\ref{red2pfree}).
	
	To prove (\ref{red2psurj}), note that our assumption (\ref{surjhyp}) that the maps $\overline{\pr}_{r,s}$ are 
	surjective for all $r\ge s$ implies that the same is true of the maps $\pr_{r,s}$ 
	(again by Nakayama's Lemma) and hence that the induced map of 
	$\Lambda_{A_r,s}$-modules in (\ref{red2psurj}) is surjective.
	As this map is then a surjective map of free $\Lambda_{A_r,s}$-modules of the same
	rank $d$, it must be an isomorphism.  
	
	Since the kernel of the canonical surjection $\Lambda_{A_r,r}\twoheadrightarrow \Lambda_{A_r,s}$
	lies in the radical of $\Lambda_{A_r,r}$, we deduce by Nakayama's Lemma that 
	any lift to $M_r$ of a $\Lambda_{A_r,s}$-basis of $M_{s}\otimes_{A_{s}} A_r$
	is a $\Lambda_{A_r,r}$-basis of $M_r$.
	It follows
	that for all $r\ge 0$, we may choose a $B[\Delta_1/\Delta_r]$-basis 
	$e_{1r},\ldots, e_{dr}$ of the free $B[\Delta_1/\Delta_r]$-module $M_r\otimes_{A_r} B$
	with the property that $e_{ir}$ is carried to $e_{is}$ under the 
	induced map $M_r\otimes_{A_r} B\rightarrow M_s\otimes_{A_s} B$ for all $s\le r$.
	These choices yield isomorphisms $B[\Delta_1/\Delta_r]^{\oplus d}\simeq M_r\otimes_{A_r} B$
	which are compatible with change in $r$ using the canonical projections on each factor
	on the left side and the induced transition maps on the right.  As projective limits
	commute with finite direct sums, it follows that
	$M_B$ is a free $\Lambda_{B}$-module of rank 
	$d$ for any 
	$A_{\infty}$-algebra $B$.  
	
	We have seen that the mappings $\rho_{r,s}$ are surjective for all $r\ge s$,
	and it follows that the canonical map $M_B\twoheadrightarrow M_r\otimes_{A_r} B$
	is surjective as well.  Since the mapping of (\ref{ControlLimit}) is obtained from this surjection
	by extension of scalars (keeping in mind the natural identification 
	$(M_r\otimes_{A_r} B) \otimes_{\Lambda_B} B[\Delta_1/\Delta_r] \simeq M_r \otimes_{A_r} B$),
	we deduce that (\ref{ControlLimit}) is likewise surjective.
	From (\ref{red2pfree}) and (\ref{MBfree}) we conclude that
	the mapping in (\ref{ControlLimit}) is a surjection of free $B[\Delta_1/\Delta_r]$-modules
	of the same rank and is hence an isomorphism as claimed.

	It remains to
	prove (\ref{CompletedBaseChange}). 
	Extending scalars,
	the canonical maps
	$M_B\twoheadrightarrow M_r\otimes_{A_r} B$ induce surjections
	\begin{equation*}
		\xymatrix{
			{M_{B}\otimes_{\Lambda_B} \Lambda_{B'}} \ar@{->>}[r] & 
			{(M_r\otimes_{A_r} B)\otimes_{\Lambda_B} \Lambda_{B'} \simeq M_r\otimes_{A_r} B'}
			}
	\end{equation*}
	that are compatible in the evident manner with change in $r$.  Passing to inverse
	limits gives the mapping $M_{B}\otimes_{\Lambda_{B}}\Lambda_{B'}\rightarrow M_{B'}$ 
	of (\ref{CompletedBaseChange}).  Due to (\ref{MBfree}),
	this is then a map of finite free $\Lambda_{B'}$-modules of the same rank, 
	so to check that it is an isomorphism it suffices by Nakayama's Lemma
	to do so after applying $\otimes_{\Lambda_{B'}} B'[\Delta_1/\Delta_r]$, which is
	an immediate consequence of (\ref{ControlLimit}). 
\end{proof}

We record the following elementary commutative algebra fact,
which will be of use to us in the sequel to this paper \cite{CaisHida2}:

\begin{lemma}\label{fflatfreedescent}
	Let $A\rightarrow B$ be a local homomorphism of local rings which makes $B$ into
	a flat $A$-algebra, and let $M$ be an arbitrary $A$-module.
	Then $M$ is a free $A$-module of finite rank if and only if $M\otimes_A B$
	is a free $B$-module of finite rank.
\end{lemma}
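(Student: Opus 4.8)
The forward implication is immediate, since tensoring an isomorphism $M\cong A^{\oplus n}$ with $B$ yields $M\otimes_A B\cong B^{\oplus n}$; so the plan is to prove the reverse implication, assuming $M\otimes_A B$ is free of finite rank over $B$. The first thing I would record is that the local homomorphism $A\to B$ is automatically \emph{faithfully} flat: flatness is assumed, and since $A\to B$ is local we have $\mathfrak{m}_A B\subseteq \mathfrak{m}_B\neq B$, so $\Spec B\to\Spec A$ is surjective. Faithful flatness is the engine that drives every descent step below, and I would isolate it at the outset.

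Next I would descend finite generation. Choose finitely many $m_1,\dots,m_k\in M$ whose images $m_i\otimes 1$ generate the finitely generated $B$-module $M\otimes_A B$ (possible because any element of $M\otimes_A B$ is a $B$-combination of such images), and set $N:=\sum_i A m_i\subseteq M$. By construction $N\otimes_A B\to M\otimes_A B$ is surjective, and it is injective by flatness applied to $N\hookrightarrow M$; hence $(M/N)\otimes_A B=0$, and faithful flatness forces $M=N$. Thus $M$ is finitely generated over the local ring $A$.

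Now I would run a minimal–presentation argument. Writing $k_A:=A/\mathfrak{m}_A$, Nakayama's lemma lets me pick $m_1,\dots,m_d\in M$ lifting a $k_A$-basis of $M/\mathfrak{m}_A M$, giving a surjection $\phi\colon A^{\oplus d}\twoheadrightarrow M$ with kernel $K$ in the exact sequence
\[
0\longrightarrow K\longrightarrow A^{\oplus d}\xrightarrow{\ \phi\ } M\longrightarrow 0.
\]
The key numerical input is that this $d$ equals the rank of $M\otimes_A B$: reduction modulo $\mathfrak{m}_B$ computes the latter rank, while $M\otimes_A B\otimes_B(B/\mathfrak{m}_B)\cong (M/\mathfrak{m}_A M)\otimes_{k_A}(B/\mathfrak{m}_B)$ has dimension $d$ over $B/\mathfrak{m}_B$ (here I use that $A\to B$ is local, so $A\to B/\mathfrak{m}_B$ factors through $k_A$). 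Tensoring the displayed sequence with the flat algebra $B$ gives $0\to K\otimes_A B\to B^{\oplus d}\xrightarrow{\phi\otimes B}M\otimes_A B\to 0$ with $\phi\otimes B$ a surjection between free $B$-modules of the \emph{same} rank $d$. Composing with an isomorphism $M\otimes_A B\cong B^{\oplus d}$ produces a surjective endomorphism of $B^{\oplus d}$, which is an isomorphism by \cite[Theorem 2.4]{matsumura}; therefore $K\otimes_A B=0$, and faithful flatness gives $K=0$. Hence $\phi$ is an isomorphism and $M\cong A^{\oplus d}$ is free of finite rank.

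I expect the only genuinely delicate point to be the descent of finite generation, since $M$ is a priori an arbitrary (possibly enormous) $A$-module and none of the later steps make sense without it. Once $M$ is known to be finite over $A$, the combination of Nakayama's lemma, the rank computation over the residue field $B/\mathfrak{m}_B$, and the surjective-endomorphism-is-injective principle of \cite[Theorem 2.4]{matsumura} closes the argument cleanly; faithful flatness is used exactly twice, once to descend finiteness of $M$ and once to descend the vanishing of $K$.
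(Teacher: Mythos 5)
Your proof is correct, and it diverges from the paper's in the step that matters. Both arguments begin identically: faithful flatness from the local hypothesis, then descent of finite generation — the paper writes $M=\varinjlim M_{\alpha}$ over all finitely generated submodules and finds one $\alpha$ with $M_{\alpha}\otimes_A B\to M\otimes_A B$ surjective, which is your explicit-generators argument in slightly different clothing (your parenthetical justification that the $m\otimes 1$ generate is the right one, and the injectivity of $N\otimes_A B\to M\otimes_A B$ via flatness is exactly what makes the "$(M/N)\otimes_A B=0$" step legitimate). After that the routes split. The paper descends \emph{flatness}: $M\otimes_A B$ free implies $B$-flat, faithfully flat descent of flatness (\cite[Exercise 7.1]{matsumura}) gives $M$ flat over $A$, and then finite flat over a local ring is free by \cite[Theorem 7.10]{matsumura}. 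You instead build a minimal presentation $0\to K\to A^{\oplus d}\to M\to 0$ with $d=\dim_{k_A}M/\mathfrak{m}_A M$, check via reduction modulo $\mathfrak{m}_B$ (using $\mathfrak{m}_A B\subseteq\mathfrak{m}_B$, so $B/\mathfrak{m}_B$ is a $k_A$-algebra) that $d$ equals the $B$-rank of $M\otimes_A B$, and kill $K\otimes_A B$ by the surjective-endomorphism principle \cite[Theorem 2.4]{matsumura} before descending its vanishing. Your version is more self-contained: it never invokes descent of flatness or the local criterion hiding behind \cite[Theorem 7.10]{matsumura}, using faithful flatness only to descend vanishing (twice, as you say), and it produces the rank $d$ explicitly; it is also, pleasantly, the same Theorem 2.4 trick the paper itself uses in the proof of its Lemma \ref{Technical}. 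The paper's version is shorter on the page at the cost of two heavier black boxes. Both are complete proofs.
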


\begin{proof}
	First observe that since $A\rightarrow B$ is local and flat, it is faithfully flat.
	We write $M=\varinjlim M_{\alpha}$ as the direct limit of its finitely generated $A$-submodules,
	whence $M\otimes_A B = \varinjlim (M_{\alpha}\otimes_A B)$ with each of $M_{\alpha}\otimes_A B$
	naturally a finitely generated $B$-submodule of $M\otimes_A B$.  Assume that $M\otimes_A B$
	is finitely generated as a $B$-module. Then there exists $\alpha$ with 
	$M_{\alpha}\otimes_A B\rightarrow M\otimes_A B$ surjective, and as $B$ is faithfully flat over $A$,
	this implies that $M_{\alpha}\rightarrow M$ is surjective, whence $M$ is finitely generated over $A$.
	Suppose in addition that $M\otimes_A B$ is free as a $B$-module.  In particular, $M\otimes_A B$
	is $B$-flat, which implies by faithful flatness of $B$ over $A$ that $M$ is $A$-flat
	(see, e.g. \cite[Exercise 7.1]{matsumura}).  Then $M$ is a finite flat module over the local ring $A$,
	whence it is free as an $A$-module by \cite[Theorem 7.10]{matsumura}.
\end{proof}

Finally, we analyze duality for towers with $\Delta_1$-action.  

\begin{lemma}\label{LambdaDuality}
	With the notation of Lemma $\ref{Technical}$, 
	let
	$\scrM:=\{M_r,\pr_{r,s}\}$ and $\scrM':=\{M_r',\pr_{r,s}'\}$ be two 
	towers of $\scrA$-modules with $\Delta_1$-action satisfying $(\ref{freehyp})$ and $(\ref{surjhyp})$.
	Suppose that for each $r$ there exist $A_r$-linear perfect duality pairings
	\begin{equation}
		\xymatrix{
			{\langle \cdot,\cdot \rangle_{r}:M_r\times M_r'} \ar[r] & A_r
		}\label{pairinghyp}
	\end{equation}
	with respect to which $\delta$ is self-adjoint for all $\delta\in \Delta_1$,
	and which satisfy the compatibility condition\footnote{By abuse of notation,
	for any map of rings $A\rightarrow B$ and any $A$-bilinear pairing of $A$-modules 
	$\langle\cdot,\cdot\rangle:M\times M'\rightarrow A$, we again write 
	$\langle\cdot,\cdot\rangle: M_B\times M_B'\rightarrow B$ for the $B$-bilinear pairing induced
	by extension of scalars.}
	\begin{equation}
		\langle\pr_{r,s}m, \pr_{r,s}'m'\rangle_{s} = 
		\sum_{\delta\in \Delta_{s}/\Delta_{r}} \langle m,\delta^{-1} m'\rangle_{r}
		\label{pairingchangeinr}
	\end{equation}
	for all $r\ge s$.  Then
 	for each $r$, the pairings
		$
			\xymatrix@1{
				{(\cdot,\cdot)_{r}: M_r \times M_r'} \ar[r] &  \Lambda_{A_r,r}				}
		$
		defined by
		\begin{equation*}
			(m,m')_{r} := \sum_{\delta\in \Delta_1/\Delta_r} \langle m , \delta^{-1} m'\rangle_r \cdot \delta
		\end{equation*}
		are $\Lambda_{A_r,r}$-bilinear and perfect, and  compile to give a $\Lambda_B$-linear
		perfect pairing
		\begin{equation*}
			\xymatrix{
				{(\cdot,\cdot)_{\Lambda_B}: M_B \times M_B'} \ar[r] & {\Lambda_B}
				}.
		\end{equation*}
		In particular, $M_B'$ and $M_B$ are canonically $\Lambda_B$-linearly dual to eachother. 
\end{lemma}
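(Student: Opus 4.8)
The plan is to fix $r$, prove that the finite-level pairing $(\cdot,\cdot)_r$ is a perfect $\Lambda_{A_r,r}$-bilinear form, check its compatibility with the transition maps, and then assemble and descend perfectness to the projective limit. Write $G:=\Delta_1/\Delta_r$ and $R:=\Lambda_{A_r,r}=A_r[G]$; since $G$ is abelian, $R$ is commutative. First I would verify $R$-bilinearity of $(\cdot,\cdot)_r$. For $\gamma\in G$, self-adjointness of $\gamma$ gives $\langle \gamma m,\delta^{-1}m'\rangle_r=\langle m,\gamma\delta^{-1}m'\rangle_r$, and reindexing the sum over $G$ by $\delta\mapsto\delta\gamma^{-1}$ yields $(\gamma m,m')_r=(m,m')_r\cdot\gamma=\gamma\cdot(m,m')_r$; a symmetric computation gives $(m,\gamma m')_r=\gamma\cdot(m,m')_r$. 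Combined with $A_r$-bilinearity, this shows $(\cdot,\cdot)_r$ is $R$-bilinear.

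The crux is perfectness at finite level, which I would deduce from the self-duality of the group ring $R$ over $A_r$. Let $\epsilon:R\to A_r$ be the $A_r$-linear ``coefficient of $1$'' functional. The assignment $\psi\mapsto \epsilon\circ\psi$ defines an isomorphism $\Hom_R(M_r',R)\xrightarrow{\sim}\Hom_{A_r}(M_r',A_r)$, with inverse $\phi\mapsto\bigl(m'\mapsto\sum_{g\in G}\phi(g^{-1}m')\,g\bigr)$; this is the standard fact that $A_r[G]$ is a Frobenius algebra for finite $G$, and a direct check confirms the two maps are mutually inverse. Under this isomorphism the adjoint $M_r\to\Hom_R(M_r',R)$ of $(\cdot,\cdot)_r$ is carried to the adjoint $M_r\to\Hom_{A_r}(M_r',A_r)$ of $\langle\cdot,\cdot\rangle_r$, because the coefficient of $1$ in $(m,m')_r$ is exactly $\langle m,m'\rangle_r$. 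Since $\langle\cdot,\cdot\rangle_r$ is a perfect pairing of the finite free $A_r$-modules $M_r,M_r'$ (free by Lemma~\ref{Technical}\,(\ref{red2pfree})), its adjoint is an isomorphism, and hence so is that of $(\cdot,\cdot)_r$; thus $(\cdot,\cdot)_r$ is $R$-perfect.

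Next I would record the compatibility needed to pass to the limit: under the natural surjection $A_r[\Delta_1/\Delta_r]\twoheadrightarrow A_r[\Delta_1/\Delta_s]$ the element $(m,m')_r$ maps to $(\pr_{r,s}m,\pr_{r,s}'m')_s$. This is precisely where hypothesis (\ref{pairingchangeinr}) enters: writing a coset $\bar\delta\in\Delta_1/\Delta_s$ and a lift $\delta_0$, the $\Delta_1$-equivariance of $\pr_{r,s}'$ identifies $\bar\delta^{-1}\pr_{r,s}'m'$ with $\pr_{r,s}'(\delta_0^{-1}m')$, and applying (\ref{pairingchangeinr}) to $m$ and $\delta_0^{-1}m'$ rewrites the $\bar\delta$-coefficient of $(\pr_{r,s}m,\pr_{r,s}'m')_s$ as $\sum_{\delta\mapsto\bar\delta}\langle m,\delta^{-1}m'\rangle_r$, matching the image of $(m,m')_r$; the point is that the fibres of $\Delta_1/\Delta_r\to\Delta_1/\Delta_s$ are exactly the cosets indexed by $\Delta_s/\Delta_r$ appearing in (\ref{pairingchangeinr}). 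Extending scalars to $B$ and passing to the projective limit, the resulting systems are compatible, so the $(\cdot,\cdot)_r$ compile to a $\Lambda_B$-bilinear pairing $(\cdot,\cdot)_{\Lambda_B}\colon M_B\times M_B'\to\Lambda_B$.

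Finally I would descend perfectness. By Lemma~\ref{Technical}\,(\ref{MBfree}) applied to both $\scrM$ and $\scrM'$, the modules $M_B$ and $M_B'$ are free of rank $d$ over $\Lambda_B$; choosing bases, let $\Theta$ be the Gram matrix of $(\cdot,\cdot)_{\Lambda_B}$. By the control isomorphism Lemma~\ref{Technical}\,(\ref{ControlLimit}) the images of these bases in $B[\Delta_1/\Delta_r]$ are bases of $M_r\otimes_{A_r}B$ and $M_r'\otimes_{A_r}B$, and the image of $\Theta$ is the Gram matrix of the base change of $(\cdot,\cdot)_r$ along $A_r\to B$, which remains perfect; hence $\det\Theta$ maps to a unit in each $B[\Delta_1/\Delta_r]$. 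As $\Lambda_B=\varprojlim_r B[\Delta_1/\Delta_r]$ and the inverses of these units form a compatible system, $\det\Theta\in\Lambda_B^{\times}$, so $(\cdot,\cdot)_{\Lambda_B}$ is perfect and exhibits $M_B$ and $M_B'$ as $\Lambda_B$-linear duals. I expect the main obstacle to be the finite-level perfectness in the second step---that is, recognizing the Frobenius self-duality of the group ring and verifying that $(\cdot,\cdot)_r$ is precisely the image of $\langle\cdot,\cdot\rangle_r$ under the duality adjunction---while the limit argument is then essentially formal given the control and freeness supplied by Lemma~\ref{Technical}.
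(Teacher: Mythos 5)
Your proof is correct, but it takes a genuinely different route from the paper's at the key step of finite-level perfectness. The paper shows that $m\mapsto(m,\cdot)_r$ is an isomorphism by observing it is a map of free $\Lambda_{A_r,r}$-modules of equal rank (via Lemma \ref{Technical}) and checking surjectivity by Nakayama after reduction modulo the augmentation ideal $I_\Delta=\ker(\Lambda_{A_r,r}\twoheadrightarrow A_r)$, where the compatibility (\ref{pairingchangeinr}) with $s=1$ identifies the reduced pairing with the scalar extension of the perfect level-one pairing $\langle\cdot,\cdot\rangle_1$ through the control isomorphism $\rho_{r,1}\otimes 1$ of Lemma \ref{Technical} (\ref{red2psurj}). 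You instead invoke the Frobenius self-duality of the group ring: writing $R=A_r[\Delta_1/\Delta_r]$, the adjunction isomorphism $\Hom_R(M_r',R)\simeq\Hom_{A_r}(M_r',A_r)$, $\psi\mapsto\epsilon\circ\psi$, carries the $R$-adjoint of $(\cdot,\cdot)_r$ to the $A_r$-adjoint of $\langle\cdot,\cdot\rangle_r$, since the coefficient of the identity in $(m,m')_r$ is exactly $\langle m,m'\rangle_r$; your explicit inverse $\phi\mapsto\bigl(m'\mapsto\sum_{g}\phi(g^{-1}m')\,g\bigr)$ checks out. This is more elementary and strictly more general than the paper's argument: it uses only that $\langle\cdot,\cdot\rangle_r$ is a perfect pairing of finite free $A_r$-modules, and requires neither the tower hypotheses (\ref{freehyp})--(\ref{surjhyp}) nor the change-of-level identity (\ref{pairingchangeinr}) at this step --- those enter your argument only where they genuinely must, namely in the verification that $(m,m')_r$ maps to $(\pr_{r,s}m,\pr_{r,s}'m')_s$ under $\Lambda_{A_r,r}\twoheadrightarrow\Lambda_{A_r,s}$, which matches the paper's congruence. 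For perfectness in the limit, you replace the paper's Nakayama argument on the duality maps (\ref{LambdaDualityMaps}) by a Gram-determinant argument: $\det\Theta$ is a unit in $\Lambda_B$ because its images under the control isomorphisms of Lemma \ref{Technical} (\ref{ControlLimit}) are units forming a compatible system with compatible inverses; this is equivalent in substance, and your compatible-inverses observation even sidesteps the paper's appeal to $\ker(\Lambda_B\twoheadrightarrow\Lambda_{B,r})$ lying in the radical. The one thing to make explicit is that your determinant step uses the specialization identity $(\cdot,\cdot)_{\Lambda_B}\equiv(\cdot,\cdot)_r\bmod\ker(\Lambda_B\twoheadrightarrow\Lambda_{B,r})$ (the paper's (\ref{pairingspecialize})), but your transition-map computation supplies precisely this, so the argument is complete.
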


\begin{proof}
	An easy reindexing argument shows that $(\cdot,\cdot)_{r}$ is $\Lambda_{A_r,r}$-linear
	in the right factor, from which it follows that it is also $\Lambda_{A_r,r}$-linear
	in the left due to our assumption that $\delta\in \Delta_1$ is self-adjoint with respect to
	$\langle\cdot, \cdot\rangle_{r}$.  To prove that 
	$(\cdot,\cdot)_{r}$ is a perfect duality pairing, we analyze the $\Lambda_{A_r,r}$-linear map
	\begin{equation}
		\xymatrix@C=45pt{
			{M_r} \ar[r]^-{m\mapsto (m,\cdot)_r} & {\Hom_{\Lambda_{A_r,r}}(M_r',\Lambda_{A_r,r})}
		}.\label{GroupRingDuality}
	\end{equation}
	Due to Lemma \ref{Technical}, both $M_r$ and $M_r'$ are free $\Lambda_{A_r,r}$-modules,
	necessarily of the same rank by the existence of the perfect $A_r$-duality pairing (\ref{pairinghyp}).
	It follows that (\ref{GroupRingDuality}) is a homomorphism of free $\Lambda_{A_r,r}$-modules
	of the same rank. To show that it is an isomorphism it therefore suffices to prove it is surjective,
	which may be checked after extension of scalars along the augmentation map
	$\Lambda_{A_r,r}\twoheadrightarrow A_r$ by Nakayama's Lemma.
	Consider the diagram
	\begin{equation}
	\begin{gathered}
		\xymatrix@C=38pt{
		{M_r \tens_{\Lambda_{A_r,r}} A_r} \ar[r]^-{(\ref{GroupRingDuality})\otimes 1}
		\ar[d]_-{\rho_{r,1}\otimes 1}^-{\simeq} & 
		{\Hom_{\Lambda_{A_r,r}}(M_r',\Lambda_{A_r,r})\tens_{\Lambda_{A_r,r}} A_r} \ar[r]^-{\xi}_-{\simeq} &
		{\Hom_{A_r}(M_r'\tens_{\Lambda_{A_r,r}} A_r, A_r)}  \\
		{M_1\tens_{A_1} A_r} \ar[rr]^-{\simeq} & &{\Hom_{A_r}(M_1'\tens_{A_1} A_r, A_r)}
		\ar[u]_-{(\rho_{r,1}'\otimes 1)^{\vee}}^-{\simeq}
		}
	\end{gathered}
	\label{XiDiagram}
	\end{equation}
	where $\xi$ is the canonical map sending $f\otimes \alpha$ to $\alpha(f\otimes 1)$,
	and the bottom horizontal arrow is obtained by $A_r$-linearly extending the canonical
	duality map $m\mapsto \langle m,\cdot\rangle_1$.  On the one hand, the vertical
	maps in (\ref{XiDiagram}) are isomorphisms thanks to Lemma \ref{Technical} (\ref{red2psurj}),
	while the map $\xi$ and the bottom horizontal arrow are isomorphisms
	because arbitrary extension of scalars commutes with linear duality 
	of finitely generated {\em free} modules.\footnote{Quite generally, for any ring $R$, any $R$-modules 
	$M$, $N$, and any $R$-algebra $S$, the canonical map
	\begin{equation*}
	\xymatrix{
		{\xi_M:\Hom_R(M,N)\otimes_R S} \ar[r] & {\Hom_S(M\otimes_R S, N\otimes_R S)}
		}
	\end{equation*}
	sending $f\otimes s$ to $s(f\otimes \id_S)$ is an isomorphism if $M$ is finite and free over $R$.
	Indeed, the map $\xi_R$ is visibly an isomorphism, and one checks that $\xi_{M_1\oplus M_2}$
	is naturally identified with $\xi_{M_1}\oplus \xi_{M_2}$.
	}
	On the other hand, this diagram commutes because (\ref{pairingchangeinr}) guarantees the relation
	\begin{equation*}
	       \xymatrix@C=15pt{
	       {\langle \rho_{r,1}m , \rho_{r,1}'m' \rangle_1} \ar@^{=}[r]^-{(\ref{pairingchangeinr})} & 	    	       
	       {\displaystyle\sum_{\delta\in \Delta_1/\Delta_r} \langle  m,\delta^{-1}m'\rangle_r
	       \equiv (m,m')_r \bmod{I_{\Delta}}}
	       }  
	\end{equation*}
	where $I_{\Delta}=\ker(\Lambda_{A_r,r}\twoheadrightarrow A_r)$ is the augmentation ideal.
	We conclude that (\ref{GroupRingDuality}) is an isomorphism, as desired.
	The argument that the corresponding map with the roles of $M_r$ and $M_r'$ interchanged
	is an isomorphism proceeds {\em mutatis mutandis}.
	
	Using the definition
	of $(\cdot,\cdot)_r$ and (\ref{pairingchangeinr}), one has more generally that
	\begin{equation*}
		(\rho_{r,s}m,\rho_{r,s}'m')_{s} \equiv (m,m')_r \bmod 
		\ker(\Lambda_{A_r,r}\twoheadrightarrow \Lambda_{A_r,s}) 
	\end{equation*}
	for all $r\ge s$.  In particular, the pairings $(\cdot,\cdot)_r$ induce, by extension 
	of scalars, a $\Lambda_B$-bilinear pairing
	\begin{equation*}
		\xymatrix{
			{(\cdot,\cdot)_{\Lambda_B}: M_B\times M_{B}'} \ar[r] & {\Lambda_B}
			}
	\end{equation*}
	which satisfies the specialization property
	\begin{equation}
		(\cdot,\cdot)_{\Lambda_B} \equiv (\cdot, \cdot)_{r} \bmod \ker(\Lambda_B \twoheadrightarrow \Lambda_{B,r}).
		\label{pairingspecialize}
	\end{equation}
	From $(\cdot,\cdot)_{\Lambda_B}$ we obtain in the usual way duality morphisms
	\begin{equation}
		\xymatrix@C=45pt{
			{M_B} \ar[r]^-{m\mapsto (m,\cdot)_{\Lambda_B}} & \Hom_{\Lambda_B}(M_{B}',\Lambda_B)
		}\quad\text{and}\quad
		\xymatrix@C=45pt{
			{M_B'} \ar[r]^-{m'\mapsto (\cdot,m')_{\Lambda_B}} & \Hom_{\Lambda_B}(M_{B},\Lambda_B)
		}\label{LambdaDualityMaps}
	\end{equation}
	which we wish to show are isomorphisms.  Due to Lemma \ref{Technical} (\ref{MBfree}),
	each of (\ref{LambdaDualityMaps}) is a map of finite free $\Lambda_B$-modules of the same rank,
	so we need only show that these mappings are surjective.  As the kernel of 
	$\Lambda_B\twoheadrightarrow \Lambda_{B,r}$ is contained in the radical of $\Lambda_B$,
	we may by Nakayama's Lemma check 
	such surjectivity after extension of scalars along $\Lambda_B\twoheadrightarrow \Lambda_{B,r}$
	for any $r$, where it follows from (\ref{pairingspecialize}) and the fact that $M_r$ and $M_{r}'$
	are free $\Lambda_{A_r,r}$-modules, so that the extension of scalars of the perfect
	duality pairing $(\cdot,\cdot)_r$ along the canonical map 
	$\Lambda_{A_r,r}\rightarrow \Lambda_{B,r}$ is again perfect.	
\end{proof}

\subsection{Ordinary families of de Rham cohomology}\label{ordfamdR}

Let $\{\X_r/T_r\}_{r\ge 0}$ be the tower of modular curves 
introduced in \S\ref{tower}.  
As $\X_r$ is regular and proper flat over $T_r=\Spec(R_r)$ with geometrically reduced fibers, it is a curve
in the sense of Definition \ref{curvedef} (thanks to Corollary \ref{curvecorollary})
which moreover satisfies the hypotheses of Proposition \ref{HodgeIntEx}.
Abbreviating
\begin{align}
	& H^0(\omega_{r}):=H^0(\X_{r},\omega_{\X_{r}/S_{r}}),  &  & H^1_{\dR,r}:= H^1(\X_{r}/R_{r}), &  & H^1(\O_{r}):=H^1(\X_{r},\O_{\X_{r}}),\label{shfcoh}
\end{align}  
Proposition \ref{HodgeIntEx} (\ref{CohomologyIntegral}) provides a canonical short exact sequence
$H(\X_r/R_r)$ of finite 
free $R_r$-modules
\begin{equation}
	\xymatrix{
		0\ar[r] & {H^0(\omega_r)} \ar[r] & {H^1_{\dR,r}} \ar[r] & {H^1(\O_r)} \ar[r] & 0
	}\label{HodgeFilIntAbbrev}
\end{equation}
which recovers the Hodge filtration of $H^1_{\dR}(X_r/K_r)$ after inverting $p$.

The Hecke correspondences on $\X_r$ induce, via Proposition \ref{HodgeIntEx} (\ref{CohomologyFunctoriality})
and the discussion surrounding Definition \ref{CorrDef},
canonical actions of $\H_r$ and $\H_r^*$ on $H(\X_r/R_r)$ via $R_r$-linear endomorphisms.
In particular, $H(\X_r/R_r)$ is canonically a short exact sequence of $\Lambda$-modules
via the diamond and adjoint diamond operator maps $\Lambda\hookrightarrow \H$ and
$\Lambda\hookrightarrow \H^*$ as in \S\ref{Notation}.
Similarly, pullback along (\ref{gammamaps}) yields $R_r$-linear morphisms
$H((\X_r)_{\gamma}/R_r)\rightarrow H(\X_r/R_r)$ for each $\gamma\in \Gamma$;
using the fact that hypercohomology commutes with flat base change 
(by \v{C}ech theory), we obtain an action of $\Gamma$ on $H(\X_r/R_r)$
which is $R_r$-semilinear over the canonical action of $\Gamma$ on $R_r$
and which commutes with the actions of $\H_r$ and $\H_r^*$ as the Hecke operators are defined over 
$K_0=\Q_p$.

For $r\ge s$, we will need to work with the base change $\X_s\times_{T_s} T_r$,
which is a curve over $T_r$ thanks to Proposition \ref{curveproperties}.
Although $\X_s\times_{T_s} T_r$ need no longer be regular as  $T_{r}\rightarrow T_{s}$ is not smooth when $r> s$,
we claim that it is necessarily {\em normal}.  
Indeed, this follows from the more general assertion:

\begin{lemma}\label{normalcrit}
		Let $V$ be a discrete valuation ring and $A$ a finite type Cohen-Macaulay $V$-algebra with smooth generic fiber and geometrically reduced
		special fiber.  Then $A$ is normal.
\end{lemma}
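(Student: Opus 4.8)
The plan is to verify Serre's criterion for normality: a noetherian ring is normal if and only if it satisfies the conditions $(R_1)$ and $(S_2)$. Since $A$ is Cohen--Macaulay it satisfies $(S_k)$ for every $k$, so $(S_2)$ is automatic and the entire content is the regularity condition $(R_1)$: I must show that $A_{\p}$ is a regular local ring for every prime $\p$ of $A$ with $\Ht\,\p \le 1$. Fix a uniformizer $\pi$ of $V$, so that the generic fiber of $A$ is $A[1/\pi]=A\otimes_V \Frac(V)$ and the special fiber is $\o{A}:=A/\pi A$.

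First I would dispose of the primes $\p$ with $\pi\notin\p$. For such a prime $A_{\p}$ is a localization of the generic fiber $A[1/\pi]$, which by hypothesis is smooth over the field $\Frac(V)$ and hence regular; since localizations of regular rings are regular, $A_{\p}$ is regular. This handles every prime not lying over the closed point of $\Spec V$, irrespective of its height.

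It remains to treat a prime $\p$ with $\pi\in\p$ and $\Ht\,\p\le 1$. The essential preliminary point is that $\pi$ is a nonzerodivisor on $A$, i.e.\ that $A$ is $V$-flat. Granting this, no minimal prime of $A$ contains $\pi$ (a nonzerodivisor avoids all associated primes, and for the Cohen--Macaulay ring $A$ the associated primes coincide with the minimal primes), so $\p$ cannot have height $0$; thus $\Ht\,\p=1$ and $\p$ is minimal among the primes containing $\pi$. Consequently the image $\o{\p}$ of $\p$ is a minimal prime of the reduced ring $\o{A}$, whence the localization $(\o{A})_{\o{\p}}=A_{\p}/\pi A_{\p}$ is a field. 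Therefore the maximal ideal of $A_{\p}$ is generated by the single element $\pi$; since $A_{\p}$ is a one-dimensional local ring with principal maximal ideal, it is regular (a discrete valuation ring). This establishes $(R_1)$, and together with $(S_2)$ it yields normality.

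The flatness input is exactly the delicate step, and I expect it to be the main obstacle: it cannot be omitted, since a ``purely vertical'' minimal prime containing $\pi$ (for instance for $A=V/\pi^2$, which is Cohen--Macaulay with vacuously smooth generic fiber and reduced special fiber $k$) would break the height analysis above. In the situation at hand, however, this pathology does not occur: the relevant $A$ is the base change along $T_r\to T_s$ of the coordinate ring of the flat curve $\X_s/T_s$, so $A$ is flat over $V=R_r$ and $\pi$ is a nonzerodivisor; more intrinsically, flatness holds whenever the smooth generic fiber is schematically dense of the expected dimension, as it is in all our applications. I would record this flatness at the outset, after which the two-case verification of $(R_1)$ together with $(S_2)$ from the Cohen--Macaulay hypothesis completes the proof.
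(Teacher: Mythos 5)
Your proof is correct, and at the decisive step it takes a genuinely different route from the paper's. Both arguments run through Serre's criterion, with Cohen--Macaulayness supplying $(S_2)$ and the smooth generic fiber disposing of the horizontal primes; but where you verify $(R_1)$ at the vertical primes by direct commutative algebra---showing that for $\p \ni \pi$ of height one, $A_\p/\pi A_\p$ is the localization of the reduced special fiber at a minimal prime, hence a field, so that $\pi$ generates the maximal ideal and $A_\p$ is a discrete valuation ring---the paper argues via the smooth locus: geometric reducedness makes the special fiber smooth at its generic points, and flatness plus the fibral criterion then places those points (and so all points of codimension $\le 1$, given the smooth generic fiber) inside the open $V$-smooth locus of $\Spec A$, whence regularity. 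Your argument is more elementary and uses strictly less: plain reducedness of the special fiber suffices, whereas the paper genuinely needs \emph{geometric} reducedness to invoke generic smoothness (over an imperfect residue field, reduced does not imply generically smooth); conversely, the paper's argument buys the slightly stronger conclusion that $\Spec A$ is $V$-smooth in codimension $\le 1$. You are also right to isolate flatness as an indispensable input that does not follow from the stated hypotheses---your example $A = V/\pi^2$ shows the statement fails literally without it---and your resolution matches the paper's implicit one: the paper asserts ``$A$ is flat over $V$ (again by definition of CM),'' i.e.\ it reads ``Cohen--Macaulay $V$-algebra'' as a CM \emph{morphism} (flat with CM fibers), which indeed holds in the application to $\X_s \times_{T_s} T_r$; if anything, your explicit flagging of this point is more careful than the paper's parenthetical.
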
 

\begin{proof}
	We claim that $A$ satisfies Serre's ``$R_1+S_2$"-criterion for normality \cite[Theorem 23.8]{matsumura}.  As $A$ is assumed
	to be CM, by definition of Cohen-Macaulay $A$ verifies $S_i$ for all $i\ge 0$, so we need only show that each localization of $A$ at
	a prime ideal of codimension 1 is regular.  Since $A$ has geometrically reduced special fiber, this special fiber is in particular
	smooth at its generic points.  As $A$ is flat over $V$ (again by definition of CM), we deduce that the (open) $V$-smooth locus   
	in $\Spec A$ contains the generic points of the special fiber and hence contains all codimension-1 points (as the generic
	fiber of $\Spec A$ is assumed to be smooth).  Thus $A$ is $R_1$, as desired.
\end{proof}

We conclude that $\X_{s}\times_{T_s} T_r$ is a normal curve, and 
we obtain from Proposition \ref{HodgeIntEx}
a canonical short exact sequence of finite free $R_r$-modules $H(\X_{s}\times_{T_s}T_r/R_r)$
which recovers the Hodge filtration of $H^1_{\dR}(X_{s}/K_r)$ after inverting $p$.
As hypercohomology commutes with flat base change and the formation of the 
relative dualizing sheaf and the structure sheaf are 
compatible with arbitrary base change, we have a natural isomorphism of short exact sequences of free $R_r$-modules
\begin{equation}
	H(\X_{s}\times_{T_s} T_r/R_r)	\simeq  H(\X_{s}/R_{s})\otimes_{R_{s}} R_r. \label{bccompat}
\end{equation}
In particular, we have $R_r$-linear actions of $\H_{s}^*$, $\H_s$ and an $R_r$-semilinear action of
$\Gamma$ on $H(\X_s\times_{T_s} T_r/R_r)$.  These actions moreover commute with one another.

Consider now the canonical degeneracy map $\pr: \X_r\rightarrow \X_{s}\times_{T_s} T_r$ of curves over $T_r$
induced by (\ref{rdegen}).  
As $\X_r$ and $\X_{s}\times_{T_s} T_r$ are normal and proper curves over $T_r$,
we obtain from Proposition \ref{HodgeIntEx} (\ref{CohomologyFunctoriality}) 
canonical trace mappings of short exact sequences
\begin{equation}
		\xymatrix{
		{\pr_* : H(\X_{r}/R_r)} \ar[r] & {H(\X_{s}\times_{T_s}T_r/R_r)
		\simeq H(\X_{s}/R_{s})\otimes_{R_{s}}R_r}
	}\label{trmap}
\end{equation}
which recover the usual trace mappings on de Rham cohomology after inverting $p$;
as such, these mappings are Hecke and $\Gamma$-equivariant, and
compatible with change in $r,s$ in the obvious way.
Tensoring the maps (\ref{trmap}) over $R_r$ with $R_{\infty}$, 
we obtain projective systems of finite free $R_{\infty}$-modules 
with semilinear $\Gamma$-action and commuting, linear 
$\H^*:=\varprojlim_r \H_r^*$ action.

\begin{definition}\label{limitmods}
	We write 
	\begin{align*}
			&H^0(\omega):=\varprojlim_r \left(H^0(\omega_r) \tens_{R_r} {R}_{\infty}\right),  
			&& H^1_{\dR}:=\varprojlim_r \left(H^1_{\dR,r}\tens_{R_r} {R}_{\infty} \right),
			&&  H^1(\O) := \varprojlim_r \left(H^1(\O_r)\tens_{R_r} {R}_{\infty}\right)
	\end{align*}
	for the limits with respect to the maps induced by $\pr_*$,
	which are naturally 
	$\Lambda_{R_{\infty}}={R}_{\infty}[\![\Delta_1]\!]$-modules via the adjoint 
	diamond operators 
	and are equipped with a semilinear $\Gamma$-action 
	and a linear $\H^*$-action.  
\end{definition}

\begin{theorem}\label{main}
	Let $e^*$ be the idempotent of $\H^*$ associated to $U_p^*$ and let $d$ be
	the positive integer defined as in Proposition $\ref{IgusaStructure}$ $(\ref{IgusaFreeness})$.
	Then $e^*H^0(\omega)$, $e^*H^1_{\dR}$ and $e^*H^1(\O)$ are free $\Lambda_{R_{\infty}}$-modules
	of ranks $d$, $2d$, and $d$ respectively, and there is a canonical short exact sequence
	of free $\Lambda_{R_{\infty}}$-modules with linear $\H^*$-action and $R_{\infty}$-semilinear
	$\Gamma$-action
	\begin{equation}
		\xymatrix{
			0\ar[r] & {e^*H^0(\omega)} \ar[r] & {e^*H^1_{\dR}} \ar[r] & {e^*H^1(\O)} \ar[r] & 0
			}.\label{mainthmexact}
	\end{equation}
	For each positive integer $r$, applying $\otimes_{\Lambda_{R_{\infty}}} R_{\infty}[\Delta_1/\Delta_r]$
	to $(\ref{mainthmexact})$ yields the short exact sequence
	\begin{equation}
		\xymatrix{
			0\ar[r] & {e^*H^0(\omega_r)\tens_{R_r} R_{\infty}} \ar[r] & 
			{e^*H^1_{\dR}\tens_{R_r} R_{\infty}} \ar[r] & 
			{e^*H^1(\O)\tens_{R_r} R_{\infty}} \ar[r] & 0
			},\label{mainthmexact2}	
	\end{equation}
	compatibly with the actions of $\H^*$ and $\Gamma$.	 
\end{theorem}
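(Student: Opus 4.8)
The plan is to deduce everything from the commutative-algebra formalism of Lemma~\ref{Technical}, feeding it the characteristic-$p$ computations already in hand. First I would set up three towers of modules over the tower of rings $\scrA=\{R_r\}$ (so $A_\infty=R_\infty$): take $I_r\subseteq R_r$ to be the maximal ideal, so $\o{A}_r=R_r/I_r=\F_p$, and consider $\scrM^{0}:=\{e_r^*H^0(\omega_r)\}$, $\scrM^{1}:=\{e_r^*H^1_{\dR,r}\}$, $\scrM^{2}:=\{e_r^*H^1(\O_r)\}$, with transition maps the trace maps $\pr_*$ of (\ref{trmap}) and $\Delta_1$-action via the adjoint diamond operators $\langle\cdot\rangle^*$. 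Each $M^i_r$ is free over $R_r$, being an $e_r^*$-summand of the free $R_r$-modules in (\ref{HodgeFilIntAbbrev}) and $R_r$ being local, and $\Delta_r$ acts trivially since $\langle u\rangle^*$ depends only on $u\bmod p^r$. With $B=R_\infty$, the limits $M^i_{R_\infty}$ are precisely $e^*H^0(\omega)$, $e^*H^1_{\dR}$, $e^*H^1(\O)$ of Definition~\ref{limitmods}.

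The two hypotheses of Lemma~\ref{Technical} are exactly the characteristic-$p$ statements already established. By Lemma~\ref{ReductionCompatibilities} there is a functorial, $\H_r^*$- and $\Gamma$-equivariant identification of $e_r^*H(\X_r/R_r)\otimes_{R_r}\F_p$ with $e_r^*H(\o{\X}_r/\F_p)$, under which $\o{M}^i_r:=M^i_r/I_rM^i_r$ becomes the corresponding term of (\ref{sesincharp1}). Corollary~\ref{FreenessInCharp} then supplies that each $\o{M}^i_r$ is free over $\F_p[\Delta_1/\Delta_r]$ of rank $d$, $2d$, $d$ respectively, which is hypothesis~(\ref{freehyp}), and that the reduced transition maps $\overline{\pr}_{r,s}$ are in fact isomorphisms, in particular surjective, which is hypothesis~(\ref{surjhyp}).

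With the hypotheses verified, Lemma~\ref{Technical}(\ref{MBfree}) applied to each tower yields that $e^*H^0(\omega)$, $e^*H^1_{\dR}$, $e^*H^1(\O)$ are free $\Lambda_{R_\infty}$-modules of ranks $d$, $2d$, $d$, while part~(\ref{ControlLimit}) gives canonical control isomorphisms $M^i_{R_\infty}\otimes_{\Lambda_{R_\infty}}R_\infty[\Delta_1/\Delta_r]\simeq M^i_r\otimes_{R_r}R_\infty$. To produce the exact sequence (\ref{mainthmexact}), I would note that $\scrM^0\to\scrM^1\to\scrM^2$ is a short exact sequence of towers: at each level, tensoring the ordinary part of the free exact sequence (\ref{HodgeFilIntAbbrev}) with the flat $R_r$-algebra $R_\infty$ keeps it exact, and the trace maps (\ref{trmap}) respect the sequence. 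Passing to $\varprojlim_r$, left-exactness of the limit gives exactness at the first two spots, and surjectivity at $e^*H^1(\O)$ follows because the system $\{e_r^*H^0(\omega_r)\otimes_{R_r}R_\infty\}$ has surjective transition maps (by (\ref{surjhyp}) and Nakayama, exactly as in the proof of Lemma~\ref{Technical}), hence is Mittag--Leffler with vanishing $\varprojlim^1$. The $\H^*$- and $\Gamma$-equivariance is inherited from finite level. Finally, since (\ref{mainthmexact}) is a short exact sequence of \emph{free} $\Lambda_{R_\infty}$-modules it splits, so applying $\otimes_{\Lambda_{R_\infty}}R_\infty[\Delta_1/\Delta_r]$ preserves exactness; combined with the control isomorphism this identifies the result with the base change to $R_\infty$ of the ordinary part of (\ref{HodgeFilIntAbbrev}), which is (\ref{mainthmexact2}).

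The genuine mathematical content has all been front-loaded into the characteristic-$p$ and integral de~Rham machinery (Corollary~\ref{FreenessInCharp} and Lemma~\ref{ReductionCompatibilities}), so the main obstacle here is not a single hard step but the careful bookkeeping ensuring that the base-change identification is equivariant for \emph{both} the adjoint Hecke action (so that $e_r^*$ and the $\Delta_1$-structure match) and the geometric inertia action of $\Gamma$, so that freeness, control, and exactness transport faithfully through the limit. The $\varprojlim^1$-vanishing is the only analytic point requiring a moment's care, and it is immediate from the surjectivity (indeed bijectivity) of the reduced transition maps.
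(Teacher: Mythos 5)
Your proposal is correct and takes essentially the same route as the paper's own proof: the paper likewise feeds the three towers into Lemma \ref{Technical} with $A_r=R_r$, $I_r=(\varepsilon^{(r)}-1)$ (which is exactly the maximal ideal you chose, so $\o{A}_r=\F_p$) and $B=R_{\infty}$, verifies hypotheses (\ref{freehyp}) and (\ref{surjhyp}) via Lemma \ref{ReductionCompatibilities} together with Corollaries \ref{SplitIgusa}--\ref{FreenessInCharp} and Proposition \ref{IgusaStructure} (\ref{IgusaControl}), and deduces exactness of (\ref{mainthmexact}) by the identical Nakayama-plus-Mittag--Leffler argument on the inverse system. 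Your closing step for (\ref{mainthmexact2}) --- splitting the sequence of free $\Lambda_{R_{\infty}}$-modules and invoking the control isomorphism of Lemma \ref{Technical} (\ref{ControlLimit}) --- merely makes explicit what the paper leaves implicit.
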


\begin{proof}
	
Applying $e^*$ to the short exact sequence $H(\X_r/R_r)$ yields a short exact sequence
\begin{equation}
	\xymatrix{
		0\ar[r] & {e^*H^0(\omega_r)}\ar[r] & {e^*H^1_{\dR,r}} \ar[r] & {e^*H^1(\O_r)} \ar[r] & 0
		}\label{hitwithidem}
\end{equation}
of $R_r[\Delta_1/\Delta_r]$-modules with linear $\H_r^*$-action and $R_r$-semilinear $\Gamma$-action in which each term is
free as an $R_r$-module.\footnote{Indeed, $e^*M$ is a direct summand of $M$ for any $\H_r^*$-module $M$,
and hence $R_r$-projective ($=R_r$-free) if $M$ is.}
Similarly, for each pair of nonnegative integers $r\ge s$,
the trace mappings (\ref{trmap}) induce a commutative diagram with exact rows
\begin{equation}
\begin{gathered}
	\xymatrix{
		0\ar[r] & {e^*H^0(\omega_r)} \ar[r]\ar[d]_-{\pr_*} & 
		{e^*H^1_{\dR,r}} \ar[r]\ar[d]_-{\pr_*} & {e^*H^1(\O_r)} 
		\ar[r]\ar[d]^-{\pr_*} & 0\\
		0\ar[r] & {e^*H^0(\omega_{s})\otimes_{R_{s}}R_r} \ar[r] & 
		{e^*H^1_{\dR,{s}}\otimes_{R_{s}}R_r} \ar[r] & {e^*H^1(\O_{s})\otimes_{R_{s}}R_r} 
		\ar[r] & 0
	}
	\end{gathered}
	\label{piecetogether}
\end{equation}
We will apply Lemma \ref{Technical} with $A_r=R_r$, $I_r=(\varepsilon^{(r)}-1)$, $B=R_{\infty}$ 
and with $M_r$ each one of the terms in (\ref{hitwithidem}).
In order to do this, we must check that the hypotheses (\ref{freehyp}) and 
(\ref{surjhyp}) are satisfied.

Applying $\otimes_{R_r} \F_p$ to the short exact sequence (\ref{hitwithidem})
and using the fact that the idempotent $e^*$ commutes with tensor products,
we obtain, thanks to Lemma \ref{ReductionCompatibilities} (\ref{BaseChngDiagram}),
the short exact sequence of $\F_p$-vector spaces (\ref{sesincharp1}).
By Corollary \ref{FreenessInCharp}, the three terms of (\ref{sesincharp1}) are free 
$\F_p[\Delta_1/\Delta_r]$-modules
of ranks $d$, $2d$, and $d$ respecvitely, so (\ref{freehyp})
is satisfied for each of these terms.  Similarly, applying $\otimes_{R_r} \F_p$
to the diagram (\ref{piecetogether}) 
yields a diagram which by Corollary \ref{SplitIgusa} is naturally isomorphic to
the diagram of $\F_p[\Delta_1/\Delta_r]$-modules with split-exact rows
\begin{equation*}
	\xymatrix@C=15pt{
		0 \ar[r] & {H^0(I_r^{\infty},\Omega^1(\SS))^{V_{\ord}}} \ar[r]\ar[d]_-{\pr_*} &
		{H^0(I_r^{\infty},\Omega^1(\SS))^{V_{\ord}}\oplus
		H^1(I_r^0,\O(-\SS))^{F_{\ord}}} \ar[r]\ar[d]|-{\pr_*\oplus \pr_*} & 
		{H^1(I_r^0,\O(-\SS))^{F_{\ord}}} \ar[r]\ar[d]^-{\pr_*} & 0 \\
		0 \ar[r] & {H^0(I_{s}^{\infty},\Omega^1(\SS))^{V_{\ord}}} \ar[r] &
		{H^0(I_{s}^{\infty},\Omega^1(\SS))^{V_{\ord}}\oplus
		H^1(I_{s}^0,\O(-\SS))^{F_{\ord}}} \ar[r] & 
		{H^1(I_{s}^0,\O(-\SS))^{F_{\ord}}} \ar[r] & 0
	}
\end{equation*}
Each of the vertical maps in this diagram is surjective due to Proposition \ref{IgusaStructure} 
(\ref{IgusaControl}), and we conclude that the hypothesis (\ref{surjhyp}) is satisfied
as well.  Furthermore, the vertical maps in (\ref{piecetogether}) are then surjective by Nakayama's
Lemma, so applying $\otimes_{R_r} R_{\infty}$ 
yields an inverse system of short exact sequences in which the first term satisfies the Mittag-Leffler
condition.  Passing to inverse limits is therefore (right) exact, and we obtain the short 
exact sequence (\ref{mainthmexact}).
\end{proof}

Due to Proposition \ref{HodgeIntEx} (\ref{CohomologyDuality}), the short exact sequence
(\ref{HodgeFilIntAbbrev}) is auto-dual with respect to the canonical cup-product pairing $(\cdot,\cdot)_r$
on $H^1_{\dR,r}$.  We extend scalars along $R_r\rightarrow R_r':=R_r[\mu_N]$, so that the Atkin-Lehner
``involution" $w_r$ is defined, and consider the ``twisted" pairing on ordinary parts
\begin{equation}
\xymatrix{
	{\langle \cdot,\cdot\rangle _r : ({e^*}H^1_{\dR,r})_{R_r'} \times ({e^*}H^1_{\dR,r})_{R_r'}} \ar[r] & {R_r'}
	}\qquad\text{given by}\qquad \langle x, y\rangle_r := (x, w_r {U_p^*}^r y).
	\label{TwistdRpairing}
\end{equation}
It is again perfect and satisfies $\langle T^* x,y\rangle =\langle x, T^* y \rangle$
for all $x,y\in (e^*H^1_{\dR,r})_{R_r'}$ and $T^*\in \H_r^*$.
\begin{proposition}\label{dRDuality}
	The pairings $(\ref{TwistdRpairing})$ compile to give a perfect $\Lambda_{R_{\infty}'}$-linear
	duality pairing 
	\begin{equation*}
		\xymatrix{
		{\langle\cdot,\cdot\rangle_{\Lambda_{R_{\infty}'}}:   
		({e^*}H^1_{\dR})_{\Lambda_{R_{\infty}'}} \hspace{-1ex}\times ({e^*}H^1_{\dR})_{\Lambda_{R_{\infty}'}}}
		\ar[r] & {\Lambda_{R_{\infty}'}}
		}\ \text{given by}\ 
		\langle x , y \rangle_{\Lambda_{R_{\infty}'}} :=
		\varprojlim_r\sum_{\delta\in \Delta_1/\Delta_r} \langle x_r, \langle \delta^{-1}\rangle^* y_r\rangle_r\cdot\delta
	\end{equation*}
	for $x=\{x_r\}_r$ and $y=\{y_r\}_r$ in $(e^*H^1_{\dR})_{\Lambda_{R_{\infty}'}}$.
	The pairing $\langle \cdot,\cdot \rangle_{\Lambda_{R_{\infty}}'}$  induces
	a canonical isomorphism 
	\begin{equation*}
		\xymatrix{
				0\ar[r] & {e^*H^0(\omega)(\langle\chi\rangle\langle a\rangle_N)_{\Lambda_{R_{\infty}'}}}
				\ar[r]\ar[d]^-{\simeq} & 
				{e^*H^1_{\dR}(\langle\chi\rangle\langle a\rangle_N)_{\Lambda_{R_{\infty}'}}} 
				\ar[r]\ar[d]^-{\simeq} & 
				{e^*H^1(\O)(\langle\chi\rangle\langle a\rangle_N)_{\Lambda_{R_{\infty}'}}} 
				\ar[r]\ar[d]^-{\simeq} & 0\\
	0\ar[r] & {(e^*H^1(\O))^{\vee}_{\Lambda_{R_{\infty}'}}} \ar[r] & 
	{({e^*H^1_{\dR}})^{\vee}_{\Lambda_{R_{\infty}'}}} \ar[r] & 
	{(e^*H^0(\omega))^{\vee}_{\Lambda_{R_{\infty}'}}} \ar[r] & 0
		}
	\label{mainthmdualityisom}
	\end{equation*}
	that is $\H^*$- and $(\Z/N\Z)^{\star}$-equivariant and compatible with the natural action 
	of $\Gamma \times \Gal(K_0'/K_0)\simeq \Gal(K_{\infty}'/K_0)$ 
	on the bottom row and the twist
	$\gamma\cdot m := \langle \chi(\gamma)\rangle\langle a(\gamma)\rangle_N \gamma m$
	of the natural action on the top, 	
	where 
	$a(\gamma) \in (\Z/N\Z)^{\times}$ is
	determined by
	$\zeta^{a(\gamma)}=\gamma\zeta$ for every $\zeta\in \mu_N(\Qbar_p)$.
\end{proposition}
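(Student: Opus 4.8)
The plan is to deduce the proposition from the tower duality formalism of Lemma \ref{LambdaDuality}, applied to the constant pair of towers $\scrM = \scrM' = \{(e^*H^1_{\dR,r})_{R_r'}\}_r$ equipped with the trace maps $\pr_*$ of (\ref{trmap}) and the twisted pairings $\langle\cdot,\cdot\rangle_r$ of (\ref{TwistdRpairing}). The tower hypotheses (\ref{freehyp}) and (\ref{surjhyp}) were already verified over $R_r$ in the course of proving Theorem \ref{main}, and they persist after the finite \'etale base change $R_r\rightarrow R_r'=R_r[\mu_N]$ (note $R_r'$ is again a discrete valuation ring since $\mu_N$ generates an unramified extension), so it remains only to check the three input conditions of Lemma \ref{LambdaDuality}: that each $\langle\cdot,\cdot\rangle_r$ is perfect, that every $\delta\in\Delta_1$ is self-adjoint, and that the change-of-level compatibility (\ref{pairingchangeinr}) holds. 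Perfectness is immediate, since the cup-product pairing $(\cdot,\cdot)_r$ is perfect by Proposition \ref{HodgeIntEx} (\ref{CohomologyDuality}) and $w_r{U_p^*}^r$ is an automorphism of $(e^*H^1_{\dR,r})_{R_r'}$ ($w_r$ being an involution and $U_p^*$ invertible on the ordinary part). Self-adjointness of $\Delta_1$ is the special case $T^*=\langle\delta\rangle^*$ of the already-recorded symmetry $\langle T^*x,y\rangle_r=\langle x,T^*y\rangle_r$, which itself follows from $w_rT=T^*w_r$ and the Hecke-equivariance of cup product.

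The principal obstacle is the verification of (\ref{pairingchangeinr}). Expanding the left-hand side by the definition of $\langle\cdot,\cdot\rangle_s$ and applying the projection formula $(\pr_*m,z)_s=(m,\pr^*z)_r$ for cup product under the finite map $\pr$, one reduces the identity to the operator relation
\begin{equation*}
	\pr^*\circ w_s{U_p^*}^s\circ\pr_* = \Big(\sum_{\delta\in\Delta_s/\Delta_r}\langle\delta^{-1}\rangle^*\Big)\circ w_r{U_p^*}^r
\end{equation*}
on $(e^*H^1_{\dR,r})_{R_r'}$. I would establish this by combining three standard ingredients: the Atkin--Lehner interchange of the two degeneracy maps, $w_s\circ\pr=\ps\circ w_r$, which translates via Albanese functoriality into $\pr^*w_s=w_r\ps^*$; the relation $w_rU_p=U_p^*w_r$ coming from $w_rT=T^*w_r$; and the norm identity $\pr^*\pr_*=\sum_{\delta\in\Delta_s/\Delta_r}\langle\delta\rangle$, valid because on the ordinary tower $\pr$ realizes $\Delta_s/\Delta_r$ as its Galois group. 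The precise exponent $r$ (rather than $s$) on the Atkin operator is exactly what is forced by the factor of $p$ in ramification gained at each level, and keeping track of these powers as one commutes $w$, $U_p^*$, and the diamond operators past one another is the delicate bookkeeping step on which the whole argument hinges.

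With the three conditions in hand, Lemma \ref{LambdaDuality} produces the perfect $\Lambda_{R_\infty'}$-linear pairing $\langle\cdot,\cdot\rangle_{\Lambda_{R_\infty'}}$, whose explicit group-ring averaging formula is precisely the expression displayed in the statement; it identifies $(e^*H^1_{\dR})_{\Lambda_{R_\infty'}}$ with its own $\Lambda_{R_\infty'}$-linear dual. To upgrade this to the asserted isomorphism of exact sequences, I would use that $w_r{U_p^*}^r$ preserves the Hodge filtration $e^*H^0(\omega_r)\subseteq e^*H^1_{\dR,r}$ (being built from the correspondences $w_r$ and $U_p^*$, which act on all of $H(\X_r/R_r)$), so that the standard isotropy $H^0(\omega_r)^\perp=H^0(\omega_r)$ for the de~Rham cup product is inherited by the twisted pairing: for $x,y\in e^*H^0(\omega_r)$ one has $w_r{U_p^*}^ry\in H^0(\omega_r)$, whence $\langle x,y\rangle_r=(x,w_r{U_p^*}^ry)_r=0$. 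Passing to the limit, $e^*H^0(\omega)$ is its own annihilator inside the self-dual module $e^*H^1_{\dR}$, and since it is a free direct summand of half the rank by Theorem \ref{main}, this forces the three vertical maps of the diagram to be isomorphisms and reverses the short exact sequence (\ref{mainthmexact}) into its dual.

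Finally, I would pin down the twisting character. The cup-product pairing over $R_r'$ is semilinearly equivariant for $\Gamma\times\Gal(K_0'/K_0)\simeq\Gal(K_\infty'/K_0)$, but the involution $w_r$ is defined only over $R_r'$ and depends on the chosen $Np^r$-th root of unity; its Galois conjugate satisfies $\gamma w_r\gamma^{-1}=\langle\chi(\gamma)\rangle\langle a(\gamma)\rangle_N\,w_r$, where $a(\gamma)$ records the action of $\gamma$ on $\mu_N$ as in the statement. Substituting this into the definition of $\langle\cdot,\cdot\rangle_r$ shows that the resulting duality isomorphism intertwines the natural semilinear action on the bottom row with its twist by the $\H^*$-valued character $\langle\chi\rangle\langle a\rangle_N$ on the top row, while the $\H^*$- and prime-to-$p$ diamond-operator equivariance is immediate from the symmetry $\langle T^*x,y\rangle_r=\langle x,T^*y\rangle_r$ of the finite-level pairings and the compatibility with change in $r$ is built into Lemma \ref{LambdaDuality}. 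The hardest part throughout remains the level-compatibility (\ref{pairingchangeinr}), where the interplay of Atkin--Lehner, the ${U_p^*}$-normalization, and the group-ring trace must be balanced exactly.
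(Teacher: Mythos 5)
Your overall framework is the same as the paper's: apply Lemma \ref{LambdaDuality} to the tower $\{(e^*H^1_{\dR,r})_{R_r'}\}$ with the twisted pairings (\ref{TwistdRpairing}), check perfectness and self-adjointness of $\Delta_1$ (both of which you handle correctly, and which the paper records immediately after (\ref{TwistdRpairing})), reduce everything to the level-compatibility (\ref{pairingchangeinr}), and then sort out the Galois twist via the conjugation behaviour of $w_r$ from Proposition \ref{ALinv}. Your isotropy argument for upgrading self-duality of $e^*H^1_{\dR}$ to the isomorphism of short exact sequences is a reasonable fleshing-out of a step the paper leaves terse. But the step you yourself identify as the crux --- verifying (\ref{pairingchangeinr}) --- contains a genuine error: the ``norm identity'' $\pr^*\pr_* = \sum_{\delta\in\Delta_s/\Delta_r}\langle\delta\rangle$ is false. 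The degeneracy covering $\rho: X_r\rightarrow X_s$ is \emph{not} Galois with group $\Delta_s/\Delta_r$: it has degree $p^{2(r-s)}$, while that group has order only $p^{r-s}$. Concretely, composing your claimed identity on the left with $\rho_*$ and using $\rho_*\rho^* = \deg\rho = p^{2(r-s)}$, together with $\rho_*\langle\delta\rangle = \rho_*$ for $\delta\in\Delta_s$ (the diamond $\langle\delta\rangle$ is trivial at level $s$), would force $p^{2(r-s)}\rho_* = p^{r-s}\rho_*$; since $\rho_*$ is surjective onto the nonzero torsion-free module $e^*H^1_{\dR,s}$, this is absurd, and the ordinary projector does nothing to reconcile the scalars $p^2$ versus $p$.

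The correct replacement --- and the actual hinge of the paper's proof --- is the identity $\sigma^*\rho_* = U_p^*\sum_{\delta\in\Delta_r/\Delta_{r+1}}\langle\delta^{-1}\rangle^*$ on $H^1_{\dR,r+1}$, which involves \emph{both} degeneracy maps and carries the extra factor $U_p^*$ that exactly accounts for the degree discrepancy ($p^2 = p\cdot p$); the paper combines this with $\rho^*w_r = w_{r+1}\sigma^*$ (via Proposition \ref{ALinv}) to get $\rho^*w_r{U_p^*}^r\rho_* = w_{r+1}{U_p^*}^{r+1}\sum_\delta\langle\delta^{-1}\rangle^*$, and imports the key identity from Lemmas 3.1.1, 3.1.5 and Proposition 2.2.5 of the sequel \cite{CaisHida2} --- it is not formal, and no rearrangement of your three ingredients produces it, since any such shuffle (e.g.\ via $w_sU_p^{*s} = U_p^s w_s$ and $w_s\pr_* = \ps_* w_r$) again lands you on a composite of the form $\pr^*\ps_*$ or $\ps^*\pr_*$, which is precisely this identity in disguise. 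A secondary slip: your Atkin--Lehner interchange $w_s\circ\pr = \ps\circ w_r$ omits the twist by $\langle p\rangle_N$ (Proposition \ref{ALinv} gives $\pr w_r = \langle p\rangle_N w_r\ps$); that factor can be tracked through the bookkeeping, but the missing $U_p^*$ in the norm relation cannot, so as written the proposal does not establish (\ref{pairingchangeinr}) and the limit pairing is not constructed.
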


\begin{proof}[Proof of Proposition $\ref{dRDuality}$]
	That $\langle\cdot,\cdot \rangle_{\Lambda_{R_{\infty}'}}$ is a perfect duality pairing
	follows easily from Lemma \ref{LambdaDuality}, using Theorem \ref{main} and the formalism of 
	\S\ref{TowerFormalism}, once we check that the twisted pairings (\ref{TwistdRpairing})
	satisfy the hypothesis (\ref{pairingchangeinr}).  By the definition (\ref{TwistdRpairing})
	of $\langle\cdot, \cdot\rangle_r$, this amounts to the computation
	\begin{align*}
		({\rho}_* x, w_r {U_p^*}^r{\rho}_* y)_r = (x, \rho^* w_r {U_p^*}^r{\rho}_*y)_{r+1}
		&=(x, w_{r+1} {U_p^*}^r \sigma^*{\rho}_*y)_{r+1} \\				
		&= \sum_{\delta\in \Delta_r/\Delta_{r+1}}(x, w_{r+1} {U_p^*}^{r+1} \langle \delta^{-1}\rangle^* y)_{r+1}
	\end{align*}
	where we have used Proposition \ref{ALinv} and the identity 
	$\sigma^*{\rho}_* = U_p^*\sum_{\delta\in \Delta_r/\Delta_{r+1}} \langle\delta^{-1}\rangle^*$
	on $H^1_{\dR,r+1}$, which follows from\footnote{The reader will check that our reliance
	on the sequel \cite{CaisHida2} of the present paper does not involve circular reasoning.} 
	Lemma 3.1.1, Lemma 3.1.5 and Proposition 2.2.5 of \cite{CaisHida2}.
	We obtain an isomorphism of short exact sequences of
	$\Lambda_{R_{\infty}'}$-modules as in (\ref{mainthmdualityisom}), which it remains to show is 
	$\Gamma\times\Gal(K_0'/K_0)$-equivariant
	for the specified actions.  For this, we compute that for $\gamma\in\Gal(K_{\infty}'/K_0)$,
	\begin{equation*}
		\langle \gamma x,\gamma y\rangle_{r} =
		(\gamma x, w_r {U_p^*}^r \gamma y)_r = 
		 (\gamma x,  \gamma w_r {U_p^*}^r \langle \chi(\gamma)^{-1} \rangle\langle a(\gamma)^{-1}\rangle_N  y)_r
		 =  \gamma \langle x,\langle \chi(\gamma)^{-1} \rangle\langle a(\gamma)^{-1}\rangle_N y\rangle_r,
	\end{equation*}
	where we have used Proposition \ref{ALinv} and the fact that the cup product is Galois-equivariant.
	It now follows easily from definitions that 
	\begin{equation*}
	\langle \gamma x,\gamma y\rangle_{\Lambda_{R_{\infty}'}} = 
	\langle \chi(\gamma)^{-1} \rangle \gamma \langle x, \langle a(\gamma)^{-1}\rangle_N y	
	\rangle_{\Lambda_{R_{\infty}'}},
	\end{equation*}
	and the claimed $\Gamma\times\Gal(K_0'/K_0)$-equivariance of  (\ref{mainthmdualityisom}) is equivalent to this.
\end{proof}

\begin{remark}
	For an open subgroup $H$ of $\scrG_K$ and any $H$-stable subfield 
	$F$ of $\C_K$, denote by $\Rep_F(H)$ the category of finite-dimensional
	$F$-vector spaces that are equipped with a continuous semilinear 
	action of $H$.  Recall \cite{DSen} that classical Sen theory provides a functor
	$\D_{\Sen}:\Rep_{\C_K}(\scrG_K)\rightarrow \Rep_{K_{\infty}}(\Gamma)$
	which is quasi-inverse to $(\cdot)\otimes_{K_{\infty}} \C_K$.  Furthermore,
	for any $W\in \Rep_{\C_K}(\scrG_K)$, there is a unique $K_{\infty}$-linear
	operator $\Theta_{D}$ on $D:=\D_{\Sen}(W)$ with the property that 
	$\gamma x = \exp(\log \chi(\gamma)\cdot \Theta_D)(x)$ for all $x\in D$
	and all $\gamma$ in a small enough open neighborhood of $1\in \Gamma$.
	
	We expect that for $W$ any specialization of $e^*H^1_{\et}$ along a continuous homomorphism 
	$\Lambda\rightarrow K_{\infty}$, there is a canonical isomorphism between $D:=\D_{\Sen}(W\otimes {\C_K})$
	and the corresponding specialization of $e^*H^1_{\dR}$,
	with the Sen operator $\Theta_D$
	induced by the Gauss-Manin connections on $H^1_{\dR,r}$.  In this way, we might think
	of $e^*H^1_{\dR}$ 
	as a $\Lambda$-adic avatar of ``$\D_{\Sen}(e^*H^1_{\et}\otimes_{\Lambda} \Lambda_{\O_{\C_K}})$."
	We hope to pursue these connections in future work.
\end{remark}

\subsection{Relation to ordinary \texorpdfstring{$\Lambda$}{Lambda}-adic modular forms}\label{ordforms}

In this section, we discuss the relation between $e^*H^0(\omega)$ 
and ordinary $\Lambda_{R_{\infty}}$-adic cuspforms as defined by Ohta \cite[Definition 2.1.1]{OhtaEichler}.

We begin with some preliminaries on modular forms.
For a ring $A$, a congruence subgroup $\Upgamma$, and a nonnegative integer $k$, we will write $S_k(\Upgamma;A)$ for the space of weight $k$ cuspforms for $\Upgamma$ over $A$; we put $S_k(\Upgamma):=S_k(\Upgamma;\Qbar)$.
If $\Upgamma',$ $\Upgamma$ are congruence subgroups and $\gamma\in \GL_2(\Q)$ satisfies
$\gamma^{-1}\Upgamma'\gamma\subseteq \Upgamma$, then there is a 
canonical injective ``pullback" map 
on modular forms
$\xymatrix@1{{\iota_{\gamma}:S_k(\Upgamma)} \ar@{^{(}->}[r] & {S_k(\Upgamma')}}$ 
given by $\iota_{\gamma}(f):=f\big|_{\gamma^{-1}}$. 
When $\Upgamma'\subseteq \Upgamma$, {\em unless specified to the contrary}, we will
always view $S_k(\Upgamma)$ as a subspace of $S_k(\Upgamma')$ via $\iota_{\id}$.
As $\gamma\Upgamma'\gamma^{-1}$ is necessarily of finite index in $\Upgamma$,
one also has a canonical ``trace" mapping
\begin{equation}
	\xymatrix{
		{\tr_{\gamma}:S_k(\Upgamma')} \ar[r] & {S_k(\Upgamma)}
		}
		\qquad\text{given by}\qquad
		\tr_{\gamma}(f):=\sum_{\delta\in \gamma^{-1}\Upgamma'\gamma\backslash\Upgamma} (f\big|_{\gamma})\big|_{\delta}
		\label{MFtrace}
\end{equation}
with the property that $\tr_{\gamma}\circ\iota_{\gamma}$ is multiplication by $[\Upgamma: \gamma^{-1}\Upgamma'\gamma]$
on $S_k(\Upgamma)$.

We put $\Upgamma_r:=\Upgamma_1(Np^r)$ and define
\begin{equation*}
		S_2^{\infty}(\Upgamma_r;R_r):=S_2(\Upgamma_r;R_r)\qquad\text{and}\qquad
		S_2^{0}(\Upgamma_r;R_r):=
		\{f\in S_2(\Upgamma_r; \Qbar_p)\ :\ f\big|_{w_r} \in S_2^{\infty}(\Upgamma_r;R_r) \},
	\end{equation*}
By definition, $S_2^{\star}(\Upgamma_r;R_r)$ for $\star=0,\infty$ are $R_r$-submodules
of $S_2(\Upgamma_r;K_r')$ that are carried isomorphically onto each other by the automorphism
$w_r$ of $S_2(\Upgamma_r;K_r')$.	Note that $S_2^{\star}(\Upgamma_r;R_r)$ is precisely the
$R_r$-submodule consisting of cuspforms whose formal expansion at the cusp $\star$
has coefficients in $R_r$.	
As the Hecke algebra $\H_r$ stabilizes 
$S_2^{\infty}(\Upgamma_r; R_r)$, it follows immediately from 
the intertwining relation $w_rT=T^*w_r$ for any $T\in \H_r$ (see \S\ref{Notation})
that
$S_2^0(\Upgamma_R;R_r)$ is stable under the action of $\H_r^*$ on $S_2(\Upgamma_r; K_r)$.
Furthermore, $\Gal(K_r'/K_0)$ acts on $S_2(\Upgamma_r;K_r')\simeq S_2(\Upgamma_r;\Q_p)\otimes_{\Q_p} K_r'$
through the second tensor factor, and this action leaves stable the $R_r$-submodule $S_2^{\infty}(\Upgamma_r; R_r)$.
The second equality of Proposition \ref{ALinv} then implies that $S_2^0(\Upgamma_r;R_r)$
is also a $\Gal(K_r'/K_0)$-stable $R_r$-submodule of $S_2(\Upgamma_r;K_r')$.  
A straightforward computation shows that 
the direct factor $\Gal(K_0'/K_0)$ of $\Gal(K_r'/K_0)$ acts trivially on $S_2^{\infty}(\Upgamma_r;R_r)$
and through $\langle a\rangle_N^{-1}$ on $S_2^0(\Upgamma_r;R_r)$.

We can interpret $S_2^{\star}(\Upgamma_r;R_r)$ geometrically as follows.
As in Remark \ref{MWGood}, for $\star= \infty, 0$
let $I_r^{\star}$ be the irreducible component 
of $\o{\X}_r$ passing through the cusp $\star$,
and denote by $\X_{r}^{\star}$ the complement
in $\X_r$ of all irreducible components of $\o{\X}_r$ distinct from $I_{r}^{\star}$.
By construction, $\X_r$ and $\X_r^{\star}$ have the same generic fiber $X_r\times_{\Q_p} K_r$.
Using Proposition \ref{redXr}, it is not hard to show that the diamond operators
induce automorphisms of $\X_r^{\star}$, and one checks via Proposition \ref{AtkinInertiaCharp}
that the ``semilinear" action (\ref{gammamaps}) of $\gamma\in \Gamma$ on $\X_r$
carries $\X_r^{\star}$ to $(\X_r^{\star})_{\gamma}$ for all $\gamma$.

\begin{lemma}\label{Edixhoven}
	Formal expansion at the $R_r$-point $\infty$ $($respectively $R_r'$-point $0$$)$ of $\X_r^{\star}$ 
	induces an isomorphism of $R_r$-modules
		\begin{equation}
			H^0(\X_r^{\infty},\Omega^1_{\X_r^{\infty}/R_r}) \simeq S_2^{\infty}(\Upgamma_r;R_r)
			\quad\text{respectively}\quad
			H^0(\X_r^{0},\Omega^1_{\X_r^{0}/R_r})(\langle a\rangle_N^{-1}) \simeq S_2^{0}(\Upgamma_r;R_r)
		\end{equation}
	which is equivariant for the natural actions of $\Gamma$ and $\H_r$ $($respectively $\H_r^*$$)$ on source
	and target and, in the case of the second isomorphism, intertwines the action of $\Gal(K_0'/K_0)$
	via $\langle a\rangle_N^{-1}$ on source with the natural action on the target.
\end{lemma}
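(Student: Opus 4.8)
The plan is to prove the $\star=\infty$ case directly and then deduce the $\star=0$ case by transport of structure along the Atkin--Lehner involution $w_r$. First I would record the two geometric facts that make the argument run. Since $\X_r^{\infty}$ is an open subscheme of the regular scheme $\X_r$ it is regular; its generic fibre is the smooth proper curve $X_r\times_{\Q} K_r$, while by Proposition \ref{redXr} its special fibre is the single Igusa component $I_r^{\infty}$ (geometrically irreducible by Proposition \ref{Pr:IgusaSmoothCrv}) with the supersingular points deleted, which is smooth over $\F_p$. A flat finite-type morphism to a discrete valuation ring with smooth fibres is smooth, so $\X_r^{\infty}\to \Spec R_r$ is smooth and $\Omega^1_{\X_r^{\infty}/R_r}$ is an invertible sheaf (this is precisely why one passes from the merely regular $\X_r$ to the smooth $\X_r^{\star}$). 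Because $\X_r^{\infty}$ is reduced and $R_r$-flat, restriction to the generic fibre embeds $H^0(\X_r^{\infty},\Omega^1_{\X_r^{\infty}/R_r})$ into $H^0(X_r\times_{\Q} K_r,\Omega^1)\simeq S_2(\Upgamma_r;K_r)$, the classical identification of weight two cusp forms with everywhere-holomorphic differentials, sending $f$ to the differential $\omega_f$ whose expansion at the cusp $\infty$ is $f\,dq/q$ in the Tate parameter. Thus $H^0(\X_r^{\infty},\Omega^1_{\X_r^{\infty}/R_r})$ is realised as an $R_r$-submodule of $S_2(\Upgamma_r;K_r)$, and the content of the lemma is to identify this submodule with the lattice $S_2^{\infty}(\Upgamma_r;R_r)$ of forms whose expansion at $\infty$ has coefficients in $R_r$.

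I would establish this identification by proving the two inclusions via formal expansion along the cusp section. The cusp $\infty$ is rational, giving an $R_r$-point of $\X_r^{\infty}$ whose formal completion is the formal Tate curve with its canonical parameter $q$ (Appendix \ref{tower}); restricting a differential to this formal neighbourhood is exactly the $q$-expansion. For the inclusion $H^0(\X_r^{\infty},\Omega^1_{\X_r^{\infty}/R_r})\hookrightarrow S_2^{\infty}(\Upgamma_r;R_r)$, a global section of the invertible sheaf $\Omega^1_{\X_r^{\infty}/R_r}$ pulls back along the $R_r$-section into $R_r[\![q]\!]\,dq/q$, so the associated cusp form has $R_r$-integral expansion. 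The reverse inclusion is the $q$-expansion principle: given $f\in S_2^{\infty}(\Upgamma_r;R_r)$, the form $\omega_f$ is a rational section of $\Omega^1_{\X_r^{\infty}/R_r}$ that is regular on the generic fibre, so its only possible pole is along the unique vertical prime divisor, namely the special fibre; since that special fibre is the geometrically irreducible curve $I_r^{\infty}$ carrying the cusp $\infty$, integrality of the expansion at this one cusp forces the order of $\omega_f$ along the special fibre to be non-negative, and hence $\omega_f$ is a global section. These two inclusions are visibly mutually inverse inside $S_2(\Upgamma_r;K_r)$, giving the asserted $R_r$-module isomorphism. I expect this to be the main obstacle: making the integral $q$-expansion principle precise for the relative differentials on the non-proper model $\X_r^{\infty}$, and in particular deducing the order of $\omega_f$ along the generic point of the special fibre purely from its expansion at the single cusp $\infty$, which is exactly where geometric irreducibility of $I_r^{\infty}$ is indispensable.

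For equivariance, the Hecke correspondences and diamond operators are defined by correspondences on $\X_r$ whose effect on $q$-expansions is the classical one (Appendix \ref{tower}), so the identification is $\H_r$-equivariant; likewise the semilinear action of $\Gamma$ given by (\ref{gammamaps}) acts on global differentials through its action on the coefficient ring $R_r$, and since the cusp $\infty$ and its parameter $q$ are defined over $\Q_p$, this matches the natural second-factor action of $\Gamma$ on $S_2^{\infty}(\Upgamma_r;R_r)\subseteq S_2(\Upgamma_r;\Q_p)\otimes_{\Q_p}K_r'$, with $\Gal(K_0'/K_0)$ acting trivially, as required.

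Finally I would obtain the $\star=0$ statement by applying $w_r$, which is defined over $R_r'$ and carries $\X_r^{\infty}$ to $\X_r^{0}$, interchanging the cusps $\infty$ and $0$. Composing the $\infty$-isomorphism with $w_r$ and invoking $w_r T = T^* w_r$ converts $\H_r$-equivariance into $\H_r^*$-equivariance and sends $S_2^{\infty}(\Upgamma_r;R_r)$ onto $S_2^{0}(\Upgamma_r;R_r)=\{f : f|_{w_r}\in S_2^{\infty}(\Upgamma_r;R_r)\}$ by definition. Because $w_r$ depends on a choice of primitive $N$-th (and $p^r$-th) root of unity, Proposition \ref{ALinv} shows that the action of $\Gal(K_0'/K_0)$ acquires a twist by $\langle a\rangle_N$; tracking this through the isomorphism produces exactly the twist $(\langle a\rangle_N^{-1})$ on the source and matches the previously computed $\langle a\rangle_N^{-1}$-action on $S_2^{0}(\Upgamma_r;R_r)$, completing the proof.
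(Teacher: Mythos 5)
Your proof is correct and is essentially the argument the paper relies on: its proof of Lemma \ref{Edixhoven} is a one-line citation to \cite[Proposition 2.5]{EdixhovenComparison}, whose proof is precisely the $q$-expansion-principle argument you spell out — smoothness of $\X_r^{\star}$ inside $\X_r^{\sm}$, integrality of the expansion along the cuspidal section, and the pole-order argument along the unique (geometrically irreducible) vertical prime divisor of $\X_r^{\star}$. Your deduction of the $\star=0$ case from the $\star=\infty$ case by Atkin--Lehner transport, with the $\langle a\rangle_N$-twist supplied by Proposition \ref{ALinv}, is likewise the intended ``straightforward adaptation,'' since $S_2^0(\Upgamma_r;R_r)$ is defined via $w_r$ in the first place.
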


\begin{proof}
	The proof is a straightforward
	adaptation of the proof of  \cite[Proposition 2.5]{EdixhovenComparison}.
\end{proof}	
	
Now $\X_r\rightarrow S_r$ is smooth outside the supersingular points, so there is a canonical
closed immersion $\iota_r^{\star}:\X_r^{\star}\hookrightarrow \X_r^{\sm}$.
Using Lemmas \ref{ConcreteDualizingDescription} and \ref{Edixhoven}, 
pullback of differentials along $\iota_r^{\star}$ gives a natural map 
\begin{equation}
	\xymatrix{
		{H^0(\X_r,\omega_{\X_r/T_r})\simeq H^0(\X_r^{\sm},\Omega^1_{\X_r^{\sm}/T_r})} \ar[r]^-{(\iota_r^{\star})^*} & 
		{H^0(\X_r^{\star},\Omega^1_{\X_r^{\star}/T_r}) \simeq S_2^{\star}(\Upgamma_r;R_r)}
		}\label{OmegasComparison}
\end{equation}
which is an isomorphism after inverting $p$ as $\X_r^{\sm}$ and $\X_r^{\star}$ have the same generic fiber.
In particular,
the map (\ref{OmegasComparison}) is injective, $\Gamma$ and $\H_r$ (respectively $\H_r^*$) equivariant
for $\star=\infty$ (respectively
$\star=0$), and in the case of $\star=0$ intertwines the action of $\Gal(K_0'/K_0)$ via 
$\langle a\rangle_N^{-1}$ on source with the natural action on the target.

\begin{remark}
	The image of $(\ref{OmegasComparison})$ for $\star=\infty$ 
	is naturally identified 
	with the space of weight $2$ cuspforms for $\Upgamma_r$ whose formal expansion
	at {\em every} cusp has $R_r$-coefficients. 
\end{remark}

Applying the idempotent $e$ (respectively $e^*$) to (\ref{OmegasComparison}) with $\star=\infty$
(respectively $\star=0$) gives an injective homomorphism
\begin{subequations}
\begin{equation}
	\xymatrix{
		{eH^0(\X_r,\omega_{\X_r/T_r})} \ar@{^{(}->}[r] & {eS_2^{\infty}(Np^r;R_r)}
		}\label{OmegasComparisonOrd}
\end{equation}
respectively
\begin{equation}
	\xymatrix{
		{e^*H^0(\X_r,\omega_{\X_r/T_r})(\langle a\rangle_N^{-1})} \ar@{^{(}->}[r] & {e^*S_2^0(Np^r;R_r)}
		}\label{OmegasComparisonOrd0}
\end{equation}
\end{subequations}
which is compatible with the canonical actions of $\Gamma$ and of $\H_r$ (respectively $\H_r^*$) on
source and target and in the case of (\ref{OmegasComparisonOrd}) is $\Gal(K_0'/K_0)$-equivariant.

\begin{proposition}\label{MFGeometryIsom}
	The mappings $(\ref{OmegasComparisonOrd})$ and $(\ref{OmegasComparisonOrd0})$ are isomorphisms.
\end{proposition}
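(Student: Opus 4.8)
The plan is to prove that each injection is an isomorphism by a Nakayama argument over the discrete valuation ring $R_r$, reducing the surjectivity to an injectivity statement for $q$-expansions on a single irreducible component of the special fiber. First I would record that in each case source and target are finite free $R_r$-modules of the same rank. The sources are direct summands, cut out by the idempotents $e$ and $e^*$, of the free $R_r$-module $H^0(\X_r,\omega_{\X_r/T_r})$ of Proposition \ref{HodgeIntEx}; the targets are direct summands of $S_2^{\star}(\Upgamma_r;R_r)$, which is finitely generated and torsion free, hence free, over $R_r$ (the twist by $\langle a\rangle_N^{-1}$ in (\ref{OmegasComparisonOrd0}) does not alter the underlying module). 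Since (\ref{OmegasComparison}) is an isomorphism after inverting $p$, so are (\ref{OmegasComparisonOrd}) and (\ref{OmegasComparisonOrd0}), whence source and target are free of equal rank. As an $R_r$-linear map of finite free modules of equal rank over the local ring $R_r$ that is injective modulo the maximal ideal is automatically surjective by Nakayama's Lemma, and a surjection of finite free $R_r$-modules of equal rank is an isomorphism, it suffices to show that the reductions of (\ref{OmegasComparisonOrd}) and (\ref{OmegasComparisonOrd0}) along $R_r\rightarrow \F_p$ are injective.

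Next I would identify these reductions. By the base-change compatibility of Lemma \ref{ReductionCompatibilities}, the reduction of the source of (\ref{OmegasComparisonOrd}) (resp.\ (\ref{OmegasComparisonOrd0})) is canonically $e_rH^0(\o{\X}_r,\omega_{\o{\X}_r/\F_p})$ (resp.\ $e_r^*H^0(\o{\X}_r,\omega_{\o{\X}_r/\F_p})$). The maps are given by formal expansion at the cusp $\infty$ (resp.\ $0$) in the sense of Lemma \ref{Edixhoven}, and since formal expansion commutes with base change, the reduction of (\ref{OmegasComparisonOrd}), followed by the tautological inclusion of its target into $\F_p[\![q]\!]$ via $q$-expansion, is exactly the reduced $q$-expansion map $e_rH^0(\o{\X}_r,\omega_{\o{\X}_r/\F_p})\rightarrow \F_p[\![q]\!]$ at the $\infty$-cusp (and likewise at $0$). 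It is therefore enough to prove that these reduced $q$-expansion maps are injective on the relevant idempotent parts.

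Finally I would establish this injectivity using Proposition \ref{charpord}. The cusp $\infty$ lies on the component $I_r^{\infty}=I_{(r,0,1)}$, and under Rosenlicht's identification of $H^0(\o{\X}_r,\omega_{\o{\X}_r/\F_p})$ with meromorphic differentials on $\nor{\o{\X}}_r$, the $q$-expansion at $\infty$ of a class $\bar\eta$ is read off from its $I_r^{\infty}$-component, i.e.\ from $(i_r^{\infty})^*\bar\eta$, pullback being projection onto the $(r,0,1)$-component as in (\ref{PBisProj}). By Proposition \ref{charpord}, $(i_r^{\infty})^*$ is an isomorphism $e_rH^0(\o{\X}_r,\omega_{\o{\X}_r})\xrightarrow{\ \simeq\ }H^0(I_r^{\infty},\Omega^1_{I_r^{\infty}}(\SS))^{V_{\ord}}$, and the $q$-expansion at $\infty$ of $\bar\eta$ coincides with that of $(i_r^{\infty})^*\bar\eta$. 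Since $I_r^{\infty}$ is irreducible and $\infty$ is a closed point of it, a nonzero meromorphic differential on $I_r^{\infty}$ has nonvanishing formal expansion at $\infty$; hence the $q$-expansion map on $H^0(I_r^{\infty},\Omega^1_{I_r^{\infty}}(\SS))^{V_{\ord}}$ is injective, and so is the reduced $q$-expansion map on $e_rH^0(\o{\X}_r,\omega_{\o{\X}_r/\F_p})$. This gives the injectivity of the reduction of (\ref{OmegasComparisonOrd}), and the identical argument with $e^*$, $I_r^0$, and the cusp $0$—tracking the $\langle a\rangle_N^{-1}$-twist through $(i_r^0)^*$—handles (\ref{OmegasComparisonOrd0}). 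The step I expect to require the most care is this last identification: verifying that, after reduction, the formal expansion of Lemma \ref{Edixhoven} is computed on the single privileged component $I_r^{\star}$ through the isomorphism of Proposition \ref{charpord}, so that the usual global $q$-expansion principle is replaced by the transparent injectivity of formal expansion on an integral curve.
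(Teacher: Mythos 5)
Your proof is correct, and its engine is the same one that drives the paper's argument: reduce modulo $\varpi_r=\varepsilon^{(r)}-1$, identify the reduced comparison map with ``project an ordinary section of $\omega_{\o{\X}_r}$ onto the good component $I_r^{\star}$ and formally expand at the cusp'' (your identification via Lemma \ref{ReductionCompatibilities} and (\ref{PBisProj}) matches the paper's diagram (\ref{etamaps20})), and use ordinarity to see that nothing is lost in this projection. The packaging differs. The paper runs a minimal-divisibility induction: given $\nu$ in the target, it takes the least $d$ with $\varpi_r^d\nu$ in the image, and shows directly from the explicit $U_p^n$-formula of Corollary \ref{Cor:UpHighPower} that a lift $\eta$ with $d\ge 1$ satisfies $\o{\eta}_{(r,0,1)}=0$, hence $U_p^n\o{\eta}=0$ for $n\gg 0$, hence $\o{\eta}=0$ by invertibility of $U_p$; dividing by $\varpi_r$ contradicts minimality. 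You instead quote Proposition \ref{charpord} --- whose injectivity half is exactly the vanishing statement the paper re-derives inline --- to get injectivity of the mod-$p$ reduction of (\ref{OmegasComparisonOrd}), and conclude by Nakayama and a rank count. This is shorter given that Proposition \ref{charpord} is already in hand, but note the one place your wrapper asks for more than the paper's: you assert without proof that $S_2^{\star}(\Upgamma_r;R_r)$ is finitely generated over $R_r$ (so that source and target are free of equal rank). This is true --- e.g.\ a Sturm-type bound embeds $S_2^{\infty}(\Upgamma_r;R_r)$ into $R_r^{M}$ via finitely many $q$-coefficients --- but it is nowhere established in the paper, and the paper's minimal-$d$ induction deliberately avoids needing it. In fact you can avoid it too: injectivity of the reduction plus torsion-freeness of the target already gives surjectivity by the same division argument (if $\varpi_r^d\nu$ lifts to $m$ with $d\ge 1$ minimal, then $\o{m}=0$ by your injectivity, so $m=\varpi_r m'$ and $m'$ lifts $\varpi_r^{d-1}\nu$), at which point your proof and the paper's coincide except that you cite Proposition \ref{charpord} where the paper recomputes. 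Your remaining details are sound: the cusps lie in the smooth locus, so $\omega$ is $\Omega^1$ there and formal expansion commutes with reduction; formal expansion of meromorphic differentials at a closed point of the integral curve $I_r^{\star}$ is injective; and the saturation $S_2^{\infty}(\Upgamma_r;R_r)\cap \varpi_r K_r[\![q]\!]=\varpi_r S_2^{\infty}(\Upgamma_r;R_r)$, immediate from the definition of $S_2^{\infty}$, justifies viewing $S_2^{\infty}\otimes_{R_r}\F_p$ inside $\F_p[\![q]\!]$; the $e^*$-case at the cusp $0$ goes through \emph{mutatis mutandis} exactly as you say.
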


\begin{proof}
	We treat the case of (\ref{OmegasComparisonOrd}); the proof that (\ref{OmegasComparisonOrd0})
	is an isomorphism goes through {\em mutatis mutandis}.
	We must show that (\ref{OmegasComparisonOrd}) is surjective.
	To do this, let $\nu\in e_rS_2^{\infty}(Np^r;R_r)$ be arbitrary.  Since (\ref{OmegasComparisonOrd})
	is an isomorphism after inverting $\varpi_r:=\varepsilon^{(r)}-1$, there exists a least nonnegative integer $d$
	such that $\varpi_r^d\nu$ is in the image of (\ref{OmegasComparisonOrd}).  
	Assume that $d\ge 1$, and let $\eta\in eH^0(\X_r,\omega_{\X_r/R_r})$
	be any element mapping to $\varpi_r^d \nu$.  For an irreducible component $I$ of $\o{\X}_r$,
	write $I^h$ for the complement of the supersingular points in $I$, and denote by
	$i_r^{\infty}:I_r^{\infty,h}\hookrightarrow \X_r^{\infty}$ the canonical immersion.
	We then have a commutative diagram
	\begin{equation}
	\begin{gathered}
		\xymatrix@C=40pt{
			{H^0(\o{\X}_r,\omega_{\o{\X}_r/R_r})} \ar[r]^-{(\ref{OmegasComparison})\bmod \varpi_r}\ar@{^{(}->}[d] & 
			{H^0(\X_r^{\infty},\Omega^1_{\X_r^{\infty}/R_r})\tens_{R_r} \F_p}\ar[d]^-{(i_r^{\infty})^*} \\
			{\displaystyle\prod\limits_{I\in \Irr(\o{\X}_r)} H^0(I^h,\Omega^1_{I^h/\F_p})}\ar[r]_-{\proj_{\infty}} & 
			{H^0(I_r^{\infty,h},\Omega^1_{I_r^{\infty,h}/\F_p})}
		}\label{etamaps20}
	\end{gathered}	
	\end{equation}
	where the left vertical mapping is deduced from (\ref{dualizing2prod}), 
	while the bottom map is simply projection.
	Our assumption that $d\ge 1$ implies that the image of $\o{\eta}:=\eta\bmod \varpi_r$ 
	under the composite of the right vertical and top horizontal maps in (\ref{etamaps20})
	is zero and hence, viewing
	$\o{\eta}=(\eta_{(a,b,u)})$ as a meromorphic differential on the normalization of $\o{\X}_r$,
	we have $\eta_{(r,0,1)}=\proj_{\infty}(\o{\eta})=0$.  
	Using the formula (\ref{Upn1}), we deduce that $U_p^n\o{\eta}=0$ for $n$ sufficiently large.
	But $U_p$ acts invertibly on $\eta$ (and hence on $\o{\eta}$) so we necessarily have that
	$\o{\eta}=0$ or what is the same thing that $\eta\bmod \varpi_r=0$.  We conclude that 
	$\varpi_r^{d-1}\nu$ is in the image of (\ref{OmegasComparisonOrd}), contradicting the minimality
	of $d$.  Thus $d=0$ and (\ref{OmegasComparisonOrd}) is surjective.
\end{proof}

For $s \le r$, Ohta shows \cite[2.3.4]{OhtaEichler} that the trace mapping 
$\tr_{\id}:S_k(\Upgamma_r; K_r)\rightarrow S_k(\Upgamma_s; K_s)\otimes_{K_s} K_r$
attached to the inclusion $\Upgamma_r\subseteq \Upgamma_s$
carries $S_k^0(\Upgamma_r;R_r)$ into $S_k^0(\Upgamma_s;R_s)\otimes_{R_s} R_r$, so that the projective limit
\begin{equation*} 
	\s_k^*(N,R_{\infty}) : = \varprojlim_{\tr_{\id}} \left( S_k^0(\Upgamma_r; R_r)\otimes_{R_r} R_{\infty}\right)
\end{equation*}
makes sense.  It is canonically a $\Lambda_{R_{\infty}}$-module, equipped with an action of $\H^*$,
a semilinear action of $\Gamma$, and a natural action of $\Gal(K_0'/K_0)$.
On the other hand,
let $eS(N;\Lambda_{R_{\infty}})\subseteq \Lambda_{R_{\infty}}[\![q]\!]$ 
be the space of ordinary $\Lambda_{R_{\infty}}$-adic
cuspforms of level $N$, as defined in \cite[2.5.5]{OhtaEichler}.
This space is equipped with an action of $\H$ via the usual formulae on formal
$q$-expansions (see, for example \cite[\S1.2]{WilesLambda}), as well as an action
of $\Gamma$ via its $q$-coefficient-wise action on $\Lambda_{R_{\infty}}[\![q]\!]$.

\begin{theorem}[Ohta]\label{OhtaThm}	
	Then there is a canonical isomorphism of $\Lambda_{R_{\infty}}$-modules 
	\begin{equation}
		\xymatrix{
			{eS(N;\Lambda_{R_{\infty}})(\langle \chi\rangle^{-1}\langle a\rangle_N^{-1})} \ar[r]^-{\simeq} & {e^*\s_2^*(N,R_{\infty})}
			}\label{LambdaForms}
	\end{equation}
	that intertwines the action of $T\in \H$ on the source with that of $T^*\in \H^*$
	on the target, for all $T\in \H$.  
	This isomorphism is $\Gal(K_{\infty}'/K_0)$-equivariant 
	for the natural action of $\Gal(K_{\infty}'/K_0)$ on the target,
	and the twisted action
	 $\gamma\cdot \scrF := \langle \chi(\gamma)\rangle^{-1}\langle a(\gamma)\rangle_N^{-1} 
	\gamma\scrF$ on the source.
\end{theorem}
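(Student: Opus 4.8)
The plan is to reduce the statement to Ohta's comparison between $\Lambda_{R_{\infty}}$-adic cuspforms and the tower of weight-two forms, and then to track the effect of the Atkin--Lehner involution and of the cyclotomic and $\mu_N$-components of the Galois action on the resulting twist. Concretely, I would first invoke Ohta's foundational interpolation result \cite[\S2.5]{OhtaEichler}: the space $eS(N;\Lambda_{R_{\infty}})$ of ordinary $\Lambda_{R_{\infty}}$-adic cuspforms is canonically isomorphic, as a $\Lambda_{R_{\infty}}$-module equipped with its $\H$- and $\Gamma$-actions, to the inverse limit $\varprojlim_r eS_2^{\infty}(\Upgamma_r;R_r)$ taken along the trace maps $\tr_{\id}$ of $(\ref{MFtrace})$, where the $\Lambda_{R_{\infty}}$-structure on the right is induced by the diamond operators $\langle\cdot\rangle$ at $p$. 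This is the weight-two avatar of Hida's control theorem, and it is exactly here that the $q$-expansion definition of $\Lambda$-adic forms is matched with the geometric tower; the $q$-expansion principle guarantees that this identification is $\Gamma$-equivariant for the natural (untwisted) actions.

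Next I would pass from standard forms to adjoint forms by means of the Atkin--Lehner involutions $\{w_r\}_r$. By the very definition of $S_2^{0}(\Upgamma_r;R_r)$ together with the intertwining relation $w_r T = T^* w_r$ recorded in \S\ref{Notation}, each $w_r$ restricts to an isomorphism $eS_2^{\infty}(\Upgamma_r;R_r) \simeq e^*S_2^{0}(\Upgamma_r;R_r)$ carrying the action of $T\in\H$ on the source to that of $T^*\in\H^*$ on the target. I would then check, using the compatibility of $w_r$ with the degeneracy maps $\rho,\sigma$ furnished by Proposition \ref{ALinv}, that these involutions are compatible with the transition maps $\tr_{\id}$ in the two towers, so that they assemble into an isomorphism $\varprojlim_r eS_2^{\infty}(\Upgamma_r;R_r) \simeq e^*\s_2^*(N,R_{\infty})$ of $\Lambda_{R_{\infty}}$-modules swapping $T$ and $T^*$. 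Composing with Ohta's isomorphism from the previous step produces the map $(\ref{LambdaForms})$ on underlying $\Lambda_{R_{\infty}}$-modules with the asserted $T \leftrightarrow T^*$ behaviour.

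It remains to pin down the twist. Splitting off the trivial eigenspace for the diamond action and applying the second equality of Proposition \ref{ALinv}, one sees that the direct factor $\Gal(K_0'/K_0)$ acts on $S_2^{0}(\Upgamma_r;R_r)$ through $\langle a\rangle_N^{-1}$ relative to its trivial action on $S_2^{\infty}(\Upgamma_r;R_r)$ (compare the discussion preceding Lemma \ref{Edixhoven}); this is the source of the $\langle a\rangle_N^{-1}$-factor. The interaction of $w_r$ with the cyclotomic $\Gamma$-action, together with the Galois-equivariance of $q$-expansions, contributes the $\langle\chi\rangle^{-1}$-factor, by the same computation that produced the twist in the proof of Proposition \ref{dRDuality}. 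Assembling the two contributions yields precisely the twisted equivariance $\gamma\cdot\scrF = \langle\chi(\gamma)\rangle^{-1}\langle a(\gamma)\rangle_N^{-1}\gamma\scrF$ on the source, with the target carrying the natural $\Gal(K_{\infty}'/K_0)$-action.

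The main obstacle is this final piece of bookkeeping: verifying that the Atkin--Lehner involution contributes exactly the character $\langle\chi\rangle^{-1}\langle a\rangle_N^{-1}$, and no more, and that it is compatible with the trace maps $\tr_{\id}$ uniformly along the tower. Both points hinge on the precise relations of Proposition \ref{ALinv} linking $w_r$ with the degeneracy maps $\rho,\sigma$ and with the Galois action on roots of unity, and care is needed because $w_r$ exchanges the two degeneracy maps while the inverse limits defining $eS(N;\Lambda_{R_{\infty}})$ and $\s_2^*(N,R_{\infty})$ are both formed along $\tr_{\id}$; the discrepancy is absorbed by powers of the unit $U_p^*$ on ordinary parts, exactly as in the computation of Proposition \ref{dRDuality}.
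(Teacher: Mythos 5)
The first thing to know is that the paper does not reprove this statement at all: its proof is a pointer to Ohta, citing \cite[Theorem 2.3.6]{OhtaEichler} for the construction of the map and the fact that it is an isomorphism, \cite[2.5.1]{OhtaEichler} for the $T\leftrightarrow T^*$ compatibility, and \cite[Proposition 3.5.6]{OhtaEichler} for the Galois equivariance. Your steps 2 and 3 are in fact a reasonable outline of what happens inside Ohta's proof: the map is built out of the Atkin--Lehner involutions $w_r$ together with $U_p^*$-power normalizations on ordinary parts (exactly the mechanism of the twisted pairing in Proposition \ref{dRDuality}), and the character $\langle\chi\rangle^{-1}\langle a\rangle_N^{-1}$ is extracted from the relations of Proposition \ref{ALinv}, in particular $((\gamma,g)^*w_r)(\gamma,g)=\langle\chi(\gamma)^{-1}\rangle\langle a(g)^{-1}\rangle_N(\gamma,g)w_r$, together with the fact that $\Gal(K_0'/K_0)$ acts through $\langle a\rangle_N^{-1}$ on $S_2^0(\Upgamma_r;R_r)$.

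The genuine gap is your step 1. You invoke, as a known ``interpolation result,'' an untwisted $\Gamma$-equivariant isomorphism $eS(N;\Lambda_{R_{\infty}})\simeq\varprojlim_{\tr_{\id}}\bigl(eS_2^{\infty}(\Upgamma_r;R_r)\otimes_{R_r}R_{\infty}\bigr)$, but no such statement is available off the shelf, and it is doubtful as literally written. The trace $\tr_{\id}$ attached to $\Upgamma_{r}\subseteq\Upgamma_{s}$ is not known to preserve the $\infty$-integral structure $S_2^{\infty}(\Upgamma_r;R_r)$; the stability that Ohta proves in \cite[2.3.4]{OhtaEichler} concerns precisely the $0$-integral spaces $S_2^0(\Upgamma_r;R_r)$, and this is the very reason Theorem \ref{OhtaThm} is formulated with $\s_2^*(N,R_{\infty})$ rather than with an $S^{\infty}$-tower. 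Moreover, your appeal to the $q$-expansion principle does not establish the claimed untwisted $\Gamma$-equivariance: the level-$r$ component of an element of the limit is assembled from the weight-two specializations of $\mathscr{F}$ at the finite-order characters of $\Delta_1/\Delta_r$, whose values in $\mu_{p^{\infty}}\subset R_{\infty}$ are permuted by $\Gamma$ via $\chi$, so this step is itself a potential source of a $\langle\chi\rangle$-twist that your bookkeeping never rules out. (Checking the total twist against Corollary \ref{LambdaFormsRelation} would be circular, since that corollary is deduced from the theorem.) In effect, proving your step 1 with its asserted equivariance is equivalent to the theorem itself---Ohta constructs the comparison directly against the $S^0$-tower via $w_r$-twisted $q$-expansions and $U_p$-normalizations---so as written your argument black-boxes the hard part and is circular at exactly that point.
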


\begin{proof}
	For the definition of the canonical map (\ref{LambdaForms}), as well as the proof that it is an isomorphism,
	see Theorem 2.3.6 and its proof in \cite{OhtaEichler}.  With the conventions of \cite{OhtaEichler},
	the claimed compatibility of (\ref{LambdaForms}) with Hecke operators is a consequence of \cite[2.5.1]{OhtaEichler},
	while the $\Gal(K_{\infty}'/K_0)$-equivariance of (\ref{LambdaForms}) follows from \cite[Proposition 3.5.6]{OhtaEichler}.
\end{proof}

\begin{corollary}\label{LambdaFormsRelation}
	There is a canonical isomorphism of $\Lambda_{R_{\infty}}$-modules 
	\begin{equation}
		 eS(N;\Lambda_{R_{\infty}})(\langle \chi\rangle^{-1})\simeq		e^* H^0(\omega) 
	\end{equation}
	that intertwines the action of $T\in \H$ on $eS(N;\Lambda_{R_{\infty}})$
	with $T^*\in \H^*$ on $e^*H^0(\omega)$, and
	is $\Gamma$-equivariant for the canonical action of $\Gamma$
	on the target and the twisted action $\gamma\cdot \scrF:=\langle \chi(\gamma)\rangle^{-1} \gamma\scrF$
	on the source.
\end{corollary}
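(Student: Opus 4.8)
The plan is to obtain the corollary by combining Ohta's $\Lambda$-adic comparison (Theorem \ref{OhtaThm}) with a $\Lambda$-adic upgrade of the finite-level geometric comparison of Proposition \ref{MFGeometryIsom}, and then discarding a twist which becomes trivial once the structure is restricted to $\Gamma$.

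First I would pass the finite-level isomorphisms (\ref{OmegasComparisonOrd0}) to the limit. Extending scalars to $R_{\infty}$ and taking the projective limit over $r$, the left-hand terms assemble by Definition \ref{limitmods} into $e^*H^0(\omega)$, while the right-hand terms assemble into Ohta's space $e^*\s_2^*(N,R_{\infty})$, yielding a $\Lambda_{R_{\infty}}$-linear isomorphism
\[
	e^*H^0(\omega)(\langle a\rangle_N^{-1}) \xrightarrow{\simeq} e^*\s_2^*(N,R_{\infty})
\]
that intertwines $T^*\in\H^*$ on the two sides and is equivariant for the natural actions of $\Gamma$ and $\Gal(K_0'/K_0)$.

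The step I expect to be the main obstacle is verifying that the isomorphisms (\ref{OmegasComparisonOrd0}) actually form a morphism of towers, i.e. that they are compatible with the transition maps of the two projective systems: the geometric trace $\rho_*$ of (\ref{trmap}) on the de Rham side and Ohta's cusp-form trace $\tr_{\id}$ on the other. I would reduce this to the generic fibre. Each of $e^*H^0(\X_r,\omega_{\X_r/T_r})$ and $e^*S_2^0(\Upgamma_r;R_r)$ is a finite free, hence torsion-free, $R_r$-module, and (\ref{OmegasComparison}) identifies both, after inverting $p$, with one and the same $K_r$-vector space, namely the ordinary part of $H^0(X_r,\Omega^1_{X_r/K_r})$. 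There the trace $\rho_*$ on de Rham $H^0$ and the trace $\tr_{\id}$ attached to the degeneracy map $\rho:\tau\mapsto\tau$ (equivalently, to the inclusion $\Upgamma_r\subseteq\Upgamma_s$) coincide by construction. Since a square of maps of free $R_r$-modules which commutes after inverting $p$ already commutes integrally, the finite-level isomorphisms are compatible with the transition maps, legitimizing the passage to the limit above.

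Finally I would compose with the inverse of Ohta's isomorphism (\ref{LambdaForms}),
\[
	eS(N;\Lambda_{R_{\infty}})(\langle\chi\rangle^{-1}\langle a\rangle_N^{-1}) \xrightarrow{\simeq} e^*\s_2^*(N,R_{\infty}),
\]
to produce a $\Lambda_{R_{\infty}}$-linear isomorphism
\[
	eS(N;\Lambda_{R_{\infty}})(\langle\chi\rangle^{-1}\langle a\rangle_N^{-1}) \xrightarrow{\simeq} e^*H^0(\omega)(\langle a\rangle_N^{-1})
\]
carrying $T\in\H$ to $T^*\in\H^*$ and equivariant for the twisted $\Gal(K_{\infty}'/K_0)$-actions supplied by the two theorems. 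To reach the stated form I would then restrict the equivariance to $\Gamma=\Gal(K_{\infty}'/K_0')$: each $\gamma\in\Gamma$ fixes $\mu_N$, so $a(\gamma)=1$ and the character $\langle a\rangle_N$ is trivial on $\Gamma$. Hence the common $\langle a\rangle_N^{-1}$-twist drops out of the $\Gamma$-action on both source and target, and the underlying map is precisely the asserted isomorphism $eS(N;\Lambda_{R_{\infty}})(\langle\chi\rangle^{-1})\simeq e^*H^0(\omega)$, carrying $T\leftrightarrow T^*$ and equivariant for the $\Gamma$-action twisted by $\langle\chi\rangle^{-1}$ on the source against the natural action on the target.
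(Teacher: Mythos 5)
Your proposal is correct and follows essentially the same route as the paper, whose proof consists of the single line ``This follows immediately from Proposition \ref{MFGeometryIsom} and Theorem \ref{OhtaThm}'': pass the finite-level isomorphisms (\ref{OmegasComparisonOrd0}) to the limit, compose with Ohta's isomorphism (\ref{LambdaForms}), and observe that $a(\gamma)=1$ for $\gamma\in\Gamma$ so the common $\langle a\rangle_N^{-1}$-twist disappears upon restriction to $\Gamma$. Your verification that the finite-level isomorphisms are compatible with the traces $\rho_*$ and $\tr_{\id}$ (via agreement on the generic fibre and torsion-freeness of the integral lattices) is exactly the detail the paper leaves implicit, and it is handled correctly.
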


\begin{proof}
	This follows immediately from Proposition \ref{MFGeometryIsom} and Theorem \ref{OhtaThm}.
\end{proof}

\setcounter{section}{0}
\setcounter{equation}{0}
\renewcommand{\theequation}{\thesection.\arabic{equation}}

\appendix
\section*{Appendices}

In the following appendices, 
we recall the technical geometric background used in our constructions.  Much (though not all)
of this material is contained in \cite{CaisDualizing}, \cite{CaisNeron}
and \cite{KM}.

\renewcommand{\thesubsection}{A} 
\numberwithin{theorem}{subsection}  
\numberwithin{equation}{subsection}

\subsection{Dualizing sheaves and cohomology}\label{GD}

We begin by describing a certain modification of the usual de Rham complex for non-smooth curves.  
The hypercohomology of this (two-term) complex is in general much better behaved than algebraic de Rham 
cohomology and will enable us to construct our $\Lambda$-adic de Rham cohomology.
We largely refer to \cite{CaisDualizing}, but remark that our
treatment here is different in some places and better suited to our purposes.

\begin{definition}\label{curvedef}
	A {\em curve} over a scheme $S$ is a morphism $f:X\rightarrow S$ 
	of finite presentation which is a flat local complete 
	intersection\footnote{That is, a {\em syntomic morphism} in the sense of
	Mazur \cite[II, 1.1]{FontaineMessing}.  Here, we use the definition of l.c.i. given 
	in \cite[Exp. \Rmnum{8}, 1.1]{SGA6}.}
	of pure relative dimension 1 with geometrically reduced fibers.
	We will often say that $X$ is a curve over $S$ or that $X$ is 
	a relative $S$-curve when $f$ is clear from context.
\end{definition}

\begin{proposition}\label{curveproperties}
	Let $f:X\rightarrow S$ be a flat morphism of finite type.  The following are equivalent:
	\begin{enumerate}
		\item The morphism $f:X\rightarrow S$ is a curve.\label{fiscrv}
		\item For every $s\in S$, the fiber $f_s:X_s\rightarrow \Spec k(s)$ is a curve.\label{fiberscrv}
		\item For every $x\in X$ with $s=f(x)$, the local ring 
		$\O_{X_s,x}$ is a complete intersection\footnote{That is, the quotient of a regular local ring by 
		a regular sequence.} and $f$ has geometrically reduced fibers of pure dimension 1.\label{localringcrv}
	\end{enumerate}
	Moreover, any base change of a curve is again a curve.
\end{proposition}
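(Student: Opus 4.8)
The plan is to reduce the relative conditions in Definition \ref{curvedef} to conditions on the fibers, and then to observe that each such condition is stable under base change. Recall that, beyond flatness and finite type, being a curve requires: finite presentation, the l.c.i.\ (syntomic) property, pure relative dimension $1$, and geometrically reduced fibers. Over a locally noetherian base (the only case we shall use) a flat morphism of finite type is automatically of finite presentation, so that hypothesis may be set aside; in general it follows from a standard limit argument. Pure relative dimension $1$ and the condition of having geometrically reduced fibers are, by their very formulation, conditions on the fibers $X_s$, so only the l.c.i.\ property requires genuine work.

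For the equivalence (\ref{fiscrv}) $\Leftrightarrow$ (\ref{fiberscrv}), the essential input is the fiberwise criterion for local complete intersection morphisms: for a flat, finitely presented $f\colon X\rightarrow S$, the morphism $f$ is l.c.i.\ if and only if every fiber $X_s\rightarrow \Spec k(s)$ is l.c.i.\ over $k(s)$. I would deduce this from the characterization of syntomic morphisms in \cite[Exp.~\Rmnum{8}]{SGA6}, using flatness to propagate a fiberwise regular-sequence presentation of a local embedding to a neighborhood, i.e.\ from the compatibility of the conormal sheaf (or the cotangent complex) of a local embedding with the base change $\Spec k(s)\rightarrow S$. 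This is the main obstacle and the only step that is not essentially formal. Granting it, the remaining defining properties match up fiberwise tautologically, which yields (\ref{fiscrv}) $\Leftrightarrow$ (\ref{fiberscrv}).

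The equivalence (\ref{fiberscrv}) $\Leftrightarrow$ (\ref{localringcrv}) is then a matter of unwinding what it means for the fiber $X_s$ to be a curve over the \emph{field} $k(s)$. Since a field is regular, a finite-type $k(s)$-scheme is l.c.i.\ over $k(s)$ precisely when each of its local rings $\O_{X_s,x}$ is a complete intersection in the absolute sense, i.e.\ the quotient of a regular local ring by a regular sequence (the relative and absolute notions of complete intersection coincide over a field). Thus ``$X_s$ is a curve over $k(s)$'' is exactly ``$\O_{X_s,x}$ is a complete intersection for all $x$, together with geometric reducedness and pure dimension $1$,'' which is condition (\ref{localringcrv}).

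Finally, for the base-change assertion, let $f\colon X\rightarrow S$ be a curve and $S'\rightarrow S$ arbitrary, with $X':=X\times_S S'\rightarrow S'$. I would verify the defining properties one at a time, each being stable under base change: flatness and finite presentation are preserved; the l.c.i.\ (syntomic) property is preserved; and, since the fibers of $X'\rightarrow S'$ over $s'\in S'$ are $X_s\times_{k(s)}k(s')$ for $s$ the image of $s'$, both pure relative dimension $1$ and geometric reducedness of the fibers are preserved, because dimension is insensitive to the field extension $k(s)\subseteq k(s')$ and geometric reducedness of a $k(s)$-scheme is inherited by its base change to any extension field. The only point meriting care is this last one, namely the standard fact that geometric reducedness is stable under extension of the base field; alternatively it can be read off from the already-established equivalence with (\ref{fiberscrv})--(\ref{localringcrv}), since those fiberwise conditions are manifestly preserved. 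Hence $X'\rightarrow S'$ is again a curve.
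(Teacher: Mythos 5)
Your argument is correct and is essentially the paper's own proof: the paper disposes of the equivalence of (1)--(3) in one stroke by citing \cite[Exp.~\Rmnum{8}, 1.4]{SGA6}, which is precisely your key fiberwise criterion identifying the relative l.c.i.\ (syntomic) notion of \cite[Exp.~\Rmnum{8}, 1.1]{SGA6} with the fiberwise one of \cite[$\mathrm{\Rmnum{4}}_4$, 19.3.6]{EGA} for flat, finitely presented morphisms, and it handles base change by citing \cite[$\mathrm{\Rmnum{4}}_4$, 19.3.9]{EGA}, matching your property-by-property verification. One minor caveat: over a non-noetherian base, flat $+$ finite type $\Rightarrow$ finite presentation is a theorem of Raynaud--Gruson rather than a routine limit argument, though this is immaterial in the noetherian setting in which the paper applies the proposition.
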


\begin{proof}
	Since $f$ is flat and of finite presentation, the definition of local complete 
	intersection that we are using ({\em i.e.} \cite[Exp. \Rmnum{8}, 1.1]{SGA6}) is equivalent
	to the definition given in \cite[$\mathrm{\Rmnum{4}}_4$, 19.3.6]{EGA} by \cite[Exp. \Rmnum{8}, 1.4]{SGA6};
	the equivalence of (\ref{fiscrv})--(\ref{localringcrv}) follows immediately.  
	The final statement of the proposition is an easy consequence of \cite[$\mathrm{\Rmnum{4}}_4$, 19.3.9]{EGA}.    
\end{proof}

\begin{corollary}\label{curvecorollary}
	Let $f:X\rightarrow S$ be a finite type morphism of pure relative dimension $1$.
	\begin{enumerate}
		\item If $f$ is smooth, then it is a curve.\label{smoothcrv}
		\item If $X$ and $S$ are regular and $f$ has geometrically reduced fibers
		then $f$ is a curve.\label{regcrv}
		\item If $f$ is a curve then it is Gorenstein and hence also Cohen-Macaulay.\label{crvCM}
	\end{enumerate}
\end{corollary}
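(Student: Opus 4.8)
The plan is to prove all three assertions by reducing, in each case, to the pointwise characterization of Proposition \ref{curveproperties} (\ref{localringcrv}): a flat, finite type morphism of pure relative dimension $1$ is a curve exactly when its fibres are geometrically reduced and each local ring $\O_{X_s,x}$ of a fibre is a complete intersection. Since flatness and finite type are already part of the hypothesis (\ref{fiscrv}) of that proposition, the work in each part is to check the two fibre conditions, possibly after first verifying flatness.

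For (\ref{smoothcrv}) I would simply note that a smooth morphism is flat and of finite presentation, so only the fibre conditions remain. The fibres of a smooth morphism are smooth over a field, hence geometrically regular; a regular local ring is reduced and is a complete intersection, so the fibres are geometrically reduced with complete-intersection local rings. As smoothness of pure relative dimension $1$ forces pure fibre dimension $1$, Proposition \ref{curveproperties} (\ref{localringcrv}) applies and $f$ is a curve.

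The substantive case is (\ref{regcrv}), where geometric reducedness and pure dimension $1$ of the fibres are given but flatness and the complete-intersection property of the fibre rings must be established. For flatness I would invoke miracle flatness \cite[Theorem 23.1]{matsumura}: $X$ regular is Cohen--Macaulay, $S$ is regular, and for $x\in X$ over $s=f(x)$ the dimension equality $\dim\O_{X,x}=\dim\O_{S,s}+\dim\O_{X_s,x}$ holds (this is the point where the hypothesis of pure relative dimension $1$, together with the fibre-dimension theorem for the finite type morphism $f$ over the regular, hence catenary, base $S$ and the regularity of $X$, is used), so $\O_{S,s}\to\O_{X,x}$ is flat. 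For the complete-intersection property I would argue locally: since $\O_{S,s}$ is regular, its maximal ideal is generated by a regular sequence $t_1,\dots,t_d$, and flatness of $\O_{X,x}$ over $\O_{S,s}$ guarantees that the images of the $t_i$ remain a regular sequence in the regular local ring $\O_{X,x}$; thus $\O_{X_s,x}=\O_{X,x}/(t_1,\dots,t_d)$ is a regular local ring modulo a regular sequence, that is, a complete intersection. Proposition \ref{curveproperties} (\ref{localringcrv}) then shows $f$ is a curve.

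For (\ref{crvCM}), $f$ being a curve is flat by Definition \ref{curvedef}, and Proposition \ref{curveproperties} (\ref{localringcrv}) exhibits each fibre local ring $\O_{X_s,x}$ as a complete intersection, hence Gorenstein; so the fibres of $f$ are Gorenstein schemes, making $f$ a flat morphism with Gorenstein fibres, i.e.\ a Gorenstein morphism. As Gorenstein local rings are Cohen--Macaulay, the fibres are in particular Cohen--Macaulay and $f$ is a Cohen--Macaulay morphism. The main obstacle is the flatness step in (\ref{regcrv}): one must ensure the dimension equality required by miracle flatness holds at \emph{every} point of $X$, not merely at closed points of fibres, and this is precisely where the combination of regularity of $X$ and $S$ with the pure relative dimension $1$ hypothesis is indispensable; the remaining verifications are formal consequences of standard commutative algebra.
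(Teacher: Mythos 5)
Your treatments of (\ref{smoothcrv}) and (\ref{crvCM}) are correct and essentially agree with the paper, which dismisses (\ref{smoothcrv}) as obvious and deduces (\ref{crvCM}) from Proposition \ref{curveproperties} (\ref{localringcrv}) together with the fact that a complete intersection local ring is Gorenstein and hence Cohen--Macaulay (Theorems 18.1 and 21.3 of \cite{matsumura}). For (\ref{regcrv}) your route differs from the paper's: the paper never passes to the fibres, but instead verifies the l.c.i.\ condition of \cite[Exp.\ \Rmnum{8}, 1.1]{SGA6} directly on the total space --- locally factor $f$ as a closed immersion $X\hookrightarrow \Aff^n_S$ followed by the projection; $\Aff^n_S$ is regular since $S$ is, and by \cite[6.3.18]{LiuBook} a closed subscheme of a regular scheme is regular precisely when it is locally cut out by a regular sequence, so regularity of $X$ makes the immersion regular and $f$ an l.c.i. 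Your alternative --- checking the fibrewise complete-intersection criterion of Proposition \ref{curveproperties} (\ref{localringcrv}) by cutting $\O_{X,x}$ with a regular system of parameters of $\O_{S,s}$, which remains a regular sequence by flatness --- is correct and arguably more self-contained, \emph{provided flatness is already in hand}.

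The genuine gap is your flatness step. The dimension equality $\dim\O_{X,x}=\dim\O_{S,s}+\dim\O_{X_s,x}$ required for miracle flatness is \emph{not} a consequence of the stated hypotheses, and no appeal to catenarity of $S$ or the fibre-dimension theorem will produce it: take $S=\Spec(V)$ for a discrete valuation ring $V$ with residue field $k$, and let $f$ be the composite $X=\Aff^1_k\rightarrow\Spec(k)\rightarrow S$. Then $X$ and $S$ are regular, $f$ is of finite type, and the unique nonempty fibre is $\Aff^1_k$ itself, pure of dimension $1$ and geometrically reduced; yet at any closed point $x$ over the closed point $s$ one has $\dim\O_{X,x}=1<2=\dim\O_{S,s}+\dim\O_{X_s,x}$, and $f$ is not flat, hence not a curve. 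So read literally, assertion (\ref{regcrv}) presupposes flatness; this is how it functions in the paper, where Proposition \ref{curveproperties} carries a standing flatness hypothesis and flatness is established separately in every application (e.g.\ for $\X_r$ via Proposition \ref{XrCptRepresentability}), so that the paper's proof only needs to supply the l.c.i.\ property. The correct repair of your argument is therefore not to prove flatness but to assume it: with $f$ flat, the dimension equality you want holds automatically by \cite[Theorem 15.1]{matsumura}, the miracle-flatness detour becomes superfluous, and the remainder of your fibrewise argument goes through verbatim.
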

			
\begin{proof}
	The assertion (\ref{smoothcrv}) is obvious, and (\ref{regcrv}) follows from the 
	fact that a closed subscheme of a regular scheme
	is regular if and only if it is defined (locally) by a regular sequence; {\em cf.} \cite[6.3.18]{LiuBook}.
	Finally, (\ref{crvCM})
	follows from Proposition \ref{curveproperties} (\ref{localringcrv}) and 
	the fact that every local ring
	that is a complete intersection is Gorenstein and hence Cohen-Macaulay  
	(see, e.g., Theorems 18.1 and 21.3 of \cite{matsumura}).  
\end{proof}

Fix a relative curve $f:X\rightarrow S$,
and assume that $S$ is noetherian.    
Since $f$ is CM by Corollary \ref{curvecorollary} (\ref{crvCM}), 
thanks to Theorem 3.5.1 and the discussion immediately following 3.5.2 in \cite{GDBC}, 
the {\em relative dualizing sheaf} for $X$ over $S$ (or for $f$),
denoted $\omega_{X/S}$ or $\omega_f$, exists.
As the fibers of $f$ are Gorenstein, $\omega_{X/S}$ is an invertible $\O_X$-module by \cite[V, Proposition 9.3, Theorem 9.1]{RD}. The formation of $\omega_{X/S}$ is moreover compatible
with arbitrary base change $S'\rightarrow S$ with $S'$ noetherian,
and \'etale localization on $X$ \cite[Theorem 3.6.1]{GDBC}.  In our situation,
one moreover has a canonical $\O_X$-module homomorphism:
	\begin{equation}
		\xymatrix{
			{c_{X/S}: \Omega^1_{X/S}} \ar[r] & {\omega_{X/S}}
			}\label{cmap}
	\end{equation}
constructed as in the proof of \cite[Th\'eor\`eme \Rmnum{3}.1]{elzeinapp}
(see also \cite[6.4.13]{LiuBook}).

\begin{proposition}\label{canmap}
	Let $X\rightarrow S$ be a relative curve. 
	The canonical map $(\ref{cmap})$ is of
	formation compatible with any base change $S'\rightarrow S$ with
	$S'$ is noetherian.  
 	 Moreover, the restriction of $c_{X/S}$ to the 
	 $S$-smooth locus $X^{\sm}$ of $f$ in $X$ is an isomorphism. 
\end{proposition}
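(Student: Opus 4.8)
The plan is to dispatch the two claims separately, reducing the base change assertion to a local computation that exploits the syntomic hypothesis built into Definition \ref{curvedef}; the organizing observation is that all of the geometric inputs to the construction of $c_{X/S}$ already commute with base change, so the real work is to check that the construction assembles them in a base-change-compatible way. First I would isolate the two formal facts: $\Omega^1_{X/S}$ commutes with arbitrary base change, being a construction of K\"ahler differentials, while $\omega_{X/S}$ commutes with noetherian base change by \cite[Theorem 3.6.1]{GDBC}, as recalled above. Thus for noetherian $S'\rightarrow S$, writing $X':=X\times_S S'$, the source and target of $c_{X/S}\otimes_{\O_S}\O_{S'}$ are canonically $\Omega^1_{X'/S'}$ and $\omega_{X'/S'}$, and the first assertion is exactly that this base-changed map is identified with $c_{X'/S'}$.

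To prove this, I would work Zariski-locally on $X$ and present $X$ as a regular immersion $X\hookrightarrow P$ into a smooth $S$-scheme $P$ cut out by a relatively regular sequence, which is possible because $f$ is syntomic. The construction of $c_{X/S}$ in \cite[Th\'eor\`eme \Rmnum{3}.1]{elzeinapp} (see also \cite[6.4.13]{LiuBook}) is assembled from the conormal sheaf $\mathcal{I}/\mathcal{I}^2$, the restriction of $\wedge^{\bullet}\Omega^1_{P/S}$ to $X$, and the adjunction identification of $\omega_{X/S}$, each of whose formations commutes with base change; tracking these through the construction then yields $c_{X/S}\otimes_{\O_S}\O_{S'}=c_{X'/S'}$. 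I expect the main obstacle to be precisely that the chosen local presentation must be preserved under the possibly non-flat base change $S'\rightarrow S$: here the flat local complete intersection (syntomic) hypothesis is essential, since it guarantees that $X'\hookrightarrow P':=P\times_S S'$ remains a regular immersion cut out by the base change of the same regular sequence, so that none of the constituent sheaves or maps degenerate.

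Finally, for the isomorphism on the smooth locus, I would note that over $X^{\sm}$ the morphism $f$ is smooth of pure relative dimension $1$, so $\Omega^1_{X^{\sm}/S}$ is invertible and the relative dualizing sheaf is canonically its top exterior power, namely $\Omega^1_{X^{\sm}/S}$ itself. By construction the restriction of $c_{X/S}$ to $X^{\sm}$ is exactly this canonical identification: in the local presentation above, over the smooth locus the equations cutting out $X$ extend to part of a relative coordinate system on $P$, whence the adjunction isomorphism reduces to the standard one and $c_{X/S}|_{X^{\sm}}$ is an isomorphism. This establishes the second assertion and completes the plan.
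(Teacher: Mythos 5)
Your proposal is correct, but note that the paper itself gives no argument here: its ``proof'' of Proposition \ref{canmap} consists entirely of the citation to \cite{elzeinapp} (Th\'eor\`eme \Rmnum{3}.1) and \cite[\S6.4.2]{LiuBook}. What you have written is essentially a faithful unpacking of what those references do, so the comparison is between a deferred proof and a self-contained one rather than between two genuinely different strategies. Your unpacking is sound on the two points where care is actually needed. First, the persistence of the local presentation under a possibly non-flat base change $S'\rightarrow S$: since $f$ is flat with fibers that are local complete intersections, a local regular immersion $X\hookrightarrow P$ into a smooth $S$-scheme is transversally regular relative to $S$ (the cutting sequence is regular on fibers), and by the local criterion of flatness ({\em cf.} \cite[$\mathrm{\Rmnum{4}}_4$, 19.3.9]{EGA}, which the paper invokes for Proposition \ref{curveproperties}) it remains a regular immersion after any base change; this is exactly the role of the syntomic hypothesis, and you identified it as the crux. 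With that in hand, the conormal sheaf, $\Omega^1_{P/S}\vert_X$, and the fundamental local (adjunction) isomorphism $\omega_{X/S}\simeq \det(\mathcal{I}/\mathcal{I}^2)^{\vee}\otimes\Omega^n_{P/S}\vert_X$ all commute with noetherian base change (the last by the same duality-theoretic input \cite[Theorem 3.6.1]{GDBC} the paper quotes just before the proposition), so the assembled map base-changes as claimed. Second, your smooth-locus computation — over $X^{\sm}$ the equations extend to part of a relative coordinate system, so $c_{X/S}$ carries $\mathrm{d}t$ to a generator — is the standard verification and matches \cite[6.4.13]{LiuBook}. The one thing your write-up silently presupposes is that $c_{X/S}$ is well defined independently of the chosen embedding and glues globally; that is the actual content of El Zein's theorem, and since you take the construction as given rather than reconstructing it, your argument is complete as a verification of the two asserted properties. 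In short: mathematically the same route as the sources the paper cites, with the added value that it makes explicit where flatness plus the l.c.i. fiber condition is used, which the paper's bare citation leaves implicit.
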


\begin{proof}
	See \cite{elzeinapp}, especially Th\'eor\`eme \Rmnum{3}.1 and {\em cf.} \cite[\S6.4.2]{LiuBook}.
\end{proof}

\begin{definition}\label{complexregdiff}
	We define the two-term $\O_S$-linear complex (of $\O_S$-flat coherent $\O_X$-modules) concentrated in degrees 0 and 1
	\begin{equation}
		\xymatrix{
			{\omega_f^{\bullet}=\omega_{X/S}^{\bullet}:=\O_X} \ar[r]^-{d_S} & {\omega_{X/S}}
			}
	\end{equation}
	where $d_S$ is the composite of the map (\ref{cmap}) and the universal 
	$\O_S$-derivation $\O_X\rightarrow \Omega^1_{X/S}$.  We view $\omega_{X/S}^{\bullet}$
	as a filtered complex via ``{\em la filtration b\^ete}" \cite{DeligneHodge2},
	which provides an exact triangle
\begin{equation}
	\xymatrix{
		{\omega_{X/S}[-1]} \ar[r] & {\omega^{\bullet}_{X/S}} \ar[r] & {\O_X}
		}\label{HodgeFilComplex}
\end{equation}
	in the derived category that we call the {\em Hodge Filtration} of $\omega^{\bullet}_{X/S}$.
\end{definition}

Since $c_{X/S}$ is an isomorphism over the $S$-smooth locus $X^{\sm}$ of $f$ in $X$, the complex $\omega^{\bullet}_{X/S}$ coincides with
the usual de Rham complex over $X^{\sm}$.  Moreover, it follows immediately from 
Proposition \ref{canmap} that the formation of $\omega_{X/S}^{\bullet}$ is compatible with 
any base change $S'\rightarrow S$ to a noetherian scheme $S'$.

\begin{definition}
	Let $f:X\rightarrow S$ be a relative curve over $S$.  For each nonnegative integer $i$, we define
	\begin{equation*}
		\mathscr{H}^i(X/S):=\R^i f_*\omega_{X/S}^{\bullet}.
	\end{equation*}
	When $S=\Spec R$ is affine, we will write $H^i(X/R)$ for the global sections of the $\O_S$-module 
	$\mathscr{H}^i(X/S)$.
\end{definition}

The complex $\omega_{X/S}^{\bullet}$ and its filtration (\ref{HodgeFilComplex})
behave extremely well with respect to duality:

\begin{proposition}\label{GDuality}
	Let $f:X\rightarrow S$ be a curve over a noetherian scheme $S$ and assume that
	$S$ is Gorenstein, excellent, and of finite Krull dimension.
	  There is a canonical
	quasi-isomorphism 
	\begin{equation}
		\omega_{X/S}^{\bullet} \simeq \R\scrHom_X^{\bullet}(\omega_{X/S}^{\bullet},\omega_{X/S}[-1])
		\label{DualityIsom}
	\end{equation}
	which is compatible with the filtrations on both sides induced by $(\ref{HodgeFilComplex})$.
	In particular:
	\begin{enumerate}
		\item If $f$ is proper then there is a natural quasi-isomorphism
		\begin{equation*}
			\R f_*\omega^{\bullet}_{X/S}\simeq \R\scrHom_S^{\bullet}(\R f_*\omega^{\bullet}_{X/S},\O_S)[-2]			
		\end{equation*}
		which is compatible with the filtrations induced by $(\ref{HodgeFilComplex})$.\label{DualityOnS}
		\item If $\rho:Y\rightarrow X$ is any finite morphism of curves over $S$, 
		then there is a canonical quasi-isomorphism
	\begin{equation*}
		\R\rho_*\omega_{Y/S}^{\bullet} \simeq \R\scrHom_X^{\bullet}
		(\R\rho_*\omega_{Y/S}^{\bullet},\omega_{X/S}[-1]).
	\end{equation*}
		that is compatible with filtrations.\label{DualityRho}
		
		\end{enumerate}
\end{proposition}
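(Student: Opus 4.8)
The plan is to establish the quasi-isomorphism (\ref{DualityIsom}) directly at the level of complexes, exploiting the crucial fact that both terms of $\omega_{X/S}^{\bullet}$ are \emph{invertible} $\O_X$-modules, and then to deduce the two consequences by feeding (\ref{DualityIsom}) into Grothendieck--Serre duality for $f$ and for $\rho$, respectively. First I would treat (\ref{DualityIsom}). Since $\O_X$ and $\omega_{X/S}$ are each invertible, $\omega_{X/S}^{\bullet}$ is a bounded complex of locally free $\O_X$-modules, so $\R\scrHom_X(\omega_{X/S}^{\bullet},\omega_{X/S}[-1])$ is computed by the naive $\scrHom$-complex, with no higher $\scrExt$ contributions. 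A term-by-term computation identifies this dual complex, concentrated in degrees $0$ and $1$, with $\scrHom(\omega_{X/S},\omega_{X/S})=\O_X$ in degree $0$ and $\scrHom(\O_X,\omega_{X/S})=\omega_{X/S}$ in degree $1$, the connecting differential being $-d_S$. The evaluation/biduality pairing for the invertible sheaf $\omega_{X/S}$ thus furnishes a canonical identification of this complex with $\omega_{X/S}^{\bullet}$, giving (\ref{DualityIsom}); I would note that this naive duality coincides with the one produced by Grothendieck's formalism via the trace for $\omega_{X/S}$ as dualizing sheaf, so that it is functorial in the sense needed below.

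For the filtration compatibility I would apply the contravariant functor $\R\scrHom_X(-,\omega_{X/S}[-1])$ to the Hodge filtration triangle (\ref{HodgeFilComplex}), namely $\omega_{X/S}[-1]\to\omega_{X/S}^{\bullet}\to\O_X$. Using $\R\scrHom_X(\O_X,\omega_{X/S}[-1])\simeq\omega_{X/S}[-1]$ and $\R\scrHom_X(\omega_{X/S}[-1],\omega_{X/S}[-1])\simeq\O_X$ (again because $\omega_{X/S}$ is invertible), the dualized triangle reads $\omega_{X/S}[-1]\to\R\scrHom_X(\omega_{X/S}^{\bullet},\omega_{X/S}[-1])\to\O_X$, which is carried by (\ref{DualityIsom}) precisely to (\ref{HodgeFilComplex}). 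This is the asserted compatibility with the b\^ete filtration.

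To obtain the consequences I would invoke Grothendieck--Serre duality; here the hypotheses that $S$ is Gorenstein, excellent and of finite Krull dimension guarantee that $\O_S$ is a dualizing complex and that the formalism of \cite{GDBC} applies. For (\ref{DualityOnS}), since $f$ is a proper Cohen--Macaulay (Gorenstein) morphism of pure relative dimension $1$, one has $f^{!}\O_S\simeq\omega_{X/S}[1]$, whence $\omega_{X/S}[-1]\simeq f^{!}\O_S[-2]$; applying $\R f_*$ to (\ref{DualityIsom}) and then the duality isomorphism $\R f_*\R\scrHom_X(-,f^{!}\O_S)\simeq\R\scrHom_S(\R f_*(-),\O_S)$ yields $\R f_*\omega_{X/S}^{\bullet}\simeq\R\scrHom_S(\R f_*\omega_{X/S}^{\bullet},\O_S)[-2]$, compatibly with (\ref{HodgeFilComplex}) by the previous paragraph. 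For (\ref{DualityRho}), the compatibility of relative dualizing sheaves under composition gives $\rho^{!}\omega_{X/S}\simeq\omega_{Y/S}$ for the finite morphism $\rho$, hence $\rho^{!}(\omega_{X/S}[-1])\simeq\omega_{Y/S}[-1]$; applying $\R\rho_*$ to (\ref{DualityIsom}) on $Y$ and invoking duality for the finite morphism $\rho$, $\R\rho_*\R\scrHom_Y(-,\rho^{!}(\omega_{X/S}[-1]))\simeq\R\scrHom_X(\R\rho_*(-),\omega_{X/S}[-1])$, produces the desired quasi-isomorphism, again compatibly with filtrations.

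The main obstacle I anticipate is not the formal computation of (\ref{DualityIsom}) but rather pinning down its canonicity so that it agrees with the Grothendieck--Serre duality isomorphism, together with verifying the two identifications $f^{!}\O_S\simeq\omega_{X/S}[1]$ and $\rho^{!}\omega_{X/S}\simeq\omega_{Y/S}$ and the precise compatibility of the resulting duality isomorphisms with the b\^ete filtration (\ref{HodgeFilComplex}). These require a careful appeal to the normalization of traces and the base-change compatibilities recorded in \cite{GDBC} and \cite{CaisDualizing}, rather than any new geometric input.
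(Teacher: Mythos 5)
Your proposal is correct and takes essentially the same route as the paper: the paper's proof of (\ref{DualityIsom}) delegates to the arguments of Lemmas 4.3 and 5.4 of \cite{CaisDualizing}, which amount to exactly your term-by-term computation of the Hom-complex of the two-term complex of invertible sheaves (with the dualizing-complex property of $\omega_{X/S}$, guaranteed by the Gorenstein/excellent/finite-dimension hypotheses on $S$, supplying the canonicity you flag as the main obstacle). The consequences (\ref{DualityOnS}) and (\ref{DualityRho}) are then obtained in the paper precisely as you propose, by applying $\R f_*$ (respectively $\R\rho_*$) to (\ref{DualityIsom}) and invoking Grothendieck duality \cite[Theorem 3.4.4]{GDBC} via $f^{!}\O_S\simeq\omega_{X/S}[1]$ and $\rho^{!}\omega_{X/S}\simeq\omega_{Y/S}$.
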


\begin{proof}
	Our hypotheses on $S$  ensure that $\O_S$ is a dualizing complex for $S$ \cite[V,\S10]{RD},
	and it then follows from \cite[\Rmnum{5}, \S8]{RD} that
	the sheaf $\omega_{X/S}$ (thought of as a complex concentrated in some degree) is a dualizing complex 
	for the abstract scheme $X$.   
	To prove the first claim, we may then argue as in the proofs
	of Lemmas 4.3 and 5.4 of \cite{CaisDualizing}, noting that although
	$S$ is assumed to be the spectrum of a discrete valuation ring and the definition of curve in that paper 	
	differs somewhat from the definition here, the arguments themselves apply {\em verbatim} in our 	
	context.  The assertion (\ref{DualityOnS}) (respectively (\ref{DualityRho})) follows from this by 
	applying $\R f_*$ (respectively $\R\rho_*$) to both sides of (\ref{DualityIsom}) and appealing 
	to Grothendieck duality
	\cite[Theorem 3.4.4]{GDBC} for the proper map $f$ (respectively $\rho$); see the proofs
	of Lemma 5.4 and Proposition 5.8 in \cite{CaisDualizing} for details.
\end{proof}

In our applications, we need to understand the cohomology $H^i(X/S)$ for a proper
curve $X\rightarrow S$ when $S$ is either the spectrum of a discrete valuation
ring $R$ of mixed characteristic $(0,p)$ or the spectrum of a perfect field.
We now examine each of these situations in more detail.

First suppose that $S:=\Spec(R)$ is the spectrum of a discrete valuation ring $R$ having field of fractions $K$ of characteristic zero and  
residue field $k$ of characteristic $p>0$, and fix a normal curve $f:X\rightarrow S$ with smooth and geometrically connected
generic fiber $X_K$.   This situation is studied extensively
in \cite{CaisDualizing}, and we content ourselves with a summary 
of the results we will need.
To begin, we recall the following ``concrete" description of the relative dualizing sheaf:

\begin{lemma}\label{ConcreteDualizingDescription}
	Let $i:U\hookrightarrow X$ be any Zariski open subscheme of $X$ 
	whose complement consists of finitely many points of codimension $2$
	$($necessarily in the closed fiber of $X$$)$.  Then the canonical map
	\begin{equation*}
		\xymatrix{
			{\omega_{X/S}} \ar[r] & {i_*i^*\omega_{X/S} \simeq i_*\omega_{U/S}}
			}
	\end{equation*}
	is an isomorphism.  In particular, $\omega_{X/S}\simeq i_*\Omega^1_{U/S}$
	for any Zariski open subscheme $i:U\hookrightarrow X^{\sm}$ whose
	complement in $X$ consists of finitely many points of codimension two.	
\end{lemma}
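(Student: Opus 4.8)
The plan is to recognize this as a Hartogs-type extension statement and to reduce it to a local depth (equivalently, local cohomology) criterion. The essential inputs are already available: $\omega_{X/S}$ is an invertible $\O_X$-module, since the fibers of $f$ are Gorenstein, and $X$ is Cohen--Macaulay by Corollary \ref{curvecorollary}~(\ref{crvCM}). I would invoke the standard criterion (see, e.g., \cite[$\mathrm{\Rmnum{4}}_2$, 5.10.5]{EGA}): for a coherent sheaf $\mathcal{F}$ on the locally noetherian scheme $X$, with $Z:=X\setminus U$ closed and $i:U\hookrightarrow X$ the inclusion, the canonical map $\mathcal{F}\to i_*i^*\mathcal{F}$ is an isomorphism as soon as $\depth_{\O_{X,z}}\mathcal{F}_z\ge 2$ for every $z\in Z$. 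Everything then reduces to verifying this depth bound at each point of the complement.

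First I would pin down the geometry of $Z$. Since $X_K$ is smooth of pure dimension $1$ over $K$, its points have codimension $0$ or $1$ in $X$ (the generic point of $X$, and the closed points of the smooth curve $X_K$, whose local rings agree with those of the open subscheme $X_K\subseteq X$); hence any point of codimension $2$ necessarily lies in the special fiber and is a closed point of $X$, which justifies the parenthetical remark in the statement. At such a point $z$ one has $\dim\O_{X,z}=2$ by hypothesis. Because $X$ is Cohen--Macaulay, $\depth\O_{X,z}=\dim\O_{X,z}=2$; and as $\omega_{X/S}$ is invertible, its stalk is free of rank one, so $\depth_{\O_{X,z}}\omega_{X/S,z}=\depth\O_{X,z}=2$. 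This supplies exactly the bound $\depth_{\O_{X,z}}\omega_{X/S,z}\ge 2$ required above, yielding the isomorphism $\omega_{X/S}\xrightarrow{\ \sim\ } i_*i^*\omega_{X/S}$; the identification $i^*\omega_{X/S}\simeq\omega_{U/S}$ is just the compatibility of the dualizing sheaf with open-immersion (hence \'etale) localization already recorded in the text.

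For the final assertion I would specialize to $U\subseteq X^{\sm}$. There the canonical map $c_{X/S}\colon \Omega^1_{U/S}\to \omega_{U/S}$ is an isomorphism by Proposition \ref{canmap}, so $i_*\omega_{U/S}\simeq i_*\Omega^1_{U/S}$; combining this with the first part gives $\omega_{X/S}\simeq i_*\Omega^1_{U/S}$, as claimed.

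The main obstacle --- such as it is --- is not any single difficult step but the bookkeeping needed to pin the depth inequality to exactly $2$: one must be sure that the codimension-$2$ points genuinely have two-dimensional local rings (this is where the relative-dimension-one-over-a-DVR hypothesis and the interpretation of ``codimension $2$'' enter) and that passing from $\O_X$ to the invertible sheaf $\omega_{X/S}$ leaves the depth unchanged. Once the Cohen--Macaulay property is used to replace depth by dimension, the argument is immediate, and no delicate estimate or auxiliary construction is required.
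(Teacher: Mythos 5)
Your proposal is correct, and it is in substance the same argument the paper relies on: the paper disposes of the first assertion by citing \cite[Lemma 3.2]{CaisNeron}, and the content of that lemma is precisely the Hartogs-type extension you prove via the depth criterion (depth $\ge 2$ along $Z$ implies $\mathcal{F}\xrightarrow{\ \sim\ } i_*i^*\mathcal{F}$), applied to the invertible sheaf $\omega_{X/S}$ over the codimension-$2$ complement; your treatment of the second assertion via Proposition \ref{canmap} matches the paper's one-line remark that $X^{\sm}$ contains the generic fiber and the generic points of the closed fiber. One small bookkeeping point: Corollary \ref{curvecorollary} (\ref{crvCM}) asserts that the \emph{morphism} $f$ is Gorenstein, hence Cohen--Macaulay --- a statement about the fibers --- not that the scheme $X$ is Cohen--Macaulay. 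To conclude $\depth\O_{X,z}=2$ you should either add that $f$ is flat with CM fibers over the regular base $\Spec(R)$, so $X$ is CM by the standard ascent result for flat local homomorphisms, or, more economically, use the standing hypothesis of this section that $X$ is \emph{normal}: normality gives Serre's condition $(S_2)$, which already yields $\depth\O_{X,z}=\min(2,\dim\O_{X,z})=2$ at each codimension-$2$ point, and then $\depth_{\O_{X,z}}\omega_{X/S,z}=2$ since the stalk is free of rank one. With that repair (or the normality shortcut) your argument is complete and, if anything, has the virtue of making the paper self-contained where it outsources the key step to \cite{CaisNeron}.
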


\begin{proof}
	The first assertion is \cite[Lemma 3.2]{CaisNeron}. The second
	follows from this, since $X^{\sm}$ contains the generic fiber
	and the generic points of the closed fiber by our definition of curve.
\end{proof}

\begin{proposition}\label{ComplexFunctoriality}
	Let $\rho:Y\rightarrow X$ be a finite morphism of normal $S$-curves.
	\begin{enumerate}
		\item Attached to $\rho$ are natural
		pullback and trace morphisms of complexes 
		\begin{equation*}
			\xymatrix{
				{\rho^*: \omega^{\bullet}_{X/S}} \ar[r] & {\rho_*\omega^{\bullet}_{Y/S}}
			}
			\quad\text{and}\quad
						\xymatrix{
				{\rho_*: \rho_*\omega^{\bullet}_{Y/S}} \ar[r] & {\omega^{\bullet}_{X/S}}
			}
		\end{equation*}
		which are of formation compatible with \'etale localization on $X$ and 
		flat base change on $S$ and
		are interchanged by applying $\R\scrHom_X^{\bullet}
		(\cdot,\omega_{X/S}[-1])$ via the duality of Proposition $\ref{GDuality}$ $(\ref{DualityRho})$.
		\label{FunctorialityProps1}
			
			\item Let $U$ be any open subscheme of $X^{\sm}$
			with the property that $V:=\rho^{-1}(U)$ is contained in $Y^{\sm}$.	
			Then the induced pullback and trace
			mappings $\omega_{U/S}^{\bullet}\leftrightarrows \rho_*\omega^{\bullet}_{V/S}$
			coincide with the usual pullback and trace mappings on de Rham complexes
			attached to the finite and flat map $\rho:V\rightarrow U$.			
\label{FunctorialityProps2}
			
	\end{enumerate}
\end{proposition}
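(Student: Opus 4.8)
The plan is to construct the pullback and trace termwise on the two-term complex $\omega^{\bullet}_{X/S}=[\O_X\xrightarrow{d_S}\omega_{X/S}]$, and then to verify the three asserted properties (morphism of complexes, compatibility with localization and base change, and the duality-interchange) by reducing each to the $S$-smooth locus, where $\omega$ is simply $\Omega^1$ and the statements are classical. Since $X$ and $Y$ are normal $S$-curves, Corollary \ref{curvecorollary} and Proposition \ref{curveproperties} exhibit $\rho$ as a finite morphism of Cohen--Macaulay schemes, and $\rho$ is flat over the smooth loci (a finite morphism between regular schemes of the same dimension). Thus over suitable opens the ordinary de Rham pullback on differentials, the finite-flat trace $\rho_*\O_Y\to\O_X$ on functions, and Grothendieck's trace $\Tr_{\rho}:\rho_*\omega_{Y/S}\to\omega_{X/S}$ (arising from $\omega_{Y/S}\simeq\rho^{!}\omega_{X/S}$ for the finite map $\rho$, and needing no flatness) are all available. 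On degree-zero terms I would take $\rho^{*}$ to be the structural map $\O_X\to\rho_*\O_Y$ and the trace to be the finite-flat trace; on degree-one terms I would take the trace to be $\Tr_{\rho}$ and would build $\rho^{*}$ on $\omega$ via the concrete description of the dualizing sheaf.

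For the pullback on $\omega$, set $U:=X^{\sm}\smallsetminus\rho(Y\smallsetminus Y^{\sm})$; as $\rho$ is finite and the non-smooth loci lie in the closed fibers, $U$ is open with complement of codimension two, and $V:=\rho^{-1}(U)\subseteq Y^{\sm}$. Over $U$ and $V$ one has $\omega_{X/S}|_U=\Omega^1_{U/S}$ and $\omega_{Y/S}|_V=\Omega^1_{V/S}$ by Proposition \ref{canmap}, so the usual de Rham pullback $\Omega^1_{U/S}\to(\rho|_V)_*\Omega^1_{V/S}$ is defined. Writing $i:U\hookrightarrow X$ and $j:V\hookrightarrow Y$, Lemma \ref{ConcreteDualizingDescription} gives $\omega_{X/S}\simeq i_*\Omega^1_{U/S}$ and, since $\rho\circ j=i\circ(\rho|_V)$, also $\rho_*\omega_{Y/S}\simeq i_*(\rho|_V)_*\Omega^1_{V/S}$; applying $i_*$ to the de Rham pullback yields the map $\rho^{*}:\omega_{X/S}\to\rho_*\omega_{Y/S}$, which by construction recovers the classical de Rham pullback over the smooth locus (this is the pullback half of \ref{FunctorialityProps2}). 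The degree-zero trace is defined the same way, on $U$ and then extended, using that normality of $X$ gives $\O_X=i_*\O_U$.

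The key bookkeeping tool is that, by the adjunction $\Hom_X(\calF,i_*\Omega^1_{U/S})=\Hom_U(i^{*}\calF,\Omega^1_{U/S})$ furnished by Lemma \ref{ConcreteDualizingDescription}, any identity of maps with target $\omega_{X/S}$ (and, using $\O_X=i_*\O_U$, any identity with target $\O_X$) may be checked after restriction to $U$. This reduces to the smooth locus both the verification that $\rho^{*}$ and the trace commute with $d_S$ — there it is the standard relations $d\circ\tr=\tr\circ d$ for finite flat maps and the compatibility of de Rham pullback with $d$ — and the identification in \ref{FunctorialityProps2} of the trace with the classical de Rham trace, since over the étale locus $\Tr_{\rho}$ restricts to the ordinary trace on differentials (the same fact used in \S\ref{CarterOp} to describe the Cartier operator as induced by Grothendieck's trace). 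Compatibility with étale localization on $X$ and flat base change on $S$ then follows because the formation of $\omega_{X/S}$, of the canonical map $c_{X/S}$ (Proposition \ref{canmap}), and of Grothendieck's trace are all compatible with these operations, as is the formation of $U$.

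The main obstacle is the duality-interchange statement in \ref{FunctorialityProps1}. I would verify that the trace constructed above coincides with the image of $\rho^{*}$ under $\R\scrHom_X^{\bullet}(\cdot,\omega_{X/S}[-1])$, after using the single-object duality $(\ref{DualityIsom})$ for $\omega^{\bullet}_{X/S}$ and the duality $\ref{DualityRho}$ for $\rho_*\omega^{\bullet}_{Y/S}$; the reverse interchange then follows from the involutivity of these dualities (biduality for the dualizing complex $\omega_{X/S}$). The content is that this dual of $\rho^{*}$ agrees with the directly-defined trace, and since the duality isomorphisms of Proposition \ref{GDuality} are compatible with restriction to the open $U$, where they specialize to the classical Serre/residue duality of the smooth curves $U$ and $V$, this reduces to the classical adjointness of de Rham pullback and trace under the residue pairing; the codimension-two adjunction above then upgrades the identity from $U$ to all of $X$. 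Throughout, I would lean on the remark accompanying Proposition \ref{GDuality} that the arguments of \cite{CaisDualizing} (Lemmas 4.3, 5.4 and Proposition 5.8) apply verbatim in the present, slightly more general, notion of curve.
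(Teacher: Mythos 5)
Your proposal is correct and takes essentially the same route as the paper, whose proof of (\ref{FunctorialityProps1}) simply defers to Propositions 4.5 and 5.5 of \cite{CaisDualizing}: there, too, the maps are built termwise by restricting to a large open subset of the smooth locus where $c_{X/S}$ identifies $\omega_{X/S}$ with $\Omega^1_{X/S}$, extending across the codimension-two complement via Lemma \ref{ConcreteDualizingDescription} and normality, defining the trace through Grothendieck duality for the finite map $\rho$, and verifying the identities (commutation with $d_S$, duality-interchange) on a schematically dense open where they are classical. The only cosmetic difference is in part (\ref{FunctorialityProps2}), where the paper uses $S$-flatness to reduce to the generic fiber $U=X_K$, $V=Y_K$, while you reduce to the \'etale locus and invoke torsion-freeness of the target sheaves --- the same density argument in a different guise.
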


\begin{proof}
	The assertions of (\ref{FunctorialityProps1}) follow from the proofs of Propositions 4.5 and 5.5 
	of \cite{CaisDualizing}, while by $S$-flatness of $X$ and $Y$, the verification
	of (\ref{FunctorialityProps2}) reduces to checking the particular case $U=X_K$ and $V=Y_K$,
	which follows easily from the very definitions of $\rho_*$ and $\rho^*$
	in \cite[\S4]{CaisDualizing}.
\end{proof}

We henceforth assume that the normal $S$-curve $X$ is in addition proper over $S$.
Then as $X_K$ is a smooth and proper curve over $K$, 
the Hodge to de Rham spectral sequence degenerates \cite{DeligneIllusie}, and there
is a functorial short exact sequence of $K$-vector spaces
\begin{equation}
	\xymatrix{
		0\ar[r] & {H^0(X_K,\Omega^1_{X_K/K})} \ar[r] & {H^1_{\dR}(X_K/K)} \ar[r] & {H^1(X_K,\O_{X_K})} \ar[r] & 0
	}\label{HodgeFilCrv}
\end{equation}
which we call the {\em Hodge filtration} of $H^1_{\dR}(X_K/K)$.

\begin{proposition}\label{HodgeIntEx}
	Let $f:X\rightarrow S$ be a normal curve that is proper over $S=\Spec(R)$.  
	\begin{enumerate}
		\item There are natural isomorphisms of free $R$-modules of rank $1$
		\begin{equation*}
			H^0(X/R)\simeq H^0(X,\O_X)\quad\text{and}\quad H^2(X/R)\simeq H^1(X,\omega_{X/S}),
		\end{equation*}	
		which are canonically $R$-linearly dual to each other.

	\item There is a canonical short exact sequence of finite free $R$-modules, 
	which we denote $H(X/R)$,
		\begin{equation*}
			\xymatrix{
					0\ar[r] & {H^0(X,\omega_{X/S})} \ar[r] & {H^1(X/R)} \ar[r] & {H^1(X,\O_X)} \ar[r] & 0
			}
		\end{equation*}
	that recovers the Hodge filtration $(\ref{HodgeFilCrv})$ of $H^1_{\dR}(X_K/K)$ after
	extending scalars to $K$.  \label{CohomologyIntegral}
	
	\item Via the canonical cup-product auto-duality of $(\ref{HodgeFilCrv})$, 
	the exact sequence $H(X/R)$ is naturally isomorphic to
	its $R$-linear dual.\label{CohomologyDuality}
	
	\item The exact sequence $H(X/R)$ is contravariantly
	$($respectively covariantly$)$ functorial in finite morphisms $\rho:Y\rightarrow X$
	of normal and proper $S$-curves via pullback $\rho^*$ $($respectively trace $\rho_*$$)$;
	these morphisms recover the usual pullback and trace mappings on Hodge filtrations after extending scalars 
	to $K$ and are adjoint with respect to the canonical cup-product autoduality of $H(X/R)$
	in $(\ref{CohomologyDuality})$. \label{CohomologyFunctoriality} 
	
	\end{enumerate}
\end{proposition}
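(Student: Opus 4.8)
The plan is to derive all four assertions from the Hodge-filtration exact triangle $(\ref{HodgeFilComplex})$ by applying $\R f_*$ and analysing the resulting long exact sequence, with Grothendieck--Serre duality (Proposition \ref{GDuality}) supplying both the freeness of the terms and the integral exactness. Since $S=\Spec(R)$ is affine, working at the level of global sections, applying $\R f_*$ to $(\ref{HodgeFilComplex})$ produces a long exact sequence
\[
\cdots \to H^{i-1}(X,\omega_{X/S})\to H^i(X/R)\to H^i(X,\O_X)\xrightarrow{\delta_i} H^i(X,\omega_{X/S})\to\cdots,
\]
whose connecting maps $\delta_i$ are induced by the differential $d_S=c_{X/S}\circ d$. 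I would first record that each relevant term is $R$-free: $H^0(X,\omega_{X/S})$ is torsion-free because multiplication by a uniformizer is injective on the $R$-flat invertible sheaf $\omega_{X/S}$ and $H^0$ is left exact, while $H^1(X,\O_X)$ and $H^1(X,\omega_{X/S})$ are free, being the $R$-linear duals $H^0(X,\omega_{X/S})^\vee$ and $H^0(X,\O_X)^\vee$ by Grothendieck--Serre duality \cite[Theorem 3.4.4]{GDBC} for the proper Cohen--Macaulay morphism $f$.

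For (1) and (2), the key input is that $H^0(X,\O_X)=R$: indeed $f_*\O_X$ is a finite, $R$-flat, normal $R$-algebra with generic fibre $H^0(X_K,\O_{X_K})=K$ (as $X_K$ is smooth and geometrically connected), which forces $f_*\O_X=R$. Because $d_S$ is an $\O_S$-derivation it annihilates these scalar sections, so $\delta_0\colon H^0(X,\O_X)\to H^0(X,\omega_{X/S})$ vanishes; the low-degree terms then give $H^0(X/R)\simeq H^0(X,\O_X)=R$ together with the left-exact sequence
\[
0\to H^0(X,\omega_{X/S})\to H^1(X/R)\to H^1(X,\O_X)\xrightarrow{\delta_1} H^1(X,\omega_{X/S}).
\]
It remains to show $\delta_1=0$. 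Here I would invoke base change: the formation of $\omega^\bullet_{X/S}$ commutes with the flat map $R\to K$ and $X_K$ is smooth, so $\omega^\bullet_{X/S}\otimes_R K$ is the ordinary de Rham complex of $X_K$ and the whole sequence base-changes to the Hodge-to-de Rham sequence $(\ref{HodgeFilCrv})$, which is exact by degeneration. Thus $\delta_1\otimes_R K=0$, and as its target $H^1(X,\omega_{X/S})\simeq H^0(X,\O_X)^\vee\simeq R$ is torsion-free, $\delta_1=0$ on the nose. (Alternatively, the filtered self-duality of Proposition \ref{GDuality} (\ref{DualityOnS}) identifies $\delta_1$ with the $R$-linear dual of $\delta_0$, giving $\delta_1=0$ directly.) This yields the short exact sequence $H(X/R)$ of (2), with $H^1(X/R)$ free as an extension of free modules; moreover $H^2(X/R)=\coker(\delta_1)\simeq H^1(X,\omega_{X/S})$ since $H^2(X,\O_X)=0$, and its duality with $H^0(X/R)$ is Proposition \ref{GDuality} (\ref{DualityOnS}), compatibly with Serre duality between $H^0(X,\O_X)$ and $H^1(X,\omega_{X/S})$. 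Base change to $K$ recovers $(\ref{HodgeFilCrv})$ by construction.

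For (3), I would apply the filtered duality quasi-isomorphism of Proposition \ref{GDuality} (\ref{DualityOnS}) and pass to $H^1$: this produces an isomorphism $H(X/R)\simeq H(X/R)^\vee$ interchanging the two filtration steps $H^0(X,\omega_{X/S})$ and $H^1(X,\O_X)$, and one checks, exactly as in the proofs of Lemma 5.4 and Proposition 5.8 of \cite{CaisDualizing}, that the induced pairing agrees with cup product on $(\ref{HodgeFilCrv})$ after inverting $p$, which pins it down. For (4), Proposition \ref{ComplexFunctoriality} (\ref{FunctorialityProps1}) furnishes filtered pullback and trace maps $\rho^*\colon \omega^\bullet_{X/S}\to \rho_*\omega^\bullet_{Y/S}$ and $\rho_*\colon \rho_*\omega^\bullet_{Y/S}\to \omega^\bullet_{X/S}$; applying $\R f_*$ and taking $H^1$ gives the contravariant and covariant morphisms of $H(X/R)$ and $H(Y/R)$. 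By Proposition \ref{ComplexFunctoriality} (\ref{FunctorialityProps2}) these restrict to the usual de Rham pullback and trace over the smooth loci, hence recover the classical maps on Hodge filtrations over $K$, and the adjointness under cup product is precisely the assertion in Proposition \ref{ComplexFunctoriality} (\ref{FunctorialityProps1}) that $\rho^*$ and $\rho_*$ are interchanged by $\R\scrHom^\bullet_X(\cdot,\omega_{X/S}[-1])$.

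The main obstacle is the integral exactness in (2), equivalently the surjectivity of $H^1(X/R)\to H^1(X,\O_X)$, i.e. the vanishing of $\delta_1$. Everything else is bookkeeping with the long exact sequence, but $\delta_1=0$ genuinely requires combining two global inputs: base change to the generic fibre (to see that $\delta_1$ is torsion, via degeneration of Hodge-to-de Rham) and Grothendieck--Serre duality (to know that the target is $R$-free, so that a torsion map must vanish). Keeping the b\^ete filtration compatible simultaneously with base change, duality, and the functoriality morphisms is the delicate point, but this is exactly what Propositions \ref{GDuality} and \ref{ComplexFunctoriality} are designed to guarantee.
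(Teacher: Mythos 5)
Your overall architecture---apply $\R f_*$ to the triangle (\ref{HodgeFilComplex}), show the two connecting maps vanish, and extract duality and functoriality from Propositions \ref{GDuality} and \ref{ComplexFunctoriality}---is essentially the argument of Propositions 5.7--5.8 of \cite{CaisDualizing}, to which the paper's proof defers. But there is a genuine gap at your freeness step, and it is exactly the point the paper's (two-line) proof singles out. Grothendieck--Serre duality for the proper CM map $f$ gives a canonical isomorphism $H^0(X,\omega_{X/S})\simeq \Hom_R(H^1(X,\O_X),R)$ together with a universal-coefficients extension $0\to \Ext^1_R(H^1(X,\O_X),R)\to H^1(X,\omega_{X/S})\to \Hom_R(H^0(X,\O_X),R)\to 0$; that is, it exhibits the $H^0$'s as the $R$-linear duals of the $H^1$'s, \emph{not} the reverse. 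Your claim that $H^1(X,\O_X)\simeq H^0(X,\omega_{X/S})^{\vee}$ and $H^1(X,\omega_{X/S})\simeq H^0(X,\O_X)^{\vee}$ silently presupposes that $H^1(X,\O_X)$ is torsion-free, which is precisely what is at stake: by the base-change sequence $0\to H^0(X,\O_X)\otimes_R k\to H^0(X_k,\O_{X_k})\to \mathrm{Tor}_1^R(H^1(X,\O_X),k)\to 0$, torsion-freeness of $H^1(X,\O_X)$ is \emph{equivalent} to cohomological flatness of $f$ in degree $0$. Your proof never invokes geometric reducedness of the fibers, and without it the proposition is false: a fibration with a multiple special fiber (Raynaud's examples) can have normal---even regular---total space, $f_*\O_X=R$, smooth geometrically connected generic fiber, and genuine torsion in $H^1(X,\O_X)$, so the integral exactness and freeness in part (\ref{CohomologyIntegral}) fail. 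Note also that your vanishing argument for $\delta_1$ (torsion image into a torsion-free target $H^1(X,\omega_{X/S})\simeq R$) rests on the same unproved freeness.

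The repair is the one ingredient the paper's proof actually supplies: since curves in the sense of Definition \ref{curvedef} have geometrically reduced fibers, Raynaud's crit\`ere de platitude cohomologique \cite[Th\'eor\`eme 7.2.1]{Raynaud} shows $f$ is cohomologically flat in degree $0$ (concretely: $f_*\O_X=R$ forces the special fiber to be connected by Stein factorization, and geometric reducedness then gives $h^0(X_k)=1=h^0(X_K)$, so $\mathrm{Tor}_1^R(H^1(X,\O_X),k)=0$ and $H^1(X,\O_X)$ is free, whence $H^1(X,\omega_{X/S})\simeq H^0(X,\O_X)^{\vee}$ as well). With that in hand, your computation $\delta_0=0$ from $H^0(X,\O_X)=R$, the torsion argument for $\delta_1=0$ via Hodge--de Rham degeneration over $K$, and your treatment of parts (3) and (4) via Propositions \ref{GDuality} and \ref{ComplexFunctoriality} all go through as written.
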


\begin{proof}
	By Raynaud's ``{\em crit\`ere de platitude cohomologique}" \cite[Th\'eor\`me 7.2.1]{Raynaud}
	(see also \cite[Proposition 2.7]{CaisDualizing}), our requirement that curves have geometrically 
	reduced fibers implies that $f:X\rightarrow S$ is cohomologically flat.\footnote{In other words, the $\O_S$-module $f_*\O_X$ commutes with 
	arbitrary base change.}
	The proposition now follows from Propositions 5.7--5.8 of \cite{CaisDualizing}.
\end{proof}

We now turn to the case that $S=\Spec(k)$ for a field $k$ and
$f:X\rightarrow S$ is a proper and geometrically connected curve over $k$.
Recall that $X$ is required to be geometrically reduced, so that the
$k$-smooth locus $U:=X^{\sm}$ is the complement of finitely many closed
points in $X$.  

\begin{proposition}\label{HodgeFilCrvk} 
	Let $X$ be a proper and geometrically connected curve over $k$.  
	\begin{enumerate}
		\item There are natural isomorphisms of 1-dimensional $k$-vector spaces 
		\begin{equation*}
			H^0(X/k)\simeq H^0(X,\O_X)\quad\text{and}\quad H^2(X/k)\simeq H^1(X,\omega_{X/k}).
		\end{equation*}	
		\label{H0H2overk}
	
		\item There is a natural short exact sequence, which we denote $H(X/k)$
		\begin{equation*}
			\xymatrix{
				0 \ar[r] & {H^0(X,\omega_{X/k})} \ar[r] & {H^1(X/k)} \ar[r] & 
				{H^1(X,\O_{X})} \ar[r] & 0
			}.\label{HodgeDegenerationField}
		\end{equation*}
		\label{HodgeExSeqk}
	\end{enumerate}
\end{proposition}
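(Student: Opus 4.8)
The plan is to mirror the proof of Proposition \ref{HodgeIntEx}, replacing the discrete valuation ring by the field $k$ and routing the argument through the duality of Proposition \ref{GDuality}, which applies here since any field is Gorenstein, excellent, and of Krull dimension zero. The essential input is the Hodge filtration exact triangle (\ref{HodgeFilComplex}) for $\omega^{\bullet}_{X/k}$,
$$\omega_{X/k}[-1] \longrightarrow \omega^{\bullet}_{X/k} \longrightarrow \O_X \xrightarrow{\ d\ } \omega_{X/k},$$
in which the boundary map $\O_X\to\omega_{X/k}$ is the differential $d = c_{X/k}\circ d_{\mathrm{univ}}$ of the complex. First I would apply $\R f_*$ and pass to the associated long exact hypercohomology sequence. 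Because $X$ is a curve, hence of pure dimension one, the groups $H^i(X,\O_X)$ and $H^i(X,\omega_{X/k})$ vanish for $i\notin\{0,1\}$, and the sequence collapses to
$$0 \to H^0(X/k) \to H^0(X,\O_X) \xrightarrow{\delta^0} H^0(X,\omega_{X/k}) \to H^1(X/k) \to H^1(X,\O_X) \xrightarrow{\delta^1} H^1(X,\omega_{X/k}) \to H^2(X/k) \to 0,$$
where $\delta^0,\delta^1$ are the $k$-linear maps induced on cohomology by $d$. The entire content of (1) and (2) reduces to the claim that both edge maps $\delta^0$ and $\delta^1$ vanish, i.e. that the ``Hodge to de Rham'' spectral sequence degenerates.

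The vanishing of $\delta^0$ is immediate: since $X$ is proper, geometrically connected, and (being a curve) geometrically reduced, one has $H^0(X,\O_X)=k$, and the $k$-derivation underlying $d$ kills the constants. Thus $\delta^0=0$ and the injection $H^0(X/k)\hookrightarrow H^0(X,\O_X)$ is an isomorphism, giving the first isomorphism of (1) and its one-dimensionality. The genuinely nontrivial point is the vanishing of $\delta^1$, which I would deduce by a dimension count via duality rather than by invoking a characteristic-$p$ degeneration theorem directly. On one hand, Proposition \ref{GDuality} (\ref{DualityOnS}) yields $H^i(X/k)\simeq (H^{2-i}(X/k))^{\vee}$, so $H^2(X/k)\simeq (H^0(X/k))^{\vee}$ is one-dimensional. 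On the other hand, since $\omega_{X/k}$ is the dualizing sheaf of the proper curve $X/k$, Grothendieck--Serre duality identifies $H^1(X,\omega_{X/k})\simeq H^0(X,\O_X)^{\vee}=k$, also one-dimensional. The tail of the displayed sequence exhibits $H^2(X/k)$ as $\coker(\delta^1)$, a quotient of the one-dimensional space $H^1(X,\omega_{X/k})$ by $\im(\delta^1)$; comparing dimensions forces $\im(\delta^1)=0$. Hence $\delta^1=0$ and the natural surjection $H^1(X,\omega_{X/k})\twoheadrightarrow H^2(X/k)$ is an isomorphism, giving the second isomorphism of (1).

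With $\delta^0=\delta^1=0$ established, the central portion of the long exact sequence reduces exactly to the short exact sequence
$$0 \to H^0(X,\omega_{X/k}) \to H^1(X/k) \to H^1(X,\O_X) \to 0$$
of part (2), and naturality is inherited from the functoriality of the hypercohomology long exact sequence attached to (\ref{HodgeFilComplex}). I expect the main obstacle to be precisely the vanishing of $\delta^1$: one must check that Proposition \ref{GDuality} really does apply over a field and that the bare dimension count above (or, equivalently, the filtration-compatibility of the duality quasi-isomorphism, which swaps the graded pieces $\O_X$ and $\omega_{X/k}[-1]$) suffices. The duality route seems cleanest, as it sidesteps any direct appeal to Deligne--Illusie while still delivering degeneration in this curve setting.
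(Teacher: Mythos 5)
Your proposal is correct and takes essentially the same route as the paper: both arguments kill $\delta^0$ using $H^0(X,\O_X)=k$ and the fact that $d$ annihilates constants, and both kill $\delta^1$ by using the filtration-compatible duality of Proposition \ref{GDuality} (\ref{DualityOnS}) to see that $H^2(X/k)\simeq H^0(X/k)^{\vee}$ is one-dimensional, so that the surjection from the one-dimensional space $H^1(X,\omega_{X/k})$ (via Grothendieck's trace isomorphism) onto $H^2(X/k)$ must be an isomorphism, forcing $\im(\delta^1)=0$. The only cosmetic difference is that the paper spells out the double-complex spectral sequence and the vanishing of $\Ext^m_k(\cdot,k)$ for $m>0$ needed to extract $H^2(X/k)\simeq H^0(X/k)^{\vee}$ from the derived-category duality, a step you assert directly.
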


\begin{proof}
	Consider the long exact cohomology sequence arising from the exact
	triangle (\ref{HodgeFilComplex}).
	Since $X$ is proper over $k$, geometrically connected and reduced,  
	the canonical map $k\rightarrow H^0(X,\O_X)$ is an isomorphism, and it follows that
	the map $d:H^0(X,\O_X)\rightarrow H^0(X,\omega_{X/k})$ is zero,  
	whence the map $H^0(X/k)\rightarrow H^0(X,\O_X)$ is an isomorphism.
	Thanks to Proposition \ref{GDuality} (\ref{DualityOnS}), we have a canonical
	quasi-isomorphism 
	\begin{equation}
		\R\Gamma(X,\omega_{X/k}^{\bullet})\simeq 
		\R\Hom_k^{\bullet}(\R\Gamma(X,\omega_{X/k}^{\bullet}),k)[-2]\label{GDFieldExplicit}
	\end{equation}
	that is compatible with the filtrations induced by (\ref{HodgeFilComplex}).
	Using the double complex spectral sequence
	\begin{equation*}
		E_2^{m,n}:=\Ext_k^m(\mathbf{H}^{-n}(X,\omega_{X/k}^{\bullet}),k)
		\implies H^{m+n}(\R\Hom_k^{\bullet}(\R\Gamma(X,\omega_{X/k}^{\bullet}),k))
	\end{equation*}
	and the vanishing of $\Ext_k^m(\cdot,k)$ for $m>0$,
	we deduce that $H^2(X/k)\simeq H^0(X/k)^{\vee}$ is 1-dimensional over $k$.
	Since Grothendieck's trace map $H^1(X,\omega_{X/k})\rightarrow k$ is an isomorphism,
	we conclude that the {\em surjective} map of 1-dimensional $k$-vector spaces 
	$H^1(X,\omega_{X/k})\rightarrow H^2(X/k)$ must be an isomorphism.  It follows that
	the map $d:H^1(X,\O_X)\rightarrow H^1(X,\omega_{X/k})$ is zero as well, as desired.
\end{proof}

Using the canonical map of complexes 
$\omega^{\bullet}_{X/k}\otimes_{\O_X} \omega^{\bullet}_{X/k}\rightarrow \omega^{\bullet}_{X/k}$,
we obtain from the identification (\ref{H0H2overk}) of Proposition \ref{HodgeFilCrvk} 
and Grothendieck's trace map a natural {\em cup product} pairing
\begin{equation*}
	\xymatrix{
		{\langle\cdot,\cdot\rangle_{X}: H^1(X/k)\times H^1(X/k)}\ar[r] & 
		{\mathbf{H}^2(X,\omega^{\bullet}_{X/k}\tens_{\O_X} \omega^{\bullet}_{X/k})}\ar[r] &
		{H^2(X/k) \simeq H^1(X,\omega_{X/k})} \ar[r]^-{\Tr} & k
		} 
\end{equation*}
under which the submodule $H^0(X,\omega_{X/k})$ annihilates itself.  This pairing induces a
map 
\begin{equation}
\begin{gathered}
	\xymatrix{
		0 \ar[r] & {H^0(X,\omega_{X/k})}\ar[d] \ar[r] & {H^1(X/k)} \ar[r]\ar[d] & {H^1(X,\O_{X})} \ar[r]\ar[d] & 0 \\
		0 \ar[r] & {H^1(X,\O_{X})^{\vee}} \ar[r] & {H^1(X/k)^{\vee}} \ar[r] & {H^0(X,\omega_{X/k})^{\vee}} 
		\ar[r] & 0
	}
	\end{gathered}
	\label{DualityCrvk}
\end{equation}
of short exact sequences of $k$-vector spaces. 

\begin{proposition}
	The map $(\ref{DualityCrvk})$ is an isomorphism of short exact sequences.
\end{proposition}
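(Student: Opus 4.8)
The plan is to realize the cup-product map (\ref{DualityCrvk}) as the $\mathbf{H}^1$ of the filtered self-duality quasi-isomorphism already furnished by Proposition \ref{GDuality}, and then to read off that it is an isomorphism from the fact that the Hodge-to-de~Rham spectral sequence degenerates.

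First I would observe that the pairing $\langle\cdot,\cdot\rangle_X$ defining (\ref{DualityCrvk}) is, by its very construction, induced by the multiplication of complexes $\omega^\bullet_{X/k}\otimes_{\O_X}\omega^\bullet_{X/k}\to\omega^\bullet_{X/k}$ followed by $\mathbf{H}^2(X,\omega^\bullet_{X/k})\simeq H^1(X,\omega_{X/k})\xrightarrow{\Tr}k$. But this same multiplication map is precisely what induces the canonical filtered quasi-isomorphism (\ref{DualityIsom}) of Proposition \ref{GDuality}, namely $\omega^\bullet_{X/k}\simeq \R\scrHom^\bullet_X(\omega^\bullet_{X/k},\omega_{X/k}[-1])$. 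Applying $\R\Gamma(X,-)$ and Grothendieck--Serre duality over the field $k$ yields the filtered quasi-isomorphism (\ref{GDFieldExplicit}), $\R\Gamma(X,\omega^\bullet_{X/k})\simeq\R\Hom^\bullet_k(\R\Gamma(X,\omega^\bullet_{X/k}),k)[-2]$; by construction the map it induces on $\mathbf{H}^1$ is exactly the middle vertical arrow $H^1(X/k)\to H^1(X/k)^\vee$ of (\ref{DualityCrvk}).

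The key step is then to pass to the associated graded of the Hodge filtration. Because $\R\Hom_k(-,k)$ is exact over a field, the right-hand side of (\ref{GDFieldExplicit}) has cohomology $H^i(X/k)^\vee$ in degree $2-i$; in particular its $\mathbf{H}^1$ is $H^1(X/k)^\vee$, with induced Hodge filtration the $k$-linear dual of that on $H^1(X/k)$, so that its graded pieces are $H^1(X,\O_X)^\vee$ and $H^0(X,\omega_{X/k})^\vee$. Since (\ref{DualityIsom}) is compatible with the filtrations (\ref{HodgeFilComplex}) on both sides, so is (\ref{GDFieldExplicit}); and as the Hodge-to-de~Rham spectral sequence degenerates at $E_1$ (this is exactly the content of Proposition \ref{HodgeFilCrvk} (\ref{HodgeExSeqk})), a filtered quasi-isomorphism induces isomorphisms on each graded piece and hence on the full two-step filtration of $\mathbf{H}^1$. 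This means precisely that the three vertical arrows of (\ref{DualityCrvk})---the map on the sub $H^0(\omega_{X/k})\to H^1(X,\O_X)^\vee$, the map on the total $H^1(X/k)\to H^1(X/k)^\vee$, and the map on the quotient $H^1(X,\O_X)\to H^0(\omega_{X/k})^\vee$---are all isomorphisms, so (\ref{DualityCrvk}) is an isomorphism of short exact sequences as claimed.

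The main obstacle is the first, foundational identification: that the concretely defined cup-product pairing $\langle\cdot,\cdot\rangle_X$ of the text agrees with the abstract Grothendieck-duality isomorphism of Proposition \ref{GDuality} after applying $\R\Gamma$. Both are induced by the multiplication $\omega^\bullet_{X/k}\otimes_{\O_X}\omega^\bullet_{X/k}\to\omega_{X/k}[-1]$ together with the trace, but verifying their coincidence requires unwinding the construction of the duality isomorphism in \cite{CaisDualizing} and checking that the cup product computed in hypercohomology matches the evaluation pairing against $\R\scrHom^\bullet_X(\omega^\bullet_{X/k},\omega_{X/k}[-1])$; once this compatibility is in hand (it is the standard assertion that Serre duality is realized by the cup product), everything else is formal. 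As an alternative to this route, should one prefer to avoid the derived-category bookkeeping, one can instead chase the hypercohomology spectral sequence directly to show that the restriction of $\langle\cdot,\cdot\rangle_X$ to $H^0(\omega_{X/k})\times H^1(X/k)$ factors through the coherent cup product $H^0(X,\omega_{X/k})\times H^1(X,\O_X)\to H^1(X,\omega_{X/k})\xrightarrow{\Tr}k$ (the degree-$1$ times degree-$0$ part of the multiplication being the only nonzero contribution), identify the two outer vertical maps with the Serre-duality pairings for the dualizing sheaf $\omega_{X/k}$ on the proper curve $X/k$, invoke their perfectness, and finish with the five lemma.
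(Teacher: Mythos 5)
Your proposal is correct, but note that its two halves have very different status. The crux of your primary, derived-category route --- the identification of the concrete cup-product pairing $\langle\cdot,\cdot\rangle_X$ with the map on $\mathbf{H}^1$ induced by the abstract filtered quasi-isomorphism (\ref{GDFieldExplicit}) --- is exactly the point you flag but do not verify; without it, the filtered-quasi-isomorphism argument only shows that \emph{some} canonical map is a filtered isomorphism, not that the specific map (\ref{DualityCrvk}) is. The paper instead takes your fallback route, in its most economical form: since the multiplication $\omega^{\bullet}_{X/k}\otimes_{\O_X}\omega^{\bullet}_{X/k}\rightarrow\omega^{\bullet}_{X/k}$ pairs degree $0$ against degree $1$ through the module structure $\O_X\otimes\omega_{X/k}\rightarrow\omega_{X/k}$, the two flanking vertical arrows of (\ref{DualityCrvk}) are by construction the Grothendieck--Serre duality maps for the invertible dualizing sheaf on the proper curve $X/k$, hence isomorphisms by \cite[Theorem 5.1.2]{GDBC}, and the middle arrow is then an isomorphism by the five lemma --- no spectral-sequence chase is needed, since the self-annihilation of $H^0(X,\omega_{X/k})$ is recorded in the text just before the diagram and the exact rows are supplied by Proposition \ref{HodgeFilCrvk}. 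What your heavier route buys is uniformity: it is the same mechanism the paper itself deploys over a discrete valuation ring (Proposition \ref{HodgeIntEx}, via Propositions 5.7--5.8 of \cite{CaisDualizing}), where no field-theoretic shortcut is available; but over a field the direct Serre-duality argument reduces the proof to two lines, which is why the paper prefers it.
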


\begin{proof}
	The flanking vertical maps are isomorphisms by \cite[Theorem 5.1.2]{GDBC}, 
	so the middle one must be as well.
\end{proof}

When $k$ is algebraically closed, the sheaf $\omega_{X/k}$
admits a beautiful explicit description, due to Rosenlicht
\cite{Rosenlicht}, in terms of meromorphic differentials
on the normalization of $X$.  We briefly recall this description,
and refer the reader to \cite[\S5.2]{GDBC} for details.

Let $k(X):=\prod_i k(\xi_i)$
be the ``function field" of $X$, by definition the product of the residue fields at the finitely many generic points of
$X$, and write 
$\u{\Omega}^1_{k(X)/k}$ 
for the pushforward of $\Omega^1_{k(X)/k}$ along the inclusion $\Spec(k(X))\hookrightarrow X$.
As $X$ is reduced, it is smooth
at its generic points, so this inclusion factors through 
$i:X^{\sm}\hookrightarrow X$. By \cite[Lemma 5.2.1]{GDBC}, the canonical map
of $\O_X$-modules
\begin{equation}
	\xymatrix{
		{\omega_{X/k}} \ar[r] & {i_*i^*\omega_{X/k}\simeq i_*\Omega^1_{X^{\sm}/k}}
		}\label{OmegaSubSheaf}
\end{equation}  
is injective, and it follows that $\omega_{X/k}$ is a subsheaf of
$\u{\Omega}^1_{k(X)/k}$. 
Writing $\pi:\nor{X}\rightarrow X$ for the normalization map,
we have $k(X)=k(\nor{X})$ and hence an identification
$\u{\Omega}^1_{k(X)/k}\simeq \pi_*\u{\Omega}^1_{k(\nor{X})/k}$.
For $x\in X(k)$, define 
\begin{equation}
	\xymatrix{
		{\res_x : \u{\Omega}^1_{k(X)/k,x}}\ar[r] & k
		}
	\ \text{by}\ 	
		\res_x(\eta):=\sum_{y\in \pi^{-1}(x)} \res_y(\eta),
		\label{Def:resmaps}
\end{equation}		
where $\res_{y}$ is the classical residue map on meromorphic differentials on
the smooth curve $\nor{X}$ over $k$.

\begin{proposition}[Rosenlicht]\label{Rosenlicht}
	Let $X$ be a proper and geometrically connected curve over $k$.
	As a subsheaf of $\u{\Omega}^1_{k(X)/k}\simeq \pi_*\u{\Omega}^1_{k(\nor{X})/k}$,
	the sections of $\omega_{X/k}$ over any open $V\subseteq X$ are precisely
	those meromorphic differentials on $\pi^{-1}(V)\subseteq \nor{X}$ that satisfy
	$\res_x(s\eta)=0$ for all $x\in V(k)$ and all $s\in \O_{X,x}$.
\end{proposition}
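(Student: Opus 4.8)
The plan is to recognize both sides as coherent subsheaves of $\u{\Omega}^1_{k(X)/k}$ and to prove that they have the same stalk at every closed point; throughout I use the standing hypothesis that $k$ is algebraically closed. For the dualizing sheaf the embedding is the map (\ref{OmegaSubSheaf}); on the right-hand side, write $\widetilde{\omega}$ for the subsheaf of $\u{\Omega}^1_{k(X)/k}$ whose sections over an open $V$ are the meromorphic differentials $\eta$ on $\pi^{-1}(V)$ satisfying $\res_x(s\eta)=0$ for all $x\in V(k)$ and all $s\in\O_{X,x}$. Since equality of two subsheaves of a fixed quasi-coherent sheaf may be checked on stalks, and every closed point of $X$ is $k$-rational, it suffices to show $\omega_{X/k,x}=\widetilde{\omega}_x$ inside $\u{\Omega}^1_{k(X)/k,x}$ for each closed $x\in X$.

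First I would dispose of the smooth points. Over the dense open $X^{\sm}$ the canonical map $c_{X/k}$ of Proposition \ref{canmap} is an isomorphism, so there $\omega_{X/k}$ coincides with $\Omega^1_{X^{\sm}/k}$; and at a smooth point a meromorphic differential $\eta=\sum_n a_n t^n\,\mathrm{d}t$ satisfies $\res_x(s\eta)=0$ for all $s\in\O_{X,x}$ if and only if it has no pole, since taking $s=t^{-n-1}\in\O_{X,x}$ for $n<0$ detects a nonzero polar coefficient $a_n$. Thus $\omega_{X/k}$ and $\widetilde{\omega}$ agree over $X^{\sm}$, and the two sheaves can differ only at the finitely many singular points.

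The heart of the matter is the local computation at a singular closed point $x$. Passing to the complete local ring $A:=\widehat{\O}_{X,x}$ --- a reduced, one-dimensional, Cohen--Macaulay complete local $k$-algebra whose normalization $\widetilde{A}$ is a finite product of power-series rings $\cong k[\![u]\!]$ indexed by the branches through $x$ --- the stalk $\omega_{X/k,x}$ becomes the canonical module $\omega_A$, and the assertion reduces to the purely local identity $\omega_A=\{\eta\in\Omega^1_{\Frac(A)/k}:\sum\res(a\eta)=0\ \text{for all}\ a\in A\}$, the sum running over the branches. To prove this I would choose by Noether normalization a finite injection $B:=k[\![t]\!]\hookrightarrow A$ and invoke Grothendieck duality for the finite map $\Spec A\to\Spec B$ \cite[Theorem 3.4.4]{GDBC}, which yields $\omega_A\simeq\Hom_B(A,\omega_B)$ compatibly with the embeddings into meromorphic differentials. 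Here $\omega_B=\Omega^1_{B/k}$ carries the Grothendieck trace to $k$ given explicitly by the residue $\res_0$, exactly as in the formula \cite[2.7.41]{GDBC} already used in the proof of Proposition \ref{CartierOp}. Unwinding the adjunction, $\phi\in\Hom_B(A,\omega_B)$ corresponds to the differential $\eta_\phi$ with $\Tr_{\Frac(A)/\Frac(B)}(a\,\eta_\phi)=\phi(a)$, and the transitivity of residues under trace --- that is, $\res_0\circ\Tr=\sum_{\text{branches}}\res$, a form of Tate's residue theorem \cite[Theorem 4]{TateResidues} --- shows that $\phi$ carries $A$ into $\omega_B$ precisely when $\res_x(a\eta_\phi)=0$ for all $a\in A$. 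This establishes the residue characterization, and independence of the auxiliary choice of $B$ is automatic since the resulting subsheaf of $\u{\Omega}^1_{k(X)/k}$ is manifestly intrinsic.

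Finally, the resulting isomorphism $\omega_{X/k}\simeq\widetilde{\omega}$ is the canonical one: under the local computations above, the global Grothendieck trace $H^1(X,\omega_{X/k})\to k$ is matched with the sum of local residues, and this functional is well defined on $H^1$ precisely because of the residue theorem (the sum of the residues of a global meromorphic differential vanishes), which is the global incarnation of the sheaf condition defining $\widetilde{\omega}$. I expect the main obstacle to be the singular-point computation of the preceding paragraph, namely reconciling the abstract duality isomorphism $\omega_A\simeq\Hom_B(A,\omega_B)$ with the concrete residue pairing on meromorphic differentials; the decisive technical input is the compatibility of Grothendieck's trace with classical residues, which is the content of \cite[\S5.2]{GDBC} together with \cite{TateResidues}.
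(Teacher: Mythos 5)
The paper itself gives no argument here---its ``proof'' is the single citation \cite[Theorem 5.2.3]{GDBC}---so your proposal should be measured against the standard proof in that reference, which it reconstructs faithfully and correctly in outline: reduce to stalks at closed points, dispose of smooth points by the residue criterion for holomorphy (your choice $s=t^{-n-1}\in\O_{X,x}$ detecting each polar coefficient is exactly right), and at a singular point pass to $A=\widehat{\O}_{X,x}$, pick a finite Noether normalization $B=k[\![t]\!]\hookrightarrow A$, and convert the duality isomorphism $\omega_A\simeq\Hom_B(A,\omega_B)$ into the residue condition via the compatibility $\res_0\circ\Tr_{\Frac(A)/\Frac(B)}=\sum_{\mathrm{branches}}\res$. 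A pleasant feature of your write-up is that the smooth-point criterion does double duty: $\phi_\eta(A)\subseteq\omega_B=B\,\mathrm{d}t$ iff $\res_0\bigl(b\,\Tr(a\eta)\bigr)=\res_0\bigl(\Tr(ba\,\eta)\bigr)=\res_x(ba\,\eta)=0$ for all $a\in A$, $b\in B$, which is precisely the defining condition since $BA=A$.

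Two points are glossed, and one of them matters precisely where the paper uses this proposition (the curves $\o{\X}_r$ live over $\F_p$). Your dictionary $\phi\leftrightarrow\eta_\phi$ with $\Tr_{\Frac(A)/\Frac(B)}(a\,\eta_\phi)=\phi(a)$ presupposes that the trace pairing of $\Frac(A)$ over $\Frac(B)$ is nondegenerate, i.e.\ that each branch $k(\!(u_i)\!)$ is \emph{separable} over $k(\!(t)\!)$; likewise the trace--residue formula is available (cf.\ Remark \ref{poletrace} and \cite[Theorem 4]{TateResidues}) only for generically \'etale maps. In characteristic $p$ an arbitrary Noether normalization can be inseparable on some branch, in which case $\Tr\equiv 0$ there and $\eta_\phi$ does not exist, so the step as written would fail without a choice. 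The repair is easy but should be said: the conductor of $A$ in $\widetilde{A}=\prod_i k[\![u_i]\!]$ contains $\prod_i u_i^{c_i}k[\![u_i]\!]$, so one may take $t\in A$ whose $i$-th component is $u_i^{n_i}$ with $n_i\geq c_i$ and $p\nmid n_i$; then $\mathrm{d}t\neq 0$ on every branch and $B\hookrightarrow A$ is finite and generically \'etale. Secondly, your stalkwise (and completed-stalk) comparison implicitly uses that the sheaf $\widetilde{\omega}$ cut out by the residue conditions is coherent; this follows because those conditions bound pole orders by the conductor, so $\widetilde{\omega}$ sits between $\pi_*\Omega^1_{\nor{X}/k}$ and a fixed twist $\pi_*\Omega^1_{\nor{X}/k}(D)$. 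With these two remarks supplied, your argument is complete and is essentially the localized form of the proof in the reference the paper cites.
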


\begin{proof}
	See \cite[Theorem 5.2.3]{GDBC}.
\end{proof}

We now suppose that $S=\Spec(R)$ for a discrete valuation ring $R$ with fraction field $K$
of characteristic zero and residue field $k$ of characteristic $p>0$.

\begin{lemma}\label{ReductionCompatibilities}
	Let $X$ be a normal curve over $S=\Spec(R)$ with smooth and geometrically
	connected generic fiber, and denote by $\o{X}:=X_k$
	the special fiber of $X$; it is a geometrically connected curve over $k$ 
	by Proposition $\ref{curveproperties}$ $(\ref{fiberscrv})$.
	\begin{enumerate}
		\item If $X$ is proper, then the canonical base change maps
		\begin{equation*}
			\xymatrix{
				0 \ar[r] & {H^0(X,\omega_{X/S})\tens_R k} \ar[r]\ar[d]^-{\simeq}_-{\nu_{0}} & 
				{H^1(X/R) \tens_R k} \ar[r]\ar[d]^-{\simeq}_-{\nu_1} & 
				{H^1(X,\O_X)\tens_R k} \ar[r]\ar[d]^-{\simeq}_-{\nu_2} & 0\\			
				0\ar[r] & {H^0(\o{X},\omega_{\o{X}/k})} \ar[r] & {H^1(\o{X}/k)} \ar[r] & 
				{H^1(\o{X},\O_{\o{X}})} \ar[r] & 0
			}
		\end{equation*}
		are isomorphism, and via the auto-duality of each row 
		one has ${\nu}_{i}^{\vee} = \nu_{2-i}^{-1}$.
		
		 \label{BaseChngDiagram}  
		
		\item Let $\rho:Y\rightarrow X$ be a finite morphism of
		normal curves over $S$ with smooth and geometrically connected 
		generic fibers.		
		The canonical diagrams $($one for $\rho^*$ and one for $\rho_*$$)$
		\begin{equation*}
			\xymatrix{
				{H^0(Y,\omega_{Y/S})\tens_R k} \ar@<0.5ex>[d]^-{\rho_*\otimes 1}\ar[r] & 
				{H^0(\o{Y},\omega_{\o{Y}/k})}\ar@{^{(}->}[r]_-{(\ref{OmegaSubSheaf})} &
				{H^0(\nor{\o{Y}}, \Omega^1_{k(\nor{\o{Y}})/k})} \ar@<0.5ex>[d]^-{\nor{\o{\rho}}_*} \\
				\ar@<0.5ex>[u]^-{\rho^*\otimes 1} {H^0(X,\omega_{X/S})\tens_R k} \ar[r] &
				 {H^0(\o{X},\omega_{\o{X}/k})}\ar@{^{(}->}[r]_-{(\ref{OmegaSubSheaf})} & 
				\ar@<0.5ex>[u]^-{{\nor{\o{\rho}}}^*} {H^0(\nor{\o{X}}, \Omega^1_{k(\nor{\o{X}})/k})} 
			}			
		\end{equation*}
		commute, where ${\nor{\o{\rho}}}^*$ and $\nor{\o{\rho}}_*$ are the usual pullback
		and trace morphisms on meromorphic differential forms associated to the finite
		flat map $\nor{\o{\rho}}:\nor{\o{Y}}\rightarrow \nor{\o{X}}$
		of smooth curves over $k$.\label{PTBCCompat}	
	\end{enumerate}	
\end{lemma}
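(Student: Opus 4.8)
The plan is to treat the two assertions separately: I would derive (\ref{BaseChngDiagram}) from cohomology-and-base-change together with the autoduality already recorded in Proposition \ref{HodgeIntEx}, and deduce (\ref{PTBCCompat}) by restricting to the smooth locus and invoking Proposition \ref{ComplexFunctoriality} (\ref{FunctorialityProps2}).

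First I would handle the flanking terms of (\ref{BaseChngDiagram}). As noted in the proof of Proposition \ref{HodgeIntEx}, Raynaud's criterion guarantees that $f\colon X\to S$ is cohomologically flat, so $H^0(X,\O_X)\otimes_R k\to H^0(\o{X},\O_{\o{X}})$ is an isomorphism of free rank-one modules. Since $f$ has relative dimension one, the top-degree base change map $H^1(X,\O_X)\otimes_R k\to H^1(\o{X},\O_{\o{X}})$ is automatically surjective, hence an isomorphism by the theorem on cohomology and base change, and $R^1f_*\O_X$ is then $R$-free. Because the formation of $\omega_{X/S}$ commutes with base change and $\omega_{X/S}$ is an $S$-flat line bundle, the analogous statements hold for $H^0(X,\omega_{X/S})$ and $H^1(X,\omega_{X/S})$; alternatively these follow from the $\O_X$-statements via the Grothendieck--Serre duality of Proposition \ref{HodgeIntEx} (\ref{CohomologyIntegral}), whose formation is compatible with base change. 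For the middle term the key point is that $\omega_{X/S}^{\bullet}$ has $S$-flat terms and that both it and its b\^ete filtration (\ref{HodgeFilComplex}) are of formation compatible with base change (Definition \ref{complexregdiff} and Proposition \ref{canmap}); thus reduction modulo $\mathfrak{m}_R$ carries the exact triangle over $S$ to the one over $k$, producing the displayed commutative diagram of base change maps. As $H(X/R)$ is a short exact sequence of free $R$-modules it is split, so $H(X/R)\otimes_R k$ stays exact; comparing with $H(\o{X}/k)$ of Proposition \ref{HodgeFilCrvk} (\ref{HodgeExSeqk}) and applying the short five lemma to $\nu_0,\nu_1,\nu_2$ shows $\nu_1$ is an isomorphism as well.

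The relation $\nu_i^{\vee}=\nu_{2-i}^{-1}$ I would obtain from base change compatibility of the cup-product autoduality. Both the pairing on $H^1(X/R)$ underlying Proposition \ref{HodgeIntEx} (\ref{CohomologyDuality}) and the pairing on $H^1(\o{X}/k)$ underlying (\ref{DualityCrvk}) are built from the multiplication $\omega_{X/S}^{\bullet}\otimes_{\O_X}\omega_{X/S}^{\bullet}\to\omega_{X/S}^{\bullet}$ and Grothendieck's trace, each of which commutes with base change; hence the reduction modulo $\mathfrak{m}_R$ of the $R$-pairing agrees with the $k$-pairing under the identifications $\nu_i$. Since the autoduality interchanges $H^0(\omega)$ with $H^1(\O)$, this agreement is precisely the statement that $\nu_i$ and $\nu_{2-i}$ are $R$-linearly dual, i.e. $\nu_i^{\vee}=\nu_{2-i}^{-1}$.

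For (\ref{PTBCCompat}) I would reduce at once to the smooth locus. Choose an open $U\subseteq X^{\sm}$ avoiding the image of the finitely many non-smooth points of $Y$, so that $V:=\rho^{-1}(U)\subseteq Y^{\sm}$; by the definition of curve, $X^{\sm}$ (and $Y^{\sm}$) contains the generic fiber and the generic points of the closed fiber, so $U_k$ and $V_k$ are dense in $\o{X}$ and $\o{Y}$ and meet every component. On smooth curves a finite dominant morphism is automatically flat, so $\rho\colon V\to U$ is finite flat, and by Proposition \ref{ComplexFunctoriality} (\ref{FunctorialityProps2}) the maps $\rho^{*},\rho_{*}$ restrict there to the ordinary pullback and trace on relative de~Rham complexes; for finite flat maps of smooth curves these commute with the (now harmless) base change to $k$, so their reductions are the usual $\nor{\o{\rho}}^{*}$ and $\nor{\o{\rho}}_{*}$ on differentials over the dense opens $V_k\subseteq\nor{\o{Y}}$ and $U_k\subseteq\nor{\o{X}}$. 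Because a meromorphic differential on the smooth curves $\nor{\o{X}},\nor{\o{Y}}$ is determined by its restriction to any dense open, the two composites in each diagram agree globally, via Rosenlicht's identification (\ref{OmegaSubSheaf}) of Proposition \ref{Rosenlicht}. I expect the main obstacle to be exactly this last reduction: the base change to the closed fiber is not flat, so Proposition \ref{ComplexFunctoriality} (\ref{FunctorialityProps1}) does not apply directly, and the trace on the \emph{singular} curves $\o{Y},\o{X}$ must be matched with $\nor{\o{\rho}}_{*}$ between their normalizations; the device that makes this work is precisely passing to the dense $S$-smooth locus, where the morphism is finite flat and the constructions are the classical ones.
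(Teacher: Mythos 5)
Your proof is correct and follows essentially the same route as the paper: for (1), base-change isomorphisms on the flanking terms plus the five lemma for $\nu_1$, with the duality relation coming from the base-change compatibility of cup product and Grothendieck's trace; for (2), restriction to a dense open of the $S$-smooth locus where Proposition \ref{ComplexFunctoriality} (\ref{FunctorialityProps2}) identifies $\rho^*,\rho_*$ with the classical maps, which commute with reduction and determine everything on meromorphic differentials. You merely make explicit some points the paper leaves implicit (cohomological flatness, exactness of the reduced top row via freeness, and the non-flatness of the base change to $k$), which is sound.
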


\begin{proof}
	Since $X$ is of relative dimension 1, when it is in addition proper,
	the cohomologies $H^1(X,\O_X)$ and $H^1(X,\omega_{X/S})$
	both commute with base change, and are free over $R$ by Proposition \ref{HodgeIntEx}. 
	In that case, we conclude that $H^i(X,\O_X)$ and $H^i(X,\omega_{X/S})$ commute with base change for all $i$ and
	hence that the left and right vertical maps in the base change diagram (\ref{BaseChngDiagram}) (whose
	rows are exact by Propositions \ref{HodgeIntEx} and \ref{HodgeFilCrvk}) 
	are isomorphisms.  It follows that the middle vertical map in 
	(\ref{BaseChngDiagram}) is an isomorphism as well.	
	Writing $\langle \cdot,\cdot\rangle_{\star}$ with $\star=X,\o{X}$ for the canonical duality pairing
	 on $H^1(X/R)$ and $H^1(\o{X}/k)$, respectively, our assertion about the behavior of the base change isomorphisms $\nu_i$
	under duality amounts to the equality
	\begin{equation}
	  \nu\left( \o{\langle x , y\rangle}_{X}\right) = \langle \nu_1(\o{x}), \nu_1(\o{y}) \rangle_{\o{X}} 
	  \label{BCpairing}
	\end{equation}
	for all $x,y \in H^1(X/R)$; here $\overline{(\cdot)}: M\rightarrow M\otimes_R k$ is the natural reduction map
	for any $R$-module $M$ and $\nu: R\otimes_R k\rightarrow k$ is the canonical isomorphism.
	 But the duality pairing is given by
	cup-product followed by Grothendieck's trace morphism, both of which are compatible
	with base change, whence the equality (\ref{BCpairing}).
	The compatibility of pullback and trace under base change to the special fibers, 
	as asserted by the diagram in (\ref{PTBCCompat}),
	is a straightforward consequence of Proposition \ref{ComplexFunctoriality} 
	(\ref{FunctorialityProps2}), using the facts that $X$ and $Y$ are 
	smooth at generic points of closed fibers and that $\o{\rho}:\o{Y}\rightarrow\o{X}$
	takes generic points to generic points as noted in the proof of Lemma \ref{ConcreteDualizingDescription}.
\end{proof}

We end this appendix with a brief discussion
of correspondences on curves and their induced action on cohomology and Jacobians.
Fix a ring $R$ and a proper normal curve $X$ over $S=\Spec R$.  Throughtout this discussion, we assume either that 
$R$ is a discrete valuation ring of mixed characteristic $(0,p)$, 
or that $R$ is a perfect field (and hence the normal $X$ is smooth). 

\begin{definition}\label{CorrDef}
	A {\em correspondence $T:=(\pi_1,\pi_2)$ on $X$} is an ordered pair $\pi_1,\pi_2:Y\rightrightarrows X$
	of finite morphisms of normal and proper curves over $S$.
	  The {\em transpose} 
	of a correspondence $T:=(\pi_1,\pi_2)$ on $X$ is the correspondence
	on $X$ given by the ordered pair $T^*:=(\pi_2,\pi_1)$. 
\end{definition}

Thanks to Proposition \ref{HodgeIntEx} (\ref{CohomologyFunctoriality}), any correspondence
$T=(\pi_1,\pi_2)$ on $X$ induces an $R$-linear endomorphism of the short exact sequence
$H(X/R)$ via ${\pi_1}_*\pi_2^*$.  By a slight abuse of notation, we denote this endomorphism
by $T$;  as endomorphisms of $H(X/R)$ we then have
\begin{equation}
	T= {\pi_1}_*\pi_2^*\qquad\text{and}\qquad T^* = {\pi_2}_* \pi_1^*.
	\label{HeckeDef}
\end{equation}
Given a finite map $\pi:X\rightarrow X$, we will consistently view $\pi$ as a correspondence on $X$
via the association $\pi\rightsquigarrow (\id,\pi)$.  
In this way, we may think of correspondences
on $X$ as ``generalized endomorphisms."  This perspective can be made more compelling as follows.

Suppose that $R$ is a field, and fix a correspondence $T$ given by an ordered pair
$\pi_1,\pi_2:Y\rightrightarrows X$ of finite morphisms of smooth and proper curves.
Then $T$ and its transpose $T^*$ induce endomorphisms of the Jacobian $J_X:=\Pic^0_{X/R}$ of $X$, which we again
denote by the same symbols, via
\begin{equation} 
	T:=\Alb(\pi_2)\circ \Pic^0(\pi_1)\qquad\text{and}\qquad T^*:= \Alb(\pi_1) \circ \Pic^0(\pi_2).
	\label{JacHecke}
\end{equation}
Here, for any finite map $\pi:Y\rightarrow X$ of smooth and proper curves over a field, 
$\Pic^0(\pi):J_X\rightarrow J_Y$ and $\Alb(\pi):J_Y\rightarrow J_X$ are the
maps on Jacobians functorially induced by pullback and pushforward of Weil divisors along $\pi$, respectively.
Note that when $T=(\id,\pi)$ for a morphism $\pi:X\rightarrow X$, the induced endomorphisms
(\ref{JacHecke}) of $J_X$ are given by $T=\Alb(\pi)$ and $T^*:=\Pic^0(\pi)$.\footnote{Because of this fact, 
for a general correspondence $T=(\pi_1,\pi_2)$,
the literature often refers to the induced endomorphism $T$ (respectively $T^*$) of $J_X$
as the {\em Albanese} (respectively {\em Picard}) or 
{\em covariant} (respectively {\em contravariant}) action of the correspondence
$(\pi_1,\pi_2)$.  Since the definitions (\ref{JacHecke}) of $T$ and $T^*$ {\em both}
literally involve Albanese and Picard functoriality, we find this old terminology
confusing, and eschew it in favor of the consistent notation we have introduced. 
}
Abusing notation, we will simply write $\pi$ for the endomorphism $\Alb(\pi)$ of $J_X$
induced by the correspondence $(1,\pi)$, and $\pi^*$ for the endomorphism $\Pic^0(\pi)$
induced by $(\pi,1) = (1,\pi)^*$.  When $\pi:X\rightarrow X$ is an automorphism, an easy argument 
shows that $\pi^* = \pi^{-1}$ as automorphisms of $J_X$.

With these definitions, the canonical filtration-compatible isomorphism 
$H^1_{\dR}(X/R) \simeq H^1_{\dR}(J_X/R)$ is $T$ (respectively $T^*$)-equivariant with respect to 
the action (\ref{HeckeDef}) on $H^1_{\dR}(X/R)$ and the action on $H^1_{\dR}(J_X/R)$
induced by pullback along the endomorphisms (\ref{JacHecke}); see \cite[Proposition 5.4]{CaisNeron}.

\renewcommand{\thesubsection}{B} 

\subsection{Integral models of modular curves}\label{tower}

We record some basic facts about integral models of modular curves that will be needed in what follows.
We use \cite{KM} as our basic reference, and freely utilize the notation and terminology 
therein. Throughout, we keep the notation of \S\ref{Notation}.

\begin{definition}\label{Def:MainModuliProblem}
	Let $r$ be a nonnegative integer and $R$ a ring containing a fixed choice $\zeta$ of 
	primitive $p^r$-th root of unity (by definition \cite[9.1.1]{KM}, a root of the $p^r$-th
	cyclotomic polynomial) in which $N$ is invertible.
	The moduli problem 
	$\scrP_{r}^{\zeta}:=([\bal\ \Upgamma_1(p^r)]^{\zeta\can}; [\mu_N]])$ 
	on $(\Ell/R)$
	assigns to $E/S$ the set of quadruples $(\phi:E\rightarrow E',P,Q ; \alpha)	$
	where:
	\begin{enumerate}
		\item $\phi:E\rightarrow E'$ is a cyclic $p^r$-isogeny.
		\item $P\in (\ker\phi)(S)$ and $Q\in (\ker\phi^t)(S)$ are generators of
		$\ker\phi$ and $\ker\phi^t$, respectively, 
		which pair to $\zeta$ under the canonical pairing
		$\langle\cdot,\cdot\rangle_{\phi}: \ker\phi\times\ker\phi^t\rightarrow \mu_{\deg\phi}$
		\cite[\S2.8]{KM}.
		\item $\alpha$ is a $\mu_N$-structure on $E/S$, {\em i.e.}
		a closed immersion $\alpha:\mu_N\hookrightarrow E[N]$ of $S$-group schemes.\label{muNstr}
	\end{enumerate}	
\end{definition}

\begin{proposition}\label{XrRepresentability}
	If $N \ge 4$, then the moduli problem $\scrP_{r}^{\zeta}$ is represented
	by a regular scheme $\M(\scrP_r^{\zeta})$ that is flat of pure relative dimension
	$1$ over $\Spec(R)$.   
\end{proposition}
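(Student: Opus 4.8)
The plan is to realize $\scrP_r^{\zeta}$ as the combination of a rigidifying moduli problem with a relatively representable one, and then to reduce regularity to the Katz--Mazur local analysis at supersingular points. Concretely, I would split the structure $([\bal\ \Upgamma_1(p^r)]^{\zeta\can}; [\mu_N])$ into its prime-to-$p$ part $[\mu_N]$ and its $p$-part $[\bal\ \Upgamma_1(p^r)]^{\zeta\can}$. Since $N$ is invertible in $R$ and $N\ge 4$, the problem $[\mu_N]$ is \emph{rigid} and is represented by a scheme $\M([\mu_N])$ that is smooth over $\Spec(R)$ of pure relative dimension $1$; indeed $[\mu_N]$ is finite \'etale over $(\Ell/R)$ because $N\in R^{\times}$ \cite{KM}. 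This scheme serves as the rigidifying base over which the interesting $p$-level structure will live.

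Next I would record that $[\bal\ \Upgamma_1(p^r)]^{\zeta\can}$ is relatively representable and finite flat over $(\Ell/R)$. This rests on the relative representability and finite flatness of Drinfeld $\Upgamma_1(p^r)$-structures, applied to the pair of generators $P$ of $\ker\phi$ and $Q$ of $\ker\phi^t$ (equivalently, $\Upgamma_1(p^r)$-generators on $E$ and on $E'=E/\langle P\rangle$), together with the fact that imposing the value $\zeta$ of the canonical pairing $\langle\cdot,\cdot\rangle_{\phi}$ cuts out a relatively representable finite flat subproblem; all of this is contained in \cite{KM}. By the general Katz--Mazur formalism for combining moduli problems, the combination of a relatively representable finite flat problem with the rigid representable problem $[\mu_N]$ is again representable, so $\scrP_r^{\zeta}$ is represented by a scheme $\M(\scrP_r^{\zeta})$ for which the forgetful morphism $\M(\scrP_r^{\zeta})\to \M([\mu_N])$ is finite and flat. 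Composing this finite flat morphism with the smooth structural morphism $\M([\mu_N])\to \Spec(R)$ of relative dimension $1$ then shows at once that $\M(\scrP_r^{\zeta})$ is flat of pure relative dimension $1$ over $\Spec(R)$.

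It remains to prove regularity, which is the heart of the matter. Since regularity is \'etale-local, I would treat the ordinary and supersingular loci separately. Over the whole generic fiber and over the ordinary locus of the characteristic-$p$ fiber the $p$-level structure is \'etale, so the forgetful map $\M(\scrP_r^{\zeta})\to \M([\mu_N])$ is \'etale there; as $\M([\mu_N])$ is smooth over the regular base $\Spec(R)$, the total space is regular at all such points. The only remaining points are the finitely many supersingular points of the special fiber, where the formal group of the universal curve has height $2$ and the level structure fails to be \'etale. There one must compute the complete local ring of $\M(\scrP_r^{\zeta})$ through the deformation theory of Drinfeld level structures on a one-dimensional formal group: one presents it as a two-dimensional complete local $R$-algebra built from the Serre--Tate deformation coordinate together with the relations coming from the balanced $\Upgamma_1(p^r)$-structure, and shows it is regular. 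The hard part will be exactly this supersingular local computation, which is precisely the Katz--Mazur regularity theorem for level structures at $p$ \cite{KM}; granting it, $\M(\scrP_r^{\zeta})$ is regular everywhere, completing the proof.
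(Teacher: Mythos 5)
Your overall architecture is the same as the paper's: rigidify by the representable, finite \'etale problem $[\mu_N]$ (rigid for $N\ge 4$), reduce via the Katz--Mazur formalism for simultaneous moduli problems to relative representability and regularity of the $p$-part, and invoke \cite{KM} for the latter. The paper's proof is exactly this citation chain (2.7.4, 3.6.0, 4.7.1, 5.1.1 and 4.3.4 of \cite{KM} for the reduction, then 9.1.7 and 7.6.1\,(2) for the $p$-part). However, your added explanation of the regularity step contains a genuine error: it is false that ``over the ordinary locus of the characteristic-$p$ fiber the $p$-level structure is \'etale.'' Drinfeld level structures at $p$ are \'etale only where $p$ is invertible; over the ordinary locus in characteristic $p$ the kernel of $\phi$ has a connected (multiplicative) part on most components, and the scheme of Drinfeld generators of $\mu_{p^a}$ over an $\F_p$-algebra $\O$ is $\Spec\bigl(\O[X]/(X-1)^{p^{a-1}(p-1)}\bigr)$, so the fibers of the forgetful map are non-reduced there. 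Concretely, in the paper's own description of $\o{\X}_r$ (Propositions \ref{redXr}, \ref{pr1desc}, \ref{UlmerProp}), the irreducible components map to the level-$N$ curve through powers of the absolute Frobenius, i.e.\ through finite \emph{inseparable} maps, which are flat but nowhere \'etale. Hence regularity at ordinary characteristic-$p$ points does not come for free from \'etaleness onto a regular scheme; it is true, but it requires the same formal-group analysis of Drinfeld structures as at supersingular points (the connected directions of the level structure are absorbed by the ramification of $\Z_p[\zeta_{p^r}]$ over $\Z_p$ via the pairing condition), and it is part of the content of \cite[7.6.1]{KM}, not a triviality.

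A second, smaller gloss: you treat ``imposing the value $\zeta$ of the canonical pairing'' as an innocuous closed condition cutting out a finite flat subproblem. A closed subscheme of a regular scheme need not be regular, so regularity of $[\bal\ \Upgamma_1(p^r)]^{\zeta\can}$ over $R$ does not follow formally from regularity of $[\bal\ \Upgamma_1(p^r)]$; this transfer is exactly what \cite[9.1.7]{KM} supplies: the canonical pairing makes the balanced moduli scheme naturally a scheme over $\Z_p[\zeta_{p^r}]$, and the $\zeta$-canonical problem over $R$ \emph{is} that scheme viewed over $R$, whence regularity and flatness of pure relative dimension $1$ descend. Since your fallback at the supersingular points is in any case the Katz--Mazur regularity theorem, the proof is repaired simply by citing 7.6.1\,(2) together with 9.1.7 for all points at once---which is precisely what the paper does---but as written your ordinary-locus step would fail.
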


\begin{proof}
	Using that $N$ is a unit in $R$, one first shows that for $N\ge 4$, the moduli problem
	$[\mu_N]$ of $\mu_N$-structures in the sense of (\ref{muNstr}) on $(\Ell/R)$ is 
	representable over $\Spec(R)$ and finite \'etale;
	this follows from  2.7.4, 3.6.0, 4.7.1 and 5.1.1 of \cite{KM}, as $[\mu_N]$
	is isomorphic to $[\Upgamma_1(N)]$ over any $R$-scheme containing a fixed choice 
	of primitive $N$-th root of unity (see also \cite[8.4.11]{KM}).  By \cite[4.3.4]{KM}, 
	it is then enough to show that
	$[\bal \Upgamma_1(p^r)]^{\zeta\can}$ on $(\Ell/R)$ is relatively representable and
	regular, which (via \cite[9.1.7]{KM}) is a consequence of \cite[7.6.1 (2)]{KM}.
\end{proof}

As in \cite[8.2]{KM}, for any relatively representable moduli problem $\scrP$
that is finite over $(\Ell/R)$, 
there is a canonical ``$j$-line" morphism of moduli problems $\scrP\rightarrow [\Gamma(1)]$
on $(\Ell/R)$ that makes the coarse moduli scheme $\M(\scrP)$ \cite[8.1]{KM} 
into a scheme over $\Aff^1_R:=\Spec(R[j])$.    
Using this structure map, we would like to extend $\M(\scrP)$
to a scheme $\c{\M}(\scrP)$ over $\P^1_R$ by ``normalizing near infinity"
via the procedure described in \cite[8.6.3]{KM}.  
Quite generally, assume that $Y$ is a curve over $R$
equipped with a finite morphism $Y\rightarrow \Aff_R^1:=\Spec(R[j])$
making $Y$ ``normal near infinity" \cite[8.6.2 C2]{KM}
in the sense that there exists a monic polynomial $f\in R[j]$ with the property that 
$Y$ is normal over the open subscheme $U$ of $\Aff^1_R$
where $f(j)\neq 0$.  Writing $D:=V(f)\subseteq \Aff^1_R$, 
we then denote by $\c{Y}$ the glueing of
$Y$ with the normalization of $\c{U}:=\P^1_R-D$
in $Y\big|_U$ over $Y\big|_U$.
Intrinsically, $Y^c$ is the unique scheme
that is finite over $\P^1_R$, agrees with $Y$
over $\Aff^1_{R}$, and is normal over an open neighborhood
of the $\infty$-section in $\P^1_R$.
  
\begin{proposition}\label{XrCptRepresentability}
	If $N \ge 4$, then the compactified moduli scheme $\c{\M}(\scrP_r^{\zeta})$ is regular and proper flat of pure
	relative dimension $1$ over $\Spec(R)$.
\end{proposition}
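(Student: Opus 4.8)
The plan is to establish the four asserted properties — properness, flatness, pure relative dimension one, and regularity — by analyzing $\c{\M}(\scrP_r^{\zeta})$ separately over the affine $j$-line $\Aff^1_R := \Spec(R[j])$ and in a neighborhood of the $\infty$-section of $\P^1_R$, these two opens covering $\c{\M}(\scrP_r^{\zeta})$. Properness is immediate from the construction: $\c{\M}(\scrP_r^{\zeta})$ is by definition finite over $\P^1_R$, finite morphisms are proper, and $\P^1_R$ is proper over $\Spec(R)$, so the composite $\c{\M}(\scrP_r^{\zeta}) \to \Spec(R)$ is proper.

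Over $\Aff^1_R$ the scheme $\c{\M}(\scrP_r^{\zeta})$ coincides with $\M(\scrP_r^{\zeta})$, so Proposition \ref{XrRepresentability} supplies at once regularity, $R$-flatness, and pure relative dimension one there. For flatness and relative dimension in a neighborhood of $\infty$ I would argue formally. By construction $\c{\M}(\scrP_r^{\zeta})$ is, near the $\infty$-section, the normalization of $\c{U} = \P^1_R \setminus D$ in the finite $\c{U}$-scheme $\M(\scrP_r^{\zeta})|_U$. Since $R$ is a discrete valuation ring in our applications, $\c{U}$ is integral and dominates $\Spec(R)$; its normalization in a finite extension is again finite over $\c{U}$ with every irreducible component dominating $\Spec(R)$, hence $R$-flat. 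As $\c{\M}(\scrP_r^{\zeta})$ is finite and surjective onto $\P^1_R$, its relative dimension over $R$ is everywhere one. Thus only regularity near the cusps remains to be checked.

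The substantive point is regularity in a neighborhood of the $\infty$-section, where the normalization procedure hands us only normality. To upgrade this to regularity I would invoke the Katz--Mazur analysis of the cusps via the Tate curve: completing $\c{\M}(\scrP_r^{\zeta})$ along a cuspidal section and identifying the universal object with the Tate curve $\mathrm{Tate}(q)$ over the appropriate base, the datum of a cyclic $p^r$-isogeny $\phi$ together with generators $P \in \ker\phi$ and $Q \in \ker\phi^t$ pairing to $\zeta$, and the $\mu_N$-structure $\alpha$, all translate into an explicit level structure on $\mathrm{Tate}(q)$. One then computes the completed local rings at the cusps directly and verifies they are regular, exactly as in the cusp computations of \cite[Ch.~8, Ch.~10]{KM}; the representability and regularity of the $[\mu_N]$-part are as already used in the proof of Proposition \ref{XrRepresentability}. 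Assembling this with the finite part settles regularity on all of $\c{\M}(\scrP_r^{\zeta})$.

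The main obstacle is precisely this cusp computation. Away from the $\infty$-section all four properties follow directly from Proposition \ref{XrRepresentability} and the formal arguments above, but near the cusps the construction yields only normality, and promoting normality to regularity genuinely requires the explicit Tate-curve description of the completed local rings; everything else is routine.
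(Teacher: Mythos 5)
Your proposal is correct and follows essentially the same route as the paper: the paper's proof is a direct citation of \cite[10.9.1 (2)]{KM} (see also \cite[10.9.3]{KM} and \cite[8.6.8 (2)]{KM}), and that theorem packages exactly your decomposition---the affine part handled by Proposition \ref{XrRepresentability}, properness and flatness by formal properties of the finite map to $\P^1_R$, and regularity near the cusps by the Tate-curve computation of completed local rings. You have correctly identified that the Tate-curve cusp analysis is the only substantive input beyond Proposition \ref{XrRepresentability}, and that is precisely the content of the Katz--Mazur result the paper invokes.
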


\begin{proof}
	This follows from \cite[10.9.1 (2)]{KM}; see also \cite[10.9.3]{KM}
	and {\em cf.} \cite[8.6.8 (2)]{KM}.
\end{proof}

In what follows, we will frequently need to extend
the map of moduli schemes $\M(\scrP)\rightarrow \M(\scrQ)$
corresponding to a morphism of representable moduli problems
$\scrP\rightarrow \scrQ$
that are finite over $(\Ell/R)$,
and normal near infinity, to a morphism $\c{\M}(\scrP)\rightarrow \c{\M}(\scrQ)$
on compactified moduli schemes.  That such an extension
exists is clear from the very construction of $\c{\M}(\cdot)$
when $\scrP\rightarrow \scrQ$ is a morphism {\em over} $(\Ell/R)$,
as in this case the induced mapping $\M(\scrP)\rightarrow \M(\scrQ)$
is compatible with the canonical projections to the affine $j$-line
by which the compactifications are built via normalization near infinity.
As we will need to consider ``abstract" (or exotic) morphisms $\scrP\rightarrow \scrQ$ \cite[11.2]{KM}
which are not necessarily morphisms of moduli problems
on $(\Ell/R)$, an additional argument for why the desired 
morphism on compactified moduli schemes exists is required.
The key is the following Lemma:

\begin{lemma}\label{UlmerLemma}
	Let $Y$ be a curve over $R$ equipped with two finite morphisms
	$Y\rightrightarrows \Aff^1_R$ such that $Y$ is normal near infinity
	with respect to each map.
	Then the two compactifications 	of $Y$ constructed as above using either projection to the affine line
	are canonically isomorphic.
\end{lemma}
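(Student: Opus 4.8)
The plan is to produce a canonical isomorphism restricting to the identity on $Y$ by showing that the compactification built from the first projection already satisfies the intrinsic characterization of the one built from the second. Write $\pi_1,\pi_2\colon Y\rightrightarrows\Aff^1_R$ for the two finite maps, $j_i:=\pi_i^*(j)\in\Gamma(Y,\O_Y)$ for the corresponding global coordinates, and $\c{Y}_1,\c{Y}_2$ for the two compactifications, with boundary $C_i:=\c{Y}_i\setminus Y$ lying over the $\infty$-section of $\P^1_R$. Both $\c{Y}_1$ and $\c{Y}_2$ contain $Y$ as a common dense open subscheme, so it suffices to produce a finite morphism $\bar\pi_2\colon\c{Y}_1\to\P^1_R$ extending $\pi_2$ such that $\c{Y}_1$ is normal over a neighborhood of $\bar\pi_2^{-1}(\infty)$; the intrinsic characterization of $\c{Y}_2$ recalled just before the lemma then forces a unique isomorphism $\c{Y}_1\simeq\c{Y}_2$ that is the identity on $Y$, and this uniqueness makes it canonical.

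First I would match up the boundaries on the generic fibre. Since $Y$ is affine with $\O(Y)$ finite over each $R[j_i]$ and $j_1,j_2$ both global functions, for any valuation $v$ of (a component of) the generic fibre $Y_K$ one has $v\notin Y_K\iff v(j_1)<0\iff v(j_2)<0$, where the equivalences use integral equations for $j_2$ over $R[j_1]$ and for $j_1$ over $R[j_2]$. Hence the places at infinity for $\pi_1$ and $\pi_2$ coincide, so $\c{Y}_{1,K}$ and $\c{Y}_{2,K}$ are canonically the same normal $K$-curve, and $1/j_2$ is a regular function vanishing along each horizontal (codimension-one) component of $C_1$. Because $\c{Y}_1$ is normal near $C_1$, the local ring at every codimension-one point of $C_1$ is a discrete valuation ring, so the valuative criterion of properness extends the rational map $j_2\colon\c{Y}_1\dashrightarrow\P^1_R$ across each such point.

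The main obstacle is extending $\bar\pi_2$ across the finitely many closed cusp points, where $C_1$ meets the special fibre and $\c{Y}_1$ is two-dimensional; there the valuative criterion alone is insufficient and an a priori contraction (a blow-up phenomenon) could obstruct the extension, so this is the step specific to the mixed-characteristic setting. To rule it out I would use normality of $\c{Y}_1$ near $C_1$ to write $\O_{\c{Y}_1,c}=\bigcap_{\mathfrak p}\O_{\mathfrak p}$ over the height-one primes $\mathfrak p$ through a closed cusp $c$, and check $v_{\mathfrak p}(j_2)\le 0$ for each: for the horizontal cusp this is the inequality already found, while for a vertical (special-fibre) component $\Theta$ through $c$ one must show $j_2$ does not vanish identically on $\Theta$. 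This follows from the facts that $\c{Y}_1$ is finite over $\P^1_R$ via $\bar\pi_1$, so $\bar\pi_1|_\Theta$ is surjective, and that $j_2$ is integral over $R[j_1]$ near $\infty$, which forces $1/j_2$ into the integral closure $\O_{\c{Y}_1,c}$. Thus $1/j_2\in\O_{\c{Y}_1,c}$ and $\bar\pi_2$ is a genuine morphism on all of $\c{Y}_1$.

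Finally I would record that $\bar\pi_2$ is finite: it is proper, as $\c{Y}_1$ is proper over $R$ and $\P^1_R$ is separated, and it is quasi-finite because $j_2$ is nonconstant on every irreducible component of $\c{Y}_1$ (no component is contracted), whence properness together with quasi-finiteness yields finiteness. Since $\bar\pi_2^{-1}(\infty)\subseteq C_1$, where $\c{Y}_1$ is normal, the pair $(\c{Y}_1,\bar\pi_2)$ satisfies every condition of the intrinsic characterization of $\c{Y}_2$, and the asserted canonical isomorphism follows.
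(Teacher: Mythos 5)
Your strategy is genuinely different from the paper's: the paper argues purely ring-theoretically, writing down the equation $F(x_1,x_2)=0$ of the image of $Y$ in $\Aff^1_R\times_R\Aff^1_R$ and using its (near-)monic leading terms to show $x_2^{-1}$ is integral over a localization of $R[x_1^{-1}]$, then matching localizations of the two normalizations; you instead extend $\pi_2$ to a finite map $\bar\pi_2\colon\c{Y}_1\to\P^1_R$ and invoke the uniqueness characterization of $\c{Y}_2$. The architecture is reasonable, but your central step has a genuine gap: at a closed cusp $c$, the height-one primes of the normal local ring $\O_{\c{Y}_1,c}$ are \emph{not} exhausted by horizontal cusp branches and vertical components of the special fibre. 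They also include the closures of closed points $y$ of the generic fibre of $Y$ itself that specialize into $C_1$. Already for $Y=\Aff^1_{\Z_p}$, $\pi_1=\mathrm{id}$, $\c{Y}_1=\P^1_{\Z_p}$, and $c$ the reduction of $\infty$, the local ring $\Z_p[j_1^{-1}]_{(p,\,j_1^{-1})}$ contains, besides $(p)$ and $(j_1^{-1})$, the infinitely many height-one primes through the interior points $j_1=p^{-n}$. At such a prime $\mathfrak p$ one has $1/j_2\in\O_{\mathfrak p}$ if and only if $j_2(y)\neq 0$, and this is a real condition: you must show that no zero of $j_2$ on the generic fibre of $Y$ specializes into $C_1$. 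This \emph{is} provable with a tool you already use---if $j_2(y)=0$, the integral equation of $j_1$ over $R[j_2]$ evaluated at $y$ makes $j_1(y)$ integral over (the image of) $R$, whereas $c\in\overline{\{y\}}\cap C_1$ would make $1/j_1(y)$ a nonzero nonunit of the local domain $\O_c/\mathfrak p$, and the inverse of such an element is never integral---but as written your intersection computation silently omits these divisors, so the extension of $\bar\pi_2$ across $c$ is not established.

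Two further points need repair. First, your justification on a vertical component $\Theta$ is circular and insufficient as stated: ``which forces $1/j_2$ into the integral closure $\O_{\c{Y}_1,c}$'' is the conclusion being proved, and reducing the equation $j_2^{\,n}+a_{n-1}(j_1)j_2^{\,n-1}+\cdots+a_0(j_1)=0$ along $\Theta$ proves nothing when all coefficients of the $a_i$ reduce to zero modulo the maximal ideal of $R$; the clean argument is simply that $\pi_2\colon Y\to\Aff^1_R$ is finite, hence has finite fibres, while $\Theta\cap Y$ is dense in $\Theta$ (a vertical component disjoint from $C_1$ would be a proper curve inside the affine scheme $Y$), so $j_2$ cannot be constant along $\Theta$. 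The same finiteness, rather than ``no component is contracted,'' is also what you need at the end: for a morphism of two-dimensional schemes, quasi-finiteness requires that no \emph{curve} is contracted, not merely no irreducible component. Second, the intrinsic characterization requires $\bar\pi_2^{-1}(\Aff^1_R)=Y$, i.e.\ the inclusion $C_1\subseteq\bar\pi_2^{-1}(\infty)$, which you never verify; only the trivial reverse inclusion appears in your argument. It does hold---Krull's principal ideal theorem applied to $1/j_1\in\mathfrak m_c$ in the two-dimensional normal local ring $\O_c$ shows every closed point of $C_1$ is a specialization of a horizontal cusp, along which $1/j_2$ vanishes by your generic-fibre computation---but without it the pair $(\c{Y}_1,\bar\pi_2)$ need not satisfy the characterization of $\c{Y}_2$ at all. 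Note finally that your argument presupposes $R$ is a discrete valuation ring (special fibre, two-dimensionality), whereas the paper's algebraic proof applies verbatim over the general base rings for which Corollary \ref{MorExtCor} is invoked.
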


\begin{proof}	
	Let $\pr_i:Y\rightarrow \Aff_R^1:=\Spec(R[x_i])$	
	for $i=1,2$ be the given projections and again denote by $\pr_i:\c{Y}_i\rightarrow \P^1_R$
	the compactification of $Y$ using $\pr_i$.
	From the very construction
	of $\c{Y}_i$, it suffices to prove that there are
	open neighborhoods $V_i$ of $\pr_i^{-1}(\infty)$
	with $V_1\simeq V_2$ canonically.
	
	We construct $V_i$ and such a canonical identification
	as follows.  By the hypothesis that $\pr_i$ is normal near infinity,
	there is a monic polynomial $f_i\in R[x_i]$ of degree $d_i$ 
	with $Y$ normal over the open $D(f_i)$ in $\Spec(R[x_i])$.
	Let $g_i=x_i^{-d_i} f_i$, viewed as a polynomial in $R[x_i^{-1}]$,
	and denote by $W$ the scheme-theorteic image of the map 
	$\pr_1\times \pr_2: Y\rightarrow \Aff_R^1\times_R \Aff_R^1 = \Spec(R[x_1,x_2])$.
	Then $W=V(F)$ for a polynomial $F(x_2,x_2)$ over $R$
	which, as each $\pr_i$ is finite, may be taken to be monic 
	of degree $e_1$ as a polynomial in $x_1$ over $R[x_2]$
	and with leading term $r x_2^{e_2}$ for a unit $r\in R[x_1]^{\times}=R^{\times}$
	when viewed as a polynomial over $R[x_1]$.
	Define $h_1:=x_1^{-e_1}F(x_1,0)$ and $h_2:=x_2^{-e_2}F(0,x_2)$
	in $R[x_i^{-1}][g_i^{-1}]$ for $i=1$, $2$, respectively,
	and set $B_i:=R[x_i^{-1}][g_i^{-1}][h_i^{-1}]$.
	It is clear from the definitions that
	$g_i$ and $h_i$ are units near $V(x_i^{-1})$, so
	$\Spec(B_i)$ is an open neighborhood
	of the infinity section in $\P^1_R$.
	Let $A:=R[x_1,x_2]/(F)$ be the coordinate ring of $W$
	and set $A':=A[\{x_i^{-1},g_i^{-1},h_i^{-1}\}_{i=1,2}]$,
	which is the coordinate ring of an open $U$ in $W$ over the infinity section of each copy of $\P^1_R$.
	Denote by $\wt{B}_i$ the normalization of $B_i$ in $A'$.
	We claim that there are suitable localizations $C_i$
	of $\wt{B}_i$ for $i=1,2$ with $C_1=C_2$ as subrings of
	the total ring of fractions of $A$.  Granting this,
	we then take $V_i$ to be the 
	normalization of $\Spec(C_i)$ in $Y\big|_U$.
	By transitivity of integral closure, it is clear
	that $V_i$ is an open neighborhood $\pr_i^{-1}(\infty)$
	in $\c{Y}_i$, and by construction we have $V_1\simeq V_2$ canonically.
	
	To see our claim, we observe that $x_2^{-1}$
	lies in $\wt{B}_1$.  Indeed, in $A'$ we have
	$$0=x_1^{-e_1}x_2^{-e_2}F(x_1,x_2) = x_2^{-e_2}h_1 + \text{terms of lower degree in }x_2^{-1},$$	
	from which it follows that $x_2^{-1}\in A'$ is integral over
	$B_1=R[x_1^{-1}][g_1^{-1}][h_1^{-1}]$ so lies in $\wt{B}_1$.
	As $g_2$ and $h_2$ are polynomials in $x_2^{-1}$ with $R$-coefficients,
	we can then make sense of the localization
	$\wt{B}_1[g_2^{-1}][h_2^{-1}]$.
	Now if $\alpha \in \wt{B}_2$, then $\alpha$
	is integral over $B_2=R[x_2^{-1}][g_2^{-1}][h_2^{-1}]$,
	so $(g_2h_2)^d\alpha$ is integral over $R[x_2^{-1}]$
	for some positive integer $d$, whence $(g_2h_2)^d\alpha$
	lies in $\wt{B}_1$ and we conclude that
	$\wt{B}_2 \subseteq \wt{B}_1[g_2^{-1}][h_2^{-1}]$.
	Similarly, we have $\wt{B}_1 \subseteq \wt{B}_2[g_1^{-1}][h_1^{-1}]$,
	and it follows that as 
	$$C_1:=\wt{B}_1[g_2^{-1}][h_2^{-1}] = \wt{B}_2[g_1^{-1}][h_1^{-1}]=:C_2,$$
	as subrings of the total ring of fractions of $A$.
	This establishes our claim, and completes the proof.
\end{proof}

\begin{corollary}\label{MorExtCor}
	Let $\scrP\rightarrow \scrQ$ be an abstract morphism of representable moduli problems
	that are finite over $(\Ell/R)$ and normal near infinity, 
	and denote by $\rho:\M(\scrP)\rightarrow \M(\scrQ)$ the induced morphism
	of moduli schemes.  Assume that $\rho$ is finite and that $\M(\scrP)$
	is normal near infinity with respect to the composition $j\circ\rho$.
	Then there is a unique extension of $\rho$ to a finite morphism
	$\c{\rho}:\c{\M}(\scrP)\rightarrow \c{\M}(\scrQ)$ of the compactified moduli schemes.  
\end{corollary}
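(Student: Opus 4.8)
The plan is to reduce the assertion to the case already disposed of in the discussion preceding Lemma~\ref{UlmerLemma}, in which the morphism of moduli problems lies over $(\Ell/R)$ and is therefore automatically compatible with the canonical projections to the affine $j$-line used to build the compactifications. The only obstruction to running that argument directly is that an \emph{abstract} morphism $\scrP\to\scrQ$ need not respect these canonical projections; the device for circumventing this is precisely Lemma~\ref{UlmerLemma}, which shows that the compactification of a curve is insensitive to the choice of finite projection to $\Aff^1_R$ among those for which the curve is normal near infinity.

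First I would equip $\M(\scrP)$ with two finite morphisms to $\Aff^1_R$: the canonical $j$-line morphism $j_\scrP$ used in Proposition~\ref{XrCptRepresentability} to construct $\c{\M}(\scrP)$, and the composite $j_\scrQ\circ\rho = j\circ\rho$, where $j_\scrQ$ denotes the $j$-line morphism of $\M(\scrQ)$. The composite is finite, since $\rho$ is finite by hypothesis and $j_\scrQ$ is finite because $\scrQ$ is finite over $(\Ell/R)$; moreover $\M(\scrP)$ is normal near infinity with respect to $j_\scrP$ by the construction of $\c{\M}(\scrP)$ and with respect to $j\circ\rho$ by the standing assumption on $\rho$. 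Lemma~\ref{UlmerLemma} then furnishes a canonical isomorphism $\c{\M}(\scrP)\simeq \c{\M}(\scrP)'$, where $\c{\M}(\scrP)'$ denotes the compactification of $\M(\scrP)$ formed using $j\circ\rho$ in place of $j_\scrP$.

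Next, observe that $\rho$ is tautologically a morphism over $\Aff^1_R$ once the source carries the projection $j\circ\rho$ and the target carries $j_\scrQ$. Matching the opens used in the two normalizations-near-infinity, the functoriality invoked in the pre-Lemma discussion then extends $\rho$ to a morphism $\c{\M}(\scrP)'\to \c{\M}(\scrQ)$; composing with the isomorphism of the previous step produces the sought extension $\c{\rho}\colon \c{\M}(\scrP)\to\c{\M}(\scrQ)$. To see that $\c{\rho}$ is finite, I would note that it is a morphism over $\P^1_R$ between two schemes each finite over $\P^1_R$ (Proposition~\ref{XrCptRepresentability}), and factor it as the graph $\c{\M}(\scrP)\to \c{\M}(\scrP)\times_{\P^1_R}\c{\M}(\scrQ)$, a closed immersion because $\c{\M}(\scrQ)\to\P^1_R$ is separated, followed by the projection to $\c{\M}(\scrQ)$, which is finite as a base change of $\c{\M}(\scrP)\to\P^1_R$.

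Uniqueness follows because $\M(\scrP)$ is schematically dense in $\c{\M}(\scrP)$ --- the compactification adjoins only finitely many points above $\infty$ to an $R$-flat curve of pure relative dimension one --- while $\c{\M}(\scrQ)$ is separated, so any two finite extensions of $\rho$ agreeing on the dense open $\M(\scrP)$ must coincide. The main obstacle is conceptual rather than technical: recognizing that one should not try to compare $\rho$ with the canonical projections $j_\scrP$ and $j_\scrQ$ simultaneously (which the abstractness of $\scrP\to\scrQ$ forbids), but instead transport $\c{\M}(\scrP)$ across Lemma~\ref{UlmerLemma} to the $j\circ\rho$-compactification, over which $\rho$ becomes a projection-compatible map by fiat. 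Once this is set up, every remaining step is formal.
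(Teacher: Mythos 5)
Your proposal is correct and follows essentially the same route as the paper: form the alternative compactification $\c{\M}(\scrP)'$ of $\M(\scrP)$ using $j':=j\circ\rho$, extend $\rho$ over it by projection-compatibility, and transport back via Lemma~\ref{UlmerLemma} applied to the two projections $j_{\scrP}$ and $j\circ\rho$. You additionally spell out the finiteness (graph factorization over $\P^1_R$) and uniqueness (schematic density of $\M(\scrP)$ plus separatedness) that the paper dismisses as ``readily checked,'' and those details are sound.
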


\begin{proof}
	Let us write $j':=j\circ \rho : \M(\scrP)\rightarrow \Aff^1_R$
	and denote by $\c{\M}(\scrP)'$ the compactification of $M(\scrP)$
	using $j'$.  It follows easily from the given constructions
	that $\rho$ uniquely extends to a morphism 
	$\c{\M}(\scrP)'\rightarrow \c{\M}(\scrQ)$
	that is readily checked to be finite.  Applying Lemma \ref{UlmerLemma}
	to $Y=\M(\scrP)$ with the two projections $j,j'$ to $\Aff^1_R$
	then gives $\c{\M}(\scrP)\simeq \c{\M}(\scrP)'$ canonically,
	and the conclusion follows.
\end{proof}

Recall that we have fixed a compatible sequence $\{\varepsilon^{(r)}\}_{r\ge1}$
of primitive $p^r$-th roots of unity in $\o{\Q}_p$, and that we set $R_r:=\Z_p[\mu_{p^r}]$
and $K_r:=\Frac(R_r)$.

\begin{definition}\label{XrDef}
	We define $\X_r:=\c{\M}(\scrP_r^{\varepsilon^{(r)}})$, viewed as a scheme over $T_r:=\Spec(R_r)$.
\end{definition}

\begin{remark}\label{genfiberrem}
We may identify the generic fiber of $\X_r$ with the (base change of the) modular curve $(X_r)_{K_r}$.
Indeed, it is a standard exercise that $X_r$
is the compactified moduli scheme of triples $(E,\iota, \alpha)$ consisting 
of an elliptic curve $E$ with embeddings of group schemes
$\iota:\mu_{p^r}\hookrightarrow E$ and
$\alpha:\mu_{N}\hookrightarrow E$.
Given such a triple $(E,\iota, \alpha)$ over a $K_r$-scheme $S$, 
we set $E':=E/\im(\iota)$, $P_{\iota}:=\iota(\varepsilon^{(r)})$ and denote by 
$Q_{\iota}\in E'(S)$ the unique point satisfying $\langle \iota(z),Q_{\iota}\rangle = z$
for all $z\in \mu_{p^r}(S)$.  The association 
\begin{equation*}
	(E, \iota, \alpha)\mapsto 
	(\phi: E\rightarrow E', P_{\iota},Q_{\iota}; \alpha)
\end{equation*}
then induces the claimed identification 
$(\X_r)_{K_r}\simeq (X_r)_{K_r}$.
\end{remark}

There is a canonical action of $\Z_p^{\times}\times (\Z/N\Z)^{\times}$
by $R_r$-automorphisms of $\X_r$,
defined at the level of the underlying moduli problem by
\begin{equation}
		{(u,v)\cdot (\phi:E\rightarrow  E',P,Q; \alpha)} :={(\phi:E\rightarrow E',uP, u^{-1}Q; \alpha v)}
	\label{balcanaction}
\end{equation}
as one checks by means of the computation
	$\langle uP,u^{-1}Q\rangle_{\phi} = \langle P,Q\rangle^{uu^{-1}}_{\phi} = \langle P,Q \rangle_{\phi}$.
Here, we again write $v:\mu_N\rightarrow\mu_N$ for the automorphism of $\mu_N$ 
functorially defined by $\zeta \mapsto \zeta^v$ for any $N$-th root of unity $\zeta$. 
We refer to this action of $\Z_p^{\times}\times (\Z/N\Z)^{\times}$ as the {\em diamond operator}
action, and will denote by $\langle u \rangle$ (respectively $\langle v \rangle_N$) the 
automorphism induced by $u\in \Z_p^{\times}$ (respectively $v\in (\Z/N\Z)^{\times}$).

There is also an $R_r$-semilinear ``geometric inertia" action of $\Gamma:=\Gal(K_{\infty}/K_0)$
on $\X_r$, which reflects the fact that the generic fiber of $\X_r$ descends to $\Q_p$.
To explain this action,
for $\gamma \in \Gamma$ and any $T_r$-scheme $T'$, let us write 
$T'_{\gamma}$ for the base change of $T'$ along the induced morphism $\gamma: T_r\rightarrow T_r$.
For a moduli problem $\scrP$ on $(\Ell/T_r)$, we denote by $\gamma^*\scrP$
the moduli problem on $(\Ell/(T_r)_{\gamma})$ determined by the requirement
$(\gamma^*\scrP)(E_{\gamma}/S_{\gamma})=\scrP(E/S)$ for all objects $E/S$ of $(\Ell/T_r)$;
we note that if $\scrP$ is representable, then so is $\gamma^*\scrP$
and we have $\c{\M}(\gamma^*\scrP)\simeq \c{\M}(\scrP)_{\gamma}$ 
\cite[4.13]{KM} and {\em cf.} \cite[12.10.1]{KM} and \cite[\S4]{Ulmer}.
Each $\gamma\in \Gamma$ gives rise to 
a morphism of moduli problems $\gamma: \scrP_r^{\varepsilon^{(r)}}\rightarrow \gamma^*\scrP_r^{\varepsilon^{(r)}}$
via the assignment $\gamma:\scrP(E/S)\rightsquigarrow (\gamma^*\scrP)(E/S) = \scrP(E_{\gamma^{-1}}/S_{\gamma^{-1}})$
determined by
\begin{equation}
		{\gamma(\phi:E\rightarrow E',P,Q; \alpha)} := 
		{(\phi_{\gamma^{-1}}:E_{\gamma^{-1}}\rightarrow E'_{\gamma^{-1}},\chi(\gamma)P_{\gamma^{-1}},Q_{\gamma^{-1}}; \alpha_{\gamma^{-1}})}
	\label{gammamapsModuli}
\end{equation}
where the subscript of $\gamma^{-1}$ means ``base change along $\gamma^{-1}$" (see \S\ref{Notation}).
Since 
\begin{equation*}
	\langle \chi(\gamma)P_{\gamma^{-1}}, Q_{\gamma^{-1}}\rangle_{\phi_{\gamma^{-1}}} = 
	\gamma^{-1}\langle P,Q\rangle_{\phi}^{\chi(\gamma)} = \langle P,Q \rangle_{\phi},
\end{equation*}
this really is a morphism of moduli problems on $(\Ell/T_r)$,
so by the discussion preceding Lemma \ref{UlmerLemma} and
\cite[Proposition 9.3.1]{KM} induces
a morphism of $T_r$-schemes
\begin{equation}
	\xymatrix{
		{\gamma:\X_r} \ar[r] & {(\X_r)_{\gamma}}
		}\label{gammamaps}
\end{equation}
for each $\gamma\in \Gamma$, compatibly with change in $\gamma$.

Recall (\cite[\S6.7]{KM}) that over any base scheme $S$, a cyclic $p^{r+1}$-isogeny of elliptic curves 
$\phi:E\rightarrow E'$  admits a ``standard factorization" (in the sense of \cite[6.7.7]{KM})
\begin{equation}
	\xymatrix{
		E=:E_0 \ar[r]^-{\phi_{0,1}} & E_1 \ar[r] \cdots & E_{r} \ar[r]^-{\phi_{r,r+1}} & E_{r+1}:=E'
		}.\label{standardFac}
\end{equation}
For each pair of nonnegative integers $a<b\le r+1$ we will write
$\phi_{a,b}$ for the composite $\phi_{a,a+1}\circ\cdots\circ\phi_{b-1,b}$
and $\phi_{b,a}:=\phi_{a,b}^t$ for the dual isogeny.

Using this notion
and appealing to Corollary \ref{MorExtCor}, we may define
``degeneracy maps" $\pr,\ps:\X_{r+1} \rightrightarrows \X_r$ (over the map
$T_{r+1}\rightarrow T_r$) at the level of underlying moduli problems
as follows ({\em cf.}: \cite[11.3.3]{KM}):
\begin{equation}
\begin{aligned}
		&{\pr(\phi:E_0\rightarrow E_{r+1},P,Q; \alpha)}:=
		{(\phi_{0,r}:E_0\rightarrow E_{r},pP,\phi_{r+1,r}(Q); \alpha)}\\
		&{\ps(\phi: E_0\rightarrow E_{r+1},P,Q; \alpha)}:=
		{(\phi_{1,r+1}:E_1\rightarrow E_{r+1},\phi_{0,1}(P),pQ; \phi_{0,1}\circ \alpha)}
\end{aligned}\label{XrDegen}
\end{equation} 
By the universal property of fiber products, we obtain morphisms of
$T_{r+1}$-schemes
\begin{equation}
	\xymatrix{
		{\X_{r+1}} \ar@<0.5ex>[r]^-{\pr}\ar@<-0.5ex>[r]_-{\ps} &  {\X_r\times_{T_r} T_{r+1}}
	}.\label{rdegen}
\end{equation}
that are compatible with the diamond operators and
the geometric inertia action of $\Gamma$.

\begin{remark}
Via the identification of Remark \ref{genfiberrem}, the maps 
on generic fibers induced by (\ref{rdegen}) coincide
with the (base change to $K_{r+1}$ of the) degeneracy maps $\pr,\ps:X_{r+1}\rightrightarrows X_r$
of \S\ref{Notation}, which (on complex analytifications) arise from
the mappings $\rho:\tau\mapsto\tau$ and $\sigma:\tau\mapsto p\tau$ on the complex upper half-plane.
\end{remark}

Recall that we have fixed a choice of primitive $N$-th root of unity $\zeta_N$ in $\o{\Q}_p$.
The Atkin Lehner ``involution" $w_{\zeta_N}$ 
is defined as follows.
Let $E$ be an elliptic curve over an $R_r':=R_r[\mu_N]$-scheme $S$ and 
$\alpha: \mu_N\hookrightarrow E$ an embedding of group schemes.
Writing $\psi: E\rightarrow E/\im(\alpha)$ and
$\psi': E\rightarrow E/\im(\phi\circ \alpha)$
for the canonical maps, as $N$ is a unit in $R_r'$
there is a unique embedding $\beta:\mu_N\rightarrow E/\im(\alpha)$ 
of group schemes over $S$ satisfying $\langle \alpha(z), \beta(\zeta_N)\rangle_{\psi}=z$
for all $z\in \mu_N(S)$.  Define
\begin{equation*}
	{w_{\zeta_N} (\phi:E\rightarrow E',P,Q; \alpha)} :=
		{(\o{\phi}:E/\im(\alpha) \rightarrow E'/\im(\phi\circ\alpha),\psi(P),N^{-1}\psi'(Q); \beta)},
		\label{NAtkinLehnerInv}
\end{equation*}
where $\o{\phi}$ is the isogeny induced by $\phi$, and by a slight abuse we write ``$N^{-1}$''
for the multiplicative inverse of $N$ in $\Z_p$.  Using the basic compatibility
properties of the Weil pairing we calculate
\begin{align*}
	\langle \psi(P),N^{-1}\psi'(Q) \rangle_{\o{\phi}} &= 
	\langle P,N^{-1}\psi'(Q) \rangle_{\psi\o{\phi}}=
	\langle P,N^{-1}\psi'(Q) \rangle_{\psi'\phi}=
	\langle N^{-1}\psi'(Q), P \rangle^{-1}_{\Dual{\phi}\Dual{{\psi'}}}\\
		&=\langle N^{-1}\Dual{{\psi'}}\psi'(Q), P \rangle^{-1}_{\Dual{\phi}}=
				\langle Q, P \rangle^{-1}_{\Dual{\phi}}=
				\langle P,Q\rangle_{\phi}=\varepsilon^{(r)}
\end{align*}
so that this really does define an endomorphism of $\scrP_r^{\varepsilon^{(r)}}$.
Invoking Corollary \ref{MorExtCor} then gives a self-map of
$\X_r\times_{R_r} R_r'$ over $R_r'$ that we again denote by $w_{\zeta_N}$.
Exactly as in (\ref{gammamapsModuli}), each $g\in \Gal(K_0'/K_0)$
induces a morphism $\X_r\times_{R_r} R_r'\rightarrow g^*(\X_r\times_{R_r} R_r')$.

Following \cite[11.3.2]{KM} and once again employing Corollary \ref{MorExtCor},
we define the Atkin Lehner automorphism $w_{\varepsilon^{(r)}}$ of $\X_r$ over $R_r$ at the level of
underlying moduli problem by
\begin{equation*}
	{w_{\varepsilon^{(r)}} (\phi:E\rightarrow E',P,Q; \alpha)} :=
		{(\phi^t:E'\rightarrow E,-Q,P;\ \phi\circ\alpha )}
		\label{AtkinLehnerInv}
\end{equation*}
We then define $w_r:=w_{\varepsilon^{(r)}} \circ w_{\zeta_N}$; 
it is an automorphism of $\X_r \times_{R_r} R_r'$
over $R_r'$.

\begin{proposition}\label{ALinv}
	For all $(u,v)\in \Z_p^{\times}\times(\Z/N\Z)^{\times}$ and all 
	$(\gamma,g)\in \Gamma\times \Gal(K_0'/K_r)\simeq\Gal(K_{\infty}'/K_0)$, the identities
	\begin{align*}
		w_{\zeta_N}^2 &= \langle -1\rangle_N\langle N\rangle \\
		w_{\varepsilon^{(r)}}^2 &= \langle p^r\rangle_N\langle -1\rangle \\
		w_{\zeta_N}\circ w_{\varepsilon^{(r)}} &= \langle p^r\rangle^{-1}_N \langle N\rangle w_{\varepsilon^{(r)}}\circ
		w_{\zeta_N}\\
		w_r^2 &= \langle -1\rangle_N\langle -1\rangle \\
		w_{\varepsilon^{(r)}} \langle u\rangle \langle v\rangle_N &= 
		\langle u^{-1}\rangle\langle v\rangle_N w_{\varepsilon^{(r)}} \\
		w_{\zeta_N} \langle u\rangle \langle v\rangle_N &= 
		\langle u\rangle\langle v^{-1}\rangle_N w_{\zeta_N} \\
				w_{r} \langle u\rangle \langle v\rangle_N &= 
		\langle u^{-1}\rangle\langle v^{-1}\rangle_N w_{r} \\
		\pr w_{\varepsilon^{(r+1)}} &= w_{\varepsilon^{(r)}}\ps\\
		\ps w_{\varepsilon^{(r+1)}} &= \langle p\rangle_N w_{\varepsilon^{(r)}}\pr\\
		\pr w_{\zeta_N} &= w_{\zeta_N}\pr\\
		\ps w_{\zeta_N} &= \langle p\rangle_N w_{\zeta_N}\ps\\
		\pr w_r &= \langle p\rangle_N w_r \ps\\
		\ps w_r &= \langle p\rangle_N w_r \pr\\
		(\gamma^*w_{\varepsilon^{(r)}})\gamma & = \langle \chi(\gamma)^{-1}\rangle\gamma w_{\varepsilon^{(r)}}\\
		(g^*w_{\zeta_N})g & = \langle a(g)^{-1}\rangle_Ng w_{\zeta_N}\\
		((\gamma,g)^* w_r)(\gamma,g) &= \langle \chi(\gamma)^{-1} \rangle \langle a(g)^{-1}\rangle_N (\gamma,g) w_r
	\end{align*}
	hold, with $a:\Gal(K_{\infty}'/K_0)\rightarrow (\Z/N\Z)^{\times}$ the character determined
	by $\gamma\zeta=\zeta^{a(\gamma)}$ for all $\zeta\in \mu_N(\Qbar_p)$.
\end{proposition}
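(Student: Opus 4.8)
The plan is to reduce every one of the listed identities to a computation with the underlying moduli functor. Each operator occurring here — the diamond operators $\langle u\rangle,\langle v\rangle_N$, the Atkin--Lehner maps $w_{\zeta_N}$, $w_{\varepsilon^{(r)}}$, $w_r$, the degeneracy maps $\pr,\ps$, and the geometric inertia and Galois maps attached to $\gamma\in\Gamma$ and $g$ — is defined by an (abstract) morphism of representable moduli problems and hence, via Corollary \ref{MorExtCor}, is a finite morphism of normal proper $R$-curves extending uniquely over the cusps. Since each $\X_r$ is reduced and separated and contains the fine moduli scheme $\M(\scrP_r^{\varepsilon^{(r)}})$ as a schematically dense open subscheme, two composites of such morphisms agree as soon as they induce the same assignment on the universal object over $(\Ell/R)$, the uniqueness clause of Corollary \ref{MorExtCor} then forcing agreement across the compactification. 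So I would verify each asserted identity by evaluating both sides on a test quadruple $(\phi\colon E\to E',P,Q;\alpha)$ using the defining formulas \eqref{balcanaction}, \eqref{gammamapsModuli}, \eqref{XrDegen}, \eqref{NAtkinLehnerInv}, and the formula for $w_{\varepsilon^{(r)}}$.

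First I would dispatch the \emph{atomic} relations involving only $w_{\zeta_N}$, $w_{\varepsilon^{(r)}}$ and the diamond operators: the two squares, the two commutation-with-diamond relations, and $w_{\zeta_N}\circ w_{\varepsilon^{(r)}}=\langle p^r\rangle_N^{-1}\langle N\rangle\, w_{\varepsilon^{(r)}}\circ w_{\zeta_N}$. These are immediate substitutions. For instance, iterating the definition of $w_{\varepsilon^{(r)}}$ sends $(\phi,P,Q;\alpha)$ to $((\phi^t)^t,-P,-Q;\phi^t\circ\phi\circ\alpha)$, and the canonical identification $(\phi^t)^t=\phi$ together with $\phi^t\phi=[p^r]$ and $[p^r]\circ\alpha=\langle p^r\rangle_N(\alpha)$ yields $w_{\varepsilon^{(r)}}^2=\langle-1\rangle\langle p^r\rangle_N$, the sign factor $\langle-1\rangle$ coming from the $-P,-Q$. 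The corresponding computation for $w_{\zeta_N}^2$ runs through the degree-$N$ quotient isogeny and its dual, with the factor $\langle N\rangle$ recording the degree and $\langle-1\rangle_N$ the Weil-pairing normalization in \eqref{NAtkinLehnerInv}. The commutation relations follow by tracking how $\langle u\rangle\langle v\rangle_N$ (which sends $P\mapsto uP$, $Q\mapsto u^{-1}Q$, $\alpha\mapsto\alpha v$) passes through the swap $(P,Q)\mapsto(-Q,P)$.

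Next I would treat the relations mixing $\pr,\ps$ with the Atkin--Lehner maps, together with the inertia and Galois relations; this is the delicate part. Here one must unwind the standard factorization \eqref{standardFac} and the definitions \eqref{XrDegen}, keeping careful track of which curve $E_a$ in the chain each datum lives on and of how the transpose $\phi\mapsto\phi^t$ reverses the chain. The asymmetry between $\pr$ (built from $\phi_{0,r}$) and $\ps$ (built from $\phi_{1,r+1}$) interacting with dualization is precisely what produces the extra $\langle p\rangle_N$ in $\ps\, w_{\varepsilon^{(r+1)}}=\langle p\rangle_N\, w_{\varepsilon^{(r)}}\,\pr$, and pinning this factor down through the chain of Weil-pairing identities displayed after \eqref{NAtkinLehnerInv} is where I expect the main obstacle to lie. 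The inertia relations are read off from \eqref{gammamapsModuli}, whose built-in factor $\chi(\gamma)$ on $P$ yields the $\langle\chi(\gamma)^{-1}\rangle$ once $P$ and $Q$ are swapped by $w_{\varepsilon^{(r)}}$; similarly $g$ acts on $\zeta_N$ through $a(g)$ and produces $\langle a(g)^{-1}\rangle_N$ for $w_{\zeta_N}$. Finally, every identity involving $w_r=w_{\varepsilon^{(r)}}\circ w_{\zeta_N}$ or the combined pair $(\gamma,g)$ — namely $w_r^2$, $w_r\langle u\rangle\langle v\rangle_N$, $\pr\,w_r$, $\ps\,w_r$, and $((\gamma,g)^*w_r)(\gamma,g)$ — follows formally by composing the atomic relations and using $w_{\zeta_N}\circ w_{\varepsilon^{(r)}}=\langle p^r\rangle_N^{-1}\langle N\rangle\,w_{\varepsilon^{(r)}}\circ w_{\zeta_N}$; for example $w_r^2$ rewrites as $\langle N^{-1}\rangle\langle p^r\rangle_N^{-1}\,w_{\varepsilon^{(r)}}^2\,w_{\zeta_N}^2=\langle -1\rangle\langle-1\rangle_N$ after substituting the two squares.
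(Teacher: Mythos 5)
Your proposal is correct and takes essentially the same route as the paper, whose entire proof is that the identities are ``a straightforward consequence of definitions'': one checks each relation on the underlying moduli problems by direct substitution into the defining formulas (\ref{balcanaction}), (\ref{gammamapsModuli}), (\ref{XrDegen}) and the formulas for $w_{\zeta_N}$ and $w_{\varepsilon^{(r)}}$, the extension over the cusps being automatic from Corollary \ref{MorExtCor} together with schematic density of the open moduli scheme in the normal, separated compactification. Your sample computations (e.g.\ $w_{\varepsilon^{(r)}}^2=\langle p^r\rangle_N\langle -1\rangle$ via $(\phi^t)^t=\phi$ and $\phi^t\phi=[p^r]$, the sign appearing as the diamond action of $u=-1$, and the formal deduction $w_r^2=\langle N^{-1}\rangle\langle p^r\rangle_N^{-1}\,w_{\varepsilon^{(r)}}^2\,w_{\zeta_N}^2=\langle -1\rangle\langle -1\rangle_N$ from the atomic relations) are accurate, so nothing further is needed.
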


\begin{proof}
	This is a straightforward consequence of definitions.
\end{proof}

In order to describe the special fiber of $\X_r$, we must first introduce Igusa curves:

\begin{definition}\label{IgusaStrDef}
	Let $r$ be a nonnegative integer.  The moduli problem $\I_r:=([\Ig(p^r)]; [\mu_N])$ on 
	$(\Ell/\F_p)$ assigns to $(E/S)$ the set of triples $(E,P;\alpha)$ where $E/S$ is an elliptic curve
	and
	\begin{enumerate}
		\item $P\in E^{(p^r)}(S)$ is a point that generates the $r$-fold iterate of 
		Verscheibung $V^{(r)}:E^{(p^r)}\rightarrow E$.   
		\item $\alpha:\mu_N\hookrightarrow E[N]$ is a closed immersion of $S$-group schemes.
	\end{enumerate}
\end{definition}

\begin{proposition}\label{Pr:IgusaSmoothCrv}
	If $N\ge 4$, then the moduli problem $\I_r$ on $(\Ell/\F_p)$ is represented by a smooth 
	affine curve $\M(\I_r)$ over $\F_p$.
\end{proposition}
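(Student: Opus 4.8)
The plan is to establish representability and smoothness of $\M(\I_r)$ by reducing to the structural results of \cite{KM} on Igusa curves and level structures, exactly as in the proof of Proposition \ref{XrRepresentability}. First I would recall that the moduli problem $[\mu_N]$ of $\mu_N$-structures is representable and finite \'etale over $(\Ell/\F_p)$ for $N\ge 4$, since $N$ is invertible in characteristic $p$; this is the same input used in Proposition \ref{XrRepresentability}, following 2.7.4, 3.6.0, 4.7.1 and 5.1.1 of \cite{KM}, together with the identification of $[\mu_N]$ with $[\Upgamma_1(N)]$ over a base containing a primitive $N$-th root of unity. By the relative representability criterion \cite[4.3.4]{KM}, it then suffices to show that the Igusa moduli problem $[\Ig(p^r)]$ is relatively representable and regular over $(\Ell/\F_p)$.

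Next I would invoke the theory of Igusa structures developed in \cite[\S12.3--12.6]{KM}. The key point is that the $r$-fold Verschiebung $V^{(r)}:E^{(p^r)}\rightarrow E$ is, away from supersingular points, an \'etale isogeny of degree $p^r$, so that the functor classifying generators $P$ of $\ker V^{(r)}$ (equivalently, generators of the Igusa group) is relatively representable and finite over $(\Ell/\F_p)$; moreover the representing scheme is regular. This is precisely the content of the representability and regularity assertions for $[\Ig(p^r)]$ in \cite{KM}, and combined with the finite \'etale $[\mu_N]$-structure it yields that $\scrP:=([\Ig(p^r)];[\mu_N])$ is relatively representable, finite over $(\Ell/\F_p)$, and regular. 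Applying the general machinery \cite[4.3.4]{KM} as in Proposition \ref{XrRepresentability} then shows that $\M(\I_r)$ exists and is a regular scheme, flat of pure relative dimension $1$ over $\Spec(\F_p)=\F_p$.

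Finally, to upgrade regularity to smoothness and to conclude that $\M(\I_r)$ is an affine curve, I would use that a regular scheme of finite type and pure dimension $1$ over the perfect field $\F_p$ is automatically smooth over $\F_p$, since regularity and smoothness coincide over a perfect field. Affineness follows because $\M(\I_r)$ is constructed as the coarse (here fine, by rigidity from the $[\mu_N]$-structure) moduli scheme associated to a finite relatively representable moduli problem over the affine $j$-line $\Aff^1_{\F_p}$, without any compactification step; the $j$-line morphism $\M(\I_r)\rightarrow \Aff^1_{\F_p}$ of \cite[\S8.2]{KM} is finite, so the source is affine.

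I expect the main obstacle to be pinning down the precise references in \cite{KM} for relative representability and regularity of $[\Ig(p^r)]$ in the generality needed here, rather than any conceptual difficulty: the Verschiebung-generator condition behaves well \'etale-locally on the ordinary locus but degenerates at supersingular points, and one must confirm that the representing object remains regular across these points. This is handled in \cite{KM} via the explicit local structure of Igusa curves, and the argument is then entirely parallel to Proposition \ref{XrRepresentability}.
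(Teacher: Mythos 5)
Your proposal is correct and follows essentially the same route as the paper, which argues exactly as in Proposition \ref{XrRepresentability} and cites \cite[12.6.1]{KM} for the relative representability, regularity, and finiteness of $[\Ig(p^r)]$ over $(\Ell/\F_p)$ --- precisely the input you identified in \S 12.3--12.6 of \cite{KM}. Your additional remarks making explicit the final steps (regularity implies smoothness over the perfect field $\F_p$, and affineness via finiteness over the $j$-line) are sound and simply spell out what the paper's one-line proof leaves implicit.
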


\begin{proof}
	One argues as in the proof of Proposition \ref{XrRepresentability}, using  \cite[12.6.1]{KM}
	to know that $[\Ig(p^r)]$ is relatively representable on $(\Ell/\F_p)$,
	regular 1-dimensional and finite flat over $(\Ell/\F_p)$.
\end{proof}

\begin{definition}\label{IgusaDef}
	We write $\Ig_r:=\c{\M}(\I_r)$ for the compactified moduli scheme.
\end{definition}

As $\Ig_r$ is normal and 1-dimensional by the very construction of $\c{\M}(\cdot)$,
it follows that $\Ig_r$ is a smooth and proper $\F_p$-curve.
There is a canonical action of the diamond operators $\Z_p^{\times}\times (\Z/N\Z)^{\times}$ on
$\I_r$ via $(u,v)\cdot (E,P; \alpha):= (E,u^{-1}P; \alpha  v)$; this induces a corresponding action on $\Ig_r$
by $\F_p$-automorphisms.  We again write $\langle u\rangle$ (respectively $\langle v\rangle_N$) for the
action of $u\in \Z_p^{\times}$ (respectively $v\in (\Z/N\Z)^{\times}$).

\begin{remark}\label{GrossCompat}
	The astute reader will wonder why we have defined the action of $\langle u\rangle$ 
	on the moduli problem $\I_r$ to be $(E,P,\alpha)\rightsquigarrow (E,u^{-1}P,\alpha)$
	rather than $(E,P,\alpha)\rightsquigarrow (E,uP,\alpha)$.  The reason is this: 
	in the literature ({\em e.g} \cite[\S5]{tameness}, \cite{MW-Hida}, \cite[\S2.1]{Saby}), 
	one works instead with the
	moduli problem $\I_{r}^{\mu}$ classifying $(E,\iota,\alpha)$, where $\alpha$
	is as above and $\iota:\mu_{p^r}\hookrightarrow E[p^r]$ is a closed immersion of $S$-group
	schemes.  On the category of ordinary $E/S$, there is an isomorphism of moduli problems
	$\I_r\rightarrow \I_r^{\mu}$ given as follows: a triple $(E,P,\alpha)$
	determines an isomorphism $\psi_P: \Z/p^r\Z\rightarrow \ker(V^r:E^{(p^r)}\rightarrow E)$,
	the Cartier dual of which is an isomorphism $\psi_P^t:\ker(F^r:E\rightarrow E^{(p^r)})\rightarrow \mu_{p^r}$.
	The inverse of this isomorohism gives an embedding $\iota:=(\psi_P^t)^{-1}\hookrightarrow E[p^r]$,
	which gives an element $(E,\iota,\alpha)$ of $\I_r^{\mu}(E/S)$, and this association induces
	the claimed isomorphism of moduli problems.  Under this identification, the action of $\langle u\rangle$
	on $\I_r$ that we have defined is carried to the action of $\langle u\rangle$ on $\I_r^{\mu}$
	given by $\langle u\rangle: (E,\iota,\alpha)\mapsto (E,u\circ\iota,\alpha)$, which agrees
	with the diamond operator action in {\em loc.~cit.}
\end{remark}

Thanks to the ``backing up theorem" \cite[6.7.11]{KM}, one also has natural degeneracy maps
\begin{equation}
	\xymatrix{
		{\pr:\Ig_{r+1}} \ar[r] & {\Ig_r}
	}\qquad\text{induced by}\qquad
	\pr(E,P;\alpha):= (E,VP,\alpha)
	\label{Vmapsch}
\end{equation}
on underlying moduli problems.  These maps are visibly
equivariant for the diamond operator action on source and target.
Let $\SS_r$ be the (reduced) closed subscheme of $\Ig_r$
that is the support of the coherent ideal sheaf of relative differentials $\Omega^1_{\Ig_r/\Ig_0}$;
over the unique degree 2 extension of $\F_p$, this scheme breaks up as a disjoint union of rational 
points---the supersingular points. The map (\ref{Vmapsch})
is finite of degree $p$, generically \'etale and totally (wildly) ramified over
each supersingular point. 

We can now describe the special fiber of $\X_r$:

\begin{proposition}	\label{redXr}
	The scheme $\o{\X}_r:=\X_r\times_{T_r} \Spec(\F_p)$ is the disjoint union, with crossings at the supersingular points, of the 		following proper, smooth $\F_p$-curves:
	for each pair $a,b$ of nonnegative integers with $a+b=r$, and for each $u\in (\Z/p^{\min(a,b)}\Z)^{\times}$, 
	one copy of 
	$\Ig_{\max(a,b)}$.
\end{proposition}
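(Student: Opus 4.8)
The plan is to reduce the statement to the Katz--Mazur analysis of the special fibre of the balanced $\Upgamma_1(p^r)$ moduli problem in characteristic $p$, and then to translate the resulting components into Igusa curves moduli-theoretically. First I would work over $\o{\X}_r$ away from the finitely many supersingular points and from the cusps: there the $j$-line morphism is generically \'etale, so $\o{\X}_r$ is smooth over $\F_p$ on this open locus, and it suffices to describe this ordinary locus together with the local structure at the supersingular crossings (the cusps being built in by the normalization-near-infinity construction of $\c{\M}(\cdot)$). Over the ordinary locus, every object $(\phi:E\rightarrow E',P,Q;\alpha)$ of $\scrP_r^{\varepsilon^{(r)}}$ has a standard factorization of $\phi$ into cyclic $p$-isogenies as in $(\ref{standardFac})$, and by the classification of order-$p$ finite flat subgroup schemes of an ordinary elliptic curve each step is \'etale-locally either a Frobenius $\Frob$ (with connected, multiplicative kernel $\cong \mu_p$) or a Verschiebung $\Ver$ (with \'etale kernel $\cong \Z/p\Z$). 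The orders $p^a$ and $p^b$ of the connected and \'etale parts of $\ker\phi$ are locally constant with $a+b=r$, and this pair $(a,b)$ is the discrete invariant separating the components.

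For fixed $(a,b)$ I would then set up the identification with $\Ig_{\max(a,b)}$. The \emph{longer} of the two \'etale towers attached to $\ker\phi$ and $\ker\phi^t$ has length $p^{\max(a,b)}$, and its generator (one of $P$, $Q$) supplies, via the connected-\'etale duality that converts a generator of the $\Ver$-tower into a generator of $\ker(V^{(\max(a,b))})$ as in Remark $\ref{GrossCompat}$, an Igusa structure of level $p^{\max(a,b)}$ in the sense of Definition $\ref{IgusaStrDef}$; the datum $\alpha$ furnishes the $\mu_N$-structure. The residual generator in the \emph{shorter} tower, constrained by the pairing normalization $\langle P,Q\rangle_\phi=\varepsilon^{(r)}$ and by the canonical specification in the definition of $\scrP_r^{\varepsilon^{(r)}}$, is then determined only up to the ``overlap'' between the multiplicative and \'etale directions, which I expect to be exactly a unit $u\in(\Z/p^{\min(a,b)}\Z)^{\times}$. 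Carrying this through should yield, for each admissible $(a,b,u)$, an isomorphism of the corresponding piece of the ordinary locus of $\o{\X}_r$ with the ordinary locus of $\Ig_{\max(a,b)}$, compatible with the diamond operator action; note that $(a,b)=(r,0)$ and $(0,r)$ force $\min=0$, giving the two distinguished components $I_r^{\infty}$ and $I_r^{0}$, each a copy of $\Ig_r$.

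Finally I would glue in the supersingular points. Each $\Ig_{\max(a,b)}$ is a smooth proper $\F_p$-curve by Proposition $\ref{Pr:IgusaSmoothCrv}$ and Definition $\ref{IgusaDef}$, so the closures of the ordinary components are precisely the asserted smooth proper Igusa curves. To see that distinct $(a,b,u)$-branches meet only at the supersingular points and do so ``with crossings,'' I would combine the regularity of $\X_r$ from Proposition $\ref{XrCptRepresentability}$ with the Katz--Mazur local study of $\c{\M}(\scrP_r^{\varepsilon^{(r)}})$ at a supersingular point, where the several branches (corresponding to the different reductions of the standard factorization) pass transversally through the same closed point; reducedness of $\o{\X}_r$ is then immediate from this normal-crossings picture, since a transversal union of smooth curves is reduced.

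The hard part will be the bookkeeping in the middle step: extracting \emph{exactly} the invariant $u\in(\Z/p^{\min(a,b)}\Z)^{\times}$ and the correct Igusa level $p^{\max(a,b)}$ from the balanced generators $(P,Q)$ together with the pairing and canonical conditions, and verifying that the enumeration produces every irreducible component once and only once. This combinatorial input is precisely the content of the special-fibre theorems of \cite[Ch.~13]{KM}, which I would invoke directly rather than reprove; the explicit description of the degeneracy maps and correspondences on the resulting components in \cite[\S4]{Ulmer} (recorded in Proposition $\ref{UlmerProp}$) gives an independent check that the indexing by $(a,b,u)$ is the right one.
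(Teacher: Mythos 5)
Your proposal follows the paper's proof essentially verbatim: the paper likewise defers the structural statement (disjoint union with crossings, smoothness, the enumeration of components) to \cite[13.11.2--13.11.4]{KM} rather than reproving it, and contents itself with spelling out the moduli-theoretic dictionary between $(a,b,u)$-components and Igusa curves, with the longer tower giving the level $p^{\max(a,b)}$ and the unit defined by $V^{a-b}(Q)=uP$ for $a\ge b$ (resp.\ $uV^{b-a}(P)=Q$ for $b\ge a$), exactly as you predict. One small correction to your middle step: in characteristic $p$ the canonical-pairing condition reads $\langle P,Q\rangle_{\phi}=1$ (since $\varepsilon^{(r)}$ reduces to $1$) and is automatically satisfied by such data by \cite[13.11.3]{KM}, so the unit $u\in(\Z/p^{\min(a,b)}\Z)^{\times}$ arises solely from comparing the two Igusa structures under iterated Verschiebung, not from the pairing normalization.
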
 

We refer to \cite[13.1.5]{KM} for the definition of ``disjoint union with crossings at the supersingular points".
Note that the special fiber of $\X_r$ is (geometrically) reduced; this will be crucial in our later work.  We often write $I_{(a,b,u)}$ for the irreducible component of $\o{\X}_r$ indexed by the triple $(a,b,u)$ and will refer to it as the {\em $(a,b,u)$-component} (for fixed $(a,b)$ we have $I_{(a,b,u)}=\Ig_{\max(a,b)}$ for all $u$).

For the proof of Proposition \ref{redXr}, we refer the reader to \cite[13.11.2--13.11.4]{KM},
and content ourselves with recalling
the correspondence between (non-cuspidal) points of the $(a,b,u)$-component and 
$[\bal \Upgamma_1(p^r)]^{1\can}$-structures on elliptic curves.

Let $S$ be any $\F_p$ scheme, fix an ordinary elliptic curve $E_0$ over $S$, and
let $(\phi:E_0\rightarrow E_r,P,Q; \alpha)$ be an element of $\scrP_{r}^1(E_0/S)$.
By \cite[13.11.2]{KM}, there exist unique nonnegative integers $a,b$
with the property that the cyclic $p^r$-isogeny $\phi$ factors as a purely inseparable cyclic $p^a$-isogeny
followed by an \'etale $p^b$-isogeny (this {\em is} the standard factorization of $\phi$).  
Furthermore, there exists a unique elliptic curve $E$ over $S$ and $S$-isomorphisms
$E_0\simeq E^{(p^b)}$ and $E_r\simeq E^{(p^a)}$ such that the cyclic $p^r$ isogeny $\phi$ is: 
\begin{equation*}
	\xymatrix{
		{E_0\simeq E^{(p^b)}} \ar[r]^-{F^a} & {E^{(p^r)}} \ar[r]^-{V^b} & {E^{(p^a)} \simeq E_r}
		}
\end{equation*}
and $P\in E^{(p^b)}(S)$ (respectively $Q\in E^{(p^a)}(S)$) is an Igusa structure of level $p^b$ (respectively $p^a$) on $E$ over $S$.
When $a\ge b$ there is a unique unit $u\in (\Z/p^{b}\Z)^{\times}$ such that $V^{a-b}(Q)=uP$ in $E^{(p^b)}(S)$ and when
$b\ge a$ there is a unique unit $u\in (\Z/p^a\Z)^{\times}$ such that $uV^{b-a}(P)=Q$ in $E^{(p^a)}(S)$.  
Thus, for $a\ge b$ (respectively $b\ge a$) and fixed $u$, the data $(E,Q;V^b \alpha p^{-b})$ 
(respectively $(E,P;V^b \alpha  p^{-b})$) 
gives an $S$-point of the $(a,b,u)$-component
$\Ig_{\max(a,b)}$.  
Conversely, suppose given $(a,b,u)$ and an $S$-valued point of $\Ig_{\max(a,b)}$ which is neither a cusp nor a supersingular point (in the sense that it corresponds to an ordinary elliptic curve with extra structure).
If $a\ge b$ and $(E,Q;\alpha)$ is the given $S$-point of $\Ig_{a}$ then we set
$P:=u^{-1}V^{a-b}(Q)$, while if $b\ge a$ and $(E,P;\alpha)$ is the given $S$-point of $\Ig_b$ then we set $Q:=uV^{b-a}P$.  Due to \cite[13.11.3]{KM}, the data
\begin{equation*}
	(\xymatrix{
		{E^{(p^b)}} \ar[r]^-{F^a} & {E^{(p^r)}} \ar[r]^-{V^b} & {E^{(p^a)}, P,Q; F^b \alpha} 
	})
\end{equation*}
gives an $S$-point of $\M(\scrP_r^{1})$.  These constructions are visibly inverse to each other.

\begin{remark}\label{rEvenChoice}
	When $r$ is even and $a=b=r/2$, there is a choice to be made as to how one identifies the 
	$(r/2,r/2,u)$-component of $\o{\X}_r$ with $\Ig_{r/2}$: if
	$(\phi:E_0\rightarrow E_r,P,Q;\alpha)$ is an element of  $\scrP_r^1(E_0/S)$
	which corresponds to a point on the $(r/2,r/2,u)$-component, then for $E$ with 
	$E_0\simeq E^{(p^{r/2})}\simeq E_r$,
	{\em both} $(E,P;V^{r/2}\alpha  p^{-r/2})$ and $(E,Q;V^{r/2}\alpha  p^{-r/2})$ are 
	$S$-points of $\Ig_{r/2}$. 
	Since $uP=Q$,
	the corresponding closed immersions
	$\Ig_{r/2}\hookrightarrow \o{\X}_r$ 	
	are twists of each other by the automorphism $\langle u\rangle$ of the source.
	We will {\em consistently choose} $(E,Q;V^{r/2}\alpha p^{-r/2})$ to identify the $(r/2,r/2,u)$-component
	of $\o{\X}_r$ with $\Ig_{r/2}$.  
\end{remark} 
  
\begin{remark}\label{MWGood}
	 As in \cite[pg.~236]{MW-Hida}, we will refer to $I_r^{\infty}:=I_{(r,0,1)}$ and $I_r^0:=I_{(0,r,1)}$
  	as the two ``good" components of $\o{\X}_r$.  
	The $\Q_p$-rational cusp $\infty$ of $X_r$ induces a section
	of $\X_r\rightarrow T_r$ which meets $I_r^{\infty}$, while
	the section
	induced by the $K_r'$-rational cusp $0$ meets $I_r^0$.
	It is precisely these irreducible components of 
	$\o{\X}_r$ which contribute to the ``ordinary" part of cohomology. 
	We note that $I_r^{\infty}$ corresponds to the image of $\Ig_r$ under the map $i_1$
	of \cite[pg. 236]{MW-Hida}, and corresponds to the component of $\o{\X}_r$ denoted by $C_{\infty}$
	in \cite[pg. 343]{Tilouine}, by $C_r^{\infty}$ in \cite[pg. 231]{Saby}
	and, for $r=1$, by $I$ in \cite[\S 7]{tameness}.  
\end{remark}

By base change, the degeneracy mappings (\ref{rdegen}) induce morphisms 
$\o{\pr},\o{\ps}:\o{\X}_{r+1}\rightrightarrows \o{\X}_r$
of curves over $\F_p$ which admit the following descriptions on irreducible components:

\begin{proposition}\label{pr1desc}
	Let $a,b$ be nonnegative integers with $a+b=r+1$ and $u\in (\Z/p^{\min(a,b)}\Z)^{\times}$.
	The restriction of the map $\o{\ps}: \o{\X}_{r+1}\rightarrow \o{\X}_r$ to the $(a,b,u)$-component
	of $\o{\X}_{r+1}$ is:
	\begin{equation*}
		\begin{cases}
			\xymatrix{
				{\Ig_{a}=I_{(a,b,u)}} \ar[r]^-{ F\circ \pr} & {I_{(a-1,b,u)}=\Ig_{a-1}}
				}
				&:\quad b < a \le r+1 \\
				\xymatrix{
				{\Ig_{b}=I_{(a,b,u)}} \ar[r]^-{\langle u\rangle^{-1}F} & 
				{I_{(a-1,b,u\bmod p^{a-1})}=\Ig_{b}}
				}
				&:\quad a = b = r/2 \\
				\xymatrix{
				{\Ig_{b}=I_{(a,b,u)}} \ar[r]^-{F} & {I_{(a-1,b,u\bmod p^{a-1})}=\Ig_{b}}
				}
				&:\quad a < b < r+1 \\
				\xymatrix{
				{\Ig_{r+1}=I_{(0,r+1,1)}} \ar[r]^-{\langle p\rangle_N\pr} & {I_{(0,r,1)}=\Ig_{r}}
				}
				&:\quad (a,b,u)=(0,r+1,1)
		\end{cases}
	\end{equation*}
	and the restriction of the map $\overline{\pr}: \o{\X}_{r+1}\rightarrow \o{\X}_r$ to the $(a,b,u)$-component
	of $\o{\X}_{r+1}$ is: 
		\begin{equation*}
		\begin{cases}
			\xymatrix{
				{\Ig_{r+1}=I_{(r+1,0,1)}} \ar[r]^-{\pr} & {I_{(r,0,1)}=\Ig_{r}}
				}
				&:\quad (a,b,u)=(r+1,0,1)\\
				\xymatrix{
				{\Ig_{a}=I_{(a,b,u)}} \ar[r]^-{F} & {I_{(a,b-1,u\bmod p^{b-1})}=\Ig_{a}}
				}
				&:\quad b < a+1 \le r+1 \\
					\xymatrix{
				{\Ig_{b}=I_{(a,b,u)}} \ar[r]^-{\langle u\rangle F\circ \pr} & {I_{(a,b-1,u)}=\Ig_{b-1}}
				}
				&:\quad a+1 = b = r/2+1\\
				\xymatrix{
				{\Ig_{b}=I_{(a,b,u)}} \ar[r]^-{F\circ \pr} & {I_{(a,b-1,u)}=\Ig_{b-1}}
				}
				&:\quad a+1 < b \le r+1  \\
		\end{cases}
	\end{equation*}
	Here, for any $\F_p$-scheme $I$, the map $F:I\rightarrow I$ is the absolute Frobenius morphism.
\end{proposition}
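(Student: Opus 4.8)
The plan is to read off both maps directly from their moduli-theoretic definition \eqref{XrDegen}, translating back and forth through the explicit dictionary set up in the proof of Proposition~\ref{redXr} between non-cuspidal ordinary $S$-points of the $(a,b,u)$-component of $\o{\X}_{r+1}$ and $[\bal\,\Upgamma_1(p^{r+1})]^{1\can}$-structures on ordinary elliptic curves. Since each component $I_{(a,b,u)}\simeq\Ig_{\max(a,b)}$ is a reduced, irreducible, proper curve in which the non-cuspidal ordinary locus is dense and open, and since the restriction of $\o{\ps}$ (resp.\ $\o{\pr}$) to such a component has irreducible image, hence lands in a single component of $\o{\X}_r$ (the closure of the image of the generic point, which is ordinary), it suffices to verify the claimed identities on this dense locus; two morphisms of reduced curves into the separated scheme $\o{\X}_r$ agreeing on a dense open subset coincide. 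The extension of everything over the cusps is automatic from the construction of the compactifications via Corollary~\ref{MorExtCor}, so no separate cuspidal analysis is needed.

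For the main computation I would treat $\o{\ps}$, the case of $\o{\pr}$ being dual (one drops the last step $\phi_{r,r+1}$ of the standard factorization \eqref{standardFac} and replaces $P,Q$ by $pP,\phi_{r+1,r}(Q)$, which interchanges the roles of $F$ and $V$ and of $a$ and $b$). Starting from a point of the $(a,b,u)$-component, I unwind it to $(\phi\colon E_0\to E_{r+1},P,Q;\alpha)$ whose standard factorization has $a$ purely inseparable (Frobenius) steps followed by $b$ \'etale (Verschiebung) steps, with $E_0\simeq E^{(p^b)}$ and $P,Q$ the level-$p^b$, level-$p^a$ Igusa structures on $E$ (Definition~\ref{IgusaStrDef}) linked by the unit $u$. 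Applying \eqref{XrDegen} produces $(\phi_{1,r+1},\phi_{0,1}(P),pQ;\phi_{0,1}\circ\alpha)$, and the heart of the matter is to identify the standard factorization of $\phi_{1,r+1}$. When $a\ge 1$ the dropped step $\phi_{0,1}$ is the Frobenius $F\colon E^{(p^b)}\to E^{(p^{b+1})}$, so the new base curve is the twist $E^{(p)}$ and the new indices are $(a-1,b)$; using $FV=VF=p$, the naturality of $F$, and the injectivity of $F$ on the \'etale kernels $\ker V^{m}$ of an ordinary curve, I check that $\phi_{0,1}(P)=F(P)$ and $pQ=F(VQ)$ are again Igusa structures. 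The passage $(E,\,\cdot\,)\mapsto(E^{(p)},F(\,\cdot\,))$ is precisely the absolute Frobenius $F$ on the relevant $\Ig_m$, while $\times p=F\circ V$ realizes, at the level of Igusa curves, the composite of the level-lowering map $\pr$ of \eqref{Vmapsch} with $F$. This is exactly what yields the maps $F$ and $F\circ\pr$ in the statement; the boundary case $(a,b)=(0,r+1)$ is the only one where $\phi_{0,1}$ is instead Verschiebung, so no twist occurs and the map is $\pr$ up to a $\mu_N$-correction.

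The diamond-operator factors I would extract as follows. Because $p\nmid N$, the Verschiebung acts on $N$-torsion as $p\cdot F^{-1}$ with $F$ an automorphism there; hence in the purely-\'etale case $(0,r+1,1)$ the replacement $\alpha\mapsto V\circ\alpha$ multiplies the $\mu_N$-structure by $p$, producing the factor $\langle p\rangle_N$, whereas in the Frobenius cases the $\mu_N$-structure is carried along by the twist $E\rightsquigarrow E^{(p)}$ with no net diamond. The factors $\langle u\rangle^{\pm1}$ appear exactly at the balanced component, where $\max(a,b)$ is attained by both indices: there the convention of Remark~\ref{rEvenChoice} forces the identification $\Ig_{\max(a,b)}\simeq I_{(\cdot,\cdot,u)}$ to be made through $Q$ rather than $P$, and since $P$ and $Q$ differ by the action of $\langle u\rangle$ (via $V^{|a-b|}$ and the unit $u$), converting the naturally-produced parametrization into the chosen one introduces the displayed $\langle u\rangle^{\pm1}$; here I would also check that the new unit reduces to $u$ modulo the appropriate power of $p$.

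The hard part will be the exact bookkeeping in the balanced and boundary cases: pinning down the precise power of $p$ to which $u$ is reduced, the correct sign of each diamond twist, and the mutual consistency of the normalizations of the Igusa and $\mu_N$-structures (the auxiliary $F^{\bullet}$ and $p^{-\bullet}$ factors occurring in the dictionary of Proposition~\ref{redXr}), so that the powers of $\langle u\rangle$ and $\langle p\rangle_N$ emerge exactly as stated rather than off by an inverse or an unwanted twist. Everything outside this bookkeeping reduces to the formal identities $FV=VF=p$ and the naturality of Frobenius, applied componentwise to the standard factorization \eqref{standardFac}.
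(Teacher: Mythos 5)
Your proposal is correct and takes essentially the same route as the paper, whose entire proof is the declaration that the result is ``a pleasant exercise in tracing through the functorial correspondence'' between points of $\o{\X}_r$ and of the Igusa components via the moduli-theoretic definitions (\ref{XrDegen}) --- exactly the unwinding you describe, with your reduction to the dense ordinary non-cuspidal locus (legitimate since the components are reduced and the target is separated) and the $FV=VF=p$ bookkeeping filling in the details the paper leaves to the reader. When you carry out the balanced-case and $\mu_N$-normalization bookkeeping you rightly flag as delicate, take care to apply the paper's diamond convention $\langle u\rangle\colon (E,P;\alpha)\mapsto (E,u^{-1}P;\alpha)$ together with the $Q$-identification of Remark \ref{rEvenChoice} and the dictionary of Proposition \ref{redXr} consistently, since whether one lands on $\langle u\rangle^{-1}F$ or $\langle u\rangle F$ (and likewise $\langle u\rangle F\circ\pr$ versus its inverse twist) hinges entirely on that orientation.
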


\begin{proof}
	Using the moduli-theoretic definitions (\ref{XrDegen}) of the degeneracy maps,
	the proof is a pleasant exercise in tracing through 
	the functorial correspondence between the points of $\o{\X}_r$
	and points of $\Ig_{(a,b,u)}$.  We leave the details to the reader.
\end{proof}

We likewise have a description of the automorphism 
of $\o{\X}_r$ induced via base change by
the geometric inertia action\footnote{
Since $\Gamma$ acts trivially on $\F_p$,
for each $\gamma\in \Gamma$ the base change of the $R_r$-morphism $\gamma: \X_r\rightarrow (\X_r)_{\gamma}$
along the map induced by the canonical surjection $R_r\twoheadrightarrow \F_p$
is an $\F_p$-morphism $\o{\gamma}:\o{\X}_r\rightarrow (\o{\X}_r)_{\gamma}\simeq \o{\X}_r$.
} 
(\ref{gammamapsModuli}) of $\Gamma$:

\begin{proposition}\label{AtkinInertiaCharp}
	Let $a,b$ be nonnegative integers with $a+b=r$ and $u\in (\Z/p^{\min(a,b)}\Z)^{\times}$.
		For $\gamma\in \Gamma$, the restriction of $\o{\gamma}:\o{\X}_{r}\rightarrow \o{\X}_r$ to 
		the $(a,b,u)$-component of $\o{\X}_{r}$ is:
		\begin{equation*}
			\begin{cases}
				\xymatrix{
				{\Ig_{a}=I_{(a,b,u)}} \ar[r]^-{\id} & 
				{I_{(a,b,\chi(\gamma)u)}=\Ig_{a}}
				}
				&:\quad b\le a \le r\\
				\xymatrix{
				{\Ig_{b}=I_{(a,b,u)}} \ar[r]^-{\langle \chi(\gamma)\rangle^{-1}} & {I_{(a,b,\chi(\gamma)u)}=\Ig_{b}}
				}
				&:\quad a < b \le r \\
		\end{cases}
		\end{equation*}
		\label{InertiaCharp}
\end{proposition}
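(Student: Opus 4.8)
The plan is to argue entirely at the level of moduli, precisely as in the proof of Proposition \ref{pr1desc}, by transporting a non-cuspidal ordinary point through the explicit dictionary recorded after Proposition \ref{redXr} between points of the $(a,b,u)$-component of $\o{\X}_r$ and points of $\Ig_{\max(a,b)}$. The first step is to unwind the geometric inertia action in characteristic $p$. Since $\Gamma$ acts trivially on $\F_p$, each base change along $\gamma^{-1}$ occurring in the defining formula (\ref{gammamapsModuli}) is the canonical identification over $\F_p$, and the pairing constraint $\langle P,Q\rangle_{\phi}$ is vacuous because it takes the value $1$ in characteristic $p$. Hence the induced automorphism $\o{\gamma}$ of $\o{\X}_r$ is described on the moduli problem $\scrP_r^{1}$ by
\[
\o{\gamma}(\phi:E\to E',P,Q;\alpha)=(\phi:E\to E',\chi(\gamma)P,Q;\alpha),
\]
so that $\o{\gamma}$ rescales only the generator $P$ of $\ker\phi$, by $\chi(\gamma)\in(\Z/p^r\Z)^{\times}$, and fixes $E$, $E'$, the isogeny $\phi$, and the $\mu_N$-structure $\alpha$.

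Since $\o{\gamma}$ is an automorphism, it permutes the irreducible components of $\o{\X}_r$; and because it fixes $\phi$, it fixes the standard factorization of $\phi$ into a purely inseparable $p^a$-isogeny followed by an \'etale $p^b$-isogeny. Thus the pair $(a,b)$ is preserved and $\o{\gamma}$ carries $I_{(a,b,u)}$ onto a component $I_{(a,b,u')}$ with the same $(a,b)$. To read off $u'$ and the induced self-map of $\Ig_{\max(a,b)}$, I would re-express the transported datum $(\phi,\chi(\gamma)P,Q;\alpha)$ through the dictionary. When $b\le a$ the associated Igusa point $(E,Q;V^b\alpha p^{-b})$ is manufactured from the unchanged generator $Q$, so the restriction of $\o{\gamma}$ to $\Ig_{a}$ is the identity; when $a<b$ the Igusa point $(E,P;V^b\alpha p^{-b})$ is manufactured from $P$, and rescaling $P$ by $\chi(\gamma)$ becomes, under the Igusa diamond convention $\langle w\rangle(E,P;\alpha)=(E,w^{-1}P;\alpha)$ fixed in \S\ref{tower} below Definition \ref{IgusaDef}, exactly the automorphism $\langle\chi(\gamma)\rangle^{-1}$ of $\Ig_b$. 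In either case the defining relation for the component index, namely $V^{a-b}(Q)=uP$ for $b\le a$ and $uV^{b-a}(P)=Q$ for $a<b$, together with the rescaling of $P$ then forces the new index $u'=\chi(\gamma)u$, which is the asserted relabelling.

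Finally, the dictionary applies only over the ordinary non-cuspidal locus, so I would conclude by a density argument: each component $I_{(a,b,u)}$ and its image are smooth, proper curves over $\F_p$, and two morphisms between such curves that agree on a schematically dense open subscheme coincide; since the ordinary non-cuspidal points are dense, the formulas verified above determine $\o{\gamma}$ on all of $I_{(a,b,u)}$, cusps and supersingular points included. I expect the only real difficulty to be the sign-and-inverse bookkeeping: carefully tracking the canonical isomorphisms $E_0\simeq E^{(p^b)}$ and $E_r\simeq E^{(p^a)}$ supplied by the standard factorization, the Igusa-curve diamond convention, the $u$-dependence of the reconstruction $Q=uV^{b-a}(P)$ (respectively $V^{a-b}(Q)=uP$) used to identify $\Ig_{\max(a,b)}$ with $I_{(a,b,u)}$, and, in the even case $a=b=r/2$, the convention fixed in Remark \ref{rEvenChoice} to use $Q$ rather than $P$, which is exactly what places the diagonal components in the ``$b\le a$'' branch and yields the identity there. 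Pinning down the precise power of $\chi(\gamma)$ in the relabelling is the delicate part of this verification.
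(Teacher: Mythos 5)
Your overall strategy is exactly the one the paper intends: no proof of Proposition \ref{AtkinInertiaCharp} appears in the text, and, as with Proposition \ref{pr1desc}, the intended argument is precisely the tracing-through-the-dictionary you carry out. Your reduction of (\ref{gammamapsModuli}) modulo $p$ to $(\phi,P,Q;\alpha)\mapsto(\phi,\chi(\gamma)P,Q;\alpha)$, the observation that $\o{\gamma}$ preserves $\phi$ and hence the standard factorization and the pair $(a,b)$, the identification of the self-maps ($\id$ in the branch where the Igusa point is manufactured from the untouched $Q$, and $\langle\chi(\gamma)\rangle^{-1}$ in the branch where it is manufactured from $P$, under the convention $\langle w\rangle(E,P;\alpha)=(E,w^{-1}P;\alpha)$ fixed below Definition \ref{IgusaDef}), the treatment of the diagonal case via Remark \ref{rEvenChoice}, and the closing density argument (reduced source, separated target, schematically dense ordinary locus) are all correct.

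The one step you assert rather than compute is the relabelling of the component index, and as asserted it does not follow from the relations you quote --- indeed it comes out inverted. For $b\le a$ the dictionary normalizes $u$ by $V^{a-b}(Q)=uP$; replacing $P$ by $P'=\chi(\gamma)P$ gives $V^{a-b}(Q)=u\chi(\gamma)^{-1}P'$, so the image point carries the unit $u'=\chi(\gamma)^{-1}u$. For $a<b$ the normalization is $uV^{b-a}(P)=Q$; then $u'V^{b-a}(\chi(\gamma)P)=Q=uV^{b-a}(P)$ forces $u'\chi(\gamma)\equiv u\bmod p^{a}$, i.e.\ again $u'=\chi(\gamma)^{-1}u$, since $V^{b-a}(P)$ is a level-$p^a$ Igusa structure on which scalars act faithfully modulo $p^a$. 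So under the normalization of $u$ actually stated in the dictionary following Proposition \ref{redXr}, the target component is $I_{(a,b,\chi(\gamma)^{-1}u)}$, not $I_{(a,b,\chi(\gamma)u)}$; the statement as printed is correct only under the opposite normalization (label by the unit $c$ with $V^{b-a}(P)=cQ$, resp.\ $P=cV^{a-b}(Q)$). Your instinct that ``pinning down the precise power of $\chi(\gamma)$'' is the delicate point was right, but a blind proof must actually perform this two-line check, and doing so contradicts the forcing you claim. The slip is harmless downstream --- everywhere the proposition is invoked (Proposition \ref{charpord} and its sequels) only the self-maps and the good components $I_r^{\infty}=I_{(r,0,1)}$, $I_r^{0}=I_{(0,r,1)}$ matter, where $u$ is trivial --- but your write-up should either carry out the computation or flag the convention. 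One further small inaccuracy: the pairing condition in characteristic $p$ is not vacuous, since $\mu_{p^r}$ over $\F_p$ is infinitesimal rather than trivial, so $\langle P,Q\rangle_{\phi}=1$ is a genuine condition on non-reduced test schemes; it is, however, automatically preserved by $\o{\gamma}$ because $\langle\chi(\gamma)P,Q\rangle_{\phi}=\langle P,Q\rangle_{\phi}^{\chi(\gamma)}=1$, and it is invisible over the reduced ordinary locus where your dictionary argument takes place, so this costs you nothing.
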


Following \cite[\S7--8]{Ulmer}, we now define a correspondence $\pi_1,\pi_2:\Y_r\rightrightarrows\X_r$ on $\X_r$ over $R_r$ which naturally 
extends the correspondence on $X_r$ giving the Hecke operator $U_p$ (see Appendix \ref{GD}
for a brief discussion of correspondences).

\begin{definition}
	Let $r$ be a nonnegative integer and $R$ a ring containing a fixed choice $\zeta$ of 
	primitive $p^r$-th root of unity
	in which $N$ is invertible.
	The moduli problem 
	$\scrQ_{r}^{\zeta}:=([\Upgamma_0(p^{r+1}); r,r]^{\zeta\can}; [\mu_N])$ on $(\Ell/R)$ 
	assigns to $E/S$ the set of quadruples $(\phi:E\rightarrow E',P,Q;\alpha)$
	where:
	\begin{enumerate}
		\item $\phi$ is a cyclic $p^{r+1}$-isogeny
		with standard factorization 
		\begin{equation*}
		\xymatrix{
			E=:E_0 \ar[r]^-{\phi_{0,1}} & E_1 \ar[r] \cdots & E_{r} \ar[r]^-{\phi_{r,r+1}} & E_{r+1}:=E'
		}
		\end{equation*}
		
		\item $P\in E_1(S)$ and $Q\in E_r(S)$ are generators of 
		$\ker\phi_{1,r+1}$ and $\ker \phi_{r,0}$, respectively, and satisfy
		\begin{equation*}
			\langle P, \phi_{r,r+1}(Q) \rangle_{\phi_{1,r+1}}=\langle \phi_{1,0}(P),Q \rangle_{\phi_{0,r}}=\zeta.
		\end{equation*}
		
		\item  $\alpha:\mu_N\hookrightarrow E[N]$ is a closed immersion of $S$-group schemes.
	\end{enumerate}	
\end{definition}

\begin{proposition}\label{YrRepresentability}
	If $N \ge 4$, then the moduli problem $\scrQ_{r}^{\zeta}$ is represented
	by a regular scheme $\M(\scrQ_r^{\zeta})$ that is flat of pure relative dimension
	$1$ over $\Spec(R)$.  The compactified moduli scheme
	$\c{\M}(\scrQ_r^{\zeta})$ is regular and proper flat of pure
	relative dimension $1$ over $\Spec(R)$.
\end{proposition}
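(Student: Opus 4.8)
The plan is to follow the proofs of Propositions \ref{XrRepresentability} and \ref{XrCptRepresentability} essentially verbatim, reducing first to the $p$-part of the level structure. As in the proof of Proposition \ref{XrRepresentability}, since $N\ge 4$ is a unit in $R$ the moduli problem $[\mu_N]$ is representable and finite \'etale over $(\Ell/R)$ (by 2.7.4, 3.6.0, 4.7.1, 5.1.1 and 8.4.11 of \cite{KM}). By the relative representability criterion \cite[4.3.4]{KM} it therefore suffices to prove that the $p$-power moduli problem $[\Upgamma_0(p^{r+1});r,r]^{\zeta\can}$ on $(\Ell/R)$ is relatively representable, regular, and finite flat of pure relative dimension $1$; the asserted regularity, flatness, and relative dimension of $\M(\scrQ_r^{\zeta})$ then follow by combining these properties with the finite \'etaleness of $[\mu_N]$.

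Second, I would establish relative representability and regularity of $[\Upgamma_0(p^{r+1});r,r]^{\zeta\can}$ using the Katz--Mazur theory of cyclic isogenies and their standard factorizations. The data $(\phi;P,Q)$ consist of a cyclic $p^{r+1}$-isogeny $\phi$ together with generators $P$ of $\ker\phi_{1,r+1}$ and $Q$ of $\ker\phi_{r,0}$, constrained by the two $\zeta$-pairing conditions; via the standard factorization $(\ref{standardFac})$ this is precisely a balanced generator structure relating the interior members $E_1$ and $E_r$ of the isogeny chain, of the same shape as the $[\bal\ \Upgamma_1(p^r)]^{\zeta\can}$ structure treated in Proposition \ref{XrRepresentability}. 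I would thus exhibit $[\Upgamma_0(p^{r+1});r,r]^{\zeta\can}$ as a combination of the underlying cyclic-isogeny structure and canonical balanced generators of the two kernels, and deduce its relative representability and regularity from the same circle of results culminating in \cite[7.6.1 (2)]{KM}. Since this is exactly the moduli problem underlying Ulmer's $U_p$-correspondence, I would cross-check the structural claims against \cite[\S7--8]{Ulmer}.

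Finally, the compactification statement follows formally once $\M(\scrQ_r^{\zeta})$ is known to be regular, flat, and of pure relative dimension $1$: endowing it with its canonical $j$-line structure and normalizing near infinity as in \cite[8.6.3]{KM}, the regularity and properness of $\c{\M}(\scrQ_r^{\zeta})$ over $\Spec(R)$ follow from \cite[10.9.1 (2)]{KM} (see also \cite[10.9.3]{KM} and {\em cf.} \cite[8.6.8 (2)]{KM}), precisely as in Proposition \ref{XrCptRepresentability}. The main obstacle I anticipate is the second step: one must check that the specific ``$(r,r)$'' balanced structure, with its two distinct pairing normalizations on $\ker\phi_{1,r+1}$ and $\ker\phi_{r,0}$, genuinely falls within the scope of the Katz--Mazur regularity theorems, i.e. that these generator conditions cut out a relatively representable and regular subfunctor of the cyclic problem. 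Confirming this may require the explicit analysis of the special fiber and its crossings near the supersingular points (in the spirit of \cite[13.11]{KM}) rather than a single black-box citation.
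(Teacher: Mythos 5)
Your proposal is correct and follows essentially the same route as the paper, whose entire proof reads: ``Keeping in mind \cite[\S7.9]{KM}, the argument is identical to that of Propositions \ref{XrRepresentability}--\ref{XrCptRepresentability}.'' The one obstacle you anticipate in your second step---whether the specific $(r,r)$ balanced structure genuinely falls within the scope of the Katz--Mazur regularity theorems---is exactly what that citation settles: \cite[\S7.9]{KM} treats the moduli problems $[\Upgamma_0(p^{r+1});r,r]^{\zeta\can}$ directly (relative representability, regularity, finite flatness), so no supplementary analysis of the special fiber near the supersingular points is required.
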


\begin{proof}
	Keeping in mind \cite[\S7.9]{KM}, the argument
	is identical to that of Propositions 
	\ref{XrRepresentability}--\ref{XrCptRepresentability}.
\end{proof}

\begin{definition}\label{YrDef}
	We set $\Y_r:=\c{\M}(\scrQ_r^{\varepsilon^{(r)}})$, viewed as a scheme over $T_r=\Spec(R_r)$.
\end{definition}

\begin{remark}\label{Yrgeniden}
	Write $Y_r:=X_1(Np^r; Np^{r-1})$ for the canonical model over $\Q$
	with rational cusp $i\infty$ of the modular curve 	
arising as the upper half-plane quotient
	$\Upgamma_{r+1}^r\backslash \h^*$, for
	$\Upgamma_{r+1}^r:=\Upgamma_1(Np^r)\cap \Upgamma_0(p^{r+1})$. 
	As in Remark \ref{genfiberrem}, we may identify the generic fiber of $\Y_r$ with the base change $(Y_r)_{K_r}$
	as follows.
	First, it is easy to check that $Y_r$ is the compactified moduli scheme
	of quadruples $(E_1,\iota, \alpha,C)$ where $E_1$ is an elliptic curve,
	$\iota:\mu_{p^r}\hookrightarrow E_1$ and $\alpha:\mu_{N}\hookrightarrow E_1$ are embeddings of group schemes,
	and $C$ is a locally free subgroup scheme of rank $p$ in $E_1[p]$ with the property that
	$C\cap\im\iota = 0$.  Now given such a quadruple $(E_1,\iota, \alpha,C)$ over a $K_r$-scheme $S$,
	we set $E:=E_1/C$, $E':=E_1/\im\iota$, and we denote by $\phi:E\rightarrow E'$ the evident
	isogeny.  We then put
	$P_{\iota}:=\iota(\varepsilon^{(r)})$
	and denote by $Q_{\iota}\in E_r(S)$ the unique point satisfying 
	$\langle \iota(z),\phi_{r,r+1}(Q_{\iota})\rangle_{\phi_{1,r+1}}=z$ for all $z\in \mu_{p^r}(S)$.
	The association
	\begin{equation*}
		(E_1,\iota, \alpha,C)\mapsto (\phi:E\rightarrow E', P_{\iota},Q_{\iota}; \phi_{1,0}\circ\alpha)
	\end{equation*}
	then induces the claimed identification.
\end{remark}

The scheme $\Y_r$ is equipped with an action of the diamond operators $\Z_p^{\times}\times (\Z/N\Z)^{\times}$,
as well as a ``geometric inertia" action of $\Gamma$ given moduli-theoretically
exactly as in (\ref{balcanaction}) and (\ref{gammamapsModuli}).


There is a canonical morphism of curves $\pi:\X_{r+1}\rightarrow \Y_r$ over $T_{r+1}\rightarrow T_r$
induced by the morphism 
\begin{equation}
\begin{gathered}
			\scrP_{r+1}^{\varepsilon^{(r)}}\rightarrow \scrQ_r^{\varepsilon^{(r)}}\quad\text{given by}
		\quad{\pi(\phi:E\rightarrow E',P,Q;\alpha)} := {(\phi:E\rightarrow E',\phi_{0,1}(P),\phi_{r+1,r}(Q); \alpha)}.
\end{gathered}\label{XtoY}	
\end{equation}

It is straightforward to check that the two projection maps $\ps,\pr: \X_{r+1}\rightrightarrows \X_r$ 
of (\ref{XrDegen}) factor through $\pi$ via unique maps of $T_r$-schemes 
$\pi_1,\pi_2: \Y_{r}\rightrightarrows \X_r$, given (via Corollary \ref{MorExtCor}) as morphisms of underlying
moduli problems on $(\Ell/R_r)$ by
\begin{equation}
\begin{aligned}
		&{\pi_1(\phi:E_0\rightarrow E_{r+1},P,Q;\alpha)}:=
		{(E_1\xrightarrow{\phi_{1,r+1}} E_{r+1},P,\phi_{r,r+1}(Q);\ \phi_{0,1}\circ\alpha)}\\
		&{\pi_2(\phi:E_0\rightarrow E_{r+1},P,Q;\alpha)}:=
		{(E_0\xrightarrow{\phi_{0,r}} E_{r},\phi_{1,0}(P),Q; \alpha)}
\end{aligned}\label{Upcorr}
\end{equation}
That these morphisms are well-defined and
that one has $\pr=\pi\circ \pi_2$ and $\ps=\pi\circ \pi_1$
is easily verified (see \cite[\S7]{Ulmer} and compare to
\cite[\S11.3.3]{KM}).  They are moreover 
finite of generic degree $p$, equivariant for the diamond operators,
and $\Gamma$-compatible. 

\begin{remark}
	Via the identifications of Remarks \ref{genfiberrem} and \ref{Yrgeniden},  
	on generic fibers the maps $\pi_1$ and $\pi_2$ 
	are induced by the usual degeneracy maps $\pi_1,\pi_2:Y_r\rightrightarrows X_r$
	of modular curves over $\Q$ giving the Hecke correspondence $U_p:=(\pi_1,\pi_2)$.
	Interpreting $Y_r$ and $X_r$ moduli-theoretically as in Remarks \ref{genfiberrem} and \ref{Yrgeniden},  
	the map $\pi_1$ correspons to ``forget $C$," while $\pi_2$ corresponds to ``quotient by $C$." 
We stress that the ``standard" degeneracy map $\rho:X_{r+1}\rightarrow X_r$ factors 
through $\pi_2$ (and not $\pi_1$).
\end{remark}

\begin{proposition}\label{redYr}
	The scheme $\o{\Y}_r:=\Y_r\times_{T_r} \Spec(\F_p)$ is the disjoint union, with crossings 
	at the supersingular points, of the following proper, smooth $\F_p$-curves:
	for each pair of nonnegative integers $a,b$ with $a+b=r+1$ and for each $u\in (\Z/p^{\min(a,b)}\Z)^{\times}$,
	one copy of
	\begin{equation*}
		\begin{cases}
			\Ig_{\max(a,b)} &\text{if}\ ab\neq 0\\
			\Ig_{r} &\text{if}\ (a,b)=(r+1,0)\ \text{or}\ (0,r+1)
		\end{cases}
	\end{equation*}
\end{proposition}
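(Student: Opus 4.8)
The plan is to mirror the proof of Proposition \ref{redXr} essentially line by line, replacing the moduli problem $\scrP_r^{\varepsilon^{(r)}}$ by $\scrQ_r^{\varepsilon^{(r)}}$ throughout and invoking the Katz--Mazur analysis of the special fibre of a balanced moduli problem (\cite[13.11.2--13.11.4]{KM}, together with \cite[\S7--8]{Ulmer} for the precise shape of $\scrQ_r$ and its associated correspondence). As in that proof, the ``disjoint union with crossings at the supersingular points'' structure in the sense of \cite[13.1.5]{KM} is a formal consequence of the general theory once the irreducible components have been identified, and the cusps are handled by the compactification construction exactly as for $\X_r$; thus the whole problem reduces to describing $\scrQ_r^1$-structures on ordinary elliptic curves over $\F_p$-schemes.

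Concretely, I would fix an ordinary elliptic curve $E_0/S$ and a quadruple $(\phi:E_0\to E_{r+1},P,Q;\alpha)\in\scrQ_r^1(E_0/S)$. By \cite[13.11.2]{KM} the standard factorisation of the cyclic $p^{r+1}$-isogeny $\phi$ splits uniquely into a purely inseparable part of degree $p^a$ followed by an \'etale part of degree $p^b$ with $a+b=r+1$; equivalently there is a unique $E/S$ together with identifications $E_0\simeq E^{(p^b)}$ and $E_{r+1}\simeq E^{(p^a)}$ realising $\phi$ as $E^{(p^b)}\xrightarrow{F^a}E^{(p^{r+1})}\xrightarrow{V^b}E^{(p^a)}$. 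Tracing the level data through this factorisation (exactly as in the discussion following Proposition \ref{redXr}) shows that, when $ab\neq 0$, the generator $Q$ of $\ker\phi_{r,0}$ cuts out a level-$p^a$ Igusa structure and the generator $P$ of $\ker\phi_{1,r+1}$ a level-$p^b$ Igusa structure on $E$, the two being linked by a unique unit $u\in(\Z/p^{\min(a,b)}\Z)^{\times}$ via Verschiebung; the resulting datum is precisely a point of $\Ig_{\max(a,b)}$, and this construction is inverse to the evident one going the other way. This accounts for all components indexed by triples $(a,b,u)$ with $ab\neq 0$.

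The genuinely new feature, and the point I would treat most carefully, is the degenerate case $ab=0$. When $b=0$ (so $a=r+1$ and $\phi=F^{r+1}$ is totally inseparable) the isogeny $\phi_{1,r+1}$ is again purely inseparable, so $P$ generates an infinitesimal kernel and carries no information, whereas $\phi_{r,0}=V^{r}$ is \'etale of order $p^r$ and $Q$ therefore determines a level-$p^r$ --- and not level-$p^{r+1}$ --- Igusa structure; the datum reduces to a single point of $\Ig_r$, consistent with $\min(a,b)=0$ forcing $u=1$. The case $a=0$ is symmetric, with the roles of $P$ and $Q$ interchanged. This explains the drop from $\Ig_{\max(a,b)}=\Ig_{r+1}$ to $\Ig_r$ recorded in the statement for the two extreme pairs $(a,b)$.

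The main obstacle is not the ordinary-locus bookkeeping above, which is routine once Proposition \ref{redXr} is in hand, but rather establishing the crossing behaviour at the supersingular points: there the various Igusa components meet, and verifying that they cross in the precise scheme-theoretic sense of \cite[13.1.5]{KM} requires the local deformation-theoretic input of Katz--Mazur rather than any count on the ordinary locus. I would dispatch this exactly as \cite{KM} do for $\scrP_r$, since the completed local structure of $\scrQ_r$ at a supersingular point is governed by the same computations (this is precisely the content \cite{Ulmer} exploits in describing the correspondence $U_p$). With that established, the component description obtained on the ordinary locus propagates to the global ``crossings'' picture, completing the proof.
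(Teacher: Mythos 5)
Your proposal follows the paper's own proof essentially verbatim: the paper likewise establishes Proposition \ref{redYr} by citing the ``straightforward adaptation'' of \cite[13.11.2--13.11.4]{KM} (and \cite[Proposition 8.2]{Ulmer}) for the crossings structure, and then writes out exactly the ordinary-locus dictionary you describe, including the special treatment of the extremal cases $(a,b)=(r+1,0)$ and $(0,r+1)$, where one of $P$, $Q$ generates an infinitesimal kernel and the other supplies the level-$p^r$ Igusa structure accounting for the drop to $\Ig_r$. One bookkeeping slip worth correcting: for $ab\neq 0$ the generators live on $E_1=E^{(p^{b+1})}$ and $E_r=E^{(p^{a+1})}$, one Frobenius twist higher than in the $\X_r$ situation, so $P$ and $Q$ define Igusa structures on $E^{(p)}$ rather than on $E$ --- the paper records the resulting point of $\Ig_{\max(a,b)}$ as $(E^{(p)},Q;V^{b-1}\alpha p^{1-b})$ --- and since Frobenius twisting is not invertible over a general $\F_p$-base your dictionary as literally stated is off by $F$; this does not affect the identification of the component with $\Ig_{\max(a,b)}$, but it is precisely the normalization responsible for the Frobenius factors appearing later in Proposition \ref{UlmerProp}.
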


We will write $J_{(a,b,u)}$ for the irreducible component of $\o{\Y}_r$
indexed by $(a,b,u)$, and refer to it as the $(a,b,u)$-component; again, 
$J_{(a,b,u)}$ is independent of $u$.
The proof of Proposition \ref{redYr} is a straightforward adaptation of the arguments of 
\cite[13.11.2--13.11.4]{KM} (see also \cite[Proposition 8.2]{Ulmer}).  
We recall the correspondence between non-cuspidal points of the $(a,b,u)$-component and 
$[\Upgamma_0(p^{r+1}); r,r]^{1\can}$-structures on elliptic curves.

Fix an ordinary elliptic curve  $E_0$ over an $\F_p$-scheme $S$, 
and let $(\phi:E_0\rightarrow E_{r+1},P,Q; \alpha)$ be an element of 
$\scrQ_r^{1}(E_0/S)$.  As before, there exist unique 
nonnegative integers $a,b$ with $a+b=r+1$ 
and a unique elliptic curve $E/S$
with the property that the cyclic $p^{r+1}$-isogeny $\phi$ factors as
\begin{equation*}
	\xymatrix{
		{E_0\simeq E^{(p^b)}} \ar[r]^-{F^a} & {E^{(p^{r+1})}} \ar[r]^-{V^b} & {E^{(p^a)} \simeq E_{r+1}}
		}.
\end{equation*}
First suppose that $ab\neq 0$. Then the point $P\in E^{(p^{b+1})}(S)$ (respectively $Q\in E^{(p^{a+1})}(S)$)
is an $[\Ig(p^b)]$ (respectively $[\Ig(p^a)]$) structure on $E^{(p)}$ over $S$.  
If $a\ge b$, there is a unit $u\in (\Z/p^b\Z)^{\times}$ such that $V^{a-b}(Q)=uP$ in $E^{(p^{b+1})}(S)$,
while if $a\le b$ then there is a unique $u\in (\Z/p^a\Z)^{\times}$ with $uV^{b-a}(P)=Q$ in $E^{(p^{a+1})}(S)$.
For $a\ge b$ (respectively $a < b$), and fixed $u$, the data
 $(E^{(p)}, Q ; V^{b-1}\alpha p^{1-b})$
(respectively $(E^{(p)}, P; V^{b-1}\alpha p^{1-b})$) gives an $S$-point of the $(a,b,u)$-component
$\Ig_{\max(a,b)}$.  If $b=0$ (respectively $a=0$), then $Q\in E^{(p^r)}(S)$ (respectively $P\in E^{(p^r)}(S)$)
is an $[\Ig(p^r)]$-structure on $E=E_0$ (respectively $E = E_{r+1}$).
In these extremal cases, the data $(E,Q;\alpha)$
(respectively $(E,P;V^{r+1} \alpha  p^{-r-1})$) 
gives an $S$-point of the $(r+1,0,1)$-component (respectively $(0,r+1,1)$-component)
$Ig_r$.

Conversely, suppose given $(a,b,u)$ and an $S$-point of $\Ig_{\max(a,b)}$ which is neither
cuspidal nor supersingular.  If $r+1 > a\ge b$ and $(E,Q;\alpha)$ is the given point of $\Ig_a$,
then we set $P:=u^{-1}V^{a-b}(Q)\in E^{(p^b)}(S)$, while if $r+1 > b\ge a$ and $(E,P;\alpha)$ is the given point of
$\Ig_b$, we set $Q:=uV^{b-a}P\in E^{(p^{a})}(S)$.  Then 
\begin{equation*}
	(\xymatrix{
		{E^{(p^{b-1})}} \ar[r]^-{F} & {E^{(p^b)}} \ar[r]^-{F^{a-1}} & {E^{(p^r)}} \ar[r]^-{V^{b-1}} & 
		{E^{(p^a)}} \ar[r]^-{V} & {E^{(p^{a-1})}}, P,Q; F^{b-1} \alpha 
	})
\end{equation*}
is an $S$-point of $\M(\scrQ_r^{1})$.
If $b=0$ and $(E,Q,\alpha)$ is an $S$-point of $\Ig_r$, then we let $P\in E^{(p)}(S)$ be the identity 
section and we obtain an $S$-point $(F^{r+1}:E\rightarrow E^{(p^{r+1})},P,Q;\alpha)$
of $\M(\scrQ_r^1)$.
If $a=0$ and $(E,P,\alpha)$ is an $S$-point of $\Ig_r$, then we let $Q\in E^{(p)}(S)$
be the identity section and we obtain an $S$-point $(V^{r+1}:E^{(p^{r+1})}\rightarrow E,P,Q;F^{r+1} \alpha)$
of $\M(\scrQ_r^1)$.

Using the descriptions of $\o{\X}_r$ and $\o{\Y}_r$ furnished by
Propositions \ref{redXr} and \ref{redYr}, we can calculate the restrictions of
the degenercy maps $\o{\pi}_1,\o{\pi}_2:\o{\Y}_r\rightrightarrows \o{\X}_r$ 
to each irreducible component
of the special fiber of $\Y_r$.  The following is due to Ulmer\footnote{We warn the reader, however,
that Ulmer omits the effect of the degeneracy maps on $[\mu_N]$-structures, so his formulae
are slightly different from ours.}
\cite[Proposition 8.3]{Ulmer}: 

\begin{proposition}\label{UlmerProp}
	Let $a,b$ be nonnegative integers with $a+b=r+1$ and $u\in (\Z/p^{\min(a,b)}\Z)^{\times}$.
	The restriction of the map $\o{\pi}_1: \o{\Y}_r\rightarrow \o{\X}_r$ to the $(a,b,u)$-component
	of $\o{\Y}_r$ is:
	\begin{equation*}
		\begin{cases}
			\xymatrix{
				{\Ig_{r}=J_{(r+1,0,1)}} \ar[r]^-{F} & {I_{(r,0,1)}=\Ig_{r}}
				}
				&:\quad (a,b,u)=(r+1,0,1)\\
			\xymatrix{
				{\Ig_{a}=J_{(a,b,u)}} \ar[r]^-{\pr} & {I_{(a-1,b,u)}=\Ig_{a-1}}
				}
				&:\quad b < a < r+1 \\
			\xymatrix{
				{\Ig_{b}=J_{(a,b,u)}} \ar[r]^-{\langle u^{-1}\rangle} & {I_{(a-1,b,u\bmod p^{a-1})}=\Ig_{b}}
				}
				&:\quad a=b=(r+1)/2\\	
			\xymatrix{
				{\Ig_{b}=J_{(a,b,u)}} \ar[r]^-{\id} & {I_{(a-1,b,u\bmod p^{a-1})}=\Ig_{b}}
				}
				&:\quad a < b < r+1 \\
			\xymatrix{
				{\Ig_{r}=J_{(0,r+1,1)}} \ar[r]^-{\langle p\rangle_N} & {I_{(0,r,1)}=\Ig_{r}}
				}
				&:\quad (a,b,u)=(0,r+1,1)
		\end{cases}
	\end{equation*}
	and the restriction of the map $\overline{\pi}_2: \o{\Y}_r\rightarrow \o{\X}_r$ to the $(a,b,u)$-component
	of $\o{\Y}_r$ is: 
		\begin{equation*}
		\begin{cases}
			\xymatrix{
				{\Ig_{r}=J_{(r+1,0,1)}} \ar[r]^-{\id} & {I_{(r,0,1)}=\Ig_{r}}
				}
				&:\quad (a,b,u)=(r+1,0,1)\\
				\xymatrix{
				{\Ig_{a}=J_{(a,b,u)}} \ar[r]^-{\id} & {I_{(a,b-1,u\bmod p^{b-1})}=\Ig_{a}}
				}
				&:\quad b < a+1 \le r+1 \\
				\xymatrix{
				{\Ig_{b}=J_{(a,b,u)}} \ar[r]^-{\langle u \rangle \pr} & {I_{(a,b-1,u)}=\Ig_{b-1}}
				}
				&:\quad a +1 = b =r/2 + 1  \\
				\xymatrix{
				{\Ig_{b}=J_{(a,b,u)}} \ar[r]^-{\pr} & {I_{(a,b-1,u)}=\Ig_{b-1}}
				}
				&:\quad a+1 < b < r+1  \\
				\xymatrix{
				{\Ig_{r}=J_{(0,r+1,1)}} \ar[r]^-{F} & {I_{(0,r,1)}=\Ig_{r}}
				}
				&:\quad (a,b,u)=(0,r+1,1)
		\end{cases}
	\end{equation*}
\end{proposition}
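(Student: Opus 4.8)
The plan is to verify the two tables one component at a time, by feeding a universal structure through the moduli-theoretic formulas $(\ref{Upcorr})$ for $\pi_1$ and $\pi_2$ and then reading off the resulting $\scrP_r^1$-structure via the explicit dictionary recorded after Proposition $\ref{redXr}$. Since $\o{\Y}_r$ and $\o{\X}_r$ are reduced, each restriction $\o{\pi}_i|_{J_{(a,b,u)}}$ is determined by its effect on the ordinary, non-cuspidal locus, so it is enough to take an ordinary elliptic curve $E_0$ over an arbitrary $\F_p$-scheme $S$ equipped with a structure $(\phi:E_0\to E_{r+1},P,Q;\alpha)$ corresponding to a point of the $(a,b,u)$-component of $\o{\Y}_r$. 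The bulk of the underlying-elliptic-curve bookkeeping is carried out in \cite[Proposition~8.3]{Ulmer}, which I would cite and then augment with the analysis of the $[\mu_N]$-structure that Ulmer omits (this is exactly where our formulas differ from his).

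First I would unwind the standard factorization $E_0\simeq E^{(p^b)}\xrightarrow{F^a} E^{(p^{r+1})}\xrightarrow{V^b}E^{(p^a)}\simeq E_{r+1}$, identifying the intermediate curves $E_1\simeq E^{(p^{b+1})}$ and $E_r\simeq E^{(p^{a+1})}$ and the generating relation tying $P$, $Q$, and the unit $u$. Applying $\pi_2$ deletes the final Verschiebung step, giving $\phi_{0,r}:E^{(p^b)}\to E^{(p^{a+1})}$ of factorization type $(a,b-1)$ with base curve $E^{(p)}$, hence landing on $I_{(a,b-1,\cdot)}$; dually $\pi_1$ deletes the initial step, yielding $\phi_{1,r+1}:E^{(p^{b+1})}\to E^{(p^a)}$ of type $(a-1,b)$, again with base curve $E^{(p)}$, landing on $I_{(a-1,b,\cdot)}$. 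The relevant Igusa level is $\max(a,b)$, and whether it is preserved or drops under $(a,b)\mapsto(a-1,b)$ or $(a,b-1)$ dictates whether the induced morphism is an automorphism or a degeneracy map. Concretely, $\pi_1$ replaces the generator $Q$ by $\phi_{r,r+1}(Q)$ and $\pi_2$ replaces $P$ by $\phi_{1,0}(P)$; when the relevant boundary isogeny is a Verschiebung the passage $Q\mapsto VQ$ or $P\mapsto VP$ is exactly the moduli definition $(\ref{Vmapsch})$ of $\pr$, while in the purely inseparable or purely étale extremal components the corresponding boundary isogeny is a Frobenius, so $Q\mapsto FQ$ (resp.\ $P\mapsto FP$) induces the absolute Frobenius $F$ appearing at $(r+1,0,1)$ for $\o{\pi}_1$ and at $(0,r+1,1)$ for $\o{\pi}_2$.

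The heart of the matter, and the step I expect to be the main obstacle, is the propagation of the $[\mu_N]$-structure together with its normalizing powers of $p$: on $\o{\Y}_r$ the structure is normalized as $V^{b-1}\alpha\,p^{1-b}$, whereas on $\o{\X}_r$ it is $V^{b}\widetilde{\alpha}\,p^{-b}$. Since $\pi_1$ postcomposes $\alpha$ with $\phi_{0,1}$ while $\pi_2$ leaves $\alpha$ untouched, in every generic case I would check that the relation $V\circ F=[p]$ makes the two normalizations agree on the nose, leaving no residual diamond operator. In the extremal component $(0,r+1,1)$ for $\o{\pi}_1$ the base curve is preserved (rather than Frobenius-shifted), so the $V\circ F=[p]$ cancellation is incomplete and leaves precisely the factor $\langle p\rangle_N$, while in the remaining extremal cases the base-curve shift absorbs the surplus $p$-power and no such twist survives. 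Finally, in the balanced cases $a=b=(r+1)/2$ (for $\o{\pi}_1$) and $a+1=b=r/2+1$ (for $\o{\pi}_2$), the forced switch between the $Q$- and $P$-identifications dictated by the choice convention of Remark $\ref{rEvenChoice}$, combined with the generating relation between $P$ and $Q$, produces the twists $\langle u^{-1}\rangle$ and $\langle u\rangle$ respectively. Assembling the cases yields both tables; the computation is routine once the $p$-power bookkeeping for $\alpha$ is organized correctly, which is the one genuinely delicate point.
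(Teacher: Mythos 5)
Your proposal is correct and is essentially the paper's own (largely implicit) proof: the paper likewise leaves the verification to the reader as a trace of ordinary non-cuspidal moduli points through the formulas $(\ref{Upcorr})$ and the component dictionaries recorded after Propositions \ref{redXr} and \ref{redYr}, i.e.\ Ulmer's \cite[Proposition 8.3]{Ulmer} augmented by exactly the $[\mu_N]$-normalization bookkeeping (the $V\circ F=[p]$ cancellations and the residual $\langle p\rangle_N$) that you describe. One small refinement: in the balanced case for $\o{\pi}_1$ the relevant $P$/$Q$-switch is governed by the convention fixed in the dictionary for $\o{\Y}_r$ (``$a\ge b$ uses $Q$''), not by Remark \ref{rEvenChoice}, which concerns the $\X_r$-side identification and so enters only in the balanced $\o{\pi}_2$ case; when writing up the details you should pin down these two conventions together with the paper's diamond-operator normalization $\langle u\rangle(E,P;\alpha)=(E,u^{-1}P;\alpha)$, since the direction of the resulting twists $\langle u^{-1}\rangle$ and $\langle u\rangle$ is entirely determined by that interplay.
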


\begin{proof}
	The proof is similar to the proof of Proposition \ref{pr1desc}, using the correspondence between
	irreducible components of $\Y_r$, $\X_r$ and Igusa curves that we have explained, together with the moduli-theoretic 
	definitions (\ref{Upcorr}) of the degeneracy mappings.  We leave the details to the reader.
\end{proof}

As we make frequent use of the action of the Hecke operators on the cohomology 
of $\X_r$ and $I_r$, we would be remiss not to record the definition of the Hecke
correspondences on these curves.

\begin{definition}\label{Def:HeckeModuliProblem}
	For a prime $\ell\neq p$, a nonnegative integer $r$, and a $\Z[1/N\ell]$-algebra
	$R$ containing a fixed choice $\zeta$ of 
	primitive $p^r$-th root of unity, 
	the moduli problem 
	$\prescript{}{\ell}{\scrP_{r}^\zeta}:=([\bal\ \Upgamma_1(p^r)]^{\zeta\can}; [\Gamma_{\mu}(N;\ell)])$ 
	on $(\Ell/R)$
	assigns to $E/S$ the set of quintuples $(\phi:E\rightarrow E',P,Q ; \alpha; C)	$
	where:
	\begin{enumerate}
		\item The data $(\phi:E\rightarrow E',P,Q ; \alpha)$
		is an element of $\scrP_r^{\zeta}(E/S)$.
		\item $C\subseteq E[\ell]$ is a locally-free, cyclic, subgroup-scheme of rank $\ell$.
		\item $\im(\alpha) \cap C = 0$.
	\end{enumerate}	
	When $R=\F_p$, we similarly define the moduli problem 
	$\prescript{}{\ell}{\I_r}:=([\Ig(p^r)];[\Gamma_{\mu}(N,\ell)])$ 
	on $(\Ell/\F_p)$ as the functor assigning to $E/S$ the set of quadruples $(E,P;\alpha;C)$
	where 
	\begin{enumerate}
		\item The triple $(E,R;\alpha)$ is an element of $\I_r(E/S)$.
		\item $C\subseteq E[\ell]$ is a locally-free, cyclic, subgroup-scheme of rank $\ell$.
		\item $\im(\alpha) \cap C = 0$
	\end{enumerate}
\end{definition}

Using the fact that the moduli problem $[\mu_N]$ is representable (recall we assume $N\ge 4)$, it is not difficult to 
prove that $[\Gamma_{\mu}(N;\ell)]$ is representable as well.  Indeed, if $\ell\nmid N$,
then $[\Gamma_{\mu}(N;\ell)]$ is just the simultaneous moduli problem $([\mu_N],[\Gamma_0(\ell)])$,
so representability follows immediately from 3.7.1 and 4.3.4 of \cite{KM}.  When $\ell|N$
we instead proceed as follows.  Writing $Y_{\mu}(N)$ for the representing object of $[\mu_N]$
and $(E^{\univ},\alpha^{\univ})/Y_{\mu}(N)$ for the universal elliptic curve with universal $\mu_N$-structure
$\alpha^{\univ}$, the scheme $E^{\univ}[\ell]$ is finite \'etale over $Y_{\mu}(N)$ since $\ell$
is a unit on the base, and we can form the open subscheme $Z\subseteq E^{\univ}[\ell]$
that is the complement of the closed subscheme $\im(\alpha^{\univ})$.  It is easy to see
that, for any $R$-scheme $S$, the $S$-points of $Z$ parameterize isomorphism classes of 
triples $(E,\alpha,Q)$ where $\alpha:\mu_N\hookrightarrow E[N]$ is a closed immersion
of $S$-group schemes and $Q$ is a point of $E[\ell]$ that generates a cyclic subgroup of order $\ell$
which is disjoint from $\im(\alpha).$
There is a natural free action of $(\Z/\ell\Z)^{\times}$ on $Z$ given by $a:(E,\alpha,Q)\mapsto (E,\alpha,aQ)$,
and we write $Z'$ for the quotient of $Z$ by this action.  Then the $S$-points of $Z'$
are identified 
with isomorphism classes of triples $(E,\alpha,C)$, with $E$ and $\alpha$ as before, 
and $C$ a loclly free, rank $\ell$, cyclic subgroup scheme of $E[\ell]$
with $C\cap \im(\alpha)=0$. 
Now arguing as in the proof of Proposition \ref{XrRepresentability}, we conclude
that $\prescript{}{\ell}{\scrP_{r}^\zeta}$ (respectively $\prescript{}{\ell}{\I_r}$) is represented
by a regular (respectively smooth) scheme that is flat of pure relative dimension 1 over $\Spec(R)$.
Writing $\X_r(\ell)$ (respectively $\Ig_r(\ell)$)
for the corresponding compactified moduli schemes, there are canonical degeneracy mappings
(keeping in mind Corollary \ref{MorExtCor}) 
$\pi_1^{(\ell)}, \pi_2^{(\ell)}:\X_r(\ell)\rightrightarrows \X_r$ given moduli-theoretically
by
\begin{align*}
	\pi_1^{(\ell)}(\phi:E\rightarrow E',P,Q ; \alpha; C)&:=
	(\phi:E\rightarrow E',P,Q ; \alpha)\\
		\pi_2^{(\ell)}(\phi:E\rightarrow E',P,Q ; \alpha; C)&:=
	(\o{\phi}:E/C\rightarrow E'/\phi(C),\psi(P),\psi'(Q) ; \psi\circ\alpha),
\end{align*}
where $\psi:E\rightarrow E/C$ and $\psi':E'\rightarrow E'/\phi(C)$
are the canonical isogenies and $\o{\phi}$ is the isogeny induced by $\phi$.
One similarly defines degeneracy mappings $\pi_1^{(\ell)}, \pi_2^{(\ell)}:\Ig_r(\ell)\rightrightarrows \Ig_r$.
Writing $T_{\ell}$ (respectively $U_{\ell}$) when $\ell\nmid N$ (respectively $\ell\mid N$)
for the resulting correspondence
$(\pi_1^{(\ell)}, \pi_2^{(\ell)})$ on $\X_r$
or $\Ig_r$, it is straightforward to check that $T_{\ell}$ and $U_{\ell}$ on $\X_r$ induce
$T_{\ell}$ and $U_{\ell}$, respectively, on the irreducible components 
$I_r^{\infty}=I_{(r,0,1)}$ and $I_r^{0}=I_{(0,r,1)}$
via Proposition \ref{redXr}.
The correspondence $U_p:=(\pi_1,\pi_2)$ (respectively $U_p^*:=(\pi_2,\pi_1)$)
on $\X_r$ is defined using the maps (\ref{Upcorr}), and induces the correspondence
$(F,\id)$ on $I_r^{\infty}$ (respectively $(F,\langle p\rangle_N)$ on $I_r^0$).

\bibliographystyle{amsalpha_noMR}
\bibliography{mybib}
\end{document}